\newcommand{\longmapsfrom}{\reflectbox{$\longmapsto$}}
\newcommand{\Z}{\mathbb{Z}}
\DeclareMathOperator{\Ext}{Ext}
\DeclareMathOperator{\End}{End}
\DeclareMathOperator{\Hom}{Hom}
\newcommand{\Hm}{{\operatorname{H}}} %homology
\DeclareMathOperator{\im}{im}
\newcommand{\Mod}{{\tt Mod}}
\DeclareMathOperator{\pres}{PRes}
\newcommand{\kommentar}[1]{}
\newcommand{\ls}{\lfloor}
\newcommand{\rs}{\rfloor}
\newcommand{\dum}{\sum\nolimits} %summmation symbol with delimiters at the sides
\newcommand{\lpt}{\langle\hspace{-3pt}\langle} %3pt sieht auch gut aus
\newcommand{\rpt}{\rangle\hspace{-3pt}\rangle}
\newcommand{\natop}[2]{\genfrac{}{}{0pt}{}{#1}{#2}}
\newcommand{\smpl} % small plus
{
\linethickness{0.05mm}
\begin{picture}(9,0)
\bezier{10}(5,1)(5,5)(5,9)
\bezier{10}(1,5)(5,5)(9,5)
\end{picture}
\thinlines
}
\newcommand{\smmi} % small minus
{
\linethickness{0.05mm}
\begin{picture}(9,0)
\bezier{10}(1,5)(5,5)(9,5)
\end{picture}
\thinlines
}
\newcommand{\nsection}[1]{\vfill\eject
%\pagebreak
\section{#1}}
\newcommand{\ru}[1]{\rule[#1mm]{0mm}{0mm}} %unsichtbarer Kasten in Höher des Arguments, um Platz nach oben bzw. unten zu erzingen
\newcommand{\asm}[1]{{$\textstyle \vphantom{\substack{1\\1\\1}} #1$}}
\newcommand{\nsm}[1]{\asm{#1}}
\newcommand{\nasm}[1]{{$\textstyle #1$}}
\newcommand{\ovs}[2][=]{\overset{\hphantom{\displaystyle{#1}}\mathllap{#2}}{#1}\,}
\newcommand{\eqs}{=\,}
\newcommand{\eqsp}{\,}
\newcommand{\svs}[2][=]{\overset{\mathclap{#2}}{#1}}
\newcommand{\sqs}{=}
\newcommand{\funcblc}{-}
\theoremstyle{definition}
\newtheorem{Def}{Definition} \crefname{Def}{Definition}{Definitions}
\newtheorem{bem}[Def]{Remark} \crefname{bem}{Remark}{Remarks}
\newtheorem{bsp}[Def]{Example} \crefname{bsp}{Example}{Examples}
 \crefname{defrem}{Definition/Remark}{FIXME}
\theoremstyle{plain}
\newtheorem{lemma}[Def]{Lemma} \crefname{lemma}{Lemma}{Lemmas}
\newtheorem{kor}[Def]{Corollary} \crefname{kor}{Corollary}{Corollaries}
\newtheorem{pp}[Def]{Proposition} \crefname{pp}{Proposition}{Propositions}
\newtheorem{tm}[Def]{Theorem} \crefname{tm}{Theorem}{Theorems}
\newcommand{\zp}{\mathbb{Z}_{(p)}}
\newcommand{\fp}{ {\mathbb{F}_{\!p}} }
\newcommand{\fs}{ {\mathbb{F}_{\!2}} }
\newcommand{\ff}{ {\mathbb{F}_{\!5}} }
\DeclareMathOperator{\Sy}{S}
\newcommand{\Sp}{\Sy_p}
\newcommand{\A}{{\rm A}}
\DeclareMathOperator{\pr}{Pr}
\newcommand{\name}[1]{\textsc{#1}}
\begin{document}
\newcommand{\xyprot}{}
%\newcommand{\xyprot}[1]{\\ IMAGE HERE \\}
%================CHECKLISTE===============
%-pruefen, dass Vektorschriften verwendet sind (cm-super)
%-komplier-fehler eliminieren

\title{An $\A_∞$-structure on the cohomology ring of the symmetric group $\Sp$ with coefficients in $\fp$}
\author{Stephan Schmid}
%\institute{}
\maketitle

\begin{small}
\begin{quote}
\begin{center}{\bf Abstract}\end{center}\vspace*{2mm}
Let $p$ be a prime. Let $\fp\!\Sp$ be the group algebra of the symmetric group over the finite field $\fp$ with $|\fp|=p$. Let $\fp$ be the trivial $\fp\!\Sp$-module. We present a projective resolution $\pres \fp$ of the module $\fp$ and equip the Yoneda algebra $\Ext^*_{\fp\!\Sp}(\fp,\fp)$ with an $\A_∞$-structure such that $\Ext^*_{\fp\!\Sp}(\fp,\fp)$ becomes a minimal model of the dg-algebra $\Hom^*_{\fp\!\Sp}(\pres \fp, \pres \fp)$.
\end{quote}
\end{small}

%unter den Abstract plaziert:

\renewcommand{\thefootnote}{\fnsymbol{footnote}}
\footnotetext[0]{MSC 2010: 18G15.}
\renewcommand{\thefootnote}{\arabic{footnote}}

\tableofcontents

%→ $\Ext_{RG}^i(R,R) =: \Hm^i(G;R)$, $\Tor_i^{RG}(R,R) =: H_i(G;R)$ Homology groups \\*
%→ Benson I, p.74: Berechnung von $\Hm^*(\Sp, \fp)$ mit kohomologischen Methoden (gut bekannt).
\subsection{Introduction}
\paragraph{$\A_∞$-algebras}
Let $R$ be a commutative ring.  Let $A$ be a $\Z$-graded $R$-module. Let $m_1:A→A$ be a graded map of degree $1$ with $m_1^2=0$, i.e.\ a differential on $A$. Let $m_2:A\otimes A→A$ be a graded map of degree $0$ satisfying the Leibniz rule, i.e.\ 
\begin{align*}
m_1\circ m_2 = m_2\circ(m_1\otimes 1 + 1\otimes m_1).
\end{align*}
%For an  $R$-algebra $A$ with multiplication 
%$m_2:A\otimes A→A$, t
The map $m_2$ is in general not required to be associative. Instead, we require that for a morphism $m_3:A^{\otimes 3}→A$, the following identity holds.
\[m_2\circ (m_2\otimes 1 - 1\otimes m_2) = m_1\circ m_3 + m_3\circ(m_1\otimes 1^{\otimes 2} + 1\otimes m_1 \otimes 1 + 1^{\otimes 2} \otimes  m_1)\]
Following \name{Stasheff}, cf.\ \cite{St63}, this can be continued in a certain way with higher multiplication maps to obtain a tuple of graded maps $(m_n:A^{\otimes n}→ A)_{n\geq 1}$ of certain degrees satisfying the Stasheff identities, cf.\ e.g.\ \eqref{ainfrel}. The tuple $(A, (m_n)_{n\geq 1})$ is then called an $\A_∞$-algebra. 
%The obstruction to associativity is $m_2\circ (m_2\otimes 1 - 1\otimes m_2):A\otimes A\otimes A→A$. Associativity means that this obstruction is zero.  
% James D.\ Stasheff introduced the concept of $\A_∞$-algebras in 1963, cf.\ \cite{St63}, where this obstruction is controlled by a differential $m_1:A→A$  and a morphism $m_3:A^{\otimes 3}→A$ in the following identity.
% \[m_2\circ (m_2\otimes 1 - 1\otimes m_2) = m_1\circ m_3 + m_3\circ(m_1\otimes 1^{\otimes 2} + 1\otimes m_1 \otimes 1 + 1^{\otimes 2} \otimes  m_1)\]
% Introducing "higher associativity rules" and "higher multiplications" to control the obstruction to these rules, one defines an $\A_∞$-structure on a $\Z$-graded $R$-module $A$ as a tuple of graded maps $(m_n:A^{\otimes n}→ A)_{n\geq 1}$ of certain degrees satisfying the Stasheff identities, cf.\ \eqref{ainfrel}. 

A morphism of $\A_∞$-algebras from $(A',(m'_n)_{n\geq 1})$ to $(A,(m_n)_{n\geq 1})$ is a tuple of graded maps \mbox{$(f_n:A'^{\otimes n}→ A)_{n\geq 1}$}  of certain degrees satisfying the identities \eqref{finfrel}. The first two of these are
\begin{align*}
\eqref{finfrel}[1]: && f_1 \circ m'_1 \eqs & m_1 \circ f_1\\
\eqref{finfrel}[2]: && f_1\circ m'_2 - f_2\circ(m'_1\otimes 1 + 1\otimes m'_1) \eqs& m_1\circ f_2 + m_2\circ (f_1\otimes f_1).
\end{align*}
%While the Stasheff identities seem arbitrary at first, they ensure that each $\A_∞$-structure on $A$ corresponds uniquely to a codifferential on the tensor coalgebra $TV$ of $A$ via the bar construction.
%, which, once more, satisfy conditions such that morphisms of $\A_∞$-algebras from $A'$ to $A$  can be  mapped bijectively to morphisms of differential graded coalgebras from $TA'$ to $TA$ of degree $0$.% via another part of the bar construction.

% The specific form of the Stasheff identities and of \eqref{finfrel} is motivated by the bar construction. It relates the $\A_∞$-structures on a $\Z$-graded $R$-module $A$ bijectively to the coalgebra differentials of degree $0$ on the graded tensor coalgebra $TA$. It relates morphisms of $\A_∞$-algebras from $(A',(m'_n)_{n\geq 1})$ to $(A,(m_n)_{n\geq 1})$ bijectively to the morphisms of graded differential coalgebras from $TA'$ to $TA$ of degree $0$.

So a morphism $f=(f_n)_{n\geq 1}$ of $\A_∞$-algebras from $(A',(m'_n)_{n \geq 1})$ to $(A,(m_n)_{n \geq 1})$ contains a morphism of complexes $f_1:(A',m_1')→(A,m_1)$. We say that $f$ is a quasi-isomorphism of $\A_∞$-algebras if $f_1$ is a quasi-isomorphism. Furthermore, there is a concept of homotopy for  $\A_∞$-morphisms, cf.\ e.g.\ \cite[3.7]{Ke01} and \cite[Définition 1.2.1.7]{Le03}. 

\paragraph{History}
The history of $\A_∞$-algebras is outlined in \cite{Ke01} and \cite{Ke01ad}.

As already mentioned, \name{Stasheff} introduced $\A_∞$-algebras in 1963.

If $R$ is a field, $\mathbb{F}:=R$, we have the following basic results on $\A_∞$-algebras, which are known since the early 1980s.
%By the early 1980s, the following results had been established. Assuming further regularity, e.g.\ if $R$ is a field, we
\begin{itemize}%[1.]
\item Each quasi-isomorphism of $\A_∞$-algebras is a homotopy equivalence, cf.\ \cite{Pr84}, \cite{Ka87}, …  
\item The minimality theorem: Each $\A_∞$-algebra $(A,(m_n)_{n\geq 1})$ is quasi-isomorphic to an $\A_∞$-algebra  $(A',\{m'_n\}_{n \geq 1})$ with $m'_1=0$, cf.\ \cite{Ka82}, \cite{Ka80}, \cite{Pr84}, \cite{GuLaSt91}, \cite{JoLa01}, \cite{Me99}, … . The $\A_∞$-algebra  $A'$  is then called a minimal model of $A$.
\end{itemize}

%For the attribution of these results, confer to the notes given in \cite{Ke01ad}. 
%

%Morphisms, Homotopy, quasi-isomorphisms
%It generalizes the concept of associativity for a binary operation $m_2$ to a notion, where the defect of associativity $m_2(1\otimes m_2 - m_2\otimes 1)$ is controlled by correction terms 

%Tornike Kadeishvili 

%Bernhard Keller established a connection between $\A_∞$-algebras and representation theory in the early 2000s, cf.\  \cite{Ke00}, \cite[7.7]{Ke01} and also \cite[§7]{Le03}: Given an $\mathbb{F}$-algebra over a field $\mathbb{F}$  and $A$-modules $M_1,…,M_n$, consider the full subcategory of $A$-modules given by the $A$-modules which have a finite filtration such that all quotients are isomorphic to some $M_i$. Set $M=\oplus_{i=1}^n M_i$ and choose a projective resolution $\pres M$ of $M$. 
Suppose given an $\mathbb{F}$-algebra $B$ and suppose given an $B$-module $M$ together with a projective resolution $\pres M$ of $M$.
The homology of the dg-algebra $\Hom^*_B(\pres M,\pres M)$ is the Yoneda algebra $\Ext^*_B(M,M)$. By the minimality theorem, it is possible to construct an $\A_∞$-structure on $\Ext^*_B(M,M)$ such that $\Ext^*_B(M,M)$ becomes a minimal model of the dg-algebra $\Hom^*_B(\pres M,\pres M)$. 
%Now $\Ext^*_A(M,M)$ together with its $\A_∞$-structure is all that is necessary for reconstructing the subcategory mentioned above.
For the purpose of this introduction, we will call such an $\A_∞$-structure on $\Ext^*_B(M,M)$ the canonical $\A_∞$-structure on $\Ext^*_B(M,M)$, which is unique up to isomorphisms of $\A_∞$-algebras, cf.\ \cite[3.3]{Ke01}.

%Motivation, control of non-associativity, Stasheff, bar construction\\*
%Kadeishvili, Minimal models over vector spaces\\*
%Keller, Application in representation theory, Homology groups\\*
%Vejdemo-Johansson $D_8$, $D_{16}$, $Q_8$, tensor product of algebras for $C_x\times C_y$, Dag Madsen cyclic groups, Ainhoa Berciano ? others: calculation

%It seems to the author that in literature, there are only few $\Ext$-algebras where a canonical $\A_∞$-structure is explicitly known:
This structure has been calculated or partially calculated in several cases.

Let $p$ be a prime.% and $\fp$ the finite field with $p$ elements. 

For an arbitrary field $\mathbb{F}$, \name{Madsen} computed the canonical $\A_∞$-structure on  $\Ext^*_{\mathbb{F}[α]/(α^n)}(\mathbb{F},\mathbb{F})$, where $\mathbb{F}$ is the trivial $\mathbb{F}[α]/(α^n)$-module, cf.\ \cite[Appendix B.2]{Ma02}. This can be used to compute the canonical $\A_∞$-structure on the group cohomology $\Ext^*_{\fp {\rm C}_{m}}(\fp,\fp)$, where $m∈\Z_{\geq 1}$ and ${\rm C}_m$ is the cyclic group of order $m$, cf.\ \cite[Theorem 4.3.8]{Ve08}.
%$\Ext^*_{\fp {\rm C}_{p^k}}(\fp, \fp)$,  where $k∈\Z_{\geq 1}$ and ${\rm C}_{m}$ is the cyclic group of order $m$ for $m∈\Z_{\geq 1}$, cf.\ \cite{Ve08}.
%The last step is by the following construction: We have \fp C_m \simeq \fp C_n \underset{\fp}{\otimes} \fp C_{p^r} as tensor product of algebras. As the left part is isomorphic to a direct product of 1x1-matrix rings, we can find \fp C_{p^r} as direct summand of \fp C_m, hence the Ext-algebra of \fp has the same A_∞-structure over \fp C_m as over \fp C_{p^r}.

\name{Vejdemo-Johansson} developed algorithms for the computation of minimal models~\cite{Ve08}. He applied these algorithms to compute large enough parts of the canonical $\A_∞$-structures of the group cohomologies $\Ext^*_{\fs {\rm D}_8}(\fs,\fs)$ and $\Ext^*_{\fs {\rm D}_{16}}(\fs,\fs)$ to distinguish them, where ${\rm D}_8$ and ${\rm D}_{16}$ denote dihedral groups. He stated a conjecture on the complete $\A_∞$-structure on $\Ext^*_{\fs {\rm D}_8}(\fs,\fs)$. Furthermore, he computed parts of the canonical $\A_∞$-structure on $\Ext^*_{\fs {\rm Q}_8}(\fs, \fs)$ for the quaternion group ${\rm Q}_8$. He conjecturally stated the minimal complexity of such a structure. Based on this work, there are now built-in algorithms for the Magma computer algebra system. These are capable of computing partial $\A_∞$-structures on the group cohomology of $p$-groups.

In \cite{Ve082}, \name{Vejdemo-Johansson} examined  the canonical $\A_∞$-structure $(m_n)_{n\geq 1}$  on the group cohomology $\Ext^*_{\fp({\rm C}_k\times {\rm C}_l)}(\fp, \fp)$ of the abelian group ${\rm C}_k\times {\rm C}_l$ for $k,l\geq 4$ such that $k,l$ are multiples of $p$. He showed that for infinitely many $n∈\Z_{\geq 1}$, the operation $m_n$ is non-zero. 

In \cite{Kl10}, \name{Klamt} investigated canonical $\A_∞$-structures in the context of the representation theory of Lie-algebras. In particular, given certain direct sums $M$ of parabolic Verma modules, she examined the canonical $\A_∞$-structure $(m'_k)_{k\geq 1}$ on $\Ext^*_{\mathcal{O}^{\mathfrak{p}}}(M, M)$. She proved upper bounds for the maximal $k∈\Z_{\geq 1}$ such that $m'_k$ is non-vanishing and computed the complete $\A_∞$-structure in certain cases. 	
%Comment: In \cite{Kl10}, there are ideas on how to prove that this algebra is not formal, cf.\ 8.3.

%In the context of representation theory of Lie-algebras, Angela Klamt investigated canonical $\A_∞$-structures on the %$\Ext(M,M)$, where $M$ is
% $\Ext$-spaces of certain direct sums of Verma modules.

\paragraph{The result}

For $n∈\Z_{\geq 1}$, we denote by $\Sy_n$ the symmetric group on $n$ elements.
 
The group cohomology $\Ext^*_{\fp\!\Sp}(\fp,\fp)$ is well-known. For example, in \cite[p.~74]{Be98}, it is calculated using group cohomological methods. %after Corollary 3.6.19, p.\ 74

Here, we will construct the canonical $\A_∞$-structure on $\Ext^*_{\fp\!\Sp}(\fp, \fp)$. 
%Special attention will be given to a suitable projective resolution of the trivial $\fp\!\Sp$-module $\fp$.

We obtain homogeneous elements $ι,χ∈\Hom_{\fp\!\Sp}^*(\pres \fp, \pres \fp)=:A$ of degree $|ι|=2(p-1)=: l$ and $|χ|=l-1$ such that $ι^j,χ\circ ι^j=:χι^j$ are cycles for all $j∈\Z_{\geq 0}$ and such that their set of homology classes $\{\overline{ι^j} \mid j∈\Z_{\geq 0}\}\sqcup \{\overline{χι^j} \mid j∈\Z_{\geq 0}\}$ is an $\fp$-basis of $\Ext^*_{\fp\!\Sp}(\fp, \fp)=\Hm^*A$, cf.\ \cref{pp:iota}.

For all primes $p$, the canonical $\A_∞$-structure $(m'_n:(\Hm^*A)^{\otimes n}→\Hm^*A)_{n\geq 1}$ on $\Hm^*A$ is given as follows.  %For $n∈\{2,p\}$, we have
% \begin{align*}
% m'_n(\overline{χι^{j_1}\otimes … \otimes χι^{j_n}
% \end{align*}

On the elements $\overline{χ^{a_1}ι^{j_1}}\otimes \cdots \otimes \overline{χ^{a_n}ι^{j_n}}$, $n∈\Z_{\geq 1}$, $a_i∈\{0,1\}$ and $j_i∈\Z_{\geq 0}$ for $i∈\{1,…,n\}$, the maps $m'_n$ are given as follows, cf.\ \cref{defall,bem:comp}.
 
If there is an $i∈\{1,…,n\}$ such that $a_i=0$, then
% \[m_n(\overline{χ^{a_1}ι^{j_1}}\otimes \cdots \otimes \overline{χ^{a_n}ι^{j_n}}) = \begin{cases}
% \overline{χ^{a_1 + a_1}ι^{j_1 + j_2}} & \text{if $n=2$} \\ 0 & \text{else}
% \end{cases}\]
\begin{align*}
m'_n(\overline{χ^{a_1}ι^{j_1}}\otimes \cdots \otimes \overline{χ^{a_n}ι^{j_n}}) \eqs& 0 \hphantom{\overline{χ^{a_1 + a_1}ι^{j_1 + j_2}}} \text{for $n\neq 2$ and}\\
m'_2(\overline{χ^{a_1}ι^{j_1}}\otimes  \overline{χ^{a_2}ι^{j_2}}) \eqs & \overline{χ^{a_1 + a_1}ι^{j_1 + j_2}}.% & \text{ and}\\
\end{align*}
If all $a_i$ equal $1$, then 
\begin{align*}
m'_n(\overline{χι^{j_1}}\otimes \cdots \otimes \overline{χι^{j_n}}) \eqs&0  \hphantom{(-1)^p\overline{ι^{p-1+j_1+… + j_p}}}\text{for $n\neq p$ and }\\
m'_p(\overline{χι^{j_1}}\otimes \cdots \otimes \overline{χι^{j_p}}) \eqs  &%\begin{cases}
(-1)^p\overline{ι^{p-1+j_1+… + j_p}}. %&\text{ and}\\ %&&\text{ if $n=p$} \\
%\end{cases}
\end{align*}
In particular, we have $m'_n=0$ for all $n∈ \Z_{\geq 1}\setminus\{2,p\}$.

%Mikael Vejdemo-Johansson computed the $\A_∞$-structures on $\Ext^*_{\fs {\rm Q}_8}(\fs, \fs)$ (Quaternion group), $\Ext^*_{\fs {\rm D}_8}(\fs,\fs)$ and $\Ext^*_{\fs {\rm D}_{16}}(\fs, \fs)$ (Polyhedral groups), cf.\ \cite[4.3.4]{Ve08}. Furthermore, by using methods by Samson Saneblidze and Ronald Umble for the tensorization of $\A_∞$-algebras, cf.\ \cite{SaUm04}, Mikael Vejdemo-Johansson obtained results for the group cohomology of ${\rm C}_x\times {\rm C}_y$, cf.\ \cite{Ve082}.

%Prouté Algèbres diﬀérentielles fortement homotopiquement associatives → not available

%\subsubsection{Motivation}
\subsection{Outline}
\paragraph{Section 1}
%
%Therefore we will compute the $\A_∞$-structure on $\Ext^*_{\fp\!\Sp}(\fp,\fp)$ for $p$ a prime, the group ring $\fp\!\Sp$ of the symmetric group and the finite field $\fp$ containing $p$ elements as the trivial $\fp\!\Sp$-Specht module.
The goal of \cref{secpres} is to obtain a projective resolution of %$\fp$.%
 the trivial $\fp\!\Sp$-Specht module $\fp$. A well-known method for that is "Walking around the Brauer tree", cf.\ \cite{Gr74}. %We also need information on certain morphisms. 
%However, we use a direct approach, by which we also obtain relations on certain morphisms between projective modules in a straightforward manner.
Instead, we use locally integral methods to obtain a projective resolution in an explicit and straightforward manner.
%So we use a direct approach instead. %to obtain information on certain morphisms between projective modules.

%Let $p$ be a prime. We denote by $\Sp$ the symmetric group on $p$ elements, by $\fp$ the field with $p$ elements and 
%We denote by $\zp$ the localization of the ring $\Z$ at the prime ideal $(p):= p\Z$. 
%
%We will compute a suitable $\A_∞$-structure on $\Ext^*_{\fp\!\Sp}(\fp,\fp)$, where $\fp$ is the trivial $\fp\!\Sp$-Specht module.
%\subsubsection{Overview and Results}

Over $ℚ$, the Specht modules are absolutely simple. Therefore we have a morphism of $\zp$-algebras
%
%Let $R$ be a commutative ring. Then there is a morphism of $R$-algebras 
%
$r:\zp\!\Sp→\prod_{λ\dashv p} \End_{\zp} S^λ_{\zp}=:Γ$ induced by the operation of the elements of $\zp\!\Sp$ on the Specht modules $S^λ$ for  partitions $λ$ of $p$, which becomes an Wedderburn isomorphism when tensoring with $ℚ$. So $Γ$ is a product of matrix rings over $\zp$.
%Since the Specht modules are finite rank free $R$-modules (cf.\ \cite[Proof of 8.4]{Ja78}), $ Γ^R$ is a product of matrix rings. For $R$ an integral domain with characteristic $0$, $r$ is injective since the Specht modules give all ordinary irreducible representations of $\Sp$. I.e.\ $\im r$ is isomorphic to $R\Sp$. 
There is a well-known description of  $\im r=:Λ$, %, of which we will give an explicit version in \cref{desczpsp}.
which we use for $p\geq 3$ to obtain projective $Λ$-modules $\tilde P_k\subseteq Λ$, $k ∈[1,p-1]$, and to construct the indecomposible projective resolution $\pres\zp$ of the trivial $\zp\!\Sp$-Specht module $\zp$. The non-zero parts of $\pres\zp$ are periodic with period length $l=2(p-1)$. In \cref{sec:prfp}, we reduce $\pres\zp$ modulo $p$ to obtain a projective resolution $\pres\fp$ of the trivial $\fp\!\Sp$-Specht module $\fp$. %FIXME Furthermore.... relations

\paragraph{Section 2}
%Let $B$ be an $F$-algebra for some field $F$, and let $M_1,…,M_n$ be $B$-modules. Set $M:=\oplus_{k=1}^n M_k$. Let 
%$\mathfrak{F}$ be the category of $B$-modules that admit finite filtrations such that the quotients are amongst the $M_k$. It is known that $\mathfrak{F}$ can be recovered from a certain $\A_∞$-structure on $\Ext_B^*(M,M)$, cf.\  \cite{Ke00} and \cite[Problem 2]{Ke01}. The $\A_∞$-structure on $\Ext_B^*(M,M)$ is constructed in such a way that $\Ext_B^*(M,M)$ is a \textit{minimal model} of the dg-algebra $\Hom_B^*(\pres M,\pres M)$ for some projective resolution $\pres M$ of $M$.

%However, there are only few of these $\A_∞$-structures actually explicitly known. %Therefore we will compute the following class of these.
%We will compute such an $\A_∞$-structure on $\Ext^*_{\fp\!\Sp}(\fp,\fp)$.

%The Homology of $\Hom_{\fp\!\Sp}^*(\pres\fp, \pres\fp)$ is $\Ext_{\fp\!\Sp}^*(\fp,\fp)$. In \cref{secainf} we will compute an $\A_∞$-structure on $\Ext_{\fp\!\Sp}^*(\fp,\fp)$ such that $\Ext_{\fp\!\Sp}^*(\fp,\fp)$ becomes a  \textit{minimal model} of the dg-algebra $\Hom_{\fp\!\Sp}^*(\pres\fp, \pres\fp)$. 
%This $\A_∞$-structure on $\Ext_{\fp\!\Sp}^*(\fp,\fp)$ is special in the sense that e.g. a certain full subcategory $\mathfrak{F}$ of the category of $\fp\!\Sp$-modules may be recovered from it, cf.\  \cite{Ke00} and \cite[Problem 2]{Ke01}. Here $\mathfrak{F}$ is the full subcategory of $\fp\!\Sp$-modules that admit finite filtrations such that the quotients are $\fp$.
The goal of \cref{secainf} is to compute a minimal model of the dg-algebra $\Hom_{\fp\!\Sp}^*(\pres\fp, \pres\fp)=: A$ by equipping its homology $\Ext^*_{\fp\!\Sp}(\fp,\fp) = \Hm^*A$ with a suitable $\A_∞$-structure and  finding a quasi-isomorphism of $\A_∞$-algebras from $\Hm^* A$ to $A$.

Towards that end, we recall the basic definitions concerning $\A_∞$-algebras and some general results in \cref{generaltheory}.% and present a formulation of the minimality theorem in \cref{generaltheory}. 
%Furthermore, in \cref{On the bar construction}, we present the bar construction in detail as well as a proof of the minimality theorem using Kadeishvili's algorithm.

%We deal with the case $p\geq 3$ in \cref{subsec:homology,subsec:minmod} and with the prime $2$ in \cref{prime2}.

While there does not seem to be a substantial difference between the cases $p=2$ and $p\geq 3$, we separate them to  simplify notation and argumentation. 
Consider the case $p\geq 3$. In \cref{subsec:homology}, we obtain a set of cycles $\{ι^j \mid j∈\Z_{\geq 0}\}\cup\{χι^j \mid j∈\Z_{\geq 0}\}$ in $A$ such that their homology classes are a graded basis of $\Hm^*A$. %, where $|\overline{ι^j}|= jl$ and $|\overline{χι^j}|=l-1 + jl$. 
%By exploiting the periodicity of $\pres\fp$, these cycles feature a similar periodicity that eases e.g.\ composing them with each other.  
In \cref{subsec:minmod}, we obtain a suitable $\A_∞$-structure on $\Hm^*A$ and a quasi-isomorphism of $\A_∞$-algebras from $\Hm^*A$ to $A$. For the prime $2$, both steps are combined in the short \cref{prime2}.

% \begin{abstract}
% For $p$ a prime, we calculate a periodic projective resolution of the trivial $\fp\!\Sp$-module $\fp$. We then calculate a
% minimal model of the dg-algebra $\Hom^*_{\fp\!\Sp}(\pres \fp, \pres \fp)$ by equipping its homology $\Ext^*_{\fp\!\Sp}(\fp,\fp)$ with a suitable $\A_∞$-algebra-structure $(m'_n)_{n\geq 1}$ which is done in a way such that $m'_n=0$ unless $n∈\{2,p\}$.
% \end{abstract}
% \subsection{Overview}
% In \cref{desczpsp}, we recall the well-known explicit description of the group ring $\zp\!\Sp$ for $p\geq 3$.
% %Concerning primes $p$, we separate the cases $p=2$ and $p\geq 3$. We deal with $p=2$ in \cref{prime2} to avoid degeneracy in  the case $p\geq 3$. In case $p\geq 3$ 
% A projective resolution of the trivial $\fp\!\Sp$-module $\fp$ is obtained by a projective resolution of $\zp$ over $\zp\!\Sp$.
% % 
% %In the remainder of \cref{secpres} For primes $p\geq 3$ we use this to obtain a projective resolution of $\zp$ over $\zp\!\Sp$ and of $\fp$ over $\fp\!\Sp$. 
% In \cref{prime2}, we examine the case of the prime $2$.

%\tableofcontents

\subsection{Notations and conventions}
\label{sec:not}
\paragraph{Stipulations} %FIXED Doppelpunkte weg, Zwischenueberschriften hervorheben
\begin{itemize}
\item \textbf{For the remainder of this document, $p$ will be a prime with $p\geq 3$.}
\item \textbf{Write $l:=2(p-1)$.} This will give the period length of the constructed projective resolution of $\fp$ over $\fp\!\Sp$, cf.\ e.g.\ \eqref{omega}, \cref{lem:prfp}.
\end{itemize}
%\clearpage
\paragraph{Miscellaneous}
\begin{itemize}
\item Concerning "$∞$", we assume the set $\Z\cup\{∞\}$ to be ordered in such a way that $∞$ is greater than any integer, i.e.\ $∞> z$ for all $z∈\Z$, and that the integers are ordered as usual.
\item For $a∈\Z$, $b∈\Z\cup \{∞\}$, we denote by $[a,b] := \{z∈\Z \mid a\leq z\leq b\} \subseteq \Z$ the integral interval. In particular, we have $[a,∞] = \{z∈\Z \mid  z\geq a\}\subseteq \Z$ for $a∈\Z$.
% \item For $x,y$ elements of a set, let $∂_{x,y}$ be the Kronecker-symbol, that is 
% \[∂_{x,y} := \begin{cases} 1 & x=y\\ 0 & x\neq y\end{cases}.\]
\item For $n∈\Z_{\geq 0}$, $k∈\Z$, let the binomial coefficient $\binom{n}{k}$ be defined by the number of subsets of the set $\{1,…,n\}$ that have cardinality $k$. In particular, if $k<0$ or $k>n$, we have $\binom{n}{k}= 0$. Then the formula $\binom{n}{k-1} + \binom{n}{k} = \binom{n+1}{k}$ holds for all $k∈\Z$.
%\item Rings are unital rings.
\item For a commutative ring $R$, an $R$-module $M$ and $a,b∈M$, $c∈R$, we write
\begin{align*} b&\equiv_c a &:\Longleftrightarrow & & a-b ∈ cM.\end{align*}
Often we have $M=R$ as module over itself.
%\item %Let $R$ be a commutative ring and $P\subsetneq R$ a prime ideal in $R$. The localization 
%For a prime $q$, we denote by $\Z_{(q)}$  the localization of the integers $\Z$ at the prime ideal $(q):=q\Z$, that is $\Z_{(q)} := \{z∈ℚ \mid ∃x∈\Z\setminus q\Z \colon xz∈\Z\} \subseteq ℚ$.%, that is the quotients of integrals such that the denominator is coprime to $q$.
%\item For a prime $q$, let $\mathbb{F}_q$ denote the finite field containing $q$ elements.
%\item Let $R$ be a commutative ring. An $R$-algebra $(A,ρ)$ is a ring $A$ together with a ring morphism $ρ:R→A$ such that $ρ(R)$ is a subset of the center of $A$. By abuse of notation, we often just write $A$ for $(A,ρ)$. $A$ is an $R$-module via $r\cdot a := ρ(r)\cdot a$ for $r∈R$, $a∈A$.

%For $R$-algebras $(A,ρ)$ and $(B,τ)$, a morphism of $R$-algebras $g:(A,ρ)→(B,τ)$ is a ring morphism $g:A→B$ such that $g\circ ρ = τ$.
%\item Morphisms will be written on the left.
\item Modules are right-modules unless otherwise specified. %For a ring $A$, we denote by $\Mod$-$A$ the category of right $A$-modules.
% \item We denote a tuple by enclosing it in parentheses. I.e.\ for a set $M$ and $a_i∈M$, $i∈[1,n]$, $n\geq 0$, we have the tuple $(a_1,a_2,…,a_n)=a$. In particular, $()$ is the empty tuple.
% 
% For a map $g:M→N$ from $M$ to another set $N$, we define 
% \[g(a) := (g(x)\colon x∈a):= (g(a_1),g(a_2),…,g(a_n)).\]
% %If a tuple is an operand of set operations $∈,\subseteq,\setminus$ etc., 
% For another set $M'$, by abuse of notation, we denote by $M'\setminus a$ the set difference between $M'$ and the set of elements of $a$. Similarly, we write $a\subseteq M'$ if each entry of $a$ is an element of $M'$. 
% 
% We will express ordered bases of finite-rank free modules as tuples of pairwise distinct elements.
%We usually have tuples of pairwise distinct elements. Such tuples are the totally ordered finite sets. So an ordered base of a finite-rank free module is such a tuple. 
\item For sets, we denote by $\sqcup$  the disjoint union of sets. %For tuples $a=(a_1,…,a_n)$ and $b=(b_1,…,b_m)$, we denote by $\sqcup$ the concatenation:
%\[a\sqcup b:= (a_1,a_2,…,a_n,b_1,b_2,…,b_m)\]
\item $|\cdot|$: For a homogeneous element $x$ of a graded module or a graded map $g$ between graded modules, we denote by $|x|$ resp.\ $|g|$ their degrees (This is not unique for $x=0$ resp.\ $g=0$). For $y$ a real number, $|y|$ denotes its absolute value. %For $a=(a_1,…,a_n)$ a tuple, $|a|:=n$ is the number of its entries.
\end{itemize}
\paragraph{Symmetric Groups}
Let $n∈\Z_{\geq 1}$. We denote the symmetric group von $n$ elements by $\Sy_n$.
%\begin{itemize}
%\item %For $n∈\Z_{\geq 1}$, w
%We write $λ\dashv n$ to indicate that $λ$ is a partition of $n$.
%\item %For $n∈\Z_{\geq 1}$, w
%By $\Sy_n$, we denote the symmetric group on $n$ elements.
%Concerning the representations of the symmetric groups, we use the notation given in \cite{Ja78} by G. James. In particular 
For a partition $λ\dashv n$, we denote the corresponding Specht module by $S^λ$.
%\end{itemize}
\paragraph{Complexes} Let $R$ be a commutative ring and $B$ an $R$-algebra.
\begin{itemize}
\item  For a complex of $B$-modules 
\[\cdots \rightarrow C_{k+1}\xrightarrow{d_{k+1}}C_k\xrightarrow{d_k} C_{k-1}\rightarrow\cdots\hphantom{,},\]
its $k$-th boundaries, cycles and homology groups are defined by $\text{B}^k:=\im d_{k+1}$, $\text{Z}^k:=\ker d_{k}$ and $\Hm^k:= \text{Z}^k/\text{B}^k$.

For a cycle $x∈\text{Z}^k$, we denote by $\overline{x}:=x+\text{B}^k∈\Hm^k$ its equivalence class in homology.
%\item For a complex of $B$-modules $C=(\cdots \rightarrow C_{k+1}\xrightarrow{d_{k+1}}C_k\xrightarrow{d_k} C_{k-1}\rightarrow)$ and $z∈\Z$, the shifted complex $C[z]=:\tilde C$ is defined by $\tilde C_k := C_{k+z}$, $\tilde d_k := d_{k+z}$.
\item Let \begin{align*}
C\eqs&(\cdots \rightarrow C_{k+1}\xrightarrow{d_{k+1}}C_k\xrightarrow{d_k} C_{k-1}\rightarrow\cdots)\\
C'\eqs&(\cdots \rightarrow C'_{k+1}\xrightarrow{d'_{k+1}}C'_k\xrightarrow{d'_k} C'_{k-1}\rightarrow\cdots)
\end{align*}
be two complexes of $B$-modules.

Given $z∈\Z$, let 
\begin{align*}
\Hom_B^z(C,C') := \prod_{i∈\Z} \Hom_B(C_{i+z},C'_i).
\end{align*}
For an additional complex $C''=(\cdots \rightarrow C''_{k+1}\xrightarrow{d''_{k+1}}C''_k\xrightarrow{d''_k} C''_{k-1}\rightarrow\cdots)$ and maps $h=(h_i)_{i∈\Z}∈\Hom_B^m(C,C')$, $h'=(h'_i)_{i∈\Z}∈\Hom_B^n(C',C'')$, $m,n∈\Z$, we define the composition by component-wise composition as
\begin{align*}
h'\circ h := (h'_{i}\circ h_{i+n})_{i∈\Z} ∈ \Hom_B^{m+n}(C,C'').
\end{align*}

We will assemble elements of $\Hom_B^z(C,C')$ as sums of their non-zero components, which motivates the following notations regarding "extensions by zero" and sums.

For a map $g:C_x→C'_y$\,, we define $\ls g\rs_x^y∈\Hom_B^{x-y}(C,C')$  by
\begin{align*}
(\ls g\rs_x^y)_{i} := \begin{cases} g & \text{ for }i=y\\ 0 & \text{ for } i∈\Z\setminus \{y\} \end{cases}\, .
\end{align*}

Let $k∈\Z$. Let $I$ be a (possibly infinite) set. Let $g_i = (g_{i,j})_j∈\Hom_B^k(C,C')$ for $i∈I$  such that $\{i∈I \mid g_{i,j}\neq 0\}$ is finite for all $j∈\Z$.\\*
We define the sum $\sum_{i∈I} g_i∈\Hom_B^k(C,C')$ by
\begin{align*}
\left(\dum_{i∈I} g_i\right)_j := \sum_{i∈I,g_{i,j}\neq 0} g_{i,j}\,.
\end{align*}

The graded $R$-module $\Hom_B^*(C,C') := \bigoplus_{k∈\Z} \Hom_B^k(C,C')$ becomes a complex via the differential $d_{\Hom_B^*(C,C')}$, which is defined on elements $g∈\Hom_B^k(C,C')$, $k∈\Z$ by
\begin{align*}
%d_{\Hom_B^*(C,C')}(f),\\
d_{\Hom_B^*(C,C')}(g) := d' \circ g -(-1)^k g\circ d ∈\Hom_B^{k+1}(C,C'),
\end{align*}
where $d := (d_{i+1})_{i∈\Z}= \sum_{i∈\Z}\ls d_{i+1}\rs^{i}_{i+1}∈\Hom_B^1(C,C)$ and analogously $d' := (d'_{i+1})_{i∈\Z}= \sum_{i∈\Z}\ls d'_{i+1}\rs^{i}_{i+1}∈\Hom_B^1(C',C')$.

An element $h∈\Hom_B^0(C,C')$ is called a complex morphism if it satisfies $d_{\Hom_B^*(C,C')}(h) = 0$, i.e.\ $d'\circ g = g\circ d$. %Then $h$ induces morphisms on homology.
%Infinite sums  (e.g. $\sum_{z∈\Z} \ls f_z\rs_{z+k}^k$) such that for each component there are only a finite number of summands that are non-zero on that component are defined component-wise by those finite sums.
\end{itemize}

\section{The projective resolution of \texorpdfstring{$\fp$}{Fp} over \texorpdfstring{$\fp\!\Sp$}{FpSp}}
\label{secpres}
\subsection{A description of \texorpdfstring{$\zp\!\Sp$}{ZpSp}}
\kommentar{
\label{desczpsp}
In this paragraph, we review results found e.g.\ in \cite[Chapter 4.2]{Ku99}. We use the notation of \cite{Ja78}.

Let $n∈\Z_{\geq 1}$. 

A partition of the form $λ^k := (n-k+1,1^{k-1})$, $k∈[1,n]$ is called a \textit{hook partition} of $n$.

Suppose $λ\dashv n$, i.e.\ $λ$ is a partition of $n$.

Let $S^λ$ be the corresponding integral Specht module, which is a right $\Z\!\Sy_n$-module, cf. \cite[4.3]{Ja78}.
Then $S^{λ}$ is finitely generated free over $\Z$, cf. \cite[8.1, proof of 8.4]{Ja78}, having a standard $\Z$-basis consisting of the standard $λ$-polytabloids. We write $n_λ$ for the rank of $S^λ$.

For a tuple $b=(b_2,b_3,…,b_k)$, $k∈[1,n]$,  of pairwise distinct elements of $[1,n]$, let $\lpt b\rpt$ be the  $λ^k$-polytabloid generated by the $λ^k$-tabloid 
}
\tikzset{rightrule/.style={%
        execute at end cell={%
            \draw [line cap=rect,#1] (\tikzmatrixname-\the\pgfmatrixcurrentrow-\the\pgfmatrixcurrentcolumn.north east) -- (\tikzmatrixname-\the\pgfmatrixcurrentrow-\the\pgfmatrixcurrentcolumn.south east);%
        }
    },
    bottomrule/.style={%
        execute at end cell={%
            \draw [line cap=rect,#1] (\tikzmatrixname-\the\pgfmatrixcurrentrow-\the\pgfmatrixcurrentcolumn.south west) -- (\tikzmatrixname-\the\pgfmatrixcurrentrow-\the\pgfmatrixcurrentcolumn.south east);%
        }
    },
    toprule/.style={%
        execute at end cell={%
            \draw [line cap=rect,#1] (\tikzmatrixname-\the\pgfmatrixcurrentrow-\the\pgfmatrixcurrentcolumn.north west) -- (\tikzmatrixname-\the\pgfmatrixcurrentrow-\the\pgfmatrixcurrentcolumn.north east);%
        }
    }
}
\kommentar{
\begin{center}
\begin{tikzpicture}[
inner xsep=-0.2mm, inner ysep=0.4mm,%node distance=4mm,
cell/.style={rectangle,draw=black},
space/.style={minimum height=1.0em,matrix of nodes,row sep=0.2mm,column sep=1mm}]
    \matrix (m) [matrix of nodes,
      nodes={%rectangle,minimum width=3em, 
outer sep=0pt},
      row 2/.style={bottomrule}, row 3/.style={bottomrule}, row 4/.style={bottomrule}, row 5/.style={bottomrule} %,column 1/.style=rightrule
      ] 
{
        \nasm{\vphantom{b_2}\,*\,} & \nasm{\,\cdots\,\vphantom{b_2}} & \nasm{\,*\vphantom{b_2}} \\
        \nasm{b_2} \\
        \nasm{b_3} \\
        \nasm{\hphantom{\vdots}\vdots\hphantom{\vdots}} \\
        \nasm{b_n} \\
    };
    \draw[decorate,thick] (m-1-1.north west) -- (m-1-3.north east);
    \draw[decorate,thick] (m-1-1.south west) -- (m-1-3.south east);
\end{tikzpicture}\, ,
\end{center}
%$\{a\}$ given by $a_{i,1} = b_i$, $i∈[2,k]$. I.e.\ $b$ contains the elements of the first column of $a$ without the corner of the hook. 
where $*\cdots *$ are the elements of $[1,n]\setminus b$. 
Any polytabloid of $S^{λ^k}$ can be expressed this way.

For such a tuple $b$ and distinct elements $y_1,…,y_s∈[1,n]\setminus b$, we denote by $(b,y_1,…,y_s)$ the tuple $(b_2,b_3,…,b_k,y_1,…,y_s)$. 
Recall the notations for manipulation of tuples from \cref{sec:not}.\\*
%For a tuple $b$, $x∈b$ and $y\notin b$ we denote by $b^{x,y}$ the tuple obtained by replacing $x$ with $y$ in $b$.\\
The $λ^k$-polytabloid $\lpt b\rpt$ is standard iff $2\leq b_2 < b_3 < \cdots < b_k \leq n$, cf. \cite[8.1]{Ja78}. This entails the following lemma.
\begin{lemma}
For $k∈[1,n]$, the rank of $S^{λ^k}$ is given by $n_{λ^k}=\binom{n-1}{k-1}$.
\end{lemma}
\begin{lemma}[{cf. e.g.\ \cite[Proposition 4.2.3]{Ku99}}]
\label[lemma]{lem:boxmorph}
Let $k∈[1,n-1]$. We have the $\Z$-linear box shift morphisms for hooks
\[\begin{array}{rcl}
S^{λ^k} & \overset{f_k}{\longrightarrow} & S^{λ^{k+1}}\\
\lpt b \rpt & \longmapsto & \sum_{s∈[2,n]\setminus b} \lpt (b,s)\rpt.
\end{array}\]
For $x∈S^{λ^k}$ and $ρ∈\Sy_n$, we have
\begin{align}
\label{modn}
f_k(x\cdot ρ) \equiv_n f_k(x)\cdot ρ.
\end{align}
I.e.\ the composite $(S^{λ^k} \xrightarrow{f_k}S^{λ^{k+1}} \xrightarrow{π} S^{λ^{k+1}}/nS^{λ^{k+1}})$, where $π$ is residue class map, is $\Z\!\Sy_n$-linear.
\end{lemma}

\begin{lemma}[cf. {\cite[Lemma 2]{Pe71}}, {\cite[Proposition 4.2.4]{Ku99}}]
The following sequence of $\Z$-linear maps is exact.
\begin{align*}
0 \rightarrow S^{λ^1} \xrightarrow{f_1} S^{λ^2} \xrightarrow{f_2} \cdots \xrightarrow{f_{n-1}} S^{λ^n} \rightarrow 0
\end{align*}
\end{lemma}
\begin{proof}
We show that $\im f_k\subseteq \ker f_{k+1}$ for $k∈[1,n-2]$, i.e.\ that $f_{k+1}\circ f_k = 0$. Let $\lpt b\rpt ∈ S^{λ^k}$ be a polytabloid. We obtain
\begin{align*}
f_{k+1}f_k(\lpt b\rpt) \eqs& f_{k+1}\left( \sum_{s∈[2,n]\setminus b} \lpt (b,s)\rpt\right)
%= \sum_{s∈[2,n]\setminus b}\sum_{t∈[2,n]\setminus (b\cup \{s\})} \lpt ((b,s),t)\rpt \\
= \sum_{\substack{s,t∈ [2,n]\setminus b,\\s\neq t}} \lpt (b,s,t)\rpt\\
\eqs& \sum_{\substack{s,t ∈[2,n]\setminus b,\\s<t}} \big(\lpt (b,s,t)\rpt + \lpt(b,t,s)\rpt\big) \overset{\text{cf. \cite[4.3]{Ja78}}}{=} 0.
\end{align*}
Now we show the exactness of the sequence. For convenience, we set \mbox{$f_0\colon0→S^{λ^1}$} and $f_n\colon S^{λ^n}→0$. We define $T^k$ for $k∈[1,n]$ to be the tuple of all tuples $b=(b_2,…,b_k)$ such that $2\leq b_2<b_3 < … < b_k\leq n-1$, where $T^k$  is ordered, say, lexicographically. 
 %$b\subseteq [2,n-1]$, where $|b| = k-1$ and the entries of $b$ are strictly increasing. 
Then we set $B^k_{\rm b} := ( \lpt b \rpt \colon b∈T^k)$, which consists of standard $λ^k$-polytabloids.
We set $B^1_{\rm c} := ()$, which is the empty tuple, and for $k∈[2,n]$,
\begin{align*}
B^k_{\rm c} :\eqs& (f_{k-1}(x) \colon x∈B^{k-1}_{\rm b})\\
 \eqs& \left(\sum_{s∈[2,n]\setminus b} \lpt (b,s)\rpt \colon b∈T^{k-1}\right)
 = \left(\lpt (b,n)\rpt + \sum_{\mathclap{s∈[2,n-1]\setminus b}} \lpt (b,s)\rpt \colon b∈T^{k-1}\right).
\end{align*}
So $B^k_{\rm c} \subseteq \im f_{k-1}$ and thus $f_k(B^k_{\rm c})\subseteq \{0\}$ for $k∈[1,n]$.

By comparing $B^k_{\rm c} \sqcup B^k_{\rm b}$ with the standard basis, we observe that $B^k_{\rm c}\sqcup B^k_{\rm b}$ is a $\Z$-basis of $S^{λ^k}$ for $k∈[1,n]$.

For $k∈[1,n]$, we have
\begin{align*}
n_{\rm b}^k :\eqs& |B^k_{\rm b}| = \binom{n-2}{k-1}\\
n_{\rm c}^k :\eqs& |B^k_{\rm c}| =\left\{\begin{array}{ll} |B^{k-1}_{\rm b}| = \binom{n-2}{k-2} & \text{for }k∈[2,n] \\ 0 = \binom{n-2}{1-2} & \text{for }k=1 \end{array}\right\}
= \binom{n-2}{k-2}.
\end{align*}
For $k∈[1,n-1]$, the morphism $f_k$ maps $\langle B^k_{\rm b}\rangle_{\Z}$ bijectively to $\langle B^{k+1}_{\rm c}\rangle_{\Z}$ and $\langle B^{k}_{\rm c}\rangle_{\Z}$ to zero. So $\ker f_k = \langle B^{k}_{\rm c}\rangle_{\Z}$ and $\im f_k = \langle B^{k+1}_{\rm c}\rangle_{\Z}$. As $B^1_{\rm c} = () = B^n_{\rm b}$, we have also $\im f_0 = \langle B^1_{\rm c}\rangle_{\Z}$ and $\ker f_n = \langle B^n_{\rm c}\rangle_{\Z}$. So the sequence in question is exact.
\end{proof}
%We order the sets $B^k_{\rm c}$ and $B^k_{\rm b}$ such that the maps $f_k\big{|}_{B^k_{\rm b}} B^k_{\rm b}→ B^{k+1}_{\rm c}$ preserve the order. 
We equip the Specht modules $S^{λ^k}$ of hook type with the ordered $\Z$-basis $B^k_{\rm c}\sqcup B^k_{\rm b}$. We equip all other Specht modules with the standard $\Z$-basis with an arbitrarily chosen total order. From now on each of these bases will be referred to as \textit{the} basis of the respective Specht module.  We define the $\Z$-algebra
\begin{align*}
Γ^{\Z} := \prod_{λ\dashv n} \Z^{n_λ\times n_λ}.
\end{align*}
Let $λ\dashv n$ and let $B = (b_1,…,b_{n_λ})$ be the basis of $S^{λ}$.
For the multiplication with matrices, we identify $S^λ$ with $\Z^{1\times n_λ}$ via $B$.
 
 Then $S^λ$ becomes a right $Γ^{\Z}$-module via $x\cdot ρ := x\cdot ρ^λ$ for $x∈S^λ$ and $ρ∈Γ^{\Z}$, where $ρ^λ$ is the $λ$-th component of $ρ$.
% by the operation that is given by regarding the component $ρ^λ$ of $ρ∈Γ^{\Z}$ as the matrix of an $\Z$-endomorphism of $S^{λ}$ with respect to $B$:
% \begin{align*}
% \left(\sum_{k=1}^{n_λ} a_k b_k\right) \cdot ρ :\eqs& \sum_{k,l=1}^{n_λ} a_k ρ^λ_{k,l}b_l & ρ∈Γ^{\Z}.
% \end{align*}
I.e.\ $ρ∈Γ^{\Z}$ operates by multiplication with the matrix $ρ^λ$ on the right  with respect to the basis $B$.
 
Similarly, $\bigoplus_{λ\dashv n} S^λ$ becomes a right $Γ^{\Z}$-module. Each $\Z$-endomorphism of  $\bigoplus_{λ\dashv n} S^λ$ is represented by the operation of a unique element of $Γ^{\Z}$. As the operation of $\Z\!\Sy_n$ defines such endomorphisms (cf. \cite[Corollary 8.7]{Ja78}), we obtain a $\Z$-algebra morphism $r^{\Z}:\Z\!\Sy_n → Γ^{\Z}$
such that $y\cdot r^{\Z}(x) = y\cdot x$ for all $λ\dashv n$, $y∈S^λ$, $x∈\Z\Sy_n$.
% where the image of $r^{\Z}$ is given by the tuple of matrix representations of the operations on the $S^{λ}$.
 
As the Specht modules give all irreducible ordinary  representations of $\Sy_n$, the map $r^{\Z}$ is injective. Because of \eqref{modn}, the image of $r^{\Z}$ is contained in 
\begin{align*}
Λ^{\Z} := \{ρ∈Γ^{\Z} \mid  f_k(xρ) \equiv_n f_k(x)ρ\, ∀_{k∈[1,n-1]}\, ∀_{x∈S^{λ^k}}\}  \subseteq Γ^{\Z}.
\end{align*}
As the basis $B_{\rm c}^k\sqcup B_{\rm b}^k$ of $S^{λ^k}$, $k∈[1,n]$, consists of two parts, we may split each $ρ^{λ^k}$ for a $ρ∈Γ^{\Z}$ into four blocks corresponding to the parts $B^k_{\rm c}$ and $B^k_{\rm b}$:
\begin{equation}
\label{eq:blockmotivation}
ρ^{λ^k} =
\xyprot{
%{
\begin{tikzpicture}[decoration=brace,baseline]
    \matrix (m) [matrix of math nodes,left delimiter=(,right delimiter=),
      nodes={%rectangle,minimum width=3em,
outer sep=0pt},
      row 1/.style={bottomrule},column 1/.style=rightrule] {
        ρ^{λ^k}_{\rm cc}\vphantom{ρ^{λ^k}_{\rm bc}} & ρ^{λ^k}_{\rm bc} \\
        ρ^{λ^k}_{\rm cb} & ρ^{λ^k}_{\rm bb} \\
    };
    \draw[decorate,transform canvas={xshift=1.5em},thick] (m-1-2.north east) -- node[right=2pt] {$n^k_{\rm c}$} (m-1-2.south east);
    \draw[decorate,transform canvas={xshift=1.5em},thick] (m-2-2.north east) -- node[right=2pt] {$n^k_{\rm b}$} (m-2-2.south east);
    \draw[decorate,transform canvas={yshift=0.5em},thick] (m-1-1.north west) -- node[above=2pt] {$n^k_{\rm c}$} (m-1-1.north east);
    \draw[decorate,transform canvas={yshift=0.5em},thick] (m-1-2.north west) -- node[above=2pt] {$n^k_{\rm b}$} (m-1-2.north east);
\end{tikzpicture}
}
\end{equation}
Suppose given $k∈[1,n-1]$. We represent $f_k$ by a matrix $M_{f_k}$ with respect to the bases of $S^{λ^k}$ and $S^{λ^{k+1}}$, i.e.\ $f_k(x) = x\cdot M_{f_k}$ for $x∈S^{λ^k}$. As $f_k(B_{\rm b}^k) = B_{\rm c}^{k+1}$ and $f_k(B_{\rm c}^k)\subseteq \{0\}$, the matrix $M_{f_k}$  has the following block form:
\newcommand{\hhlline}[3]{\draw (#1-#2-1.south west) -- (#1-#2-#3.south east);}
\begin{equation*}
M_{f_k} =
\xyprot{
%{
\begin{tikzpicture}[decoration=brace,baseline]
    \matrix (m) [matrix of math nodes,left delimiter=(,right delimiter=),
      nodes={%minimum width=\widthof{\tikz \node{ABCD};},%rectangle,minimum width=3em,
outer sep=0pt},
      row 1/.style={bottomrule},column 1/.style=rightrule] {
        {\hphantom{E_{n^k_{\rm b}}}\llap{0}} & 0 \\
        E_{n^k_{\rm b}} & 0\vphantom{E_{n^k_{\rm b}}} \\
    };
    \draw[decorate,transform canvas={xshift=1.5em},thick] (m-1-2.north east) -- node[right=2pt] {$n^k_{\rm c}$} (m-1-2.south east);
    \draw[decorate,transform canvas={xshift=1.5em},thick] (m-2-2.north east) -- node[right=2pt] {$n^k_{\rm b}$} (m-2-2.south east);
    \draw[decorate,transform canvas={yshift=0.5em},thick] (m-1-1.north west) -- node[above=2pt] {$n^{k+1}_{\rm c}$} (m-1-1.north east);
    \draw[decorate,transform canvas={yshift=0.5em},thick] (m-1-2.north west) -- node[above=2pt] {\rlap{$n^{k+1}_{\rm b}$}\hphantom{$n$}} (m-1-2.north east);

\end{tikzpicture}
}
\end{equation*}
Here $E_i$ is the $i\times i$-identity matrix for $i∈\Z_{\geq 1}$.
% We have for $x=\sum_{i=1}^{n_{λ^k}} a_i b^k_i∈S^{λ^k}$
% \begin{align*}
% f_k(xρ) \eqs& \sum_{i,j=1}^{n_{λ^k}} a_i ρ_{i,j} f_k(b^k_j) = \sum_{i=1}^{n_{λ^k}} \sum_{j=n^k_{\rm c}+1}^{n_{λ^k}} a_i ρ_{i,j} f_k(b^k_j)
% = \sum_{i=1}^{n_{λ^k}} \sum_{l=1}^{n_{\rm b}^k} a_i ρ_{i,n^k_{\rm c}+l} b^{k+1}_l\\
% \eqs& \sum_{i=1}^{n_{\rm c}^k} \sum_{l=1}^{n_{\rm b}^k} a_i ρ_{i,n^k_{\rm c}+l} b^{k+1}_l + \sum_{i=1}^{n_{\rm b}^k} \sum_{l=1}^{n_{\rm b}^k} a_{n_{\rm c}^k+i} ρ_{n^k_{\rm c}+i,n^k_{\rm c}+l} b^{k+1}_l \\
% f_k(x)ρ \eqs& \left(\sum_{i=1}^{n_{λ^k}} a_i f_k(b^k_i)\right)ρ = \left(\sum_{m=1}^{n_{\rm b}^k} a_{n_{\rm c}^k+m} b^{k+1}_m\right)ρ
% = \sum_{m=1}^{n_{\rm b}^k} \sum_{j=1}^{n_{λ^{k+1}}} a_{n_{\rm c}^k+m} ρ^{λ^{k+1}}_{m,j}b^{k+1}_j\\
% \eqs& \sum_{m=1}^{n_{\rm b}^k} \sum_{j=1}^{n_{\rm c}^{k+1}} a_{n_{\rm c}^k+m} ρ^{λ^{k+1}}_{m,j}b^{k+1}_j + \sum_{m=1}^{n_{\rm b}^k} \sum_{j=1}^{n_{\rm b}^{k+1}} a_{n_{\rm c}^k+m} ρ^{λ^{k+1}}_{m,n_{\rm c}^{k+1}+j}b^{k+1}_{n_{\rm c}^{k+1}+j}
% \end{align*}

% \begin{align}
% \label{eq:blockmotivation}
% ρ^{λ^k} = \left(\begin{array}{c|c} ρ^{λ^k}_{\rm cc} & ρ^{λ^k}_{\rm bc} \\ \hline ρ^{λ^k}_{\rm cb} \ru{5} & ρ^{λ^k}_{\rm bb} \end{array}\right)
% \end{align}
So for $x∈S^{λ^k}$, $ρ∈Γ^{\Z}$ we have
\begin{align*}
f_k(x)\cdot ρ \eqs & x\cdot M_{f_k} \cdot ρ^{λ^{k+1}}
 =
x\cdot \left(\begin{array}{c|c} 0 & 0 \\ \hline E_{n^k_{\rm b}}& 0\end{array}\right)
\cdot \left(\begin{array}{c|c} ρ^{λ^{k+1}}_{\rm cc} & ρ^{λ^{k+1}}_{\rm bc} \\ \hline ρ^{λ^{k+1}}_{\rm cb} \ru{5} & ρ^{λ^{k+1}}_{\rm bb} \end{array}\right)
=
x\cdot  \left(\begin{array}{c|c} 0&0  \\ \hline \ru{5} ρ^{λ^{k+1}}_{\rm cc} & ρ^{λ^{k+1}}_{\rm bc} \end{array}\right)\\
f_k(x\cdot ρ) \eqs & x  \cdot ρ^{λ^{k}} \cdot M_{f_k} =
x\cdot \left(\begin{array}{c|c} ρ^{λ^k}_{\rm cc} & ρ^{λ^k}_{\rm bc} \\ \hline ρ^{λ^k}_{\rm cb} \ru{5} & ρ^{λ^k}_{\rm bb} \end{array}\right)
\cdot \left(\begin{array}{c|c} 0 & 0 \\ \hline  E_{n^k_{\rm b}}& 0\end{array}\right)
= x\cdot \left(\begin{array}{c|c} ρ^{λ^k}_{\rm bc} & 0 \\ \hline \ru{5} ρ^{λ^k}_{\rm bb} & 0 \end{array}\right).
\end{align*}
This way we have $f_k(x\cdot ρ)\equiv_n f_k(x)\cdot ρ$ for all $x∈S^{λ^k}$ if and only if \mbox{$ρ_{\rm bb}^{λ^k} \equiv_n ρ_{\rm cc}^{λ^{k+1}}$}, $ρ_{\rm bc}^{λ^k} \equiv_n 0$ and $ρ_{\rm bc}^{λ^{k+1}} \equiv_n 0$. So
\begin{align}
\label{lambdazex}
Λ^{\Z} = \{ρ∈Γ^{\Z} \mid (ρ^{λ^k}_{\rm bb} \equiv_n  ρ^{λ^{k+1}}_{\rm cc}\text{ for } k∈[1,n-1])\text{ and } (ρ^{λ^k}_{\rm bc} \equiv_n 0\text{ for } k∈[1,n])\}.
\end{align}
We have (cf. e.g.\ \cite[Corollary 4.2.6]{Ku99})
\[|Γ^{\Z}/Λ^{\Z}|=n^{\frac{1}{2}\sum_{k∈[1,n]} \binom{n-1}{k-1}^2},\]
which is proven by counting the congruences in  \eqref{lambdazex}:
\begin{align*}
|Γ^{\Z}/Λ^{\Z}| \eqs& n^{\sum_{k=1}^{n-1} (n_{\rm b}^k)^2 + \sum_{k=1}^{n} n_{\rm b}^k\cdot n_{\rm c}^k}\\
\ovs{n_{\rm b}^n=0}& n^{\sum_{k∈[1,n]} ((n_{\rm b}^k)^2 + n_{\rm b}^k\cdot n_{\rm c}^k)} = n^{\sum_{k∈[1,n]} n_{\rm b}^k(n_{\rm b}^k + n_{\rm c}^k)} \\
\sum_{k∈[1,n]} n_{\rm b}^k(n_{\rm c}^k + n_{\rm b}^k)\eqs& \sum_{k∈[1,n]} \tbinom{n-2}{k-1}\left(\tbinom{n-2}{k-2}+\tbinom{n-2}{k-1}\right)\\
\eqs& \frac{1}{2}\sum_{k∈[1,n]} \left(\tbinom{n-2}{k-1}\tbinom{n-2}{k-2}+\tbinom{n-2}{k-1}^2\right)
 + \frac{1}{2}\sum_{k∈[1,n]} \left(\tbinom{n-2}{k-1}\tbinom{n-2}{k-2}+\tbinom{n-2}{k-2}^2\right)\\
\eqs& \frac{1}{2}\sum_{k∈[1,n]} \tbinom{n-2}{k-1}\left(\tbinom{n-2}{k-2}+\tbinom{n-2}{k-1}\right) 
+\tbinom{n-2}{k-2}\left(\tbinom{n-2}{k-1}+\tbinom{n-2}{k-2}\right)\\
\eqs&\frac{1}{2}\sum_{k∈[1,n]} \tbinom{n-2}{k-1}\tbinom{n-1}{k-1} +\tbinom{n-2}{k-2}\tbinom{n-1}{k-1} =\frac{1}{2}\sum_{k∈[1,n]} \tbinom{n-1}{k-1}^2\\
\end{align*}

Recall that $p\geq 3$ is a prime. Let $n=p$.
We have the commutative diagram of $\Z$-modules
\begin{align}
\begin{aligned}
\label{diagrz}
\xyprot{
%{
\xymatrix@!C{
\Z \Sp\ar@{^{(}->}[d]^{r^{\Z}}\ar@{^{(}->}[r]^{ι^{\Z}\circ r^{\Z}} & Γ^{\Z}\ar@{->>}[r] \ar[d]^{\rotatebox{90}{=}} & Γ^{\Z}/(ι^{\Z}\circ r^{\Z}(\Z \Sp)) \ar@{->>}[d]^{s^{\Z}} \\
Λ^{\Z} \ar@{^{(}->}[r]^{ι^{\Z}} & Γ^{\Z}  \ar@{->>}[r] &  Γ^{\Z}/Λ^{\Z}
}
}
\end{aligned}
\end{align}
The map $ι^{\Z}$ is the inclusion of $Λ^{\Z}$ in $Γ^{\Z}$. %The map between the two instances of $Γ^{\Z}$ is the identity. 
The maps from $Γ^{\Z}$ to $Γ^{\Z}/(ι^{\Z}\circ r^{\Z}(\Z \Sp))$ and to $Γ^{\Z}/Λ^{\Z}$ are the residue class maps. As $r^{\Z}(\Z \Sp)\subseteq Λ^{\Z}$, we have an unique surjective map $s^{\Z}:Γ^{\Z}/(ι^{\Z}\circ r^{\Z}(\Z \Sp))→ Γ^{\Z}/Λ^{\Z}$ such that the right rectangle is commutative. By construction, the rows of the diagram are short exact sequences.
Note that the morphisms of the left rectangle are in fact $\Z$-algebra morphisms.
 
We will need the following result on the localization of rings. 
%For the following lemma, we adopt the version given in.
\begin{lemma}[cf.\  {\cite[chap.\ II Localisation, §2, $\text{n}^{\text{o}}$ 3, Théorème 1]{Bo61}}]
\label[lemma]{boexact}
Let $A$ be a commutative ring. Let $P\subseteq R$ a prime ideal of $A$. Let $A_P$ be the localization of $A$ at $P$. Then 
$A_P$ is a flat $A$-module, that is, the functor $\funcblc\underset{A}{\otimes} \leftidx{_A}{(A_P)}{_{A_P}}$ from the category of $A$-modules to the category of $A_P$-modules is exact.
\end{lemma}
%Note that there are versions of \cref{boexact} that deal with non-unital rings.

We denote by $\zp$ the localization of $\Z$ at the prime ideal $(p):=p\Z$. We apply the functor $\funcblc\underset{\Z}{\otimes } \zp$ to obtain a commutative diagram \eqref{diagrz} of the following form:
\begin{align}
\label{diagr}
\begin{aligned}
\xyprot{
%{
\xymatrix@!C{
\zp\!\Sp\ar@{^{(}->}[d]^{r}\ar@{^{(}->}[r]^{ι \circ r} & Γ\ar@{->>}[r] \ar[d]^{\rotatebox{90}{=}} & Γ/(ι\circ r(\zp\!\Sp)) \ar@{->>}[d]^{s} \\
Λ \ar@{^{(}->}[r]^{ι} & Γ  \ar@{->>}[r] &  Γ/Λ
}
}
\end{aligned}
\end{align}
By \cref{boexact}, the functor $\funcblc\underset{\Z}{\otimes } \zp$ is exact, so the short exact sequences are mapped to short exact sequences, monomorphisms to monomorphisms and epimorphisms to epimorphisms. So the rows of diagram \eqref{diagr} are exact and we have mono-/epimorphism as indicated by the arrows. We identify $\Z \Sp \underset{\Z}{\otimes } \zp$ with $\zp\!\Sp$. We identify $Γ^{\Z}\underset{\Z}{\otimes } \zp$ with
\begin{align*}
Γ := \prod_{λ\dashv n} \zp^{n_λ\times n_λ}.
\end{align*}
The map $ι$ realizes $Λ:=Λ^{\Z}\underset{\Z}{\otimes } \zp$ as the following subset of $Γ$, for which we will use notation analogous to \eqref{eq:blockmotivation}:
\begin{align*}
Λ = \{ρ∈Γ \mid (ρ^{λ^k}_{\rm bb} \equiv_p  ρ^{λ^{k+1}}_{\rm cc}\text{ for } k∈[1,p-1])\text{ and } (ρ^{λ^k}_{\rm bc} \equiv_p 0 \text{ for } k∈[1,p])\}
\end{align*}
As the rows are exact, we identify $(Γ^{\Z}/(ι^{\Z}\circ r^{\Z}(\Z \Sp)) )\underset{\Z}{\otimes } \zp$ with $Γ/(ι\circ r(\zp\!\Sp)$ and $(Γ^{\Z}/Λ^{\Z})\underset{\Z}{\otimes } \zp$ with $Γ/Λ$.

By the classification of finitely generated $\Z$-modules, each finite $\Z$-module $M$ is isomorphic to a finite direct sum of modules of the form $\Z/q^a \Z$, where $q$ is a prime and $a\in \Z_{\geq 0}$. If $q\neq p$ then $(\Z/q^a \Z) \underset{\Z}{\otimes } \zp \cong (0)$. Otherwise $(\Z/p^a \Z) \underset{\Z}{\otimes } \zp \cong \zp/p^a\zp$ and $|(\Z/p^a \Z) \underset{\Z}{\otimes } \zp| = p^a = |\Z/p^a\Z|$. 
For $x=p^{a_p} \cdot \prod_{\natop{q \text{ prime}}{q\neq p}} q^{a_q} ∈\Z_{\geq 1}$, we set
\[(x)_p := p^{a_p}.\]
So for finite $M$, we have $|M\underset{\Z}{\otimes } \zp| = (|M|)_p$.

By the total index formula (cf. e.g.\ \cite[Proposition 1.1.4]{Ku99}), we have
\begin{align*}
|Γ^{\Z}/(ι^{\Z}\circ r^{\Z}(\Z \Sp))| = \sqrt{\frac{p!^{p!}}{\prod_{λ \dashv p} n_λ^{n_λ^2}}}\,.
\end{align*}
By the hook formula (cf. \cite[20.1]{Ja78}, \cite[Lemma 4.2.7]{Ku99}), we have for $λ\dashv p$
\begin{align*}
(n_λ)_p = \begin{cases} 1 & \text{if $λ$ is a hook-partition} \\ p & \text{ otherwise} \end{cases}\,.
\end{align*}
So
\begin{align*}
|Γ/(i\circ r(\zp\!\Sp))| \eqs& \left(\sqrt{\frac{p!^{p!}}{\prod_{λ \dashv p} n_λ^{n_λ^2}}}\right)_{\mathllap{p}}
= \sqrt{\frac{p^{p!}}{\prod_{\natop{λ \dashv p}{λ\text{ not a hook}}} (n_λ)_p^{n_λ^2}}}\\
\eqs& \sqrt{\frac{\prod_{λ\dashv p} p^{n_λ^2}}{\prod_{\natop{λ \dashv p}{λ\text{ not a hook}}} p^{n_λ^2}}}
= \sqrt{\prod_{k∈[1,n]} p^{n_{λ^k}^2}} = p^{\frac{1}{2}\sum_{k∈[1,n]}\binom{p-1}{k-1}^2}\\
\eqs& |Γ^{\Z}/Λ^{\Z}| = (|Γ^{\Z}/Λ^{\Z}|)_p=  |Γ/Λ|.
\end{align*}
By the pigeon-hole-principle, $s$ is an isomorphism as it is surjective. As \eqref{diagr} has exact rows, $r$ needs to be an isomorphism as well. Note that the functor $\funcblc\underset{\Z}{\otimes } \zp$ transforms morphisms of $\Z$-algebras into morphisms of $\zp$-algebras. In particular, the left rectangle in \eqref{diagr} consists of morphisms of $\zp$-algebras and $r:\zp\!\Sp→Λ$ is an isomorphism of $\zp$-algebras. We have proven the}%ende \kommentar

Recall that $p\geq 3$ is a prime. 

For $R$ a ring and $λ\dashv p$ a partition of $p$, the $R\!\Sp$-Specht module $S^λ$ is finitely generatey free over $R$ with dimension independent of $R$, cf.\ \cite[8.1, proof of 8.4]{Ja78}. We denote this dimension by $n_λ$.

A partition of the form $λ^k := (p-k+1,1^{k-1})$, $k∈[1,p]$ is called a \textit{hook partition} of $p$.

Over the valuation ring $\zp$, there is a well-known description of the group algebra $\zp\!\Sp$,  cf. e.g.\ \cite[Corollary 4.2.8]{Ku99} (using \cite{Pe71}), cf.\ also \cite[Chapter 7]{Ro80}:
\begin{pp}
\label[pp]{lambda}
Set %$n_λ := \dim S^λ$, 
$n^{k}_{\rm b} = \binom{p-2}{k-1}$, $n^{k}_{\rm c} = \binom{p-2}{k-2}$. Then $n^{k}_{\rm b} + n^{k}_{\rm c} = \binom{p-1}{k-1} = n_{λ^k}$. Set 
%\begin{align*}
$ Γ := \prod_{λ \dashv p}  \zp^{n_λ\times n_λ} $.
%\end{align*}
For $ρ∈Γ$, and $λ\dashv p$, we denote by $ρ^λ$ the $λ$-th component of $ρ$. For $λ=λ^k$, $k∈[1,p]$, a hook partition, we name certain subblocks of $ρ^{λ^k}$ as follows.
\begin{equation*}
ρ^{λ^k} = 
\xyprot{
%{
\begin{tikzpicture}[decoration=brace,baseline]
    \matrix (m) [matrix of math nodes,left delimiter=(,right delimiter=),
      nodes={%rectangle,minimum width=3em,
outer sep=0pt},
      row 1/.style={bottomrule},column 1/.style=rightrule] {
        ρ^{λ^k}_{\rm cc} & ρ^{λ^k}_{\rm bc} \\
        ρ^{λ^k}_{\rm cb} & ρ^{λ^k}_{\rm bb} \\
    };
    \draw[decorate,transform canvas={xshift=1.5em},thick] (m-1-2.north east) -- node[right=2pt] {$n^k_{\rm c}$ rows} (m-1-2.south east);
    \draw[decorate,transform canvas={xshift=1.5em},thick] (m-2-2.north east) -- node[right=2pt] (ru) {$n^k_{\rm b}$ rows} (m-2-2.south east);
    \draw[decorate,transform canvas={yshift=0.5em},thick] (m-1-1.north west) -- node[above=2pt] {$n^k_{\rm c}$,} (m-1-1.north east);
    \draw[decorate,transform canvas={yshift=0.5em},thick] (m-1-2.north west) -- node[above=2pt] {$n^k_{\rm b}$ \rlap{columns}} (m-1-2.north east);
    
    \node[transform canvas={xshift=1.5em}] at (ru.south east) {.};
\end{tikzpicture}
}
\end{equation*}
We have the following $\zp$-subalgebra $Λ$ of $Γ$.
\begin{align*}
% \end{align*}
% given by
% \begin{align*}
Λ &:= \{ρ∈Γ \mid  ρ^{λ^k}_{\rm bb} \equiv_p ρ^{λ^{k+1}}_{\rm cc}\text{ for }k∈[1,p-1]\text{ and } ρ^{λ^k}_{\rm bc} \equiv_p 0\text{ for }k∈[1,p]\}
\end{align*}
%Recall $Λ\subset Γ$. 
Now there is an isomorphism of $\zp$-algebras
%\begin{align*}
\vspace{-1ex} %FIXME Reevaluate this
\[\begin{array}{rccc}
r:&\zp\!\Sp & \xrightarrow{\phantom{X}\sim\phantom{X}}& Λ.
\end{array}
\]%\end{align*}
such that $ρ∈Λ$ acts on the trivial $\zp\!\Sp$-module $\zp$ by multiplication with its ($1\times 1$ / scalar-)component $ρ^{λ^1}$, i.e.\ for $x∈\zp\!\Sp$ and for $y∈\zp$, we have $yx = y\cdot r(x)^{λ^1}$.
%\vspace{-1ex}
%The ring $\zp\!\Sp$ is isomorphic to the subring of
% We recall the occurring notations:
% 
%  We have 
% For $ρ∈Γ$, we write (cf.\ \eqref{eq:blockmotivation})
% \eqref{eq:blockmotivation} for the definition of the subblocks $ρ^{λ^k}_{\rm cc},ρ^{λ^k}_{\rm cb},ρ^{λ^k}_{\rm bc},ρ^{λ^k}_{\rm cc}$:

%(cf. \eqref{eq:blockmotivation}) $ρ^{λ^k}_{\rm cc}$ is the block of $ρ^{λ^k}$  which is the intersection of the first $n^{λ^k}_{\rm c}$ columns and first $n^{λ^k}_{\rm c}$ rows, $ρ^{λ^k}_{\rm cb}$ is the intersection of the first $n^{λ^k}_{\rm c}$ columns and the last $n^{λ^k}_{\rm b}$ rows (rows $n^{λ^k}_{\rm c}+1$ to $n^{λ^k}_{\rm c} + n^{λ^k}_{\rm b} = n^{λ^k}$), $ρ^{λ^k}_{\rm bc}$ is the intersection of the last $n^{λ^k}_{\rm b}$ columns and the first $n^{λ^k}_{\rm c}$ rows and finally $ρ^{λ^k}_{\rm bb}$ is the intersection of the last $n^{λ^k}_{\rm b}$ columns and rows. $n^{λ^k}_{\rm c}$ and $n^{λ^k}_{\rm b}$ are given by
%Recall 
%Note that the elements $ρ∈Λ$ operate on the Specht modules $S^λ$ (in the modified basis for $λ = λ^k$ and standard basis the other Specht modules) via the operation of $ρ^λ$.
\end{pp}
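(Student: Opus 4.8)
The plan is to construct $r$ as the map recording the action of $\Sp$ on all Specht modules simultaneously, to show that its image lands in $Λ$, and then to force surjectivity by an index computation carried out after localizing at $p$. Since $p$ is prime, the Specht modules $S^λ$, $λ\dashv p$, are absolutely simple over $\mathbb{Q}$ and form a complete set of irreducibles of $\Sp$ \cite{Ja78}; hence recording the action of $x∈\Z\!\Sp$ on each $S^λ$ in a fixed integral basis defines an algebra map $r^{\Z}:\Z\!\Sp→Γ^{\Z}:=\prod_{λ\dashv p}\Z^{n_λ\times n_λ}$, which is injective and becomes the Wedderburn isomorphism after applying $\funcblc\otimes_{\Z}\mathbb{Q}$. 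I would work with $r^{\Z}$ over $\Z$, where all relevant quotients are finite, and specialize along $\funcblc\otimes_{\Z}\zp$ only at the end, identifying $\Z\!\Sp\otimes_{\Z}\zp$ with $\zp\!\Sp$ and $r$ with $r^{\Z}\otimes_{\Z}\zp$.

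First I would produce the congruences. For each $k∈[1,p-1]$ there is a $\Z$-linear box-shift morphism $f_k:S^{λ^k}→S^{λ^{k+1}}$ between consecutive hook Specht modules satisfying the key congruence $f_k(x\cdot ρ)\equiv_p f_k(x)\cdot ρ$, i.e.\ $f_k$ becomes $\Z\!\Sp$-linear modulo $p$, and the resulting hook complex $0→S^{λ^1}→\cdots→S^{λ^p}→0$ is exact \cite{Pe71}. I would then equip each hook module with the adapted $\Z$-basis $B^k_{\rm c}\sqcup B^k_{\rm b}$, where $B^k_{\rm c}:=f_{k-1}(B^{k-1}_{\rm b})$ and $B^k_{\rm b}$ collects the remaining standard polytabloids (that this is a basis is checked against the standard polytabloid basis). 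Since $f_kf_{k-1}=0$, in these bases the matrix of $f_k$ carries the $\rm b$-block identically onto the $\rm c$-block of $S^{λ^{k+1}}$ and annihilates the $\rm c$-block; rewriting $f_k(x\cdot ρ)\equiv_p f_k(x)\cdot ρ$ blockwise then yields exactly $ρ^{λ^k}_{\rm bb}\equiv_p ρ^{λ^{k+1}}_{\rm cc}$ and $ρ^{λ^k}_{\rm bc}\equiv_p 0$, so $\im r^{\Z}\subseteq Λ^{\Z}$, the integral analogue of $Λ$.

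Next I would count indices. Counting the independent mod-$p$ congruences in the definition of $Λ^{\Z}$ gives $|Γ^{\Z}/Λ^{\Z}|=p^{\frac12\sum_{k=1}^{p}\binom{p-1}{k-1}^2}$. The total index formula gives $|Γ^{\Z}/\im r^{\Z}|=\sqrt{p!^{p!}/\prod_{λ\dashv p} n_λ^{n_λ^2}}$ \cite[Proposition 1.1.4]{Ku99}, and the hook-length formula \cite[20.1]{Ja78} shows that, for $λ\dashv p$, the $p$-part of $n_λ$ is $1$ when $λ$ is a hook and $p$ otherwise, since the only cell of hook length $p$ in a partition of $p$ sits in the corner of a hook. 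Combining this with $p!=\sum_{λ\dashv p}n_λ^2$, the $p$-part of $|Γ^{\Z}/\im r^{\Z}|$ collapses to $p^{\frac12\sum_{λ\text{ a hook}}n_λ^2}=p^{\frac12\sum_{k=1}^p\binom{p-1}{k-1}^2}$, matching $|Γ^{\Z}/Λ^{\Z}|$. Applying the exact functor $\funcblc\otimes_{\Z}\zp$ to the short exact sequence $0→\im r^{\Z}→Γ^{\Z}→Γ^{\Z}/\im r^{\Z}→0$ and to its analogue for $Λ^{\Z}$ realizes $r(\zp\!\Sp)\subseteq Λ$ inside $Γ$ with $|Γ/r(\zp\!\Sp)|=|Γ/Λ|$; the containment together with equal finite index forces $r(\zp\!\Sp)=Λ$. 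Hence $r$ is an isomorphism of $\zp$-algebras, and since $λ^1=(p)$ indexes the trivial Specht module, $ρ$ acts on $\zp$ through its scalar block $ρ^{λ^1}$.

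The hard part will be the index matching. I must check that the mod-$p$ congruences defining $Λ^{\Z}$ are genuinely independent, so that each contributes a clean factor of $p$ to $|Γ^{\Z}/Λ^{\Z}|$, and I must establish the hook-length divisibility for partitions of the prime $p$ in order to compare with the total index formula. Once both $p$-parts are identified as $p^{\frac12\sum_{k=1}^p\binom{p-1}{k-1}^2}$, the remaining localization and pigeonhole steps are routine, relying only on flatness of $\zp$ over $\Z$.
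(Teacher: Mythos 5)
Your proposal is correct and follows essentially the same route as the paper's argument (given there in compressed form with references to K\"unzer, Peel and Roggenkamp): the map $r^{\Z}$ recording the Specht-module action, the box-shift morphisms $f_k$ with their mod-$p$ equivariance, the adapted bases $B^k_{\rm c}\sqcup B^k_{\rm b}$ yielding the block congruences, the comparison of $|Γ^{\Z}/Λ^{\Z}|$ with the $p$-part of the total index via the hook-length formula, and the final localization-plus-pigeonhole step. The two points you flag as remaining work (independence of the congruences, giving $|Γ^{\Z}/Λ^{\Z}|=p^{\sum_k n_{\rm b}^k(n_{\rm b}^k+n_{\rm c}^k)}$ which a short binomial identity turns into $p^{\frac12\sum_k\binom{p-1}{k-1}^2}$, and the $p$-part of $n_λ$ from the hook formula) are exactly the computations the paper carries out, and both go through as you describe.
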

%DONE :We accidentally proved the "transposed" version of the intended result. But this should do as well...
\kommentar{\begin{bsp}
FIXME: Dieses Bsp. rausmachen.
\label[bsp]{bsp:s3}
For $p=3$, the ring $\Z_{(3)}\Sy_3$ is isomorphic to the subring $Λ$ of $Γ=\Z_{(3)}^{1\times 1} \times \Z_{(3)}^{2\times 2} \times \Z_{(3)}^{1\times 1}$
described as 
\begin{center}
\xyprot{
%{
\noindent
\begingroup
\renewcommand*{\arraystretch}{0.25}
\begin{tikzpicture}[inner xsep=0.1mm, inner ysep=0.0mm,%node distance=4mm,
cell/.style={rectangle,draw=black},
space/.style={minimum height=1.0em,matrix of nodes,row sep=0.0mm,column sep=1mm,
%left delimiter=(,right delimiter=)
}]
\matrix (m1) [space, below delimiter=\}\rotatebox{0}{$\begin{matrix*}[l]\scriptstyle{\times}\\ \scriptstyle\times \\  \scriptstyle{\times}\end{matrix*}$}]
{
\nsm{\Z_{(3)}\vphantom{a_1^1}}\\
};
\matrix (m2) [right=of m1, space, below delimiter=\}\rotatebox{0}{$\begin{matrix*}[l]\scriptstyle\times \\  \scriptstyle{\times}{\times}\end{matrix*}$}]
{
\nsm{\Z_{(3)}\vphantom{a_1^1}} & \nsm{3\Z_{(3)}\vphantom{a_1^1}}\\% \hline
\nsm{\Z_{(3)}\vphantom{a_1^1}} & \nsm{\Z_{(3)}\vphantom{a_1^1}} \\
};
\matrix (m3) [right=of m2,space, below delimiter=\}\rotatebox{0}{$\begin{matrix*}[l] \scriptstyle{\times}{\times}{\times}\end{matrix*}$}]
{
\nsm{\Z_{(3)}\vphantom{a_1^1}}\\
};
\node[draw=black, fit=(m1-1-1)](B12) {};
\node[draw=black, fit=(m2-1-1)](B21) {};
\node[draw=black, fit=(m2-2-2)](B22) {};
\node[draw=black, fit=(m3-1-1) ](B31) {};
\draw[black] (B12.east) --  node[sloped,above]{\raisebox{0.5mm}{$\scriptscriptstyle 3$}} (B21.west);
\draw[black] (B22.east) --  node[sloped,above]{\raisebox{0.5mm}{$\scriptscriptstyle 3$}} (B31.west);
%
%\node[draw=black, fit=(m3) (B31) (B32)](B3) {};
\end{tikzpicture}. 
\endgroup
}
\end{center}
An entry in this tuple of matrices indicates that an element of $Λ$ must have its corresponding entry in the indicated set. A relation "\begin{tikzpicture}  \node (n1) {}; \node (n2) [right=of n1] {}; 
\draw[black] (n1.east) --  node[sloped,above]{$\scriptscriptstyle p$} (n2.west);
\end{tikzpicture}" between (equal sized) subblocks indicates that these subblocks  are equivalent modulo $p$, i.e.\ the difference of corresponding entries is an element of $p\Z_{(p)}$. The blocks are labeled with the diagrams of the corresponding partitions.
Alternatively, $Λ$ is the $\Z_{(3)}$-span of
\begin{align*}
\left( 3, \begin{pmatrix} 0 & 0 \\ 0& 0\end{pmatrix}, 0 \right)=:u, && 
\left( 1, \begin{pmatrix} 1 & 0 \\ 0& 0\end{pmatrix}, 0 \right)=:\tilde e_1, &&
\left( 0, \begin{pmatrix} 0 & 3 \\ 0& 0\end{pmatrix}, 0 \right)=: v_1,\\
\left( 0, \begin{pmatrix} 0 & 0 \\ 1& 0\end{pmatrix}, 0 \right)=: v_2,&&
\left( 0, \begin{pmatrix} 0 & 0 \\ 0& 1\end{pmatrix}, 1 \right)=:\tilde e_2, &&
\left( 0, \begin{pmatrix} 0 & 0 \\ 0& 3\end{pmatrix}, 0\right)=: w.
\end{align*}
We have an orthogonal decomposition $1=\tilde e_1 + \tilde e_2$ into primitive idempotents. Thus we have a decomposition $Λ=\tilde P_1 \oplus \tilde P_2$
into indecomposable projective right modules, where
\begin{align*}
\tilde P_1 :\eqs& \tilde e_1 Λ = \langle u, \tilde e_1, v_1\rangle_{\Z_{(3)}}, &
\tilde P_2 :\eqs& \tilde e_2 Λ = \langle v_2, \tilde e_2, w \rangle_{\Z_{(3)}}.
\end{align*}
A more pictoral description of $\tilde P_i$, $i∈[1,2]$ is that the elements of $\tilde P_i⊂Λ$ are exactly the elements of $Λ$ whose non-zero entries are all in the box in FIXME.
%$Λ$ is isomorphic to $\Z_{(3)}\Sy_3$. 
%The right modules $\tilde P_1 = \tilde e_1Λ$ resp.\ $\tilde P_2 = \tilde e_2Λ$ are the spans of the first resp.\ the second row of elements. We have $Λ = \tilde P_1 \oplus \tilde P_2$ as $1 = \tilde e_1 + \tilde e_2$ is an orthogonal decomposition of $1$ into  idempotents.

In this case all partitions of $3$ are of hook-type. Thus there appear no full matrix algebras as direct factors of $Λ$.
% They appear for all primes $p\geq 5$ but they are not part of the problem as we will see.
\end{bsp}
}%kommentar

\begin{bsp}
For $p=5$, the $\zp$-algebra $\Z_{(5)}\Sy_5$ is isomorphic to the subalgebra $Λ$ of $Γ=\Z_{(5)}^{1\times 1}\times \Z_{(5)}^{4\times 4}\times \Z_{(5)}^{6\times 6}\times \Z_{(5)}^{4\times 4}\times \Z_{(5)}^{1\times 1}\times  \Z_{(5)}^{5\times 5}\times \Z_{(5)}^{5\times 5}$ described as
\begin{center}
\xyprot{
%{
\noindent
\begingroup
\renewcommand*{\arraystretch}{0.25}
\newcommand{\ofstr}{-0.9mm}
\begin{tikzpicture}[inner xsep=0.1mm, inner ysep=0.0mm,node distance=4mm,
%cell/.style={rectangle,draw=black},
space/.style={minimum height=1.0em,matrix of nodes,row sep=1.0mm,column sep=1mm,
%left delimiter=(,right delimiter=)
}%,every fit/.style={text width=0.0mm}
]
\matrix (m1) [space, below delimiter=\}\rotatebox{0}{\smash{$\begin{matrix*}[l]\scriptstyle{\times}\\ \scriptstyle\times \\ \scriptstyle\times  \\ \scriptstyle\times \\ \scriptstyle{\times}\end{matrix*}$}}]
{
\asm{\,\Z_{(5)}}\\
};
\matrix (m2) [right=of m1, space, below delimiter=\}\rotatebox{0}{$\begin{matrix*}[l]\scriptstyle\times \\ \scriptstyle\times  \\ \scriptstyle\times \\ \scriptstyle{\times}{\times}\end{matrix*}$}]
{
\asm{\Z_{(5)}} & \asm{5\Z_{(5)}} & \asm{5\Z_{(5)}} & \asm{5\Z_{(5)}} \\% \hline
\asm{\Z_{(5)}} & \asm{\Z_{(5)}} & \asm{\Z_{(5)}} & \,\asm{\Z_{(5)}}\, \\
\asm{\Z_{(5)}} & \asm{\Z_{(5)}} & \asm{\Z_{(5)}} & \asm{\Z_{(5)}} \\
\asm{\Z_{(5)}} & \asm{\Z_{(5)}} & \asm{\Z_{(5)}} & \asm{\Z_{(5)}} \\
};
\matrix (m3) [right=of m2, space, below delimiter=\}\rotatebox{0}{$\begin{matrix*}[l]\scriptstyle\times  \\ \scriptstyle\times \\ \scriptstyle{\times}{\times}{\times}\end{matrix*}$}]
{
\asm{\Z_{(5)}} & \asm{\Z_{(5)}} & \asm{\Z_{(5)}} & \asm{5\Z_{(5)}} & \asm{5\Z_{(5)}} & \asm{5\Z_{(5)}} \\
\asm{\Z_{(5)}} & \asm{\Z_{(5)}} & \asm{\Z_{(5)}} & \asm{5\Z_{(5)}} & \asm{5\Z_{(5)}} & \asm{5\Z_{(5)}} \\
\asm{\Z_{(5)}} & \asm{\Z_{(5)}} & \asm{\Z_{(5)}} & \asm{5\Z_{(5)}} & \asm{5\Z_{(5)}} & \asm{5\Z_{(5)}} \\
\asm{\Z_{(5)}} & \asm{\Z_{(5)}} & \asm{\Z_{(5)}} & \asm{\Z_{(5)}} & \asm{\Z_{(5)}} & \asm{\Z_{(5)}} \\
\asm{\Z_{(5)}} & \asm{\Z_{(5)}} & \asm{\Z_{(5)}} & \asm{\Z_{(5)}} & \asm{\Z_{(5)}} & \asm{\Z_{(5)}} \\
\asm{\Z_{(5)}} & \asm{\Z_{(5)}} & \asm{\Z_{(5)}} & \asm{\Z_{(5)}} & \asm{\Z_{(5)}} & \asm{\Z_{(5)}} \\
};
\matrix (m4) [right=of m3, space, below delimiter=\}\rotatebox{0}{$\begin{matrix*}[l]\scriptstyle\times  \\  \scriptstyle{\times}{\times}{\times}{\times}\end{matrix*}$}]
{
\asm{\Z_{(5)}} & \asm{\Z_{(5)}} & \asm{\Z_{(5)}} & \asm{5\Z_{(5)}} \\
\asm{\Z_{(5)}} & \asm{\Z_{(5)}} & \asm{\Z_{(5)}} & \asm{5\Z_{(5)}} \\
\asm{\Z_{(5)}} & \asm{\Z_{(5)}} & \asm{\Z_{(5)}} & \asm{5\Z_{(5)}} \\
\asm{\Z_{(5)}} & \asm{\Z_{(5)}} & \asm{\Z_{(5)}} & \asm{\Z_{(5)}} \\
};
\matrix (m5) [right=of m4,space, below delimiter=\}\rotatebox{0}{$\begin{matrix*}[l] \scriptstyle{\times}{\times}{\times}{\times}{\times}\end{matrix*}$}]
{
\asm{\Z_{(5)}\,}\\
};
\node[draw=black, fit=(m1-1-1)](B12) {};
\node[draw=black, fit=(m2-1-1)](B21) {};
\node[draw=black, fit=(m2-2-2.north west) (m2-4-4.south east)](B22) {};
\node[draw=black, fit=(m3-1-1.north west) (m3-3-3.south east)](B31) {};
\node[draw=black, fit=(m3-4-4.north west) (m3-6-6.south east)](B32) {};
\node[draw=black, fit=(m4-1-1.north west) (m4-3-3.south east)](B41) {};
\node[draw=black, fit=(m4-4-4)] (B42) {};
\node[draw=black, fit=(m5-1-1)] (B51) {};
\draw[black] (B12.east) --  node[sloped,above,pos=0.6]{\raisebox{0.5mm}{$\scriptscriptstyle 5$}} (B21.west);
\draw[black] (B22.east) --  node[sloped,above,pos=0.6]{\raisebox{0.5mm}{$\scriptscriptstyle 5$}} (B31.west);
\draw[black] (B32.east) --  node[sloped,above,pos=0.6]{\raisebox{0.5mm}{$\scriptscriptstyle 5$}} (B41.west);
\draw[black] (B42.east) --  node[sloped,above,pos=0.6]{\raisebox{0.5mm}{$\scriptscriptstyle 5$}} (B51.west);
%
%\coordinate (A) at ($(m2-2-4.south east) + (1cm,1cm)$);
\node[left=\ofstr] (C011) at (m1-1-1.south west) {};
\node[below=\ofstr] (C11) at (C011) {};
\node[below=\ofstr] (C12) at (m1-1-1.south east) {};
\node[below=\ofstr] (C13) at (m2-1-1.south west) {};
\node[below=\ofstr] (C14) at (m2-1-4.south east) {};
\node[above=\ofstr] (C15) at (m2-1-4.north east) {};
\node[above=\ofstr] (C16) at (m2-1-1.north west) {};
\node[above=\ofstr] (C17) at (m1-1-1.north east) {};
\node[left=\ofstr] (C018) at (m1-1-1.north west) {};
\node[above=\ofstr] (C18) at (C018) {};
\draw[red,rounded corners=2pt] (C11.center) -- (C12.center) -- (C13.center) -- (C14.center) -- (C15.center) --
node[above]{\raisebox{1.5mm}{$\tilde P_1$}} (C16.center)  -- (C17.center) -- (C18.center) -- cycle;
\node[below=\ofstr] (C21) at (m2-2-1.south west) {};
\node[below=\ofstr] (C22) at (m2-2-4.south east) {};
\node[below=\ofstr] (C23) at (m3-1-1.south west) {};
\node[below=\ofstr] (C24) at (m3-1-6.south east) {};
\node[above=\ofstr] (C25) at (m3-1-6.north east) {};
\node[above=\ofstr] (C26) at (m3-1-1.north west) {};
\node[above=\ofstr] (C27) at (m2-2-4.north east) {};
\node[above=\ofstr] (C28) at (m2-2-1.north west) {};
\draw[red,rounded corners=2pt] (C21.center) -- (C22.center) -- (C23.center) -- (C24.center) -- (C25.center) -- node[above]{\raisebox{1.5mm}{$\tilde P_2$}} (C26.center)  -- (C27.center) -- (C28.center) -- cycle;
\node[below=\ofstr] (C31) at (m3-4-1.south west) {};
\node[below=\ofstr] (C32) at (m3-4-6.south east) {};
\node[below=\ofstr] (C33) at (m4-1-1.south west) {};
\node[below=\ofstr] (C34) at (m4-1-4.south east) {};
\node[above=\ofstr] (C35) at (m4-1-4.north east) {};
\node[above=\ofstr] (C36) at (m4-1-1.north west) {};
\node[above=\ofstr] (C37) at (m3-4-6.north east) {};
\node[above=\ofstr] (C38) at (m3-4-1.north west) {};
\draw[red,rounded corners=2pt] (C31.center) -- (C32.center) -- (C33.center) -- (C34.center) -- (C35.center) -- node[above]{\raisebox{1.5mm}{$\tilde P_3$}} (C36.center)  -- (C37.center) -- (C38.center) -- cycle;
\node[below=\ofstr] (C41) at (m4-4-1.south west) {};
\node[below=\ofstr] (C42) at (m4-4-4.south east) {};
\node[below=\ofstr] (C43) at (m5-1-1.south west) {};
\node[right=\ofstr](C044) at (m5-1-1.south east) {};
\node[below=\ofstr] (C44) at (C044) {};
\node[right=\ofstr](C045) at (m5-1-1.north east) {};
\node[above=\ofstr] (C45) at (C045) {};
\node[above=\ofstr] (C46) at (m5-1-1.north west) {};
\node[above=\ofstr] (C47) at (m4-4-4.north east) {};
\node[above=\ofstr] (C48) at (m4-4-1.north west) {};
\draw[red,rounded corners=2pt] (C41.center) -- (C42.center) -- (C43.center) -- (C44.center) -- (C45.center) --
node[above]{\raisebox{1.5mm}{$\tilde P_4$}} (C46.center)  -- (C47.center) -- (C48.center) -- cycle;
%\draw[black] (A.east) -- (B41.west);
%\draw[red] (m2-2-1.south west) -- (A.east) -- (m3-1-1.south west) -- (m3-1-5.south east) -- (m3-1-5.north east) -- (m3-1-1.north west) -- (m2-2-4.north east) -- (m2-2-1.north west)-- cycle;
%\coordinate (A) at (m2-2-4.south east);
% \draw[color=red,rounded corners=2pt,
% decoration={show path construction, 
% lineto code={\draw[blue,->] (\tikzinputsegmentfirst) -- (\tikzinputsegmentlast);
% }
% },decorate] (m2-2-1.south west) -- (A.east) -- (m3-1-1.south west) -- (m3-1-5.south east) -- (m3-1-5.north east) -- (m3-1-1.north west) -- (m2-2-4.north east) -- (m2-2-1.north west) -- cycle;
%
%\node[draw=black, fit=(m3) (B31) (B32)](B3) {};
\end{tikzpicture} \\*
\nopagebreak
\begin{tikzpicture}[inner xsep=0.1mm, inner ysep=0.0mm,node distance=4mm,
cell/.style={rectangle,draw=black},
space/.style={minimum height=1.0em,matrix of nodes,row sep=0.0mm,column sep=1mm,
%left delimiter=(,right delimiter=)
}]
\matrix (mx) [space, below delimiter=\}\rotatebox{0}{$\begin{matrix*}[l] \scriptstyle{\times}{\times} \\ \scriptstyle{\times}{\times}{\times}  \end{matrix*}$}]
{
\asm{\Z_{(5)}} & \asm{\Z_{(5)}} & \asm{\Z_{(5)}} & \asm{\Z_{(5)}} & \asm{\Z_{(5)}}\\
\asm{\Z_{(5)}} & \asm{\Z_{(5)}} & \asm{\Z_{(5)}} & \asm{\Z_{(5)}} & \asm{\Z_{(5)}}\\
\asm{\Z_{(5)}} & \asm{\Z_{(5)}} & \asm{\Z_{(5)}} & \asm{\Z_{(5)}} & \asm{\Z_{(5)}}\\
\asm{\Z_{(5)}} & \asm{\Z_{(5)}} & \asm{\Z_{(5)}} & \asm{\Z_{(5)}} & \asm{\Z_{(5)}}\\
\asm{\Z_{(5)}} & \asm{\Z_{(5)}} & \asm{\Z_{(5)}} & \asm{\Z_{(5)}} & \asm{\Z_{(5)}}\\
};
\matrix (my) [space,right=of mx, below delimiter=\}\rotatebox{0}{$\begin{matrix*}[l]\scriptstyle{\times}\\ \scriptstyle{\times}{\times} \\ \scriptstyle{\times}{\times}  \end{matrix*}$}]
{
\asm{\Z_{(5)}} & \asm{\Z_{(5)}} & \asm{\Z_{(5)}} & \asm{\Z_{(5)}} & \asm{\Z_{(5)}}\\
\asm{\Z_{(5)}} & \asm{\Z_{(5)}} & \asm{\Z_{(5)}} & \asm{\Z_{(5)}} & \asm{\Z_{(5)}}\\
\asm{\Z_{(5)}} & \asm{\Z_{(5)}} & \asm{\Z_{(5)}} & \asm{\Z_{(5)}} & \asm{\Z_{(5)}}\\
\asm{\Z_{(5)}} & \asm{\Z_{(5)}} & \asm{\Z_{(5)}} & \asm{\Z_{(5)}} & \asm{\Z_{(5)}}\\
\asm{\Z_{(5)}} & \asm{\Z_{(5)}} & \asm{\Z_{(5)}} & \asm{\Z_{(5)}} & \asm{\Z_{(5)}}\\
};
\end{tikzpicture}.
\endgroup
}
\end{center}
An entry in this tuple of matrices indicates that an element of $Λ$ must have its corresponding entry in the indicated set. A relation "\begin{tikzpicture}  \node (n1) {}; \node (n2) [right=of n1] {}; 
\draw[black] (n1.east) --  node[sloped,above]{$\scriptscriptstyle 5$} (n2.west);
\end{tikzpicture}" between (equal sized) subblocks indicates that these subblocks  are equivalent modulo $5$, i.e.\ the difference of corresponding entries is an element of $5\Z_{(5)}$. The blocks are labeled with the diagrams of the corresponding partitions.
%For this tuple of matrices, we use the same conventions as in \cref{bsp:s3}. 
The right ideals $\tilde P_i = \tilde e_iΛ$, $i∈[1,4]=[1,p-1]$ (cf.\ the definitions below) are framed with red lines.
%As in \cref{bsp:s3}, we obtain projective right $Λ$-modules $\tilde P_i$, $i∈[1,4] = [1,p-1]$.
\end{bsp}
\subsection%[Direct summands of the free m. \texorpdfstring{$\zp\!\Sp$}{ZpSp} \& morphisms, \texorpdfstring{$p\geq 3$}{p>=3}]
{A projective resolution of \texorpdfstring{$\zp$}{Zp} over \texorpdfstring{$\zp\!\Sp$}{ZpSp}}
\label{secpreszp}

Recall that $p\geq 3$ is a prime.

Recall from \cref{lambda} that $Λ$ is a subring of $Γ=\prod_{λ\dashv p} \zp^{n_λ\times n_λ}$.
%We shall construct two $\zp$-bases of $Λ$. 

For $λ\dashv p$ and $i,j∈[1,n_λ]$, we set $η_{λ,i,j}$ to be the element of $Γ$ such that $(η_{λ,i,j})^{\tilde{λ}} = 0$ for $\tilde{λ}\neq λ$ and $(η_{λ,i,j})^{λ}∈\Z^{n_λ\times n_λ}$ has entry $1$ at position $(i,j)$ and zeros elsewhere.

Let $k∈[1,p-1]$. We obtain the idempotent
\begin{align*}
\tilde e_k :=  η_{λ^k,n_{\rm c}^{k}+1,n_{\rm c}^{k}+1} + η_{λ^{k+1},1,1} ∈ Λ.
\end{align*}
%As $n_{\rm b}^i = n_{\rm c}^{i+1} = \binom{p-2}{i-1} \geq 1$ for $1\leq i\leq p-1$ the nontrivial blocks in the matrices above never become $0\times 0$-blocks. The other blocks may vanish and do vanish e.g.\ for $e_1$ and $e_{p-1}$. 
%In \cref{bsp:s3} we have already seen $\tilde e_1$ and $\tilde e_2$ in case $p=3$.
We define corresponding projective right $Λ$-modules 
\begin{align*}
\tilde P_k :\eqs& \tilde e_k Λ \quad \text{ for $k∈[1,p-1]$.}
\end{align*}

\begin{bem}
\label[bem]{rem:moralg}
Let $A$ be an $R$-algebra and let $e,e'∈A$ be two idempotents. For the right modules $eA$, $e'A$, we have the isomorphism of $R$-Modules
\[ \begin{array}{rcl}
 \Hom_A(eA, e'A) &\underset{\sim}{\overset{T_{e',e}}{\longrightarrow}}& e'Ae\\
f &\longmapsto& T_{e',e}(f) := f(e)\\
T_{e',e}^{-1}(e'be):=(ea \mapsto e'bea) &\reflectbox{$\longmapsto$}&
 e'be \phantom{:= f(e)X}.
\end{array}\]
Thus given $m∈e'Ae$, the morphism $T_{e',e}^{-1}(m)$ acts on elements $x∈eA$ by the multiplication of $m$  on the left: $(T_{e',e}^{-1}(m))(x) = m\cdot x$.

Given idempotents $e,e',e''∈A$, and elements $f∈\Hom_A(eA,e'A)$, $g∈A(e'A,e''A)$, we have $T_{e'',e}(g\circ f) = g(f(e)) = g(e'f(e)) = g(e')\cdot f(e)= T_{e'',e'}(g)\cdot T_{e',e}(f)$.
\end{bem}
\begin{Def}
\label[Def]{def:diffs}
% For well-definedness of the definitions below, we check $n_{\rm c}^{1}=0$, $n_{\rm b}^{1} = 1$, $n_{\rm c}^{{p-1+1}}=1$ , $n_{\rm b}^{{p-1+1}} = 0$, and for $k∈[1,p-2]$, we check $n_{\rm c}^{{k+1}},n_{\rm b}^{{k+1}}\geq 1$.\\*
% We have $β^{\Leftarrow}_{1,1,1}=pη_{λ^1,1,1}∈\tilde e_1 Λ\tilde e_1$, $β^{\Rightarrow}_{p-1,1,1}=pη_{λ^p,1,1}∈\tilde e_{p-1}Λ\tilde e_{p-1}$. For $k∈[1,p-2]$, we have $β^{\leftarrow}_{k+1,1,1}=η_{λ^{k+1},n_{\rm c}^{{k+1}}+1,1}∈\tilde e_{k+1}Λ\tilde e_k$ and $β^{\rightarrow}_{k+1,1,1}=pη_{λ^{k+1},1,n_{\rm c}^{{k+1}}+1}∈\tilde e_kΛ\tilde e_{k+1}$. For $k∈[1,p-1]$, we have $\tilde e_k ∈\tilde e_kΛ\tilde e_k$. Then  w
We define via \cref{rem:moralg}
\begin{equation*}
\begin{array}{lclll}
\hat e_{k}   &:\eqs& T_{\tilde e_k, \tilde e_k}^{-1}(\tilde e_k)				&∈\Hom_Λ(\tilde P_k,\tilde P_k) & \text{for }k∈[1,p-1]\\
\hat e_{1,1}    &:\eqs& T_{\tilde e_1,\tilde e_1}^{-1}(pη_{λ^1,1,1})			&∈\Hom_Λ(\tilde P_1,\tilde P_1) \\
\hat e_{p-1,p-1}&:\eqs& T_{\tilde e_{p-1},\tilde e_{p-1}}^{-1}(pη_{λ^p,1,1})			&∈\Hom_Λ(\tilde P_{p-1},\tilde P_{p-1})\\
\hat e_{k+1,k}  &:\eqs& T_{\tilde e_{k+1},\tilde e_k}^{-1}(η_{λ^{k+1},n_{\rm c}^{k+1}+1,1})	&∈\Hom_Λ(\tilde P_k,\tilde P_{k+1}) & \text{for }k∈[1,p-2]\\
\hat e_{k,k+1}  &:\eqs& T_{\tilde e_k,\tilde e_{k+1}}^{-1}(pη_{λ^{k+1},1,n_{\rm c}^{k+1}+1})	&∈\Hom_Λ(\tilde P_{k+1},\tilde P_k) & \text{for }k∈[1,p-2].
\end{array}
\end{equation*}
Note that $\hat e_k$ is the identity map on $\tilde P_k$ for $k∈[1,p-1]$.

Moreover, we define the $\zp\!\Sp$-linear map $\hat{ε}:\tilde P_1 → \zp$, $\hat{ε}(ρ) := ρ^{λ^1}$.
% Moreover, we define the $\zp\!\Sp$-linear map 
% \[\begin{array}{rcl}
% \hat{ε}:\tilde P_1&\longrightarrow & \fp,\\
% ρ = ρ^{λ^1}+ρ^{λ^2} &\longmapsto & ρ^{λ^1}.
% \end{array}\]
%\\DONE: well-definedness ?
\end{Def}

It is straightforward to show the following lemma.
\begin{lemma}
\label[lemma]{lem:zprel}
We have
\begin{equation*}
\begin{array}{lclr}
\hat e_{1,1} + \hat e_{1,2}\circ \hat e_{2,1} &=& p\hat e_1\\
\hat e_{k,k-1}\circ\hat e_{k-1,k} + \hat e_{k,k+1}\circ\hat e_{k+1,k} &=& p\hat e_k &\text{ for }k∈[2,p-2]\\
\hat e_{p-1,p-2}\circ\hat e_{p-2,p-1} + \hat e_{p-1,p-1} &=& p\hat e_{p-1}\\
 \hat{ε}\circ \hat e_{1,1} &=& p\hat{ε}.
\end{array}
\end{equation*}
\end{lemma}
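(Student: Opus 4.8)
The plan is to push every identity through the isomorphism $T_{e',e}$ of \cref{rem:moralg}, which turns questions about the morphisms $\hat e_{\bullet}$ into questions about products of the defining elements inside the corners $\tilde e_i\,Λ\,\tilde e_j\subseteq Γ$. Concretely, all four identities are equalities in some $\Hom_Λ(\tilde P_j,\tilde P_i)$, and for fixed $i,j$ the map $T_{\tilde e_i,\tilde e_j}$ is an $R$-linear bijection onto $\tilde e_iΛ\tilde e_j$; hence it suffices to verify the images under $T$. The composition rule from \cref{rem:moralg} converts a composite $\hat e_{i,k}\circ\hat e_{k,j}$ into the product $T_{\tilde e_i,\tilde e_k}(\hat e_{i,k})\cdot T_{\tilde e_k,\tilde e_j}(\hat e_{k,j})$ (same order), and since each $T(\hat e_{\bullet})$ is one of the listed elements $p\eta_{\dots}$ or $\eta_{\dots}$, the whole computation reduces to the matrix-unit multiplication rule $\eta_{\mu,a,b}\,\eta_{\nu,c,d}=\delta_{\mu\nu}\,\delta_{bc}\,\eta_{\mu,a,d}$, together with the description $\tilde e_k=\eta_{\lambda^k,n^k_{\rm c}+1,\,n^k_{\rm c}+1}+\eta_{\lambda^{k+1},1,1}$ and the values $n^1_{\rm c}=\binom{p-2}{-1}=0$, $n_{\lambda^p}=1$.

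First I would record the key orthogonality instance: for any hook $\lambda$ and index $a$ we have $\eta_{\lambda,a,b}\,\eta_{\lambda,b,a}=\eta_{\lambda,a,a}$, a diagonal matrix unit. With this, the three ``interior'' relations all follow by the same two-line pattern. For $k\in[2,p-2]$ one computes
\begin{align*}
T_{\tilde e_k,\tilde e_k}(\hat e_{k,k-1}\circ\hat e_{k-1,k})&=\eta_{\lambda^k,n^k_{\rm c}+1,1}\cdot p\,\eta_{\lambda^k,1,n^k_{\rm c}+1}=p\,\eta_{\lambda^k,n^k_{\rm c}+1,\,n^k_{\rm c}+1},\\
T_{\tilde e_k,\tilde e_k}(\hat e_{k,k+1}\circ\hat e_{k+1,k})&=p\,\eta_{\lambda^{k+1},1,n^{k+1}_{\rm c}+1}\cdot\eta_{\lambda^{k+1},n^{k+1}_{\rm c}+1,1}=p\,\eta_{\lambda^{k+1},1,1},
\end{align*}
whose sum is exactly $p\,\tilde e_k=T_{\tilde e_k,\tilde e_k}(p\,\hat e_k)$. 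The boundary identities are the degenerate cases: for $k=1$ the first summand $\hat e_{1,1}$ already contributes $p\,\eta_{\lambda^1,1,1}=p\,\eta_{\lambda^1,n^1_{\rm c}+1,\,n^1_{\rm c}+1}$ (using $n^1_{\rm c}=0$) in place of the missing composite $\hat e_{1,0}\circ\hat e_{0,1}$, and for $k=p-1$ the term $\hat e_{p-1,p-1}$ contributes $p\,\eta_{\lambda^p,1,1}$ in place of the missing $\hat e_{p-1,p}\circ\hat e_{p,p-1}$; in both cases the sum again reassembles $p\,\tilde e_{p-1}$ resp.\ $p\,\tilde e_1$.

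For the last relation $\hat{\varepsilon}\circ\hat e_{1,1}=p\,\hat{\varepsilon}$ I would argue directly, since $\hat{\varepsilon}$ is not given through $T$: the map $\hat e_{1,1}$ acts on $\rho\in\tilde P_1$ as left multiplication by $p\,\eta_{\lambda^1,1,1}$, so
\[
\hat{\varepsilon}\bigl(\hat e_{1,1}(\rho)\bigr)=\bigl(p\,\eta_{\lambda^1,1,1}\,\rho\bigr)^{\lambda^1}=p\,(\eta_{\lambda^1,1,1})^{\lambda^1}\,\rho^{\lambda^1}=p\,\rho^{\lambda^1}=p\,\hat{\varepsilon}(\rho),
\]
using that $(\eta_{\lambda^1,1,1})^{\lambda^1}$ is the $1\times 1$ identity. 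There is no real obstacle here; the only point requiring attention is bookkeeping of the indices and a check that each listed element genuinely lies in its prescribed corner (e.g.\ $\tilde e_1\,\eta_{\lambda^1,1,1}\,\tilde e_1=\eta_{\lambda^1,1,1}$ needs $n^1_{\rm c}=0$, and the analogous check at $\lambda^p$ needs $n_{\lambda^p}=1$), which is why the statement is flagged as straightforward.
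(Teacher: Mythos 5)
Your proposal is correct and follows essentially the route the paper intends (the paper only remarks that the lemma is straightforward): transport everything through the isomorphism $T_{e',e}$ of \cref{rem:moralg}, use its multiplicativity $T_{e'',e}(g\circ f)=T_{e'',e'}(g)\cdot T_{e',e}(f)$ to reduce each identity to a matrix-unit computation in the corners $\tilde e_i\,\Lambda\,\tilde e_j$, and treat $\hat{\varepsilon}\circ\hat e_{1,1}=p\hat{\varepsilon}$ directly via the left-multiplication description of $\hat e_{1,1}$. The index bookkeeping, including the degenerate boundary cases via $n^1_{\rm c}=0$ and $n_{\lambda^p}=1$, checks out.
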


Furthermore, it is straightforward to check that we obtain a projective resolution of $\zp$ as follows. 
We set 
\begin{align*}
\tilde{\pr}_i :\eqs& \begin{cases} \tilde P_{ω(i)} & i\geq 0 \\ 0 & i<0 \end{cases},
\end{align*}
where the integer $ω(i)$ is given by the following construction: Recall the stipulation $l:=2(p-1)$. We have $i = jl+r$ for some $j∈\Z$ and $0\leq r\leq l-1$. Then
\begin{align}
\label{omega}
ω(i) := \begin{cases} r+1 & \text{ for }0\leq r \leq p-2\\ l-r = 2(p-1) -r & \text{ for } p-1 \leq r \leq 2(p-1)-1 = l-1 \end{cases}.
\end{align}
So $ω(i)$ increases by steps of one from $1$ to $p-1$ as $i$ runs from $jl$ to $jl+(p-2)$ and $ω(i)$ decreases from $p-1$ to $1$ as $i$ runs from $jl+(p-1)$ to $jl + (l-1)$. Finally we set
\begin{align*}
\hat d_i := \begin{cases} \hat e_{ω(i-1),ω(i)}: \tilde P_{ω(i)} → \tilde P_{ω(i-1)} & i\geq 1\\ 0 & i\leq 0 \end{cases}.
\end{align*} 
Now we have the projective resolution of $\zp$ 
\begin{align}
\label{presform1}
\cdots\xrightarrow{\hat d_3} \tilde{\pr}_2\xrightarrow{\hat d_2}\tilde{\pr}_1\xrightarrow{\hat d_1}\tilde{\pr}_0\xrightarrow{0=\hat d_0} 0\rightarrow\cdots\,,
\end{align}
written more explicitly as
\begin{align*}
\cdots\rightarrow &\tilde P_2\xrightarrow{\hat e_{1,2}} \tilde P_1 \xrightarrow{\hat e_{1,1}} \tilde P_1 \xrightarrow{\hat e_{2,1}} \tilde P_2 \rightarrow … 
\rightarrow \tilde P_{p-2} \xrightarrow{\hat e_{p-1,p-2}}\tilde P_{p-1} \nonumber \\
 &\xrightarrow{\hat e_{p-1,p-1}} \tilde P_{p-1} \xrightarrow{\hat e_{p-2,p-1}}\tilde P_{p-2}\rightarrow … \rightarrow \tilde P_2\xrightarrow{\hat e_{1,2}} \tilde P_1  \rightarrow 0\rightarrow \cdots\,,
\end{align*}
with augmentation $\hat{ε}:\tilde P_1 → \zp$.

\kommentar{
\clearpage

Then let 
\begin{enumerate}[(1)]
\item $\mathscr{B}^{\Leftrightarrow}:=\{β_{k,x,y}^{\Leftrightarrow} \mid k∈[1,p-1], x,y∈[1,n_{\rm b}^{k}]\}$, where  $β_{k,x,y}^{\Leftrightarrow} := η_{λ^k,n_{\rm c}^{k}+x,n_{\rm c}^{k}+y} + η_{λ^{k+1},x,y}$.
\item  $\mathscr{B}^{\Leftarrow}:=\{β_{k,x,y}^{\Leftarrow} \mid k∈[1,p-1], x,y∈[1,n_{\rm b}^{k}]\}$, where \mbox{$β_{k,x,y}^{\Leftarrow} := pη_{λ^k,n_{\rm c}^{k}+x,n_{\rm c}^{k}+y}$}.
\item $\mathscr{B}^{\Rightarrow}:=\{β_{k,x,y}^{\Rightarrow} \mid k∈[1,p-1], x,y∈[1,n_{\rm b}^{k}]\}$, where \mbox{$β_{k,x,y}^{\Rightarrow} := pη_{λ^{k+1},x,y}$}.
\item $\mathscr{B}^{\leftarrow} := \{β_{k,x,y}^{\leftarrow} \mid k∈[1,p], x∈[1,n_{\rm b}^{k}],y∈[1,n_{\rm c}^{k}]\}$,
where 
% $1\leq x\leq n_{\rm b}^k$,$1\leq y\leq n_{\rm c}^k$ we set 
\mbox{$β_{k,x,y}^{\leftarrow} := η_{λ^k,n_{\rm c}^{k}+x,y}$}.
\item $\mathscr{B}^{\rightarrow} := \{β_{k,x,y}^{\rightarrow} \mid k∈[1,p], x∈[1,n_{\rm c}^{k}],y∈[1,n_{\rm b}^{k}]\}$,
where  \mbox{$β_{k,x,y}^{\rightarrow} := pη_{λ^k,x,n_{\rm c}^{k}+y}$}.
\item $\mathscr{B}^*:=\{η_{λ,x,y} \mid λ\dashv p\text{ not a hook partition}, x,y∈[1,n_λ]\}$.%and $1\leq x,y\leq n_λ$ %we set $β_{λ,x,y}^{*} = η_{λ,x,y}$.
\end{enumerate}
We have two $\zp$-bases $\mathscr{B}^{\Leftrightarrow} \sqcup \mathscr{B}^{\Leftarrow} \sqcup \mathscr{B}^{\leftarrow} \sqcup \mathscr{B}^{\rightarrow} \sqcup \mathscr{B}^*$ and  \mbox{$\mathscr{B}^{\Leftrightarrow} \sqcup \mathscr{B}^{\Rightarrow} \sqcup \mathscr{B}^{\leftarrow} \sqcup \mathscr{B}^{\rightarrow} \sqcup \mathscr{B}^*$} of $Λ$.
%Obviously by gathering all elements described in (1), (2), (4), (5) and (6) in a set or all of (1),(3), (4), (5), (6) in set we obtain a $\zp$-basis of $Λ$ so all in all we have constructed two bases.
%See \cref{bsp:s3} for some examples.
\begin{bsp}[$p=3$, continuation of \cref{bsp:s3}]
The only of the $β_{a,b,c}^d$ that are defined above and that are not shown in \cref{bsp:s3} are the following elements.
\begin{align*}
\left( 0, \begin{pmatrix} 3 & 0 \\ 0& 0\end{pmatrix}, 0 \right)= β_{1,1,1}^⇒, && 
\left( 0, \begin{pmatrix} 0 & 0 \\ 0& 0\end{pmatrix}, 3 \right)= β_{2,1,1}^⇒
\end{align*}
$\mathscr{B}^*$ is empty since all partitions are hook partitions. 
\end{bsp}

Once more, see \cref{bsp:s3} for an illustration of the case $p=3$.
%By straightforward calculation it is checked that the $\tilde e_k$ are an orthogonal set of idempotents. We define corresponding right modules $\tilde P_k := \tilde e_k Λ$ and those are  direct summands of the free right $Λ$-module $Λ$. Thus all $\tilde P_k$ are projective $Λ$-modules.\\
%The map $x\mapsto \tilde e_k\cdot x$ for $x∈Λ$ is the projection of $Λ$ onto $\tilde P_k$. The elements of $Λ$ that are described in (1) through (6) are mapped to zero by this projection except for the elements of the following sets:

Let
\begin{enumerate}[(1)]
\item $\mathscr{B}_k^{\Leftrightarrow} := (β_{k,1,y}^{\Leftrightarrow}\colon y∈(1,…,n_{\rm b}^{k})) 
= (η_{λ^k,n_{\rm c}^{k}+1,n_{\rm c}^{k}+y} + η_{λ^{k+1},1,y}\colon y∈(1,…,n_{\rm b}^{k}))$
\item $\mathscr{B}_k^{\Leftarrow} := (β_{k,1,y}^{\Leftarrow}\colon y∈(1,…,n_{\rm b}^{k}))
= (pη_{λ^k,n_{\rm c}^{k}+1,n_{\rm c}^{k}+y}\colon y∈(1,…,n_{\rm b}^{k}))$
\item $\mathscr{B}_k^{\Rightarrow} := (β_{k,1,y}^{\Rightarrow}\colon y∈(1,…,n_{\rm b}^{k}))
= (pη_{λ^{k+1},1,y} \colon y∈(1,…,n_{\rm b}^{k}))$
\item $\mathscr{B}_k^{\leftarrow} := (β_{k,1,y}^{\leftarrow}\colon y∈(1,…,n_{\rm c}^{{k}}))
=(η_{λ^{k},n_{\rm c}^{k}+1,y} \colon y∈(1,…,n_{\rm c}^k))$
\item $\mathscr{B}_k^{\rightarrow} := (β_{k+1,1,y}^{\rightarrow}\colon y∈(1,…,n_{\rm b}^{k+1}))
=(pη_{λ^{k+1},1,n_{\rm c}^{k+1}+y}\colon y∈(1,…, n_{\rm b}^{k+1}))$
\end{enumerate}
\begin{bem}
\label[bem]{bem:pbases}
Similarly to the bases of $Λ$, the tuples $\mathscr{B}_k^{\Leftrightarrow}\sqcup \mathscr{B}_k^{\Leftarrow} \sqcup \mathscr{B}_k^{\leftarrow} \sqcup \mathscr{B}_k^{\rightarrow}$ and \mbox{$\mathscr{B}_k^{\Leftrightarrow}\sqcup \mathscr{B}_k^{\Rightarrow} \sqcup \mathscr{B}_k^{\leftarrow} \sqcup \mathscr{B}_k^{\rightarrow}$} are  $\zp$-bases of  $\tilde P_k$.
\end{bem}

\begin{bem}
Let $k∈[1,p-1]$. The idempotent $\tilde e_k$ is actually a primitive idempotent and thus $\tilde P_k$ is an indecomposable projective $Λ$-right module: Assume $\tilde e_k=c+c'$ for some idempotents $0\neq c,c'∈Λ$ that are orthogonal, that is $c\cdot c' = c'\cdot c = 0$. Then $\tilde e_k \cdot c = (c+c')c = c^2 = c = c(c+c') = c\cdot \tilde e_k$. Similarly, we have $\tilde e_k \cdot c' = c' = c'\cdot \tilde e_k$.  Thus $c,c' ∈\tilde e_kΛ\tilde e_k$. The $\zp$-algebra 
\[\tilde e_kΛ\tilde e_k = \langle e_k, β_{k,1,1}^⇐\rangle_{\zp} = \langle e_k, β_{k,1,1}^⇒\rangle_{\zp}\]
is isomorphic to the $\zp$-algebra
\begin{equation*}
%J:=\zp\overset{p}{\hline}
\begin{tikzpicture}  
\node (n1) {$\zp$}; 
\node (n2) [right=of n1] {$\zp$}; 
\draw[black] (n1.east) --  node[above]{$\scriptstyle p$} (n2.west);
\node%[transform canvas={xshift=1.5em}] 
at (n1.west) {$\mathllap{J:=}$};
\end{tikzpicture}
\end{equation*}
consisting of elements $\{(a,b)∈\zp\times \zp \mid a\equiv_p b\}$. The only idempotents in $\zp\times \zp$ are $(0,0)∈J$, $(1,1)∈J$, $(1,0)\notin J$ and $(0,1)\notin J$. Thus the identity element $(1,1)$ of $J$ cannot be decomposed into non-trivial idempotents and the same holds for $\tilde e_k$.
\end{bem}

\begin{lemma}
\label[lemma]{lem:kerim}
We have 
\begin{align*}
&{\rm(a)}&&\ker \hat e_{k+1,k} = \langle \mathscr{B}_k^{\leftarrow} \sqcup \mathscr{B}_k^{⇐}\rangle_{\zp},&&  \im \hat e_{k+1,k} = \langle  \mathscr{B}_{k+1}^{\leftarrow} \sqcup \mathscr{B}_{k+1}^{⇐}\rangle_{\zp} \,\, \text{ for $k∈[1,p-2]$},\\
&{\rm(b)}&&\ker \hat e_{k,k+1} = \langle \mathscr{B}_{k+1}^→\sqcup \mathscr{B}_{k+1}^⇒\rangle_{\zp}, &&\im \hat e_{k,k+1} = \langle \mathscr{B}_k^{→}\sqcup \mathscr{B}_k^⇒ \rangle_{\zp}\,\,\text{ for $k∈[1,p-2]$},\\
&{\rm(c)}&&\ker \hat e_{p-1,p-1} {=} \langle  \mathscr{B}_{p-1}^{\leftarrow} \sqcup \mathscr{B}_{p-1}^⇐ \rangle_{\zp}, &&\im \hat e_{p-1,p-1} {=} \langle \mathscr{B}_{p-1}^⇒ \sqcup \mathscr{B}_{p-1}^→\rangle_{\zp},\\
&{\rm(d)}&&\ker \hat e_{1,1} = \langle \mathscr{B}_1^⇒ \sqcup \mathscr{B}_1^→ \rangle_{\zp},&& \im \hat e_{1,1} = \langle \mathscr{B}_1^⇐ \sqcup \mathscr{B}_1^{\leftarrow} \rangle_{\zp}.
\end{align*}
\end{lemma}
\begin{proof}
%Recall \cref{def:diffs}.\\
(a): \hphantom{XXX}%\vspace{-1.2em}
%Let $k∈[1,p-2]$: Matrix calculation and \cref{rem:moralg} give 
$%\begin{align*}
\begin{array}[t]{rcl}
\hat e_{k+1,k}(\mathscr{B}_k^{⇔}) &\svs{\text{R.}\ref{rem:moralg}}& (η_{λ^{k+1},n_{\rm c}^{k+1}+1,1} η_{λ^{k+1},1,y} \colon y∈(1,…,n_{\rm b}^{k}))\\
&\sqs & (η_{λ^{k+1},n_{\rm c}^{{k+1}}+1,y} \colon y∈(1,…,n_{\rm c}^{{k+1}})) = \mathscr{B}_{k+1}^{\leftarrow}\\
\hat e_{k+1,k}(\mathscr{B}_k^{→}) &\svs{\text{R.}\ref{rem:moralg}}& (η_{λ^{k+1},n_{\rm c}^{k+1}+1,1} pη_{λ^{k+1},1,n_{\rm c}^{{k+1}}+y} \colon y∈(1,…,n_{\rm b}^{k+1}))\\
&\sqs& (pη_{λ^{k+1},n_{\rm c}^{k+1}+1,n_{\rm c}^{k+1}+y} \colon y∈(1,…,n_{\rm b}^{k+1})) = \mathscr{B}_{k+1}^{⇐}\\
\hat e_{k+1,k}(\mathscr{B}_k^{\leftarrow}) &\svs[\subseteq]{\text{R.}\ref{rem:moralg}} & \{0\}\\
\hat e_{k+1,k}(\mathscr{B}_k^{⇐}) &\svs[\subseteq]{\text{R.}\ref{rem:moralg}} & \{0\}
\end{array}%\\
% \begin{array}{lrclrcl}
% ⇒& \ker \hat e_{k+1,k} &=& \langle \mathscr{B}_k^{\leftarrow} \sqcup \mathscr{B}_k^{⇐}\rangle,&  \im \hat e_{k+1,k} &=& \langle  \mathscr{B}_{k+1}^{\leftarrow} \sqcup \mathscr{B}_{k+1}^{⇐}\rangle
% \end{array}
$%\end{align*}
%\vspace{-0.7em}

Thus by \cref{bem:pbases}, assertion (a) holds.

%\hspace{-20em}

(b): \hphantom{\textit{Proof.}XXX}%\vspace{-1.2em}
%⇒ $\ker \hat e_{k+1,k} = \langle \mathscr{B}_k^{\leftarrow} \sqcup \mathscr{B}_k^{⇐}\rangle$, $\im \hat e_{k+1,k} = \langle  \mathscr{B}_{k+1}^{\leftarrow} \sqcup \mathscr{B}_{k+1}^{⇐}\rangle$
$%\begin{align*}
\begin{array}[t]{rcl}
\hat e_{k,k+1}(\mathscr{B}_{k+1}^{⇔}) &\svs{\text{R.}\ref{rem:moralg}}& (pη_{λ^{k+1},1,n_{\rm c}^{k+1}+1} η_{λ^{k+1},n_{\rm c}^{{k+1}}+1,n_{\rm c}^{{k+1}}+y} \colon y∈(1,…,n_{\rm b}^{{k+1}})) \\
&\sqs& (pη_{λ^{k+1},1,n_{\rm c}^{{k+1}}+y} \colon y∈(1,…,n_{\rm b}^{{k+1}})) = \mathscr{B}_k^{→}\\
\hat e_{k,k+1}(\mathscr{B}_{k+1}^{\leftarrow}) &\svs{\text{R.}\ref{rem:moralg}}& (pη_{λ^{k+1},1,n_{\rm c}^{{k+1}}+1} η_{λ^{k+1},n_{\rm c}^{{k+1}}+1,y} \colon y∈(1,…,n_{\rm c}^{{k+1}})) \\
&\sqs& (pη_{λ^{k+1},1,y} \colon y∈(1,…,n_{\rm b}^{k})) = \mathscr{B}_k^⇒\\
\hat e_{k,k+1}(\mathscr{B}_{k+1}^→) &\svs[\subseteq]{\text{R.}\ref{rem:moralg}} & \{0\}\\
\hat e_{k,k+1}(\mathscr{B}_{k+1}^⇒) &\svs[\subseteq]{\text{R.}\ref{rem:moralg}} & \{0\}
\end{array}
$%\end{align*}
%\vspace{-0.5em}

Thus by \cref{bem:pbases}, assertion (b) holds.

(c): \hphantom{\textit{Proof.}XX}%\vspace{-1.2em}
%⇒  $\ker \hat e_{k,k+1} = \langle \mathscr{B}_{k+1}^→\sqcup \mathscr{B}_{k+1}^⇒\rangle$, $\im \hat e_{k,k+1} = \langle \mathscr{B}_k^{→}\sqcup \mathscr{B}_k^⇒ \rangle$
$%\begin{align*}
\begin{array}[t]{rcl}
\hat e_{p-1,p-1}(\mathscr{B}_{p-1}^⇔) &\svs{\text{R.}\ref{rem:moralg}}& (pη_{λ^p,1,1}η_{λ^{(p-1)+1},1,y} \colon y∈(1,…,n_{\rm b}^{{p-1}})) \\
&\sqs& (pη_{λ^{(p-1)+1},1,y} \colon y∈(1,…,n_{\rm b}^{{p-1}})) = \mathscr{B}_{p-1}^⇒\\
\mathscr{B}_{p-1}^→ &\sqs& () \text{ as $n_{\rm b}^{p} = 0$}\\
\hat e_{p-1,p-1}(\mathscr{B}_{p-1}^{\leftarrow}) &\svs[\subseteq]{\text{R.}\ref{rem:moralg}} & \{0\}\\
\hat e_{p-1,p-1}(\mathscr{B}_{p-1}^⇐) &\svs[\subseteq]{\text{R.}\ref{rem:moralg}} & \{0\}
\end{array}
$%\end{align*}
%\vspace{-0.3em}

Thus by \cref{bem:pbases}, assertion (c) holds.

(d): \hphantom{\textit{Proof.}XXXXX}%\vspace{-1.2em}
%⇒ $\ker \hat e_{p-1,p-1} = \langle  \mathscr{B}_{p-1}^{\leftarrow} \sqcup \mathscr{B}_{p-1}^⇐ \rangle$, $\im \hat e_{p-1,p-1} = \langle \mathscr{B}_{p-1}^⇒ \sqcup \mathscr{B}_{p-1}^→\rangle$
$%\begin{align*}
\begin{array}[t]{rcl}
\hat e_{1,1}(\mathscr{B}_1^⇔) &\svs{\text{R.}\ref{rem:moralg}}& (pη_{λ^1,1,1}η_{λ^1,n_{\rm c}^{1}+1,n_{\rm c}^{1}+y} \colon y∈(1,…,n_{\rm b}^{1}))\\
&\svs{n_{\rm c}^{1}=0}& (pη_{λ^1,n_{\rm c}^{1}+1,n_{\rm c}^{1}+y} \colon y∈(1,…,n_{\rm b}^{1})) = \mathscr{B}_1^⇐\\
\mathscr{B}_1^{\leftarrow} &\sqs& ()\text{ as $n_{\rm c}^{1}=0$}\\
\hat e_{1,1}(\mathscr{B}_1^⇒) &\svs[\subseteq]{\text{R.}\ref{rem:moralg}} & \{0\}\\
\hat e_{1,1}(\mathscr{B}_1^→) &\svs[\subseteq]{\text{R.}\ref{rem:moralg}} & \{0\} 
\end{array}
$%\end{align*}

Thus by \cref{bem:pbases}, assertion (d) holds.
\end{proof}

The trivial $\zp\!\Sp$-module $\zp$ becomes a $Λ$-module via the isomorphism of $\zp$-algebras $r:\zp\!\Sp→Λ$ described in \cref{lambda}. We want to construct a projective resolution of $\zp$ over $Λ$.
%We want to construct a projective resolution of the right $Λ$-module $\zp$ that corresponds to the trivial $\zp\Sp$-module $\zp$ via the isomorphism of $\zp$-algebras $r$. 

$Γ$ is a right $Λ$-module as $Λ$ is a subalgebra of $Γ$. The set $Γ^{λ^1}:=\{ρ∈Γ \mid ρ^λ = 0 \text{ for }λ\neq λ^1\}$ is a right $Λ$-submodule of $Γ$. As $n_{\rm c}^{k}= 0$ and $n_{\rm b}^k=1$, $Γ^{λ^1}$ is free over $\zp$ with basis $\{η_{λ^1,1,1}\}$. 

Given a partition $λ\dashv p$, the operation of an element $x∈\zp\!\Sp$ on the Specht module corresponding to $λ$ is multiplication  with the matrix $r(x)^λ$ with respect to a certain basis of  that Specht module, cf.\ the definition of $r^{\Z}$ in the proof of \cref{lambda}.

As $\zp$ is the Specht module corresponding to the trivial partition $λ^1$ of $p$, and as $\zp$ is one-dimensional, the operation of $x∈\zp\!\Sp$ on $\zp$ is multiplication with the scalar $r(x)^{λ^1}$. Thus an element $ρ∈Λ$ operates on $\zp$ via multiplication with the scalar $ρ^{λ^1}$ and we haven an isomorphism of right $Λ$-modules by
\begin{align*}
\begin{array}{rlcl}
\hat{ε}^1 : &Γ^{λ^1} & \longrightarrow & \zp\\
& η_{λ^1,1,1} & \longmapsto & 1.
\end{array}
\end{align*}
%As the images of $r$ encode the operations of 

%The module $\zp$ is isomorphic to the submodule $Γ^{λ^1}:=\{ρ∈Γ \mid ρ^λ = 0 \text{ for }λ\neq λ^1\}$ of $Γ$, as $λ^1$ corresponds to the trivial Specht module.
%For each Specht representation of $\Sp$ there is at least one submodule of $Γ$ corresponding to that Specht representation. In particular for the partition $λ^1$ we have the trivial Specht module as $\{ρ∈Γ \mid ρ^λ = 0 \text{ for }λ\neq λ^1\}$ since $n_{λ^1} = 1$. Since it is a free $\zp$-module of rank $1$ corresponding to the trivial representation of $\Sp$ we denote this module as $(\zp)_Λ$. \\
We have the morphism of right $Λ$-modules
\begin{align*}
\begin{array}{rlclr}
\hat{ε}^0: &\tilde P_1 &\longrightarrow & Γ^{λ^1}\\
&\tilde e_1x &\longmapsto & η_{λ^1,1,1} \tilde e_1x= η_{λ^1,1,1}x & \text{ for }x∈Λ
\end{array}
\end{align*}
%As $n_{\rm c}^{k}= 0$ and $n_{\rm b}^k=1$, $Γ^{λ^1}$ is free with basis $\{η_{λ^1,1,1}\}$ and 
We have $\hat{ε}^0(\tilde e_1) = \hat{ε}^0(η_{λ^1,1,1} + η_{λ^2,1,1}) = η_{λ^1,1,1}$, thus 
$\hat{ε}^0$ is surjective as 
$\{η_{λ^1,1,1}\}$ is a $\zp$-basis of $Γ^{λ^1}$.
%$\hat{ε}^0(\tilde e_1) = \hat{ε}^0(η_{λ^1,1,1} + η_{λ^2,1,1}) = η_{λ^1,1,1}$.\\*
%we have $\tilde e_1 = η_{λ^1,1,1} + η_{λ^2,1,1}$ and $η_{λ^1,1,1}\tilde e_1 = η_{λ^1,1,1}$ in the definition of $\hat{ε}$. As the $\zp$-span of $\tilde e_1$ is a subset of $\tilde P_1$ and $(\zp)_Λ$ is free with basis $\{η_{λ^1,1,1}\}$ the map $\hat{ε}$ is surjective. 
Given $x∈\tilde P_1$, we have 
$\hat e_{1,1}(x) = p \hat{ε}^0(x)$ as elements of $Γ$. Thus the maps $\hat e_{1,1}$ and $\hat{ε}^0$ have the same kernel. Concatenation with the isomorphism $\hat{ε}^1$ yields the surjective morphism of right $Λ$-modules
\[ \hat{ε} := \hat{ε}^1 \circ \hat{ε}^0 : \tilde P_1 \longrightarrow \zp,\]
for which we have $\ker \hat{ε} = \ker \hat e_{1,1}$.
%As $\zp\cong Γ^{λ^1}$ we obtain from $\hat{ε}^0$ a surjective map $\hat{ε}: \tilde P_1 → \zp$ such that $\ker \hat{ε} = \ker \hat e_{1,1}$.

 With these properties of $\hat{ε}$ and \cref{lem:kerim}, %these considerations %and \cref{lem:kerim} 
we are able to directly formulate a projective resolution of $\zp$:

We set 
\begin{align*}
\tilde{\pr}_i :\eqs& \begin{cases} \tilde P_{ω(i)} & i\geq 0 \\ 0 & i<0 \end{cases},
\end{align*}
where the integer $ω(i)$ is given by the following construction: Recall the stipulation $l:=2(p-1)$. We have $i = jl+r$ for some $j∈\Z$ and $0\leq r\leq l-1$. Then
\begin{align}
\label{omega}
ω(i) := \begin{cases} r+1 & \text{ for }0\leq r \leq p-2\\ l-r = 2(p-1) -r & \text{ for } p-1 \leq r \leq 2(p-1)-1 = l-1 \end{cases}.
\end{align}
So $ω(i)$ increases by steps of one from $1$ to $p-1$ as $i$ runs from $jl$ to $jl+(p-2)$ and $ω(i)$ decreases from $p-1$ to $1$ as $i$ runs from $jl+(p-1)$ to $jl + (l-1)$. Finally we set
\begin{align*}
\hat d_i := \begin{cases} \hat e_{ω(i-1),ω(i)}: \tilde P_{ω(i)} → \tilde P_{ω(i-1)} & i\geq 1\\ 0 & i\leq 0 \end{cases}.
\end{align*} 
Now \cref{lem:kerim} gives the projective resolution of $\zp$ 
\begin{align}
\label{presform1}
\cdots\xrightarrow{\hat d_3} \tilde{\pr}_2\xrightarrow{\hat d_2}\tilde{\pr}_1\xrightarrow{\hat d_1}\tilde{\pr}_0\xrightarrow{0=\hat d_0} 0\rightarrow\cdots\,.
\end{align}
More explicitly, we have
\begin{align*}
\cdots\rightarrow &\tilde P_2\xrightarrow{\hat e_{1,2}} \tilde P_1 \xrightarrow{\hat e_{1,1}} \tilde P_1 \xrightarrow{\hat e_{2,1}} \tilde P_2 \rightarrow … 
\rightarrow \tilde P_{p-2} \xrightarrow{\hat e_{p-1,p-2}}\tilde P_{p-1} \nonumber \\
 &\xrightarrow{\hat e_{p-1,p-1}} \tilde P_{p-1} \xrightarrow{\hat e_{p-2,p-1}}\tilde P_{p-2}\rightarrow … \rightarrow \tilde P_2\xrightarrow{\hat e_{1,2}} \tilde P_1  \rightarrow 0\rightarrow \cdots\,.
\end{align*}
The corresponding extended projective resolution is
\begin{align*}
\cdots \rightarrow &\tilde P_2\xrightarrow{\hat e_{1,2}} \tilde P_1 \xrightarrow{\hat e_{1,1}} \tilde P_1 \xrightarrow{\hat e_{2,1}} \tilde P_2 \rightarrow … 
\rightarrow \tilde P_{p-2} \xrightarrow{\hat e_{p-1,p-2}}\tilde P_{p-1} \nonumber \\
 &\xrightarrow{\hat e_{p-1,p-1}} \tilde P_{p-1} \xrightarrow{\hat e_{p-2,p-1}}\tilde P_{p-2}\rightarrow … \rightarrow \tilde P_2\xrightarrow{\hat e_{1,2}} \tilde P_1 \xrightarrow{\hat{ε}} \zp \rightarrow 0\rightarrow\cdots\,,
\end{align*}
which is an exact sequence.

We have proven the
\begin{tm}
\label[tm]{tm:przp} Recall that $p\geq 3$ is a prime. \\*
The sequence \eqref{presform1} is a projective resolution of $\zp$, with augmentation $\tilde{\pr}_0=\tilde P_1 \xrightarrow{\hat{ε}}\zp$.
\end{tm}
\begin{lemma}
\label[lemma]{lem:zprel}
Recall that $p\geq 3$ is a prime. We have
\begin{equation*}
\begin{array}{lclr}
\hat e_{1,1} + \hat e_{1,2}\circ \hat e_{2,1} &=& p\hat e_1\\
\hat e_{k,k-1}\circ\hat e_{k-1,k} + \hat e_{k,k+1}\circ\hat e_{k+1,k} &=& p\hat e_k &\text{ for }k∈[2,p-2]\\
\hat e_{p-1,p-2}\circ\hat e_{p-2,p-1} + \hat e_{p-1,p-1} &=& p\hat e_{p-1}\\
 \hat{ε}\circ \hat e_{1,1} &=& p\hat{ε}.
\end{array}
\end{equation*}
\end{lemma}
\begin{proof}
We have by \cref{rem:moralg}
\begin{align*}
T_{\tilde e_1,\tilde e_1}(&\hat e_{1,1} + \hat e_{1,2}\circ\hat e_{2,1}) = T_{\tilde e_1,\tilde e_1}(\hat e_{1,1}) + T_{\tilde e_1,\tilde e_2}(\hat e_{1,2})T_{\tilde e_2,\tilde e_1}(\hat e_{2,1})\\
&= pη_{λ^1,1,1} + pη_{λ^2,1,n_{\rm c}^2+1}η_{λ^2,n_{\rm c}^2+1,1} = p(η_{λ^1,1,1}+η_{λ^2,1,1}) = T_{\tilde e_1,\tilde e_1}(p\hat e_{1})\\
%\end{align*}
%\begin{align*}
T_{\tilde e_k,\tilde e_k}(&\hat e_{k,k-1}\circ\hat e_{k-1,k} + \hat e_{k,k+1}\circ\hat e_{k+1,k})\\
&= T_{\tilde e_k,\tilde e_{k-1}}(\hat e_{k,k-1})T_{\tilde e_{k-1},\tilde e_k}(\hat e_{k-1,k}) + T_{\tilde e_k,\tilde e_{k+1}}(\hat e_{k,k+1})T_{\tilde e_{k+1},\tilde e_k}(\hat e_{k+1,k})\\
&= η_{λ^k,n_{\rm c}^k+1,1}pη_{λ^k,1,n_{\rm c}^k+1} + pη_{λ^{k+1},1,n_{\rm c}^{k+1}+1}η_{λ^{k+1},n_{\rm c}^{k+1}+1,1}\\
&= p(η_{λ^k,n_{\rm c}^k+1,n_{\rm c}^k+1} + η_{λ^{k+1},1,1}) = T_{\tilde e_k,\tilde e_k}(p\hat e_{k})\\
%\end{align*}
%\begin{align*}
T_{\tilde e_{p-1},\tilde e_{p-1}}(&\hat e_{p-1,p-2}\circ\hat e_{p-2,p-1} + \hat e_{p-1,p-1})\\
&= T_{\tilde e_{p-1},\tilde e_{p-2}}(\hat e_{p-1,p-2})T_{\tilde e_{p-2},\tilde e_{p-1}}(\hat e_{p-2,p-1}) + T_{\tilde e_{p-1},\tilde e_{p-1}}(\hat e_{p-1,p-1})\\
&= η_{λ^{p-1},n_{\rm c}^{p-1}+1,1}pη_{λ^{p-1},1,n_{\rm c}^{p-1}+1} + pη_{λ^p,1,1}\\
&= p(η_{λ^{p-1},n_{\rm c}^{p-1}+1,n_{\rm c}^{p-1}+1} + η_{λ^p,1,1}) = T_{\tilde e_{p-1},\tilde e_{p-1}}(p\hat e_{p-1}).
\end{align*}
Finally for $x∈\tilde P_1$, we have
\begin{align*}
(\hat{ε}^0\circ \hat e_{1,1})(x) \eqs& η_{λ^1,1,1}\cdot  pη_{λ^1,1,1}\cdot x = pη_{λ^1,1,1}\cdot x
= p \hat{ε}^0(x),
\end{align*}
thus $\hat{ε}\circ \hat e_{1,1} = \hat{ε}^1\circ \hat{ε}^0\circ \hat e_{1,1} = p\hat{ε}^1\circ \hat{ε}^0 = p\hat{ε}$.
\end{proof}
}%kommentar

\subsection{A projective resolution of \texorpdfstring{$\fp$}{Fp} over \texorpdfstring{$\fp\!\Sp$}{FpSp}} 
\label{sec:prfp}
\kommentar{
We have $\zp\!\Sp/(p\zp\!\Sp) \simeq \fp\!\Sp$. Since $\zp\!\Sp \simeq Λ$, we identify $\fp\!\Sp = Λ/(pΛ)$.
We obtain the desired projective resolution by reducing the projective resolution of $\zp$ "modulo $p$":

%Technically this will be done via a tensor product functor.
\subsubsection*{Reduction modulo $I$}
Let $R$ be a principal ideal domain. %commutative ring. 
Let $(A,ρ)$ be an $R$-algebra. Let $I$ be an ideal of $R$. We set $\bar{R}:=R/\!I$.

As $R$ is a principal ideal domain, $ρ(I)A$ is an additive subset of $A$. As $ρ(I)$ is a subset of the center of $A$, $ρ(I)A$ is an ideal of $A$ and $A/(ρ(I)A)=:\bar{A}$ is an $\bar{R}$-algebra.

We regard a right $A$-module $M_A$ as a right $R$-module $M_R$ via $m\cdot r := m\cdot ρ(r)$ for $m∈M$, $r∈R$.
\begin{lemma}
\label[lemma]{lem:ni}
The functors $-\underset{A}{\otimes} \bar{A}$ and $-\underset{R}{\otimes} \bar{R}$ from $\Mod$-$A$ to $\Mod$-$R$ are naturally isomorphic. The natural isomorphism $-\underset{A}{\otimes} \bar{A}→-\underset{R}{\otimes} \bar{R}$ is given at the module $M_A$ by
\begin{equation*}
\begin{array}{rcl}
M_A \underset{A}{\otimes} \leftidx{_A}{\bar{A}}{} &\overset{\sim}{\longrightarrow}& M_R\underset{R}{\otimes} \leftidx{_R}{\bar{R}}{}\\
m\otimes (a+ ρ(I)A) &\longmapsto & ma \otimes (1+I)\\
m\otimes (r+ρ(I)A)&\longmapsfrom & m\otimes (r+I).
\end{array}
\end{equation*}
\end{lemma}
\begin{proof}
By the universal property of the tensor product, the two maps given above are well-defined and $R$-linear. Straightforward calculation gives that they invert each other and that we have a natural transformation.
\end{proof}
%For the following lemma, the essential condition on $R$ is that submodules of free $R$-modules of finite rank are free. Since amongst integral domains only principal ideal domains obey that condition (ideals are submodules of the free module $R$ of rank $1$)  we require $R$ to be an principal ideal domain.
\begin{lemma}
\label[lemma]{lem:ef}
%Suppose in addition that $R$ is a principal ideal domain. 
%Then 
The functor $-\underset{A}{\otimes}\leftidx{_A}{\bar{A}}{_{\bar{A}}}$ from $\Mod$-$A$ to $\Mod$-$\bar A$ maps exact sequences of right $A$-modules that are free and of finite rank as $R$-modules to exact sequences of right $\bar{A}$-modules.
% Let $R$ be a principal ideal domain. Let $(A,ρ)$ be an $R$-algebra. Let $I$ be an ideal of $R$. Then the functor $-\underset{A}{\otimes}\leftidx{_A}{A/(ρ(I)A)}{_{A/((ρ(I)A)}}$ maps exact sequences of right $A$-modules that are free and of finite rank as $R$-modules onto exact sequences of right $A/(ρ(I)A)$-modules.
\end{lemma}
\begin{proof}
Because $-\underset{A}{\otimes}\leftidx{_A}{\bar{A}}{_{\bar{A}}}$  is an additive functor, it maps complexes to complexes. For considerations of exactness, we may compose our functor with the forgetful functor from $\Mod$-$\bar A$ to $\Mod$-$ R$. This composite is $-\underset{A}{\otimes}\leftidx{_A}{\bar{A}}{}$. 
%Given a complex, we have two complexes given by the images under the functors $-\underset{A}{\otimes}\leftidx{_A}{\bar{A}}{}$ and $-\underset{R}{\otimes}\leftidx{_R}{\bar{R}}{}$. 
By the natural isomorphism given in \cref{lem:ni}, it suffices to show that $-\underset{R}{\otimes}\leftidx{_R}{\bar{R}}{}$ transforms exact sequences of right $A$-modules that are free and of finite rank as $R$-modules into exact sequences.

Let $\cdots\xrightarrow{d_{i+1}}M_i\xrightarrow{d_i}M_{i-1}\xrightarrow{d_{i-1}}\cdots$ be an exact sequence of right $A$-modules that are free and of finite rank as $R$-modules. Then $\im d_i$ is a submodule of the free $R$-module $M_{i-1}$. As $R$ is a principal ideal domain, $\im d_i$ is free. Hence the short exact sequence $\im d_{i+1} \rightarrow M_i \rightarrow \im d_i$ splits. Now the additive functor $-\underset{R}{\otimes}\leftidx{_R}{\bar{R}}{}$ maps split short exact sequences to (split) short exact sequences and the proof is complete.
\end{proof}
\subsubsection*{Reduction modulo $p$}
}%kommentar

The isomorphism $r:\zp\!\Sp \rightarrow Λ$ from \cref{lambda} induces an isomorphism of $\fp$-algebras $\fp\!\Sp = \zp\!\Sp/(p\zp\!\Sp) \xrightarrow{\bar{r}} Λ/(pΛ)=:\bar{Λ}$. \\*
For the sake of simplicity in the next step, we identify $\bar{Λ}$ and $\fp\!\Sp$ along $\bar{r}$.
% It is known that $Λ\simeq \zp\!\Sp$ and $(\zp\!\Sp)/(p\zp\!\Sp)\simeq \fp\!\Sp$. For the sake of simplicity in the next step we thus identify $Λ/(pΛ) = \fp\!\Sp$.
\begin{lemma}
\label[lemma]{lem:prfp}
Recall that $p\geq 3$ is a prime. Applying the functor $-\underset{Λ}{\otimes} \leftidx{_Λ}{\bar{Λ}}{_{\bar{Λ}}}$\,, we obtain
\begin{itemize}
\item the projective modules $P_k:= \tilde P_k\underset{Λ}{\otimes} \leftidx{_Λ}{\bar{Λ}}{_{\bar{Λ}}}$ % from the projective module $\tilde P_k$ 
 for $k∈[1,p-1]$, 
\item $\fp:=\zp\underset{Λ}{\otimes} \leftidx{_Λ}{\bar{Λ}}{_{\bar{Λ}}}$ (the trivial $\fp\!\Sp$-module),% corresponding to the trivial representation of $\Sp$),
\item %\hspace{0ex}\\%\begin{table}[h]
% \begin{tabular}{lcll}
% $e_k$&$:=$&$\hat e_k\underset{Λ}{\otimes} \leftidx{_Λ}{\bar{Λ}}{_{\bar{Λ}}}$ for $k∈[1,p-1]$, \\
% $e_{1,1}$&$:= $&$\hat e_{1,1}\underset{Λ}{\otimes} \leftidx{_Λ}{\bar{Λ}}{_{\bar{Λ}}}$,\\
% $e_{p-1,p-1} $&$:= $&$\hat e_{p-1,p-1}\underset{Λ}{\otimes} \leftidx{_Λ}{\bar{Λ}}{_{\bar{Λ}}}$, \\
% $e_{k+1,k}$&$:= $&$\hat e_{k+1,k}\underset{Λ}{\otimes} \leftidx{_Λ}{\bar{Λ}}{_{\bar{Λ}}}$ for $k∈[1,p-2]$, \\
% $e_{k,k+1} $&$:= $&$\hat e_{k,k+1}\underset{Λ}{\otimes} \leftidx{_Λ}{\bar{Λ}}{_{\bar{Λ}}}$ for $k∈[1,p-2]$, cf.\ \cref{def:diffs}, and
% \end{tabular}
%\end{table}
% 
%\begin{equation*}
$%\begin{aligned}[t]
\begin{array}[t]{lclll}
e_k&:=&\hat e_k\underset{Λ}{\otimes} \leftidx{_Λ}{\bar{Λ}}{_{\bar{Λ}}}&∈\Hom_{\fp\Sp}(P_k,P_k)&\text{for $k∈[1,p-1]$,} \\
e_{1,1}&:=& \hat e_{1,1}\underset{Λ}{\otimes} \leftidx{_Λ}{\bar{Λ}}{_{\bar{Λ}}}&∈\Hom_{\fp\Sp}(P_1,P_1),\\
e_{p-1,p-1} &:=& \hat e_{p-1,p-1}\underset{Λ}{\otimes} \leftidx{_Λ}{\bar{Λ}}{_{\bar{Λ}}}&∈\Hom_{\fp\Sp}(P_{p-1},P_{p-1}), \\
e_{k+1,k}&:=& \hat e_{k+1,k}\underset{Λ}{\otimes} \leftidx{_Λ}{\bar{Λ}}{_{\bar{Λ}}}&∈\Hom_{\fp\Sp}(P_k,P_{k+1})&\text{for $k∈[1,p-2]$,} \\
e_{k,k+1} &:=& \hat e_{k,k+1}\underset{Λ}{\otimes} \leftidx{_Λ}{\bar{Λ}}{_{\bar{Λ}}}&∈\Hom_{\fp\Sp}(P_{k+1},P_k) &\text{for $k∈[1,p-2]$,} \\
ε &:=& \hat{ε}\underset{Λ}{\otimes} \leftidx{_Λ}{\bar{Λ}}{_{\bar{Λ}}}&∈\Hom_{\fp\Sp}(P_1,\fp) .
\end{array}
%\end{aligned}
$
\end{itemize}
%The map $ε$ is surjective as $\hat{ε}$ is surjective.
%As the exact sequence  \eqref{przpe} splits over $\zp$, and the functor $-\underset{Λ}{\otimes} \leftidx{_Λ}{\bar{Λ}}{}$ is naturally isomorphic to the functor $-\underset{\zp}{\otimes} \leftidx{_{\zp}}{(\zp/p\zp)}{}$, the exact sequence \eqref{przpe} is transformed by $-\underset{Λ}{\otimes} \leftidx{_Λ}{\bar{Λ}}{_{\bar{Λ}}}$ to an exact sequence. Hence the complex
The complex 
%The projective resolution \eqref{presform1}  is by \cref{lem:ef} transformed into a projective resolution of $\fp$. % which is the module that corresponds to the trivial representation of $\Sp$. 
%This projective resolution is given by
\begin{align}
\label{prfp}
\pres \fp := (\pres \zp)\underset{Λ}{\otimes} \leftidx{_Λ}{\bar{Λ}}{_{\bar{Λ}}}= (\cdots\xrightarrow{d_3} {\pr}_2\xrightarrow{ d_2}{\pr}_1\xrightarrow{ d_1}{\pr}_0\xrightarrow{0=d_0} 0\rightarrow\cdots),
\end{align}
\begin{align*}
{\pr}_i :\eqs& \begin{cases} P_{ω(i)} & i\geq 0 \\ 0 & i<0 \end{cases} &
d_i :\eqs& \begin{cases} e_{ω(i-1),ω(i)}: P_{ω(i)} →P_{ω(i-1)} & i\geq 1\\ 0 & i\leq 0, \end{cases}
\end{align*}  
is a projective resolution of $\fp$ with augmentation $ε:P_1→\fp$.
More explicitly, $\pres \fp$ is
\begin{align*}
…\rightarrow & \underbrace{P_2}_{\mathclap{l+1}}\xrightarrow{ e_{1,2}}  \underbrace{P_1}_{\mathclap{l=2(p-1)}} \xrightarrow{ e_{1,1}}  \underbrace{P_1}_{\mathclap{(p-2)+p-1}} \xrightarrow{ e_{2,1}}  \underbrace{P_2}_{\mathclap{(p-2)+p-2}} \rightarrow … 
\rightarrow  \underbrace{P_{p-2}}_{\mathclap{p=(p-2)+2}} \xrightarrow{ e_{p-1,p-2}} \underbrace{P_{p-1}}_{\mathclap{(p-2)+1}} \nonumber \\
 &\xrightarrow{ e_{p-1,p-1}}  \underbrace{P_{p-1}}_{\mathclap{p-2}} \xrightarrow{ e_{p-2,p-1}} \underbrace{P_{p-2}}_{p-3}\rightarrow … \rightarrow  \underbrace{P_2}_{1} \xrightarrow{ e_{1,2}}  \underbrace{P_1}_0  \rightarrow 0.
\end{align*}
\end{lemma}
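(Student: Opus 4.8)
The plan is to reduce the whole statement to the behaviour of base change on $\zp$-free modules. The key preliminary observation is that $Λ$ is finitely generated free over the principal ideal domain $\zp$: it is a full $\zp$-lattice inside $Γ=\prod_{λ\dashv p}\zp^{n_λ\times n_λ}$ (cut out by the congruence conditions of \cref{lambda}), and submodules of finitely generated free modules over a PID are free. Consequently $\bar Λ=Λ/pΛ=Λ\underset{\zp}{\otimes}\fp$, and for every right $Λ$-module $M$ there is a natural isomorphism $M\underset{Λ}{\otimes}\bar Λ\cong M\underset{\zp}{\otimes}\fp$ of $\fp$-modules, compatible with induced maps. Under this identification, applying $-\underset{Λ}{\otimes}\bar Λ$ to $\pres\zp$ is nothing but reducing the resolution modulo $p$ termwise; in particular $\fp=\zp\underset{Λ}{\otimes}\bar Λ$ is the mod-$p$ reduction of the trivial module, as asserted.

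First I would dispose of projectivity. For each $k$ the module $\tilde P_k=\tilde e_kΛ$ is a direct summand of the free right $Λ$-module $Λ$ (via $Λ=\tilde e_kΛ\oplus(1-\tilde e_k)Λ$), hence projective; base change along the algebra surjection $Λ\to\bar Λ$ carries projective $Λ$-modules to projective $\bar Λ$-modules, so each $P_k$ is a projective $\fp\!\Sp$-module. The stated formulas for $e_k$, $e_{k,k+1}$, $ε$ and the rest are simply the images of the corresponding maps from \cref{def:diffs} under the additive functor $-\underset{Λ}{\otimes}\bar Λ$, so nothing further is needed there. What remains is to show that the reduced augmented complex
\[ \cdots\xrightarrow{d_2}{\pr}_1\xrightarrow{d_1}{\pr}_0\xrightarrow{ε}\fp\to 0 \]
is exact.

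For exactness I would start from the augmented resolution $\pres\zp$ of \eqref{presform1}, that is, the exact sequence
\[ \cdots\xrightarrow{\hat d_2}\tilde{\pr}_1\xrightarrow{\hat d_1}\tilde{\pr}_0\xrightarrow{\hat{ε}}\zp\to 0, \]
all of whose terms are finitely generated free over $\zp$ (each $\tilde P_k$ is a $\zp$-summand of $Λ$, and $\zp$ is free of rank one). I would cut this long exact sequence into the short exact sequences $0\to\im\hat d_{i+1}\to\tilde{\pr}_i\to\im\hat d_i\to 0$. Since each image is a submodule of a finitely generated free module over the PID $\zp$, it is itself free, so every one of these short exact sequences splits over $\zp$. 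The functor $-\underset{\zp}{\otimes}\fp$ sends split short exact sequences to split short exact sequences, hence each piece stays exact after reduction; splicing them together again yields exactness of the reduced augmented complex, and transporting along the natural isomorphism of the first paragraph gives the desired exactness over $\bar Λ=\fp\!\Sp$.

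The only genuine subtlety is this exactness step: $-\underset{\zp}{\otimes}\fp$ is merely right exact in general, so a priori a $\mathrm{Tor}_1$ obstruction could survive reduction. What makes it vanish is purely formal, namely the $\zp$-freeness of every term of $\pres\zp$, which forces all the intervening short exact sequences to split over $\zp$. I would therefore record explicitly, at the very outset, that $Λ$ and each $\tilde P_k$ are $\zp$-free of finite rank; this single fact, visible from the block description of $Λ$ in \cref{lambda}, is the hinge on which the entire reduction turns.
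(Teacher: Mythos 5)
Your proposal is correct and takes essentially the same route as the paper's argument: the paper likewise identifies $-\underset{\Lambda}{\otimes}\bar{\Lambda}$ with $-\underset{\zp}{\otimes}\fp$ via a natural isomorphism, observes that every term of the augmented resolution is finitely generated free over the principal ideal domain $\zp$ so that the images of the differentials are free and the resulting short exact sequences split, and concludes that reduction modulo $p$ preserves exactness. The treatment of projectivity of the $P_k$ and of the induced maps is likewise the same.
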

\begin{lemma}
\label[lemma]{lem:relfp}
Recall that $p\geq 3$ is a prime.
\begin{enumerate}[\rm (a)]
\item 
We have the relations
\begin{equation*}
\begin{array}{lclr}
e_{1,1} +  e_{1,2} \circ e_{2,1} &=& 0\\
 e_{k,k-1}\circ e_{k-1,k} +  e_{k,k+1}\circ e_{k+1,k} &=& 0 &\text{ for } k∈[2,p-2]\\
 e_{p-1,p-2} \circ e_{p-2,p-1} +  e_{p-1,p-1} &=& 0\\
 ε \circ e_{1,1} &=& 0
\end{array}
\end{equation*}
and $e_k$ is the identity on $P_k$ for $k∈[1,p-1]$.
\item Given $k∈[2,p-1]$, we have $\Hom_{\fp\!\Sp}(P_k,\fp) = \{0\}$.
\item Given $k,k'∈[1,p-1]$ such that $|k-k'|>1$, we have  $\Hom_{\fp\!\Sp}(P_k,P_{k'}) = \{0\}$.
\item The set $\{ε\}$ is an $\fp$-basis of $\Hom_{\fp\!\Sp}(P_1,\fp)$.
\end{enumerate}
\end{lemma}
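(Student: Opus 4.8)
The plan is to transport \cref{lem:zprel} through the additive functor $-\underset{Λ}{\otimes}\bar{Λ}$. This functor is $\zp$-linear on morphism spaces and respects composition and identities, so it carries each left-hand side of \cref{lem:zprel} to the corresponding left-hand side of (a); for instance $\hat e_{1,1}+\hat e_{1,2}\circ\hat e_{2,1}\mapsto e_{1,1}+e_{1,2}\circ e_{2,1}$ and $\hat{ε}\circ\hat e_{1,1}\mapsto ε\circ e_{1,1}$. Each right-hand side has the form $p\hat e_k$ or $p\hat{ε}$ and is therefore carried to $p\,e_k$ resp.\ $p\,ε$, which vanishes since these are $p$-multiples of morphisms between $\fp$-modules and $\fp$ has characteristic $p$. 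Finally $e_k=\hat e_k\underset{Λ}{\otimes}\bar{Λ}$ is the identity of $P_k$ because $\hat e_k=\id_{\tilde P_k}$ and the functor preserves identities.

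\textbf{Setup and part (c).} For the remaining parts I would fix the identification $P_k=\tilde e_kΛ\underset{Λ}{\otimes}\bar{Λ}\cong\bar{\tilde e}_k\bar{Λ}$, where $\bar{\tilde e}_k$ is the image of the idempotent $\tilde e_k$ under $Λ\twoheadrightarrow\bar{Λ}=\fp\!\Sp$, and recall from \cref{lambda} that $\bar{Λ}$ acts on the trivial module $\fp$ through the reduction modulo $p$ of the scalar $λ^1$-component. For (c), \cref{rem:moralg} applied over $\bar{Λ}$ gives $\Hom_{\fp\!\Sp}(P_k,P_{k'})\cong\bar{\tilde e}_{k'}\bar{Λ}\,\bar{\tilde e}_k$, so it suffices to show this corner vanishes. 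Since $\tilde e_k=η_{λ^k,n_{\rm c}^{k}+1,n_{\rm c}^{k}+1}+η_{λ^{k+1},1,1}$ has nonzero components only in the factors of $Γ=\prod_{λ\dashv p}\zp^{n_λ\times n_λ}$ indexed by $λ^k$ and $λ^{k+1}$, and the multiplication of $Γ$ is componentwise, the product $\tilde e_{k'}\,ρ\,\tilde e_k$ is supported on the factors indexed by $\{λ^k,λ^{k+1}\}\cap\{λ^{k'},λ^{k'+1}\}$. As the hook partitions $λ^1,…,λ^p$ are pairwise distinct, this intersection is empty precisely when $|k-k'|>1$; hence $\tilde e_{k'}Λ\tilde e_k=0$ already in $Γ$, and its image $\bar{\tilde e}_{k'}\bar{Λ}\,\bar{\tilde e}_k$ is zero.

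\textbf{Parts (b) and (d).} Here I would argue through the generator $\bar{\tilde e}_k$ of $P_k$. Every $f∈\Hom_{\fp\!\Sp}(P_k,\fp)$ is determined by $y:=f(\bar{\tilde e}_k)$, and the idempotency $\bar{\tilde e}_k\bar{\tilde e}_k=\bar{\tilde e}_k$ forces $y=f(\bar{\tilde e}_k)=f(\bar{\tilde e}_k)\cdot\bar{\tilde e}_k=y\cdot\bar{\tilde e}_k$, where $\bar{\tilde e}_k$ acts on $\fp$ via $(\tilde e_k)^{λ^1}\bmod p$. Since $n_{\rm c}^{1}=\binom{p-2}{-1}=0$ we get $(\tilde e_1)^{λ^1}=1$, whereas for $k∈[2,p-1]$ the partition $λ^1$ is not among $λ^k,λ^{k+1}$, so $(\tilde e_k)^{λ^1}=0$. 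In the latter case $y=y\cdot 0=0$, hence $f=0$, which is (b). For $k=1$ the relation $y=y\cdot 1$ is vacuous, and the same computation as in \cref{rem:moralg} identifies $\Hom_{\fp\!\Sp}(P_1,\fp)$ with $\fp\cdot\bar{\tilde e}_1=\fp$ via $f\mapsto f(\bar{\tilde e}_1)$; as $ε(\bar{\tilde e}_1)=\overline{\hat{ε}(\tilde e_1)}=\overline{(\tilde e_1)^{λ^1}}=1\neq 0$, the single element $ε$ spans, which is (d).

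\textbf{Main obstacle.} None of these steps is genuinely difficult; the only real care is bookkeeping. One must make sure the functor $-\underset{Λ}{\otimes}\bar{Λ}$ really does commute with composition and is $\zp$-linear, so that the relations in (a) transport verbatim and the $p$-multiples on the right collapse to $0$, and one must read off the $λ^1$-components of the idempotents correctly, which is exactly what produces the asymmetry between $k=1$ and $k\geq 2$ in (b) and (d).
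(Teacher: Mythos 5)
Your proof is correct and follows essentially the same route as the paper: part (a) is obtained by pushing the relations of \cref{lem:zprel} through the additive functor $-\underset{Λ}{\otimes}\bar{Λ}$ so that the $p$-multiples on the right vanish in characteristic $p$, and parts (b)--(d) come down to the support of the idempotents $\tilde e_k$ in the factors of $Γ$ and the $λ^1$-component $(\tilde e_1)^{λ^1}=1$ versus $(\tilde e_k)^{λ^1}=0$ for $k\geq 2$. The only cosmetic difference is that you compute the corners $\bar{\tilde e}_{k'}\bar{Λ}\bar{\tilde e}_k$ directly over $\bar{Λ}$ via \cref{rem:moralg}, whereas the paper phrases (b)--(d) as reductions of the corresponding statements over $\zp\!\Sp$ through $\Hom_{\fp\!\Sp}(P/pP,M/pM)\simeq\Hom_{\zp\!\Sp}(P,M)/p\Hom_{\zp\!\Sp}(P,M)$; the underlying computation is the same.
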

Assertion (a) results from \cref{lem:zprel}.

Assertions (b), (c) and (d) are derived from corresponding assertions over $\zp\!\Sp$ using $\Hom_{\fp\!\Sp}(P/pP, M/pM) \simeq \Hom_{\zp\!\Sp}(P,M)/p\Hom_{\zp\!\Sp}(P,M)$ for $\zp\!\Sp$-modules $P$ and $M$, where $P$ is projective.

\section{\texorpdfstring{$\A_\infty$}{A(oo)}-algebras}
\label{secainf}
%\subsection{Graded vector spaces}
\subsection{Definitions, General theory}
\label{generaltheory}
In this subsection, we review results presented in \cite{Ke01} and we fix notation.% with some extended explanations and additional examples.

Let $R$ be a commutative  ring. %We use the term "space" for $R$-modules to emphasize that $R$ is fixed. 
We understand linear maps between $R$-modules to be $R$-linear. Tensor products are tensor products over $R$. By graded $R$-modules we understand $\Z$-graded $R$-modules.
\kommentar{
%XXXXXX: eigener Abschnitt Koszul-Regeln XXXXXXX
%FIXED: ganz oben Koerper vereinbaren, Tensorprod ueber diesem: sagen
\begin{Def}
%Let  $I$ be a set. 
A \textit{graded $R$-module $V$} is a $R$-module of the form $V = \oplus_{q∈\Z} V^q$. %So we have for each index element an element of the family of $R$-modules that compose $V$.
An element $v_q∈V^q$, $q∈\Z$ is said  to be of degree $q$. An element $v∈V$ is called \textit{homogeneous} if there is an integer $q∈\Z$ such that $v∈V^q$. For homogeneous elements $v$ resp.\ graded maps $g$ (see below), we denote their degrees by $|v|$ resp.\ $|g|$.
\end{Def}
\begin{Def}
Let $A = \oplus_{q∈\Z}A^q$, $B = \oplus_{q∈\Z}B^q$ be two graded  $R$-modules. A \textit{graded map of degree $z∈\Z$} is a linear map $g:A→B$ such that \mbox{$\im g\big|_{A^q} \subseteq B^{q+z}$} for $q∈\Z$.
%$a∈A^q$ implies $f(a)∈ B^{q+z}$.
\end{Def}
\begin{Def}
Let $A=\oplus_{q∈\Z}A^q$, $B=\oplus_{q∈\Z}B^q$ be two graded $R$-modules. We have
\begin{align*}
A\otimes B \eqs& \bigoplus_{z_1,z_2∈\Z} A^{z_1}\otimes B^{z_2}
= \bigoplus_{q∈\Z}\left(\bigoplus_{z_1+z_2=q} A^{z_1}\otimes B^{z_2}\right).
\end{align*}
As we understand the direct sums to be internal direct sums in $A\otimes B$ and understand $A^{z_1}\otimes B^{z_2}$ to be the linear span of the set $\{a\otimes b∈A\otimes B \mid a ∈ A^{z_1},b∈A^{z_2}\}$, we have equations in the above, not just isomorphisms.

We then set $A\otimes B$ to be graded by $A\otimes B = \bigoplus_{q∈\Z}(A\otimes B)^q$, where $(A\otimes B)^q:= \bigoplus_{z_1+z_2=q} A^{z_1}\otimes B^{z_2}$.

Moreover, we grade the direct sum
\begin{align*}
A\oplus B \eqs& \bigoplus_{q∈\Z} (A^q\oplus B^q)
%\bigoplus_{z_1,z_2∈\Z} A^{z_1}\oplus B^{z_2} 
%= \bigoplus_{q∈\Z}\left(\bigoplus_{z_1+z_2=q} A^{z_1}\oplus B^{z_2}\right)
\end{align*}
by $(A\oplus B)^q := A^q\oplus B^q$.
% XXXX internal direct sums - equalities automatic XXXXXXX
\end{Def}
%
% \begin{Def}
% Let $F$ be a field and $A = \oplus_{q∈\Z}A^q$ be a graded vector space. Then its tensor product with itself ($n\geq 1$) is graded by
% \begin{align*}
% A^{\otimes n} \eqs& \bigoplus_{z_1,z_2,…,z_n∈\Z} A^{z_1}\otimes A^{z_2} \otimes … \otimes A^{z_n}\\
%  \eqs& \bigoplus_{z∈\Z}\underbrace{\bigoplus_{z=z_1+z_2+…+z_n} A^{z_1}\otimes A^{z_2} \otimes … \otimes A^{z_n}}_{=:\text{part of $A^{\otimes n}$ of degree $z$}}
% \end{align*}
% We write the above relations as equalities as we understand the direct sums to be internal direct sums in $A^{\otimes n}$ and understand $A^{z_1}\otimes A^{z_2} \otimes … \otimes A^{z_n}$ to be the linear span of the set $\{a_1\otimes a_2\otimes … \otimes a_n∈A^{\otimes n} \mid a_i ∈ A^{z_i}\}$.
%
% \end{Def}
}%kommentar

\begin{Def}
In the definition of the tensor product of graded maps, we implement the \textit{Koszul sign rule}: Let $A_1,A_2,B_1,B_2$ be graded $R$-modules and $g:A_1→B_1$, $h:A_2→B_2$ graded maps. Then we set for homogeneous elements $x∈A_1,y∈A_2$
\begin{align}
\label{koszulraw}
(g\otimes h)(x\otimes y) := (-1)^{|h|\cdot|x|} g(x)\otimes h(y).
\end{align}
%where  are homogeneous elements. % and the vertical bars denote the degree. 
%Note that $g\otimes h$ has degree \mbox{$|g\otimes h| = |g|+|h|$}.
%\\ XXXXXX: note/proof that tensorization is associative ?
\end{Def}

\kommentar{
\begin{bem}
It is known that for graded $R$-modules $A,B,C$, the map
\begin{align}
\label{theta}
\begin{array}{rccc}
Θ:&(A\otimes B) \otimes C& \longrightarrow & A\otimes (B\otimes C)\\
&(a\otimes b) \otimes c &\longmapsto & a\otimes (b\otimes c) 
\end{array}
\end{align}
is an isomorphism of $R$-modules. Because of the following, $Θ$ is homogeneous of degree $0$.
%Without the Koszul sign rule it is known that tensorization of modules and linear maps is associative up to certain isomorphisms. The same is valid with the Koszul sign rule:\\*
%For the tensorization of modules, we only need to check the grading. Let $A,B,C$ be graded  $R$-modules.
\begin{align*}
((A\otimes B)\otimes C)^q \eqs& \bigoplus_{\mathclap{y+z_3=q}} (A\otimes B)^y \otimes C^{z_3} = \bigoplus_{y+z_3=q}\bigoplus_{z_1+z_2= y} (A^{z_1}\otimes B^{z_2})\otimes C^{z_3}\\
\eqs& \bigoplus_{\mathclap{z_1+z_2+z_3=q}} (A^{z_1}\otimes B^{z_2})\otimes C^{z_3}\\
(A\otimes (B\otimes C))^q \eqs& \bigoplus_{\mathclap{z_1+y=q}} A^{z_1}\otimes (B\otimes C)^y = \bigoplus_{z_1+y=q}\bigoplus_{z_2+z_3=y} A^{z_1}\otimes (B^{z_2}\otimes C^{z_3})\\
\eqs& \bigoplus_{\mathclap{z_1+z_2+z_3=q}} A^{z_1}\otimes (B^{z_2}\otimes C^{z_3})
\end{align*}
%For morphisms we need to check the signs.
 Let $A_1,A_2,B_1,B_2,C_1,C_2$ be graded  $R$-modules, $f:A_1→A_2$, $g:B_1→B_2$, $h:C_1→C_2$ graded maps. For homogeneous elements $x∈A_1
$, $y∈B_1$, $z∈C_1$, we have % homogeneous elements. Then
\begin{align*}
((f\otimes g)\otimes h)((x\otimes y)\otimes z) \eqs& (-1)^{|x\otimes y|\cdot|h|}((f\otimes g)(x\otimes y))\otimes h(z)\\
\eqs& (-1)^{(|x|+|y|)|h|+ |x|\cdot |g|} (f(x)\otimes g(y))\otimes h(z)\\
(f\otimes(g\otimes h))(x\otimes (y\otimes z)) \eqs&  (-1)^{|x|\cdot |g\otimes h|} f(x)\otimes((g\otimes h)(y\otimes z))\\
\eqs& (-1)^{|x|(|g|+|h|) + |y|\cdot|h|} f(x)\otimes(g(y)\otimes h(z)) \\
\eqs& (-1)^{(|x|+|y|)|h|+ |x|\cdot |g|} f(x)\otimes(g(y)\otimes h(z)). 
\end{align*}
Thus we have the following commutative diagram ($Θ_1$ and $Θ_2$ are derived from \eqref{theta})
\begin{align*}
\xymatrix@C=2pc{
(A_1\otimes B_1)\otimes C_1\ar[r]^{Θ_{1}}\ar[d]^{(f\otimes g)\otimes h} & A_1\otimes (B_1\otimes C_1)\ar[d]^{f\otimes (g\otimes h)}\\
(A_2\otimes B_2)\otimes C_2\ar[r]^{Θ_{2}} & A_2\otimes (B_2\otimes C_2)
}
\end{align*}
It is therefore valid to use $Θ$ as an identification and to omit the brackets for the tensorization of graded $R$-modules and the tensorization of graded maps.
%We will therefore omit the brackets and write $f\otimes g\otimes h:A_1\otimes B_1\otimes C_1→ A_2\otimes B_2\otimes C_2$ for the triple tensor product and do the same for an arbitrary number of tensorization factors.
\end{bem}
}%kommentar

%\subsection{$\A_\infty$-algebras}
%FIXED: Make Definition of $\A_∞$-algebras/structures/morphisms more exact\\
Concerning the signs in the definition of $\A_∞$-algebras and $\A_∞$-morphisms, we follow the variant given e.g.\ in \cite{Le03} and \cite{Ka80}. 
%
%For convenience, we define pre-$\A_n$-structures and morphisms
\begin{Def}
Let $n∈\Z_{\geq 0}\cup \{∞\}$. 
\begin{enumerate}[(i)]
\item Let $A$ be a graded $R$-module. A \textit{pre-$\A_n$-structure on $A$} is a family of graded maps $(m_k:A^{\otimes k}→A)_{k∈[1,n]}$ with $|m_k|=2-k$ for $k∈[1,n]$. The tuple $(A,(m_k)_{k∈[1,n]})$ is called a pre-$\A_n$-algebra.
\item Let $A$, $A'$ be graded $R$-modules. A \textit{pre-$\A_n$-morphism from $A'$ to $A$} is a family of graded maps $(f_k:A'^{\otimes k}→A)_{k∈[1,n]}$ with $|f_k|=1-k$ for $k∈[1,n]$.
\end{enumerate}
\end{Def}
%
%We will need a "finite" version of $\A_∞$-algebras, cf. \cref{Def:ainf}:
\begin{Def}
Let $n∈\Z_{\geq 0}\cup\{∞\}$. 
\begin{enumerate}[(i)]
\item %Let $A$ be a graded $R$-module. An $\A_n$-algebra structure on $A$ is a family of graded maps $(m_k:A^{\otimes k}→A)_{k∈[1,n]}$ with $|m_k|=2-k$ for all $k∈[1,n]$ such that \eqref{ainfrel}[$k$] holds for $k∈[1,n]$.
An \textit{$\A_n$-algebra} is a pre-$\A_n$-algebra $(A,(m_k)_{k∈[1,n]})$ such that for \mbox{$k∈[1,n]$} 
\refstepcounter{equation}\label{ainfrel}
\begin{align}
\sum_{\natop{k=r+s+t,}{r,t\geq 0, s\geq 1}} (-1)^{rs+t}m_{r+1+t}\circ (1^{\otimes r}\otimes m_s\otimes 1^{\otimes t})=0 .
\tag*{(\theequation)[$k$]}
\end{align}
In abuse of notation, we sometimes abbreviate $A = (A,(m_k)_{k\geq 1})$ for $\A_∞$-algebras.
\item 
% Let $(A',(m'_k)_{k∈[1,n]})$ and $(A,(m_k)_{k∈[1,n]})$ be $\A_n$-algebras. An $\A_n$-algebra morphims $A'→A$ is a family of graded maps $(f_k:A'^{\otimes k}→A)_{k∈[1,n]}$ with $|f_k|=1-k$ such that \eqref{finfrel}[$k$] holds for $k∈[1,n]$.
Let $(A',(m'_k)_{k∈[1,n]})$ and $(A,(m_k)_{k∈[1,n]})$ be $\A_n$-algebras. An \textit{$\A_n$-morphism} or \textit{morphism of $\A_n$-algebras} from $(A',(m'_k)_{k∈[1,n]})$ to $(A,(m_k)_{k∈[1,n]})$ is a pre-$\A_n$-morphism $(f_k)_{k∈[1,n]}$ such that for $k∈[1,n]$, we have 
\refstepcounter{equation}\label{finfrel}
\begin{align}
%\sum_{\mathclap{n=r+s+t}} (-1)^{r+st} f_u(1^{\otimes r}\otimes m_s\otimes 1^{\otimes t}) = \sum_{\mathclap{\natop{1\leq r\leq n}{i_1+…+i_r=n}}} (-1)^s m_r(f_{i_1}\otimes f_{i_2}\otimes … \otimes f_{i_r})
\sum_{\mathclap{\substack{k=r+s+t\\r,t\geq 0,s\geq 1}}} (-1)^{rs+t} f_{r+1+t}\circ (1^{\otimes r}\otimes m'_s\otimes 1^{\otimes t}) = \sum_{\mathclap{\substack{1\leq r\leq k\\ i_1+…+i_r=k\\ i_s\geq 1}}} (-1)^v m_r\circ (f_{i_1}\otimes f_{i_2}\otimes … \otimes f_{i_r}),
\tag*{(\theequation)[$k$]}
\end{align} 
where %$u=r+1+t$ and 
%\begin{align*}
%\label{fsign}
% s\eqs&(r-1)(i_1-1) + (r-2)(i_2-1) + … + (r- (r-1))(i_{r-1}-1) + 0\cdot(i_r - 1)\\
% \eqs& (r-1)(i_1-1) + (r-2)(i_2-1) + … + 1\cdot(i_{r-1}-1)
$v :=  %\sum_{2\leq s\leq r}\left((1-i_s)\sum_{1\leq t\leq s} i_t\right) \equiv_2
\sum_{1\leq t < s\leq r}(1-i_s)i_t$.
%\end{align*}
\end{enumerate}
\end{Def}

% \begin{Def}
% \label[Def]{Def:ainf}
% An $\A_\infty$-algebra is a graded $R$-module $A = \oplus_{p∈\Z}A^p$ equipped with graded maps ("multiplications")
% \begin{align*}
% m_n \colon A^{\otimes n} → A
% \end{align*}
% of degree $2-n$ for each $n\geq 1$. Additionally for each $n\geq 1$ these maps satisfy
% \refstepcounter{equation}\label{ainfrel}
% \begin{align}
% %\sum_{\natop{n=r+s+t,}{r,t\geq 0, s\geq 1}} (-1)^{r+st}m_u(1^{\otimes r}\otimes m_s\otimes 1^{\otimes t})=0
% \sum_{\natop{n=r+s+t,}{r,t\geq 0, s\geq 1}} (-1)^{rs+t}m_{r+1+t}(1^{\otimes r}\otimes m_s\otimes 1^{\otimes t})=0 
% \tag*{(\theequation)[$n$]}
% \end{align}
% %where $u=r+1+t$.
% \end{Def}
\begin{bsp}[dg-algebras]
\label[bsp]{ex:dg} Let $(A,(m_k)_{k\geq 1})$ be an $\A_\infty$-algebra.  
If $m_n=0$ for $n\geq 3$ then $A$ is called  a \textit{differential graded algebra} or \textit{dg-algebra}. In this case the equations \eqref{ainfrel}[$n$] for $n\geq 4$ become trivial: We have $(r+1+t)+s = n+1$ ⇒ $(r+1+t)+s\geq 5$ ⇒ $m_{r+1+t}=0$ or $m_s=0$. So all summands in \eqref{ainfrel}[$n$] are zero for $n\geq 4$. Here are the equations for $n∈\{1,2,3\}$:
\begin{align*}
\eqref{ainfrel}[1]: && 0 \eqs& m_1\circ m_1\\
\eqref{ainfrel}[2]: && 0 \eqs& m_1\circ m_2 - m_2\circ (m_1\otimes 1+1\otimes m_1)\\
\eqref{ainfrel}[3]: && 0 \eqs& m_1\circ m_3 + m_2\circ (1\otimes m_2-m_2\otimes1)\\
&&& + m_3\circ (m_1\otimes 1^{\otimes 2}+1\otimes m_2\otimes 1+1^{\otimes 2}\otimes m_1)
\\&& 
\ovs{m_3=0}& m_2\circ (1\otimes m_2-m_2\otimes1)
\end{align*}
So \eqref{ainfrel}[$1$] ensures that $m_1$ is a differential. Moreover, \eqref{ainfrel}[$3$] states that $m_2$ is an associative binary operation, since for homogeneous $x,y,z∈A$ we have  $0=m_2\circ \mbox{$(1\otimes m_2-m_2\otimes1)(x\otimes y\otimes z)$} = m_2(x\otimes m_2(y\otimes z) - m_2(x\otimes y)\otimes z)$, where because of $|m_2|=0$ there are no additional signs caused by the Koszul sign rule. Equation \eqref{ainfrel}[$2$] is the Leibniz rule.% which can be motivated by the product rule in the algebra of differential forms on a smooth manifold: We set $m_1 f := ∂f$ and $m_2(f\otimes g):= f\wedge g$ and we have for homogeneous differential forms $f,g$
% \begin{align*}
% ∂(f\wedge g) = (∂f)\wedge g + (-1)^{|f|}f\wedge(∂g).
% \end{align*}
% The signs on the right side also motivate the Koszul sign rule.
\end{bsp}
\begin{bsp}[$\A_n$-morphisms induce complex morphisms]
\label[bsp]{ex:cc} $ $\\*
Let $n∈\Z_{\geq 1}\cup \{∞\}$. Let $(A', (m'_k)_{k∈[1,n]})$ and $(A, (m_k)_{k∈[1,n]})$ be two $\A_n$-algebras and let $(f_k)_{k∈[1,n]}: (A', (m'_k)_{k∈[1,n]}) → (A, (m_k)_{k∈[1,n]})$ be an $\A_n$-morphism.

By \eqref{ainfrel}[$1$], $(A',m'_1)$ and $(A,m_1)$ are complexes. Equation \eqref{finfrel}[$1$] is
\begin{align*}
%\eqref{finfrel}[1]: && 
f_1\circ m'_1 \eqs& m_1\circ f_1.
\end{align*}
Thus $f_1:(A',m'_1)→(A,m_1)$ is a complex morphism.

For $n\geq 2$, we have also \eqref{finfrel}[$2$]:
\begin{align}
\label{finfrel2}
%\eqref{finfrel}[2]: && 
f_1\circ m'_2 - f_2\circ(m'_1\otimes 1 + 1\otimes m'_1) \eqs& m_1\circ f_2 + m_2\circ (f_1\otimes f_1)
\end{align}
\end{bsp}
Recall the conventions concerning $\Hom_B^k(C,C')$.
\begin{lemma}[{cf.\ e.g.\ \cite[Section 3.3]{Ke01}}]
\label[lemma]{lem:cai}
Let  $B$ be an (ordinary)  $R$-algebra and $M=((M_i)_{i∈\Z}, (d_i)_{i∈\Z})$ a complex of $B$-modules, that is a sequence $(M_i)_{i∈\Z}$ of $B$-modules and $B$-linear maps $d_i:M_i→M_{i-1}$ such that $d_{i-1}\circ d_i = 0$ for all $i∈\Z$.
Let 
\begin{align*}
\Hom^i_B(M,M):\eqs& \prod_{z∈\Z}\Hom_B(M_{z+i},M_z)\\
\eqs& \{g=(g_z)_{z∈\Z} \mid  g_z∈\Hom_B(M_{z+i},M_z)\text{ for }z∈\Z\}.
\end{align*}
Then 
\begin{align*}
A=\Hom^*_B(M,M) := \bigoplus_{i∈\Z} \Hom^i_B(M,M)
\end{align*}
 is a graded  $R$-module. %Recall the notations concerning the $R$-module $\Hom^k(C,C')$ for complexes $C$, $C'$. 
We have $d := (d_{z+1})_{z∈\Z}=\sum_{i∈\Z} \ls d_{i+1}\rs_{i+1}^{i} ∈\Hom^1_B(M,M)$. We define $m_1:=d_{\Hom^*(M,M)}:A→A$, that is  for homogeneous $g∈A$ we have
\begin{align*}
m_1(g) = d \circ g -(-1)^{|g|}  g\circ d.
\end{align*}
We define $m_2:A^{\otimes 2}→A$ for homogeneous $g,h∈A$ to be composition, i.e.\ 
\begin{align*}
m_2(g\otimes h) := g \circ h.
\end{align*}
For $n\geq 3$ we set $m_n:A^{\otimes n}→A$, $m_n=0$. Then $(m_n)_{n\geq 1}$ is an $\A_\infty$-algebra structure on $A=\Hom^*_B(M^*,M^*)$. More precisely, $(A, (m_n)_{n\geq 1})$ is a dg-algebra.
\end{lemma}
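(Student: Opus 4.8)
The plan is to verify that the family $(m_n)_{n\geq 1}$, with $m_n=0$ for $n\geq 3$, satisfies the Stasheff identities \eqref{ainfrel}, and thereby is an $\A_\infty$-algebra; since only $m_1$ and $m_2$ are nonzero, it will automatically be a dg-algebra as in \cref{ex:dg}. First I would confirm that we have a pre-$\A_\infty$-algebra by checking degrees. Using the composition convention $h'\circ h\in\Hom_B^{m+n}(M,M)$ for $h\in\Hom_B^m(M,M)$, $h'\in\Hom_B^n(M,M)$, the element $d=(d_{z+1})_{z\in\Z}$ lies in $\Hom_B^1(M,M)$, so for homogeneous $g$ both $d\circ g$ and $g\circ d$ lie in $\Hom_B^{|g|+1}(M,M)$; hence $|m_1|=1=2-1$, while $m_2(g\otimes h)=g\circ h$ gives $|m_2|=0=2-2$. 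Because $m_n=0$ for $n\geq 3$, the computation in \cref{ex:dg} shows that every identity \eqref{ainfrel}[$n$] with $n\geq 4$ holds trivially, so it remains to check \eqref{ainfrel}[$1$], \eqref{ainfrel}[$2$] and \eqref{ainfrel}[$3$].

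For \eqref{ainfrel}[$1$], i.e. $m_1\circ m_1=0$, the key point is that $d\circ d=0$ in $\Hom_B^2(M,M)$: by the composition convention its $z$-th component is $d_{z+1}\circ d_{z+2}$, which vanishes by the complex identity $d_{i-1}\circ d_i=0$. I would then expand, for homogeneous $g$,
\[
m_1(m_1(g)) = d\circ m_1(g) - (-1)^{|g|+1}\, m_1(g)\circ d,
\qquad m_1(g)=d\circ g - (-1)^{|g|}\, g\circ d,
\]
and observe that the two mixed terms $\pm(-1)^{|g|}\,d\circ g\circ d$ cancel, leaving $(d\circ d)\circ g$ and $\pm\, g\circ(d\circ d)$, both zero.

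For the associativity identity \eqref{ainfrel}[$3$] and the Leibniz identity \eqref{ainfrel}[$2$] I would compute on homogeneous $g,h$ (and $k$). Since $|1|=0$ and $|m_2|=0$, the Koszul rule \eqref{koszulraw} introduces no sign in $(m_2\otimes 1)$, $(1\otimes m_2)$ or $(m_1\otimes 1)$, and contributes exactly $(-1)^{|g|}$ in $(1\otimes m_1)(g\otimes h)=(-1)^{|g|}\,g\otimes m_1(h)$. For \eqref{ainfrel}[$3$], associativity of composition of maps together with the absence of Koszul signs gives $m_2\circ(1\otimes m_2)(g\otimes h\otimes k)=g\circ h\circ k=m_2\circ(m_2\otimes 1)(g\otimes h\otimes k)$, whence $m_2\circ(1\otimes m_2-m_2\otimes 1)=0$. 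For \eqref{ainfrel}[$2$], I would expand both sides: on the one hand $m_1(m_2(g\otimes h))=d\circ g\circ h-(-1)^{|g|+|h|}\,g\circ h\circ d$, and on the other hand $m_2\bigl(m_1(g)\otimes h+(-1)^{|g|}g\otimes m_1(h)\bigr)$, where the term $-(-1)^{|g|}\,g\circ d\circ h$ from $m_1(g)\circ h$ cancels the term $+(-1)^{|g|}\,g\circ d\circ h$ from $(-1)^{|g|}\,g\circ m_1(h)$, leaving exactly $d\circ g\circ h-(-1)^{|g|+|h|}\,g\circ h\circ d$.

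The main obstacle is purely the sign bookkeeping: one must track the interplay between the sign $-(-1)^{|g|}$ built into the definition of $m_1$ and the Koszul sign $(-1)^{|g|}$ produced by $1\otimes m_1$, and confirm that the mixed terms cancel in both \eqref{ainfrel}[$1$] and \eqref{ainfrel}[$2$]. Conceptually, everything reduces to the single identity $d\circ d=0$ and to the associativity of composition, so once the signs are handled there is no further difficulty.
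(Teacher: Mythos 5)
Your proposal is correct and follows essentially the same route as the paper's proof: check the degrees, dispose of \eqref{ainfrel}$[n]$ for $n\geq 4$ via \cref{ex:dg}, verify $[1]$ by expanding $m_1\circ m_1$ and using $d\circ d=0$, verify $[2]$ by the same cancellation of the mixed terms $\pm(-1)^{|g|}\,g\circ d\circ h$, and reduce $[3]$ to associativity of composition. The sign bookkeeping you describe matches the paper's computation exactly.
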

\kommentar{
\begin{proof}
Because of $|d|=1$ we have $|m_1| = 1 = 2-1$. The graded map $m_2$ has  degree $0 = 2-2$. The other maps $m_n$ are zero and have therefore automatically correct degree. As discussed in \cref{ex:dg} we only need to check \eqref{ainfrel}[$n$] for $n=1,2,3$. 
Equation \eqref{ainfrel}[$1$] holds  because for homogeneous $g∈A$ we have
\begin{align*}
m_1(m_1(g)) \eqs& m_1[d\circ g - (-1)^{|g|}g\circ d]\\
 \eqs& d\circ[d\circ g - (-1)^{|g|}g\circ d] - (-1)^{|g|+1}[d\circ g - ({-}1)^{|g|}g\circ d]\circ d\\
\ovs{d^2=0}& -(-1)^{|g|}d\circ g\circ d - (-1)^{|g|+1}d\circ g\circ d = 0.
\end{align*}
Concerning \eqref{ainfrel}[$2$], we have for homogeneous $g,h∈A$
\begin{align*}
(m_2\circ (m_1\otimes 1+&1\otimes m_1))(g\otimes h)= m_2(m_1(g)\otimes h + (-1)^{|g|}g\otimes m_1(h))\\
\eqs& (d\circ g - (-1)^{|g|}g\circ d)\circ h + (-1)^{|g|}g\circ(d\circ h - (-1)^{|h|}h\circ d)\\
\eqs& d\circ g \circ h - (-1)^{|g|+|h|}g\circ h\circ d\\
\eqs& (m_1\circ m_2 )(g\otimes h).
\end{align*}
The map $m_2$ is induced by the composition of morphisms which is associative. As discussed in \cref{ex:dg}, equation \eqref{ainfrel}[$3$] holds.
%\eqref{ainfrel} holds for $n=3$ as $m_2$ is induced by the composition of morphisms which is associative and because due to $|m_2|=0$ no additional signs appear that are caused by the Koszul sign rule.
\end{proof}
}%kommentar

% \begin{bem}
% \label{bem:extended_composition}
% The space $\Hom^*_B(M^*,M^*)$ can canonically identified with the space $\End_B(\bigoplus_{i∈\Z}M^i)$ via 
% \[(g_z∈\Hom_B(M^{z+i},M^z))∈\Hom^i_B(M^*,M^*) \mapsto \bigoplus_{z∈\Z} g_z: \bigoplus_{i∈\Z}M^i→\bigoplus_{i∈\Z}M^i\]
% Under this map elements of $\Hom^i_B(M^*,M^*)$ become graded maps of degree $i$. The composition of elements of $\End_B(\bigoplus_{i∈\Z}M^i)$ gives a natural extension of the composition of tuples of maps. In that sense for any $g,h∈\Hom^*_B(M^*,M^*)$ the identity $m_2(g\otimes h) = g\circ h$ holds.
% \end{bem}
\begin{bem}
\label[bem]{bem:di}
In $\Hom^*(\pres\fp,\pres\fp)$ we have (cf. \eqref{prfp})
\begin{align*}
d \eqs& \sum_{i\geq 0} \ls e_{ω(i),ω(i+1)}\rs_{i+1}^i\,.
\end{align*}
\end{bem}
%XXXXX: Proper definition / declaration of universal use.
\begin{Def}[Homology of $\A_∞$-algebras, quasi-isomorphisms, minimality, minimal models]
\label[Def]{def:homology}
As $m_1^2=0$ (cf.\ \eqref{ainfrel}[$1$]) and %, we may introduce the homology of an $\A_\infty$-algebra $(A,(m_k)_{k\geq 1})$. There are two slightly different notations: \\*
%Firstly we define the homology of $A$ to be $\Hm^* A := \ker m_1/\im m_1$. Secondly because %of 
$|m_1|=1$, we have the complex
\[\cdots → A^{i-1}\xrightarrow{m_1|_{A^{i-1}}}A^{i}\xrightarrow{m_1|_{A^i}} A^{i+1}→\cdots\,.\]
We define $\Hm^k A:= \ker(m_1|_{A^k})/\im(m_1|_{A^{k-1}})$ and $\Hm^* A := \bigoplus_{k∈\Z} \Hm^k A$, which gives the homology of $A$ the structure of a graded $R$-module.

A morphism of $\A_∞$-algebras $(f_k)_{k\geq 1}: (A', (m'_k)_{k\geq 1}) → (A, (m_k)_{k\geq 1})$ is called a \textit{quasi-isomorphism} if the morphism of complexes $f_1:(A', m'_1)→(A,m_1)$ (cf. \cref{ex:cc}) is a quasi-isomorphism.

An $\A_∞$-algebra is called \textit{minimal}, if $m_1=0$. If $A$ is an $\A_∞$-algebra and $A'$ is a minimal $\A_∞$-algebra quasi-isomorphic to $A$, then $A'$ is called a \textit{minimal model of $A$}.
\end{Def}
% \begin{Def}
% A morphism of $\A_\infty$-algebras $f\colon A→B$ is a family of graded maps
% \begin{align*}
% f_n\colon A^{\otimes n}→B &&n\geq 1
% \end{align*}
% of degree $1-n$ such that for $n\geq 1$ we have
% \refstepcounter{equation}\label{finfrel}
% \begin{align}
% %\sum_{\mathclap{n=r+s+t}} (-1)^{r+st} f_u(1^{\otimes r}\otimes m_s\otimes 1^{\otimes t}) = \sum_{\mathclap{\natop{1\leq r\leq n}{i_1+…+i_r=n}}} (-1)^s m_r(f_{i_1}\otimes f_{i_2}\otimes … \otimes f_{i_r})
% \sum_{\mathclap{\substack{n=r+s+t\\r,t\geq 0,s\geq 1}}} (-1)^{rs+t} f_{r+1+t}(1^{\otimes r}\otimes m_s\otimes 1^{\otimes t}) = \sum_{\mathclap{\substack{1\leq r\leq n\\ i_1+…+i_r=n\\ i_s\geq 1}}} (-1)^v m_r(f_{i_1}\otimes f_{i_2}\otimes … \otimes f_{i_r})
% \tag*{(\theequation)[$n$]}
% \end{align} 
% where %$u=r+1+t$ and 
% \begin{align}
% \label{fsign}
% % s\eqs&(r-1)(i_1-1) + (r-2)(i_2-1) + … + (r- (r-1))(i_{r-1}-1) + 0\cdot(i_r - 1)\\
% % \eqs& (r-1)(i_1-1) + (r-2)(i_2-1) + … + 1\cdot(i_{r-1}-1)
% v :\eqs& \sum_{2\leq s\leq r}\left((1-i_s)\sum_{1\leq t\leq s} i_t\right)
% \end{align}
% $f$ is called a quasi-isomorphism if $f_1$ is a quasi-isomorphism.
% \end{Def}
The existence of minimal models is assured by the following theorem. % FIXME Refactor 
\begin{tm}(minimality theorem, cf.\ \cite{Ke01ad} (history), \cite{Ka82}, \cite{Ka80}, \cite{Pr84}, \cite{GuLaSt91}, \cite{JoLa01}, \cite{Me99}, … )
\label[tm]{tm:kadeishvili}
Let $(A,(m_k)_{k\geq 1})$ be an $\A_\infty$-algebra such that the homology $\Hm^* A$ is a projective $R$-module. Then there exists an $\A_\infty$-algebra structure $(m'_k)_{k\geq 1}$ on $\Hm^*A$ and a quasi-isomorphism of $\A_∞$-algebras $(f_k)_{k\geq 1}:(\Hm^* A,(m'_k)_{k\geq 1})→(A,(m_k)_{k\geq 1})$, such that
\begin{itemize}
\item $m'_1 = 0$ and
\item the complex morphism $f_1:(\Hm^*A,m'_1)→(A, m_1)$ induces the identity in homology. I.e.\ each element $x∈\Hm^* A$, which is a homology class of $(A,m_1)$, is mapped by $f_1$ to a representing cycle.
\end{itemize}
% and $m_2$ is induced by $m_2^A$
\end{tm}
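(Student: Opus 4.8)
The plan is to follow \name{Kadeishvili}'s inductive construction, building the operations $m'_n$ and the morphism components $f_n$ \emph{simultaneously} by induction on $n$, cf.\ \cite[Section 3.3]{Ke01}. The projectivity of $\Hm^* A$ will enter at exactly two points: to start the induction by splitting a short exact sequence, and at each later step to lift a boundary-valued map against a surjection. To begin, write $\text{Z}^k:=\ker(m_1|_{A^k})$ and $\text{B}^k:=\im(m_1|_{A^{k-1}})$, so that $\Hm^k A=\text{Z}^k/\text{B}^k$, and let $\pi:\text{Z}^*\to\Hm^* A$ be the canonical projection. Since $\Hm^* A$ is projective, $\pi$ admits a graded section $s:\Hm^* A\to\text{Z}^*$; composing with the inclusion $\text{Z}^*\hookrightarrow A$ gives a degree-$0$ map $f_1:\Hm^* A\to A$ with $m_1\circ f_1=0$ and $\pi\circ f_1=\id$. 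Thus $f_1$ is a morphism of complexes sending each homology class to a representing cycle, which is precisely the condition on $f_1$ in the statement and makes $f_1$ induce the identity in homology. Setting $m'_1:=0$ starts the induction, and both \eqref{ainfrel}[$1$] for $(m'_k)$ and \eqref{finfrel}[$1$] then hold.

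For the inductive step, assume $m'_1,\dots,m'_{n-1}$ and $f_1,\dots,f_{n-1}$ have been constructed so that \eqref{ainfrel}[$k$] for $(m'_k)$ and \eqref{finfrel}[$k$] hold for all $k\in[1,n-1]$. In \eqref{finfrel}[$n$] I would isolate the two summands containing the not-yet-defined maps: on the left-hand side the term with $s=n$ (hence $r=t=0$) is $f_1\circ m'_n$, while the terms with $s=1$ vanish because $m'_1=0$; on the right-hand side the term with $r=1$ (hence $i_1=n$) is $m_1\circ f_n$. Every remaining summand involves only $m'_{<n}$, $f_{<n}$ and the given maps $m_\bullet$, so they collect into a known graded map $U_n:(\Hm^* A)^{\otimes n}\to A$ of degree $2-n$, and \eqref{finfrel}[$n$] reduces to the single equation
\begin{equation*}
f_1\circ m'_n - m_1\circ f_n = U_n.
\end{equation*}

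The crux is the lemma that $U_n$ is a cycle, i.e.\ $m_1\circ U_n=0$, and this is where I expect the real work to lie. The verification is a bookkeeping argument: substituting the definition of $U_n$, expanding $m_1\circ U_n$, and reorganizing the resulting triple sums, one plays off the Stasheff identities \eqref{ainfrel}[$k$] for the given algebra $A$ (valid for all $k$) against the inductive hypotheses \eqref{ainfrel}[$k$] for $(m'_k)$ and \eqref{finfrel}[$k$] for $k<n$, so that all terms cancel. The signs $(-1)^{rs+t}$ and $(-1)^{v}$ are exactly what make this cancellation go through under the Koszul sign rule, so the main obstacle is carrying out the reindexing with the correct signs rather than any conceptual difficulty.

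Granting $m_1\circ U_n=0$, so that $U_n$ takes values in $\text{Z}^*$, I would close the step as follows. Define $m'_n:=\pi\circ U_n:(\Hm^* A)^{\otimes n}\to\Hm^* A$, of degree $2-n$. Then $f_1\circ m'_n-U_n$ also takes values in $\text{Z}^*$, and applying $\pi$ together with $\pi\circ f_1=\id$ shows it lands in $\ker\pi=\text{B}^*=\im m_1$. As a tensor product of projective $R$-modules is projective, $(\Hm^* A)^{\otimes n}$ is projective, so this boundary-valued map lifts along the surjection $m_1:A\twoheadrightarrow\text{B}^*$; choosing a lift yields $f_n:(\Hm^* A)^{\otimes n}\to A$ of degree $1-n$ with $m_1\circ f_n=f_1\circ m'_n-U_n$, hence satisfying the displayed equation and so \eqref{finfrel}[$n$]. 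One checks in parallel that $(m'_k)$ satisfies \eqref{ainfrel}[$n$]; in this scheme it comes out of the same computation, since $f_1$ is injective and applying it to the left-hand side of \eqref{ainfrel}[$n$] gives a combination that vanishes by the morphism relations already in force, forcing the relation itself. By induction this produces an $\A_∞$-structure $(m'_k)_{k\geq 1}$ on $\Hm^* A$ with $m'_1=0$, hence minimal, together with an $\A_∞$-morphism $(f_k)_{k\geq 1}$ whose component $f_1$ induces the identity in homology and is therefore a quasi-isomorphism. Thus $(\Hm^* A,(m'_k)_{k\geq 1})$ is the desired minimal model of $A$.
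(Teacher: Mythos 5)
Your outline is the classical Kadeishvili obstruction-theoretic induction, and as an outline it is sound; but note that the paper itself does not prove \cref{tm:kadeishvili} at all --- it quotes the minimality theorem as a known result and points to the literature (\cite{Ka82}, \cite{Ka80}, \cite{Pr84}, \dots), so there is no in-paper argument to compare against. Your reduction of \eqref{finfrel}$[n]$ to $f_1\circ m'_n-m_1\circ f_n=U_n$ is correct (the only occurrence of $m'_n$ on the left is the $r=t=0,\,s=n$ term, the only occurrence of $f_n$ on the right is the $r=1$ term, and the $s=1$ terms die since $m'_1=0$), the degree bookkeeping works out, the splitting of $\text{Z}^*\twoheadrightarrow\Hm^*A$ and the lift of the boundary-valued map $f_1\circ m'_n-U_n$ along $m_1\colon A\twoheadrightarrow \text{B}^*$ are exactly where projectivity of $\Hm^*A$ and of $(\Hm^*A)^{\otimes n}$ is needed, and your closing remark that the Stasheff identities \eqref{ainfrel}$[n]$ for $(m'_k)_k$ are forced by injectivity of $f_1$ is precisely \cref{lem:aaut}. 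The one substantive caveat: the identity $m_1\circ U_n=0$, which you correctly identify as the crux, is \emph{asserted} rather than proved, and it carries essentially all of the content of the theorem --- the cancellation requires expanding $m_1\circ(\Xi_n-\Phi_n)$, applying the Leibniz-type identities \eqref{ainfrel}$[k]$ of $A$ and the already-established relations \eqref{finfrel}$[k]$, $k<n$, and verifying that the signs $(-1)^{rs+t}$ and $(-1)^v$ conspire under the Koszul rule \eqref{koszulraw}. Until that computation (or a bar-construction reformulation of it, which is how \cite{Ka80} and \cite{Le03} package the signs) is written out, what you have is a correct reconstruction of the architecture of the standard proof rather than a complete proof.
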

%We give a proof of \cref{tm:kadeishvili} in \cref{apps}, cf.\ \cref{tm:kadalg}.

% It has also been proven (FIXME to my knowledge by Lefèvre-Hasegawa, cf.\ \cite[Corollaire 1.4.1.4]{Le03}) that $(f_k)_{k\geq 1}$ actually lifts the identity on $\Hm^* A$ as an $\A_∞$-algebra and that a minimal model of an $\A_∞$-algebra $A$ is unique up to (non unique) isomorphisms of $\A_∞$-algebras. FIXME: check this !
% \begin{bem}
% \begin{itemize}
% \item there is a quasi-isomorphism of $\A_\infty$-algebras $\Hm^*A→A$ lifting the identity of $\Hm^*A$.
% \end{itemize}
% Moreover, this structure is unique up to (non unique) isomorphisms of $\A_\infty$-algebras.
% \end{bem}
%

For constructing $\A_∞$-structures induced by another $\A_∞$-algebra, we have the following
\begin{lemma}[{cf.\  \cite[Proof of Theorem 1]{Ka80}}]
\label[lemma]{lem:aaut}
Let $n∈\Z_{\geq 1}\cup \{∞\}$. Let $(A',(m'_k)_{k∈[1,n]})$ be a pre-$\A_n$-algebra. Let $(A,(m_k)_{k∈[1,n]})$ be an $\A_n$-algebra. Let $(f_k)_{k∈[1,n]}$ be a pre-$\A_n$-morphism from $A'$ to $A$ such that \eqref{finfrel}$[k]$ holds for $k∈[1,n]$. Suppose $f_1$ to be injective.\\*
Then $(A',(m'_k)_{k∈[1,n]})$ is  an $\A_n$-algebra and $(f_k)_{k∈[1,n]}$ is a morphism of $\A_n$-algebras from $(A',(m'_k)_{k∈[1,n]})$ to $(A,(m_k)_{k∈[1,n]})$.
\end{lemma}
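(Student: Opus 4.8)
The plan is to prove the two assertions in turn, observing that the second is almost immediate once the first holds: by definition an $\A_n$-morphism between $\A_n$-algebras is precisely a pre-$\A_n$-morphism satisfying \eqref{finfrel}[$k$] for all $k\in[1,n]$, so once we know that $(A',(m'_k)_{k\in[1,n]})$ is an $\A_n$-algebra, the given family $(f_k)_{k\in[1,n]}$ is automatically a morphism of $\A_n$-algebras. It therefore suffices to verify the Stasheff identities \eqref{ainfrel}[$k$] for $A'$. Abbreviate the left-hand side of \eqref{ainfrel}[$k$] for $A'$ by
\[ S'_k := \sum_{\substack{k=r+s+t\\ r,t\geq 0,\, s\geq 1}} (-1)^{rs+t}\, m'_{r+1+t}\circ(1^{\otimes r}\otimes m'_s \otimes 1^{\otimes t})\colon (A')^{\otimes k}\to A', \]
so that the goal is $S'_k=0$ for all $k\in[1,n]$. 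I will argue by induction on $k$, the strategy being to show $f_1\circ S'_k=0$ and then invoke the injectivity of $f_1$.

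The engine is the bar/coalgebra reformulation, with the sign conventions of \cite{Le03} that the present signs follow. Let $T^c$ denote the tensor coalgebra on a shift of $A'$ resp.\ $A$, with cogenerator projection $\pi$. The family $(m'_k)$ determines a degree $+1$ coderivation $b'$ on $T^c(A')$ whose corestriction encodes the $m'_k$, and likewise $(m_k)$ determines a coderivation $b$ on $T^c(A)$; the corestriction of $(b')^2$ to the cogenerators is, on the length-$k$ summand, exactly $S'_k$ up to sign. Since $A$ is an $\A_n$-algebra we have $b^2=0$, and the hypothesis that $(f_k)$ satisfies \eqref{finfrel}[$k$] for all $k$ says precisely that the induced coalgebra morphism $F\colon T^c(A')\to T^c(A)$ (with $\pi\circ F$ encoding the $f_k$, so $f_1$ is its length-one component) intertwines the codifferentials, i.e.\ $F\circ b'=b\circ F$. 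Consequently
\[ F\circ (b')^2 = (F\circ b')\circ b' = (b\circ F)\circ b' = b\circ(F\circ b') = b\circ(b\circ F) = b^2\circ F = 0. \]
Because $(b')^2$ is again a coderivation, it is determined by its corestriction; corestricting the identity $F\circ(b')^2=0$ to the length-one cogenerators and evaluating on the length-$k$ summand $(A')^{\otimes k}$ therefore yields, with Koszul signs, an identity of the shape
\[ 0 = \sum_{\substack{a+j+c=k\\ a,c\geq 0,\, j\geq 1}} (\pm)\, f_{a+1+c}\circ(1^{\otimes a}\otimes S'_j \otimes 1^{\otimes c}), \]
in which the summand with $j=k$ (forcing $a=c=0$, $a+1+c=1$) is $\pm\,f_1\circ S'_k$, while every other summand has $j<k$.

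The induction now runs cleanly. For $k=1$ only the term $f_1\circ S'_1$ appears, where $S'_1=m'_1\circ m'_1$; hence $f_1\circ S'_1=0$, and injectivity of $f_1$ gives $S'_1=0$. For the step, assume $S'_j=0$ for all $j<k$; then all summands with $j<k$ in the displayed identity vanish, leaving $\pm\,f_1\circ S'_k=0$, and injectivity of $f_1$ forces $S'_k=0$. This establishes \eqref{ainfrel}[$k$] for $A'$ for all $k\in[1,n]$, and with it both assertions. For finite $n$ one reads everything above modulo tensors of length exceeding $n$, since $m'_k$, $m_k$, $f_k$ are only given for $k\le n$; this truncation is harmless because the level-$k$ identity involves only indices $\le k\le n$. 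I expect the only real work to lie in the sign bookkeeping: checking that the corestriction of $(b')^2$ reproduces $S'_k$ with the signs of \eqref{ainfrel}, that $F\circ b'=b\circ F$ is equivalent to \eqref{finfrel}, and that the coderivation expansion of $(b')^2$ produces exactly the Koszul signs recorded as $(\pm)$ above. These are standard but must be pinned down to match the conventions used here; a reader preferring to avoid the coalgebra language can instead unwind $F\circ(b')^2=0$ into the corresponding multilinear identity and prove the displayed relation directly by the same induction.
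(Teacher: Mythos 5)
Your proof is correct and follows essentially the same route as the paper: the paper's proof is exactly "bar construction and a straightforward induction on $n$", i.e.\ passing to the coderivations $b'$, $b$ and the coalgebra morphism $F$, using $F\circ(b')^2=b^2\circ F=0$, and inducting on tensor length with the injectivity of $f_1$ to kill the top term. No substantive difference.
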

This results from the bar construction and a straightforward induction on $n$.
\kommentar{\begin{proof} %We use the bar construction. For a graded $R$-module $V$, we define the module with shifted grading by $SV=V$ with grading $(SV)^q =V^{q+1}$. Its tensor coalgebra up to is $TSV$.
We have the corresponding pre-$\A_n$-triple $((m'_k)_{k∈[1,n]}, (b'_k)_{k∈[1,n]}, b')$, the corresponding pre-$\A_n$-triple  $((m_k)_{k∈[1,n]}, (b_k)_{k∈[1,n]}, b)$ and the corresponding pre-$\A_n$-morphism triple  $((f_k)_{k∈[1,n]}, (F_k)_{k∈[1,n]}, F)$. 
It suffices to prove by induction  on $k∈[0,n]$ that $(b')^2\big|_{TSA'_{\leq k}} = 0$, cf. \cref{tm:stasheff}.\\*
For $k=0$, there is nothing to prove.
%For $k=1$, we have $\im  b'\,^2 \circ ι_1\subseteq A$. Thus $F_1\circ b'\,^2 \circ ι_1 = F\circ b'\,^2\circ ι_1 = 0$ and as $F_1$ is injective, we have $b'\,^2 \circ ι_1=0$.\\*
For the induction step, suppose that $b'\,^2\big|_{TSA'_{\leq k}} = 0$. Then by \cref{lem:bfind}(i), we have $\im(b'\,^2 \circ ι'_{k+1})\subseteq SA$. Thus $0 = b^2\circ F \circ ι'_{k+1} \overset{\text{L.}\ref{lem:stasheff2}}{=} F\circ b'\,^2 \circ ι'_{k+1} = F_1\circ b'\,^2 \circ ι'_{k+1}$. As the injectivity of $f_1$ implies the injectivity of  $F_1$, we have $b'\,^2\circ ι'_{k+1} =0$ and thus $b'\,^2\big|_{TSA'_{\leq k+1}} = 0$.
%By \cref{lem:stasheff2}, we have $F\circ b' = b\circ F$. By \cref{tm:stasheff}, we have $b^2 = 0$ thus $F\circ (b')^2 = b\circ F\circ b' = b^2 \circ F = 0$. By \cref{lem:finj}, $F$ is injective and so $(b')^2 = 0$. Once more by \cref{tm:stasheff}, $(m'_k)_{k∈[1,n]}$ is an $\A_n$-structure on $A'$ and thus $(f_k)_{k∈[1,n]}$ is a morphism of $\A_n$-algebras.
\end{proof}
}%kommentar

\begin{lemma}[{\cite[Theorem 5]{Ve10}}]
\label[lemma]{lem:finffinite}
Let $R$ be a commutative ring and $(A,(m_n)_{n\geq 1})$ be a dg-algebra (over $R$). Suppose given a graded $R$-module $B$ and graded maps $f_n:B^{\otimes n}→A$, $m'_n:B^{\otimes n}→B$ for $n\geq 1$. Suppose given $k\geq 1$ such that
we have $f_i=0$ for $i\geq k$, we have $m'_i=0$ for $i\geq k+1$, 
% \begin{align*}
% f_i\eqs&0  \text{ for $i\geq k$}, &
% m'_i\eqs&0  \text{ for $i\geq k+1$},
% \end{align*}
and  \eqref{finfrel}$[n]$ is satisfied for $n∈[1,2k-2]$. Then \eqref{finfrel}$[n]$ is satisfied for all $n\geq 1$.
\end{lemma}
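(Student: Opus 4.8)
The plan is to leverage the two finiteness hypotheses together with the dg-structure on $A$ in order to reduce the infinitely many identities \eqref{finfrel}$[n]$, $n \geq 1$, to the finitely many that are assumed. Since \eqref{finfrel}$[n]$ is given for $n \in [1, 2k-2]$, it suffices to prove that for every $n \geq 2k-1$ both sides of \eqref{finfrel}$[n]$ vanish identically; then the equation holds trivially and the claim follows. The whole argument is a counting of indices, using that $f_i = 0$ for $i \geq k$, that $m'_i = 0$ for $i \geq k+1$, and that $m_r = 0$ for $r \geq 3$ because $A$ is a dg-algebra (cf.\ \cref{ex:dg}).

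For the left-hand side, I would fix $n \geq 2k - 1$. Each summand is, up to sign, of the form $f_{r+1+t} \circ (1^{\otimes r} \otimes m'_s \otimes 1^{\otimes t})$ with $r + s + t = n$ and $s \geq 1$. Such a summand can only be non-zero if simultaneously $f_{r+1+t} \neq 0$, which forces $r + 1 + t \leq k - 1$, and $m'_s \neq 0$, which forces $s \leq k$. Adding these inequalities yields $n = (r + t) + s \leq (k - 2) + k = 2k - 2$, contradicting $n \geq 2k - 1$. Hence every summand vanishes and the left-hand side of \eqref{finfrel}$[n]$ is zero.

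For the right-hand side, the dg-condition $m_r = 0$ for $r \geq 3$ means only the terms with $r = 1$ and $r = 2$ survive. The $r = 1$ term is $m_1 \circ f_n$; since $n \geq 2k - 1 \geq k$ (using $k \geq 1$), we have $f_n = 0$ and this term drops out. The $r = 2$ terms are scalar multiples of $m_2 \circ (f_{i_1} \otimes f_{i_2})$ with $i_1 + i_2 = n$ and $i_1, i_2 \geq 1$; non-vanishing forces $i_1 \leq k - 1$ and $i_2 \leq k - 1$, so $n = i_1 + i_2 \leq 2k - 2$, once more a contradiction. Thus the right-hand side of \eqref{finfrel}$[n]$ is zero as well, and \eqref{finfrel}$[n]$ holds for all $n \geq 2k - 1$. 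Combined with the hypothesis for $n \in [1, 2k-2]$, this establishes \eqref{finfrel}$[n]$ for all $n \geq 1$.

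I do not expect a genuine obstacle here: the proof is pure bookkeeping, and its only delicate point is verifying that the threshold index is exactly $2k - 2$, i.e.\ that $2k - 2$ is the largest degree at which either side of \eqref{finfrel} can be non-zero given the vanishing data. I would also double-check the boundary case $k = 1$, where the interval $[1, 2k-2] = [1,0]$ is empty and every $f_i$ vanishes, to confirm that the statement then reduces to the trivial identity $0 = 0$ for all $n$.
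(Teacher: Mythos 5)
Your proof is correct. The paper itself gives no argument for \cref{lem:finffinite} --- it simply cites \cite[Theorem 5]{Ve10} --- and your counting argument is precisely the content of that result: the vanishing $f_i=0$ for $i\geq k$ forces $r+1+t\leq k-1$ on the left and $i_1,i_2\leq k-1$ in the surviving $m_2$-terms on the right, while $m'_s\neq 0$ forces $s\leq k$, so every term of \eqref{finfrel}$[n]$ dies once $n\geq 2k-1$; the dg-hypothesis is used exactly where you use it, to kill $m_r$ for $r\geq 3$. Your two side remarks are also the right ones to make: $2k-2$ is sharp (a term $f_{k-1}\circ(1^{\otimes r}\otimes m'_k\otimes 1^{\otimes t})$ or $m_2\circ(f_{k-1}\otimes f_{k-1})$ can survive at $n=2k-2$, which is why the hypothesis must reach that far), and for $k=1$ all $f_i$ vanish so both sides are identically zero and the empty hypothesis interval causes no harm. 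Note that vanishing of a tensor factor kills the whole tensor product of maps irrespective of Koszul signs, so no sign bookkeeping is needed.
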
 
%The proof is  analogous to  \cref{ex:dg}, where we showed that \eqref{ainfrel}$[n]$ is trivial for $n\geq 3$. 
% \begin{proof}
% We need to check \eqref{finfrel}[$n$] for $n\geq 2k-1$: \\*
% The left side of \eqref{finfrel}[$n$] is zero:  For $f_{r+1+t}\circ(1^{\otimes r}\otimes m'_s\otimes 1^{\otimes t})$ to be non-zero it is necessary that $r+1+t\leq k-1$ and $s\leq k$, so $n+1=r+s+t+1\leq 2k-1$, which is not the case. Thus all summands on the left side of \eqref{finfrel}[$n$] are zero. \\*
% The right side of \eqref{finfrel}[$n$] is zero: 
% %We denote the multiplications on $A$ by $m_n$. 
% As $A$ is a dg-algebra, we have $m_n=0$ for $n\geq 3$. So all non-zero summands on the right side have $r\leq 2$. For a non-zero summand we also have ${i_y}\leq k-1$ for all $y∈[1,r]$. So for those we have
% \begin{align*}
% n = \sum_{y=1}^r i_y \overset{r\leq 2} \leq 2(k-1) = 2k-2.
% \end{align*}
% But $n\geq 2k-1$, so all summands on the right side of \eqref{finfrel}[$n$] are zero. 
% \end{proof}

\subsection{The homology of \texorpdfstring{$\Hom^*_{\fp\!\Sp}(\pres\fp,\pres\fp)$}{Hom*(FpSp)(PResFp,PResFp)}}
\label{subsec:homology}
We need a well-known result of homological algebra in a particular formulation:
\begin{lemma}
\label[lemma]{prpr-prm}
Let $F$ be a field. Let  $B$ be an $F$-algebra. Let $M$ be a $B$-module. Let $Q = (\cdots \rightarrow Q_2 \xrightarrow{d_2} Q_1 \xrightarrow{d_1} Q_0 → 0→
\cdots)$ be a projective resolution of $M$ with augmentation $ε:Q_0→M$. Then we have maps for $k∈\Z$
\begin{align*}
Ψ_k:\Hom^k_B(Q,Q) &\xrightarrow{}\Hom^k_B(Q, M) := \Hom_B(Q_k,M)\\
(g_i:Q_{i+k}→Q_i)_{i∈\Z} &\mapsto ε\circ g_0
\end{align*}
The right side is equipped with the differentials (dualization of $d_k$)
\begin{align*}
(d_k)^*:\Hom_B(Q_k,M) &\rightarrow \Hom_B(Q_{k+1},M)\\
g & \mapsto (-1)^k g\circ d_k
\end{align*}
and the left side is equipped with the differential $m_1$ of its dg-algebra structure, cf.\ \cref{lem:cai}.
% has the usual differential of (the second notion of) \cref{def:homology}.

Then $(Ψ_k)_{k∈\Z}$ becomes a complex morphism from the complex  $\Hom^*_B(Q,Q)$ to the complex $\Hom^*_B(Q, M)$ that induces isomorphisms $\bar{Ψ}_k$ of $F$-vector spaces on the homology
\begin{align*}
\bar{Ψ}_k:\Hm^k\Hom^*_B(Q,Q) & \xrightarrow{\simeq}\Hm^k \Hom^*_B(Q, M)\\
\overline{(g_i:Q_{i+k}→Q_i)_{i∈\Z}}&\mapsto \overline{ε\circ g_0}
\end{align*}
%on the homology, i.e.\ it is a quasi-isomorphism.
%The homology is defined by \cref{def:homology}.
\end{lemma}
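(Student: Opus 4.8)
The plan is to recognise both sides as computing $\Ext^*_B(M,M)$ from the single projective resolution $Q$, with $\Psi$ the comparison map induced by post-composition with the augmentation, and then to establish the isomorphism on homology by hand via the lifting and homotopy parts of the comparison theorem, using only projectivity of the $Q_j$ and exactness of the augmented resolution $Q\to M$. No finiteness hypothesis is needed, since every construction below produces the components one index at a time and never sums over $i$. First I would verify that $(\Psi_k)_k$ is a complex morphism. For $g=(g_i)_i\in\Hom^k_B(Q,Q)$ the degree-$0$ component of $m_1(g)=d\circ g-(-1)^k g\circ d$ (cf.\ \cref{lem:cai}) is $d_1\circ g_1-(-1)^k g_0\circ d_{k+1}$, so
\[
\Psi_{k+1}(m_1(g))=\varepsilon\circ d_1\circ g_1-(-1)^k\,\varepsilon\circ g_0\circ d_{k+1}=-(-1)^k\,\varepsilon\circ g_0\circ d_{k+1},
\]
where the first term vanishes because $\varepsilon\circ d_1=0$. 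Up to the sign carried by the dualised differential, this equals $(d_k)^*(\Psi_k(g))=(-1)^k(\varepsilon\circ g_0)\circ d_{k+1}$, so $\Psi$ commutes with the differentials.

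For surjectivity of $\bar\Psi_k$ I would use the lifting part of the comparison theorem. Unwinding $m_1(g)=0$ shows that a cycle in $\Hom^k_B(Q,Q)$ is exactly a family $(g_i)_i$ with $d_{i+1}\circ g_{i+1}=(-1)^k g_i\circ d_{i+k+1}$. Given a cycle $\phi\in\Hom_B(Q_k,M)$, i.e.\ $\phi\circ d_{k+1}=0$, I first lift $\phi$ through the surjection $\varepsilon$ to $g_0\colon Q_k\to Q_0$ with $\varepsilon\circ g_0=\phi$, using projectivity of $Q_k$. The remaining $g_i$ are built inductively: at each stage $(-1)^k g_i\circ d_{i+k+1}$ lands in $\ker d_i=\im d_{i+1}$ (for $i=0$ this uses $\varepsilon\circ g_0\circ d_{k+1}=\phi\circ d_{k+1}=0$ together with $\ker\varepsilon=\im d_1$; for $i\geq 1$ it uses the relation established one step below and $d_i\circ d_{i+1}=0$), so projectivity of $Q_{i+k+1}$ yields $g_{i+1}$ with $d_{i+1}\circ g_{i+1}=(-1)^k g_i\circ d_{i+k+1}$. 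The resulting $g$ is a cycle with $\Psi_k(g)=\varepsilon\circ g_0=\phi$, so $\bar\Psi_k$ is onto.

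For injectivity I would use the homotopy-uniqueness part. Suppose $g$ is a cycle with $\Psi_k(g)=\varepsilon\circ g_0$ a boundary, say $\varepsilon\circ g_0=(-1)^k\psi\circ d_k$ for some $\psi\colon Q_{k-1}\to M$ (absorbing the sign of the dualised differential into $\psi$). Lifting $\psi$ to $h_0\colon Q_{k-1}\to Q_0$ with $\varepsilon\circ h_0=\psi$, I construct a homotopy $h=(h_i)_i\in\Hom^{k-1}_B(Q,Q)$ with $g=m_1(h)$, i.e.\ $g_i=d_{i+1}\circ h_{i+1}+(-1)^k h_i\circ d_{i+k}$, one component at a time. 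At each stage the obstruction $g_i-(-1)^k h_i\circ d_{i+k}$ is annihilated by $d_i$ (by $\varepsilon$ when $i=0$): substituting the cycle relation for $g$ and the homotopy relation already obtained at level $i-1$ telescopes to $d_i\bigl(g_i-(-1)^k h_i\circ d_{i+k}\bigr)=h_{i-1}\circ d_{i+k-1}\circ d_{i+k}=0$, so the obstruction lies in $\im d_{i+1}$ and projectivity of $Q_{i+k}$ delivers $h_{i+1}$. Hence $g$ is a boundary and $\bar\Psi_k$ is injective.

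I expect the homotopy construction in the injectivity step to be the main obstacle, both because the base case must be threaded correctly through $\varepsilon$ and because the inductive cancellation hinges on matching the Koszul signs of $m_1$ against those in the dualised differential $(d_k)^*$; once the sign conventions are pinned down consistently, the telescoping identity $d_i(\cdots)=0$ is immediate, and the chain-map and surjectivity steps become routine diagram chases with the same conventions.
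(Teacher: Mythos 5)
Your argument is correct, but it is a genuinely different route from the paper's: the paper gives no hands-on proof at all, instead invoking \cite[\S 5 Proposition 4a)]{Bo80} — the general fact that $\Hom_B(P,-)$ for a right-bounded complex of projectives $P$ preserves quasi-isomorphisms — applied to the quasi-isomorphism $Q\to M$ induced by the augmentation. What you do is unwind that general statement into the two halves of the comparison theorem (lift a cycle $\phi$ with $\phi\circ d_{k+1}=0$ to a chain self-map of $Q$; contract a cycle whose image is a boundary by building a homotopy), which is exactly the standard proof of the cited result specialized to this situation. Your version buys self-containedness and makes visible where projectivity of each $Q_j$ and exactness of the augmented resolution enter; the paper's citation buys brevity and a statement valid in greater generality. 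Your index bookkeeping is right throughout (with the paper's composition convention, $(m_1(g))_i=d_{i+1}\circ g_{i+1}-(-1)^k g_i\circ d_{i+k+1}$, and both obstruction computations in the lifting and homotopy steps check out), and you correctly read the paper's $g\mapsto(-1)^k g\circ d_k$ as precomposition with $d_{k+1}$. One small point to state more carefully: with that sign convention your own computation yields $\Psi_{k+1}\circ m_1=-(d_k)^*\circ\Psi_k$, i.e.\ $\Psi$ \emph{anti}commutes with the differentials rather than commuting; this is an artifact of the paper's choice of sign for the dual differential (taking $(-1)^{k+1}$ instead would make $\Psi$ a chain map on the nose), and it is harmless — cycles, boundaries and the induced maps $\bar\Psi_k$ on homology are unaffected — but "so $\Psi$ commutes with the differentials" is literally off by that sign and should either be corrected or accompanied by the remark that an overall sign per degree does not change the homological conclusion.
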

\cref{prpr-prm} is  \cite[§5 Proposition 4a)]{Bo80} applied to the quasi-isomorphism induced by the augmentation, cf.\ \cite[§3 Définition 1]{Bo80}.

% \cref{prpr-prm} is a special case of \cite[§5 Proposition 4]{Bo80}: The complex morphism
% \begin{align*}
% \xymatrix{
% Q \ar@{}[r]^*--{=}\ar[d]^{ψ}& (\cdots \ar[r] & Q_2 \ar[r]^{d_2}\ar[d] & Q_1 \ar[r]^{d_1}\ar[d] & Q_0 \ar[r] \ar[d]^{ε} & 0 \ar[r]\ar[d] & \cdots )\\
% \text{Conc}(M) \ar@{}[r]^*--{:=}& (\cdots \ar[r] & 0 \ar[r] & 0 \ar[r] & M \ar[r] & 0 \ar[r] & \cdots )
% }
% \end{align*}
% is a quasi-isomorphism since $Q$ is a projective resolution of $M$. Application of \cite[§5 Proposition 4]{Bo80} now gives that the induced homomorphism $Ψ:\Hom^*_B(Q,Q)→\Hom^*_B(Q,\text{Conc}(M))$ is a quasi-isomorphism. By removing zero components of the elements of $\Hom^*_B(Q,\text{Conc}(M))$, we readily obtain an isomorphism of complexes from $\Hom^*_B(Q,\text{Conc}(M))$ to $\Hom^*_B(Q,M)$. Now composition of these two quasi-isomorphisms gives the quasi-isomorphism described in \cref{prpr-prm}.

%Let us examine $\Hm^kA$ for $k=jl=2j(p-1)$, $j\geq 0$:
Recall the notation $\ls x\rs_y^z$ for the description of elements of $\Hom_B^k(C,C')$.
\begin{pp}
\label[pp]{pp:iota}
Recall that $p\geq 3$ is a prime and $l=2(p-1)$. \\* Write   \mbox{$A := \Hom^*_{\fp\!\Sp}(\pres\fp,\pres\fp)$}.
Let 
\begin{align*}
ι:\eqs&\dum_{i\geq 0} \ls e_{ω(i)} \rs_{i+l}^i =\dum_{i\geq 0}\dum_{k=0}^{l-1}\ls e_{ω(k)} \rs_{(i+1)l+k}^{il+k} ∈ A^l
\\
% \end{align*}
% \begin{align*}
χ :\eqs& \dum_{i\geq 0}\left(\ls e_1 \rs_{il+l-1}^{il} + \left\lgroup\dum_{k=1}^{p-2}\ls e_{k+1,k}\rs_{il+l-1+k}^{il + k}\right\rgroup\right.\\
&\left. + 
\ls e_{p-1}\rs_{il+l-1 +(p-1)}^{il+(p-1)}
+ \left\lgroup\dum_{k=1}^{p-2}\ls e_{p -k-1,p-k}\rs_{il+l-1+(p-1)+k}^{il +(p-1)+ k}\right\rgroup\right)∈A^{l-1}.
\end{align*}
\begin{enumerate}[\rm (a)]
\item For $j\geq 0$, we have 
$%\begin{align}
%\label{ipj}
ι^j = \dum_{i\geq 0} \ls e_{ω(i)} \rs_{i+jl}^i= \dum_{i\geq 0}\dum_{k=0}^{l-1}\ls e_{ω(k)} \rs_{(i+j)l+k}^{il+k}\,.
$%\end{align}
\item Suppose given $y\geq 0$. Let $h∈A^y$ be $l$-periodic, that is 
$h=\dum_{i\geq 0} \dum_{k=0}^{l-1} \ls h_k\rs_{il+k+y}^{il + k}$.
Then for $j\geq 0$, we have 
\begin{align*}
h\circ ι^j = ι^j\circ h = \dum_{i\geq 0} \dum_{k=0}^{l-1} \ls h_k\rs_{(i+j)l + k +y}^{il+k}∈A^{y+jl}.
\end{align*}
\item Suppose given $y∈\Z$. For $h∈A^y$ and $j\geq 0$, we have $m_1(h\circ ι^j) = m_1(h)\circ ι^j$.
\item For $j\geq 0$, we have $m_1(ι^j)=0$. Thus $ι^j$ is a cycle.
\item For $j\geq 0$, we have 
\begin{align*}
χι^j :\eqs& χ\circ ι^j = ι^j\circ χ\\
\eqs&  \dum_{i\geq 0}\left(\ls e_1 \rs_{(i+j+1)l-1}^{il} + \left\lgroup\dum_{k=1}^{p-2}\ls e_{k+1,k}\rs_{(i+j+1)l-1+k}^{il + k}\right\rgroup\right.\\
&\left. + \ls e_{p-1}\rs_{(i+j+1)l-1 +(p-1)}^{il+(p-1)}
+ \left\lgroup\dum_{k=1}^{p-2}\ls e_{p -k-1,p-k}\rs_{(i+j+1)	l-1+(p-1)+k}^{il +(p-1)+ k}\right\rgroup\right)∈A^{jl+l-1}.
\end{align*}
For convenience, we also define $χ^0ι^j:= ι^j$ and $χ^1ι^j:=χι^j=χ\circ ι^j$ for $j\geq 0$.
\item For $j\geq 0$, we have $m_1(χι^j) = 0$. Thus $χι^j$ is a cycle.
\item %Let $k=jl+r$, $l\geq 0$, $r∈[0,l-1]$. 
Suppose given $k∈\Z$. A $\fp$-basis of $\Hm^kA$ is given by
\begin{align*}
%\begin{cases}
\{\overline{ι^j}\} & \text{ if $k=jl$ for some $j\geq 0$}\\
\{\overline{χι^j}\} & \text{ if $k=jl+l-1$ for some $j\geq 0$} \\
∅ &\text{ else}.
%\end{cases}
\end{align*}
%For $k<0$ we have $\Hm^kA=\{0\}$.\\*
Thus the set $\mathfrak{B}:=\{\overline{ι^j}\mid j\geq 0\}\sqcup \{\overline{χι^j}\mid j\geq 0\}$ is an $\fp$-basis of $\Hm^*A = \bigoplus_{z∈\Z} \Hm^zA$.
%\item 
%FIXME: Andere Notation einbauen; For $k=jl=2j(p-1)$ where $j\geq 0$ the set $\{\overline{ι^j}\}$ is a basis of $\Hm^k A$.\\
% For $k= jl+l-1$ where $j\geq 0$ the set $\{\overline{χι^j}\}$ is a basis of $\Hm^k A$.
%\item 
\end{enumerate}
\end{pp}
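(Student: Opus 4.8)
The plan is to regard parts (a)–(e) as formal manipulations inside $A=\Hom^*_{\fp\!\Sp}(\pres\fp,\pres\fp)$, driven by the composition rule for graded $\Hom$-complexes and by the $l$-periodicity of $\pres\fp$; to isolate the one genuine computation, namely $m_1(χ)=0$ in (f); and to deduce (g) from the comparison of $\Hom^*_{\fp\!\Sp}(\pres\fp,\pres\fp)$ with the dualized resolution furnished by \cref{prpr-prm}. For (a) I would induct on $j$: from the definition of composition $(ι^{j+1})_i=(ι)_i\circ(ι^j)_{i+l}$, and since each $(ι)_i=e_{ω(i)}$ is an identity map and $ω(i+l)=ω(i)$, this identity simply propagates to give $ι^{j+1}=\sum_{i\ge 0}\ls e_{ω(i)}\rs^{i}_{i+(j+1)l}$, the re-indexing $i=i'l+k$ yielding the second form. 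Part (b) is the technical core of the formal parts: for an $l$-periodic $h∈A^y$ I would compute $(h\circ ι^j)_i=h_i\circ(ι^j)_{i+y}$ and $(ι^j\circ h)_i=(ι^j)_i\circ h_{i+jl}$ directly; since $ι^j$ contributes only identity maps and $h_{i+jl}=h_i$ by periodicity, both collapse to $\sum_{i,k}\ls h_k\rs^{il+k}_{(i+j)l+k+y}$, proving the commutation $h\circ ι^j=ι^j\circ h$ and the shift formula simultaneously.

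The remaining formal parts are then cheap. The differential $d=\sum_{i\ge 0}\ls e_{ω(i),ω(i+1)}\rs^{i}_{i+1}$ of \cref{bem:di} is $l$-periodic of degree $1$, so (b) applied to $h=d$ gives $d\circ ι^j=ι^j\circ d$. Since $l$ is even, $(-1)^{|h\circ ι^j|}=(-1)^{|h|}$, whence $m_1(h\circ ι^j)=d\circ h\circ ι^j-(-1)^{|h|}h\circ ι^j\circ d=(d\circ h-(-1)^{|h|}h\circ d)\circ ι^j=m_1(h)\circ ι^j$, which is (c); specializing $h=\id$ (equivalently, reading $m_1(ι^j)=d\circ ι^j-ι^j\circ d$) gives (d). As $χ$ is manifestly $l$-periodic of degree $l-1$, part (b) with $h=χ$ gives (e).

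For (f), by (c) it suffices to prove $m_1(χ)=0$, and since $l-1$ is odd this reads $d\circ χ+χ\circ d=0$; by periodicity it need only be checked on one period. There the $i$-th component equals $e_{ω(i),ω(i+1)}\circ χ_{i+1}+χ_i\circ e_{ω(i-1),ω(i)}$, and I would verify its vanishing case by case against the relations of \cref{lem:relfp}: after setting $k=ω(i)$, the interior indices of both the ascending branch and the descending branch reproduce the identity $e_{k,k-1}\circ e_{k-1,k}+e_{k,k+1}\circ e_{k+1,k}=0$, while the four boundary indices $i∈\{0,p-2,p-1,l-1\}$ reproduce $e_{1,1}+e_{1,2}\circ e_{2,1}=0$ and $e_{p-1,p-2}\circ e_{p-2,p-1}+e_{p-1,p-1}=0$. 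This index bookkeeping — keeping $ω$, the components $χ_i$, and the turning points straight, and checking that the degenerate ranges behave correctly when $p=3$ — is where I expect the real effort to lie.

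Finally, for (g) I would invoke \cref{prpr-prm} to identify $\Hm^k A$ with the cohomology of the complex $\big(\Hom_{\fp\!\Sp}(\pr_k,\fp)\big)_{k}$ whose differentials are induced by those of $\pres\fp$. By \cref{lem:relfp}(b),(d) the term $\Hom_{\fp\!\Sp}(\pr_k,\fp)=\Hom_{\fp\!\Sp}(P_{ω(k)},\fp)$ is one-dimensional, spanned by $ε$, exactly when $ω(k)=1$, i.e.\ when $k\equiv 0$ or $k\equiv l-1\pmod l$ with $k\ge 0$, and is zero otherwise. Since $p\ge 3$ forces $l\ge 4$, the only differential joining two nonzero terms runs from degree $jl+l-1$ to degree $(j+1)l$ and is precomposition with $e_{1,1}$, which annihilates $ε$ because $ε\circ e_{1,1}=0$; hence every differential vanishes and $\Hm^k A$ is one-dimensional precisely for $k=jl$ and $k=jl+l-1$, and zero otherwise. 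It then remains to see that the displayed cycles are non-bounding: the isomorphism $\bar{Ψ}$ sends $\overline{ι^j}$ and $\overline{χι^j}$ to $\overline{ε\circ e_1}=ε\neq 0$, using $(ι^j)_0=(χι^j)_0=e_1$. Being nonzero in one-dimensional spaces, $\overline{ι^j}$ and $\overline{χι^j}$ are the asserted bases, and $\mathfrak{B}$ is therefore a graded $\fp$-basis of $\Hm^*A$.
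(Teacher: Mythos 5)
Your proposal is correct and follows essentially the same route as the paper: induction for (a), direct component-wise computation for (b), the $l$-periodicity of $d$ plus evenness of $|ι^j|$ for (c)--(e), reduction of (f) to the relations of \cref{lem:relfp}(a) via $m_1(χ)=d\circ χ+χ\circ d$, and \cref{prpr-prm} together with \cref{lem:relfp}(b),(d) for (g). The only difference is presentational — you describe the bookkeeping in (f) case by case where the paper writes it out as one sorted sum — and you correctly identify that as the locus of the real work.
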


\kommentar{
% \begin{lemma}
% \label{lem:chi}
% We set 
% \begin{enumerate}[\rm (a)]
% \end{enumerate}
% \end{lemma}
%Before we proceed with the proof we give a visualization of $ι^j$ as a diagram: 
%XXX $P$ noch als PRes... bez., vertikales $=$ und geeignete Klammern hinzu, Positionen einkringeln, analog auch fuer $\iota$ 
%
%\noindent
Before we proceed we display $ι$ and $χ$ for the case $p = 5$ as an example:\\*
The period is of length $l = 2p - 2 = 2\cdot 5 - 2 = 8$. The terms inside circles denote the degrees. 
For $χ$, the differentials on the right side are negated to obtain a commutative diagram (We have $0=m_1(χ) = d\circ χ + χ\circ d$ because $|χ|$ is odd)%\clearpage

% FIXED "=" is not centered properly

 %FIXED Bezeichnung für Diagramueberschrift erlaeutern?
% FIXED: Zaehlung anpassen
% \vfill\eject
\begin{center}
\begin{footnotesize}
\[
\xymatrix@C=0.5pc@!R=0.43pc{
& \pres(\ff)[8]\ar[r]^-\iota \ar@{}[d]^*--{\rotatebox{90}{\hspace{3mm}$=$\ru{2}}}  & \pres(\ff)  \ar@{}[d]^*--{\rotatebox{90}{\hspace{3mm}$=$\ru{2}}}\\
   & \rotatebox{90}{$\cdots)$}\ar[d]              & \rotatebox{90}{$\cdots)$}\ar[d]         \\
*+[F-:<5pt>]{ 8} & P_1\ar[r]^{e_1}    \ar[d]_{e_{1,1}} & P_1\ar[d]^{e_{1,1}} \\
*+[F-:<5pt>]{ 7} & P_1\ar[r]^{e_1}    \ar[d]_{e_{2,1}} & P_1\ar[d]^{e_{2,1}} \\
*+[F-:<5pt>]{ 6} & P_2\ar[r]^{e_2}    \ar[d]_{e_{3,2}} & P_2\ar[d]^{e_{3,2}} \\
*+[F-:<5pt>]{ 5} & P_3\ar[r]^{e_3}    \ar[d]_{e_{4,3}} & P_3\ar[d]^{e_{4,3}} \\
*+[F-:<5pt>]{ 4} & P_4\ar[r]^{e_4}    \ar[d]_{e_{4,4}} & P_4\ar[d]^{e_{4,4}} \\
*+[F-:<5pt>]{ 3} & P_4\ar[r]^{e_4}    \ar[d]_{e_{3,4}} & P_4\ar[d]^{e_{3,4}} \\
*+[F-:<5pt>]{ 2} & P_3\ar[r]^{e_3}    \ar[d]_{e_{2,3}} & P_3\ar[d]^{e_{2,3}} \\
*+[F-:<5pt>]{ 1} & P_2\ar[r]^{e_2}    \ar[d]_{e_{1,2}} & P_2\ar[d]^{e_{1,2}} \\
*+[F-:<5pt>]{ 0} & P_1\ar[r]^{e_1}    \ar[d]_{e_{1,1}} & P_1\ar[d]            \\
*+[F-:<5pt>]{-1} & P_1\ar[r]          \ar[d]_{e_{2,1}} & 0  \ar[d]            \\
*+[F-:<5pt>]{-2} & P_2\ar[r]          \ar[d]_{e_{3,2}} & 0  \ar[d]            \\
*+[F-:<5pt>]{-3} & P_3\ar[r]          \ar[d]_{e_{4,3}} & 0  \ar[d]            \\
*+[F-:<5pt>]{-4} & P_4\ar[r]          \ar[d]_{e_{4,4}} & 0  \ar[d]            \\
*+[F-:<5pt>]{-5} & P_4\ar[r]          \ar[d]_{e_{3,4}} & 0  \ar[d]            \\
*+[F-:<5pt>]{-6} & P_3\ar[r]          \ar[d]_{e_{2,3}} & 0  \ar[d]            \\
*+[F-:<5pt>]{-7} & P_2\ar[r]          \ar[d]_{e_{1,2}} & 0  \ar[d]            \\
*+[F-:<5pt>]{-8} & P_1\ar[r]          \ar[d]           & 0  \ar[d]            \\
*+[F-:<5pt>]{-9} & 0  \ar[r]\ar[d]                     & 0  \ar[d]            \\
   & \rotatebox{-90}{$\cdots)$}         & \rotatebox{-90}{$\cdots)$}         \\
}
\hspace{2cm}
\xymatrix@C=0.5pc@!R=0.43pc{
   & \pres(\ff)[8-1]\ar[r]^-\chi \ar@{}[d]^*--{\rotatebox{90}{\hspace{3mm}$=$\ru{2}}}  & \pres(\ff)  \ar@{}[d]^*--{\rotatebox{90}{\hspace{3mm}$=$\ru{2}}}\\
   & \rotatebox{90}{$\cdots)$}\ar[d]              & \rotatebox{90}{$\cdots)$}\ar[d]         \\
*+[F-:<5pt>]{ 9} & P_1\ar[r]^{e_{2,1}}\ar[d]_{e_{1,1}} & P_2\ar[d]^{-e_{1,2}} \\
*+[F-:<5pt>]{ 8} & P_1\ar[r]^{e_1}    \ar[d]_{e_{2,1}} & P_1\ar[d]^{-e_{1,1}} \\
*+[F-:<5pt>]{ 7} & P_2\ar[r]^{e_{1,2}}\ar[d]_{e_{3,2}} & P_1\ar[d]^{-e_{2,1}} \\
*+[F-:<5pt>]{ 6} & P_3\ar[r]^{e_{2,3}}\ar[d]_{e_{4,3}} & P_2\ar[d]^{-e_{3,2}} \\
*+[F-:<5pt>]{ 5} & P_4\ar[r]^{e_{3,4}}\ar[d]_{e_{4,4}} & P_3\ar[d]^{-e_{4,3}} \\
*+[F-:<5pt>]{ 4} & P_4\ar[r]^{e_4}    \ar[d]_{e_{3,4}} & P_4\ar[d]^{-e_{4,4}} \\
*+[F-:<5pt>]{ 3} & P_3\ar[r]^{e_{4,3}}\ar[d]_{e_{2,3}} & P_4\ar[d]^{-e_{3,4}} \\
*+[F-:<5pt>]{ 2}  & P_2\ar[r]^{e_{3,2}}\ar[d]_{e_{1,2}} & P_3\ar[d]^{-e_{2,3}} \\
*+[F-:<5pt>]{ 1}  & P_1\ar[r]^{e_{2,1}}\ar[d]_{e_{1,1}} & P_2\ar[d]^{-e_{1,2}} \\
*+[F-:<5pt>]{ 0}  & P_1\ar[r]^{e_1}    \ar[d]_{e_{2,1}} & P_1\ar[d]            \\
*+[F-:<5pt>]{-1}  & P_2\ar[r]          \ar[d]_{e_{3,2}} & 0  \ar[d]            \\
*+[F-:<5pt>]{-2}  & P_3\ar[r]          \ar[d]_{e_{4,3}} & 0  \ar[d]            \\
*+[F-:<5pt>]{-3}  & P_4\ar[r]          \ar[d]_{e_{4,4}} & 0  \ar[d]            \\
*+[F-:<5pt>]{-4}  & P_4\ar[r]          \ar[d]_{e_{3,4}} & 0  \ar[d]            \\
*+[F-:<5pt>]{-5}  & P_3\ar[r]          \ar[d]_{e_{2,3}} & 0  \ar[d]            \\
*+[F-:<5pt>]{-6}  & P_2\ar[r]          \ar[d]_{e_{1,2}} & 0  \ar[d]            \\
*+[F-:<5pt>]{-7}  & P_1\ar[r]          \ar[d]           & 0  \ar[d]            \\
*+[F-:<5pt>]{-8} & 0  \ar[r]\ar[d]                     & 0  \ar[d]            \\
   & \rotatebox{-90}{$\cdots)$}         & \rotatebox{-90}{$\cdots)$}         \\
}
\]
\end{footnotesize}
\end{center}
}
%
%
%
%
% %\newcommand{\crr}[1]{*++[o][F-]{\rr{#1}}}
% \newcommand{\crr}[1]{*+[F-:<5pt>]{\rr{#1}}}
% \begin{align*}
% %\scriptscriptstyle
% \xymatrix @C=0.8pc @R=1pc{
% &&\crr{jl+(p-1)+1}&\crr{jl+p-1}&\crr{jl+p-2}&&
% \crr{jl+1}&\crr{jl+0}&\crr{jl-1}&&\crr{0}&\crr{-1}\\
% %
% \rr{\pres(\fp)}\ar[d]^{\rr{ι^j}}\ar@{}[r]^*--\txt{=} &(\cdots\ar[r]&\rr{P_{p-2}}\ar[r]^{\rr{e_{p-1,p-2}}}\ar[d]^{\rr{e_{p-2}}}&\rr{P_{p-1}}\ar[r]^{\rr{e_{p-1,p-1}}}\ar[d]^{\rr{e_{p-1}}}&\rr{P_{p-1}}\ar[r]\ar[d]^{\rr{e_{p-1}}}&\cdots\ar[r]&
% \rr{P_2}\ar[r]^{\rr{e_{1,2}}}\ar[d]^{\rr{e_2}}&\rr{P_1}\ar[r]^{\rr{e_{1,1}}}\ar[d]^{\rr{e_1}}\POS[]; & %\underset{\text{$j$ periods of length $l$}}
% \rr{P_1}\ar[r]\ar[d]&\cdots\ar[r]&\rr{P_1}\ar[r]%**\frm{^\}}&P_1\ar[r]&0\\
% \save;[0,-2]**[0,0]\frm{^\}}; %*+++!D\txt{$jl$ elements}
% +<-10mm,5mm>*\txt{$jl$ elements}
% \restore\ar[d] &\rr{\displaystyle 0\text{ }}\ar[r]\ar[d]&\cdots)\\
% %
% \rr{\pres(\fp)[jl]}\ar@{}[r]^*--\txt{=} & (\cdots\ar[r]&\rr{P_{p-2}}\ar[r]_{\rr{e_{p-1,p-2}}}&\rr{P_{p-1}}\ar[r]_{\rr{e_{p-1,p-1}}}&\rr{P_{p-1}}\ar[r]&\cdots\ar[r]&
% \rr{P_2}\ar[r]_{\rr{e_{1,2}}}&\rr{P_1}\ar[r]& \rr{\displaystyle 0\text{ }}\ar[r]&\cdots\ar[r]&\rr{\textstyle 0\text{ }}\ar[r]&\rr{\displaystyle 0\text{ }}\ar[r]&\cdots)
% }
% \end{align*}
\begin{proof}%[Proof of \cref{pp:iota}]
The element $ι$ is well-defined since $ω(y) = ω(l+y)$ for $y\geq 0$.\\*
In the definition of $χ$ we need to check that the "$\ls * \rs_*^*$" are well defined. This is easily proven by calculating the $ω(y)$ where $y$ is the lower respective upper index of  "$\ls * \rs_*^*$".
%It follows from the definition of the $ω(i)$ that $χ$ is well-def

%FIXED: s/"(a)"/"Ad (a)" -> nicht\\
(a): As $\pr_i = \{0\}$ for $i<0$, the identity element of $A$  is given by
$ι^0 = \dum_{i\geq 0}\ls e_{ω(i)}\rs_{i}^i$,
which agrees with the assertion in case $j=0$. So we have proven the induction basis for induction on $j$. So now assume that for some $j\geq 0$ the assertion holds. Then
\begin{align*}
ι^{j+1} \eqs& ι\circ ι^j = \left(\dum_{i\geq 0} \ls e_{ω(i)} \rs_{i+l}^i\right)\circ\left(\dum_{i'\geq 0} \ls e_{ω(i')} \rs_{i'+jl}^{i'}\right) \\
\eqs& \dum_{i\geq 0} \ls e_{ω(i)}\circ e_{ω(i+l)} \rs_{i+l+jl}^i = \dum_{i\geq 0} \ls e_{ω(i)}\rs_{i+(j+1)l}^i\,.
\end{align*}
Thus the proof by induction is complete.

(b): 
We have
\begin{align*}
ι^j \circ h \eqs& \left(\sum_{i\geq 0} \sum_{k=0}^{l-1}\ls e_{ω(il+k)} \rs_{(i+j)l+k}^{il+k}\right) \circ \left(\sum_{i'\geq 0}\sum_{k'=0}^{l-1}\ls h_{k'}\rs_{i'l+k'+ y}^{i'l+k'}\right)
\overset{\natop{i'\leadsto i+j}{k'\leadsto k}}{=} \sum_{i\geq 0} \sum_{k=0}^{l-1} \ls h_k\rs_{(i+j)l+k+y}^{il+k}\\
h\circ ι^j \eqs& \left(\sum_{i\geq 0}\sum_{k=0}^{l-1}\ls h_k\rs_{il+k+y}^{il+k}\right) \circ
 \left(\sum_{i'\geq 0} \ls e_{ω(i')} \rs_{i'+jl}^{i'}\right)
= \sum_{i\geq 0}\sum_{k=0}^{l-1} \ls h_k\rs_{(i+j)l + k+y}^{il+k}\, .
\end{align*}
So we have proven (b). 

(c): The differential $d$ of $\pres\fp$ is $l$-periodic (cf. \cref{bem:di}) and thus
\begin{align*}
m_1(h)\circ ι^j \eqs& (d\circ h - (-1)^y h\circ d)\circ ι^j\\
\ovs{(b), |ι^j|\equiv_2 0}& d\circ h\circ ι^j - (-1)^{y+|ι^j|} h\circ ι^j\circ d = m_1(h\circ ι^j).
\end{align*}

(d): We have
$%\begin{align*}
m_1(ι^j) \overset{(c)}{=} m_1(ι^0)\circ ι^j = (d\circ ι^0 - (-1)^0 ι^0 d)\circ ι^j = (d - d)\circ ι^j = 0.
$%\end{align*}

(e) is implied by (b) using the fact that $χ$ is $l$-periodic.

(f): Because of (c) we have $m_1(χι^j) = m_1(χ)\circ ι^j$. Because $|χ|=l-1$ is odd we have
\begin{align*}
m_1&(χ) = d\circ χ - (-1) χ\circ d = χ\circ d + d\circ χ\\
\ovs{\text{R.}\ref{bem:di}}& \Big(\dum_{i\geq 0}\left(\ls e_1 \rs_{il+l-1}^{il} + \left\lgroup\dum_{k=1}^{p-2}\ls e_{k+1,k}\rs_{il+l-1+k}^{il + k}\right\rgroup+ \ls e_{p-1}\rs_{il+l-1 +(p-1)}^{il+(p-1)}\right.\\
&\left. 
+ \left\lgroup\dum_{k=1}^{p-2}\ls e_{p -k-1,p-k}\rs_{il+l-1+(p-1)+k}^{il +(p-1)+ k}\right\rgroup\right)\Big)
\circ \left(\dum_{y\geq 0} \ls e_{ω(y),ω(y+1)}\rs_{y+1}^y\right)\\
&+\left(\dum_{y\geq 0} \ls e_{ω(y),ω(y+1)}\rs_{y+1}^y\right) \circ \Big(
\dum_{i\geq 0}\left(\ls e_1 \rs_{il+l-1}^{il} + \left\lgroup\dum_{k=1}^{p-2}\ls e_{k+1,k}\rs_{il+l-1+k}^{il + k}\right\rgroup\right.\\
&\left. + \ls e_{p-1}\rs_{il+l-1 +(p-1)}^{il+(p-1)}
+ \left\lgroup\dum_{k=1}^{p-2}\ls e_{p -k-1,p-k}\rs_{il+l-1+(p-1)+k}^{il +(p-1)+ k}\right\rgroup\right)\Big)\\
\eqs& 
\dum_{i\geq 0}\Big(
\ls e_1\circ e_{1,1}\rs_{il+l}^{il} + \left\lgroup\dum_{k=1}^{p-2}\ls e_{k+1,k}\circ e_{k,k+1}\rs_{il+l+k}^{il+k}\right\rgroup\\
&+\ls e_{p-1}\circ e_{p-1,p-1}\rs_{il+l+(p-1)}^{il+(p-1)}
+ \left\lgroup\dum_{k=1}^{p-2}\ls e_{p-k-1,p-k}\circ e_{p-k,p-k-1}\rs_{il+l+(p-1)+k}^{il+(p-1)+k}\right\rgroup\Big)\\
&+\dum_{i\geq 1} \ls e_{1,1}\circ e_1\rs_{il+l-1}^{il-1}
+ \dum_{i\geq 0}\Big( \left\lgroup\dum_{k=1}^{p-2}\ls e_{k,k+1}\circ e_{k+1,k}\rs_{il+l+k-1}^{il+k-1}\right\rgroup\\
&+ \ls e_{p-1,p-1}\circ e_{p-1}\rs_{il+l-1+(p-1)}^{il-1+(p-1)}
+ \left\lgroup\dum_{k=1}^{p-2}\ls e_{p-k,p-k-1}\circ e_{p-k-1,p-k}\rs_{il+l-1+(p-1)+k}^{il-1+(p-1) + k}\right\rgroup\Big)\\
\overset{*}{\eqs}& \dum_{i\geq 0}\Big(
\ls e_{1,1} + e_{1,2}\circ e_{2,1}\rs_{il+l}^{il} + \left\lgroup\dum_{k=1}^{p-3}\ls e_{k+1,k}\circ e_{k,k+1}+ e_{k+1,k+2}\circ e_{k+2,k+1}\rs_{il+l+k}^{il+k}\right\rgroup\\
&+ \ls e_{p-1,p-2}\circ e_{p-2,p-1} + e_{p-1,p-1}\rs_{il+l+p-2}^{il+p-2}
+ \ls e_{p-1,p-1}+e_{p-1,p-2}\circ e_{p-2,p-1}\rs_{il+l+p-1}^{il+p-1}\\
&+ \left\lgroup\dum_{k=1}^{p-3}\ls e_{p-k-1,p-k}\circ e_{p-k,p-k-1} + e_{p-k-1,p-k-2}\circ e_{p-k-2,p-k-1}\rs_{il+l+p-1+k}^{il+p-1+k}\right\rgroup\\
&+ \ls e_{1,2}\circ e_{2,1} + e_{1,1}\rs_{(i+1)l+l-1}^{(i+1)l-1}\Big)
\overset{\text{L.}\ref{lem:relfp}(a)}{=} 0
\end{align*}
In the step marked by "$*$" we sort the summands by their targets. Note that when splitting sums of the form $\dum_{k=1}^{p-2}(…)_k$ into $(…)_1+\dum_{k=2}^{p-2}(…)_k$ or into $(…)_{p-2}+\dum_{k=1}^{p-3}(…)_k$, the existence of the summand that is split off is ensured by $p\geq 3$.

(g): We first show that the differentials of the complex $\Hom^*(\pres \fp, \fp)$ (cf. \cref{prpr-prm}) are all zero:
By \cref{lem:relfp}, $\{ε\}$ is an $\fp$-basis of $\Hom_{\fp\!\Sp}(P_1,\fp)$, and for $k∈[2,p-1]$ we have $\Hom_{\fp\!\Sp}(P_k,\fp) = 0$. So the only non-trivial $(d_k)^*$ are those where $\pr_k = \pr_{k+1} = P_1$. This is the case only when $k=lj + l-1$ for some $j\geq 0$. Then $d_k= e_{1,1}$. For $ε∈\Hom(P_1,\fp)$, we have $(d_k)^* (ε) = (-1)^kε\circ e_{1,1} \overset{\text{L.}\ref{lem:relfp}(a)}{=} 0$. As $ \Hom(P_1,\fp)= \langle ε\rangle_\fp$, we have $(d_k)^* = 0$.

So $\Hm^k \Hom^*(\pres \fp, \fp) = \Hom^k(\pres \fp, \fp)$. We use \cref{prpr-prm}.\\*
For $k=jl$, $j\geq 0$, we have $\bar{Ψ}^k(\overline{ι^j}) \overset{(a)}{=} ε$, and $\{ε\}$ is a basis of $\Hm^k \Hom^*(\pres \fp, \fp)$.\\*
For $k=jl+l-1$, $j\geq 0$, we have $\bar{Ψ}^k(\overline{χι^j})\overset{(e)}{=}ε$, and $\{ε\}$ is a basis of $\Hm^k \Hom^*(\pres \fp, \fp)$.\\*
Finally, for $k=jl+r$ for some $j\geq 0$ and some $r∈[1,l-2]$ and for $k<0$, we have $\Hm^k \Hom^*(\pres \fp, \fp)=\{0\}$.
\end{proof}
\subsection[An \texorpdfstring{$\A_\infty$}{A(oo)}-structure on \texorpdfstring{$\Ext^*_{\fp\!\Sp}\!(\fp,\fp)$}{Ext*(FpSp)(Fp,Fp)} as a minimal model of \texorpdfstring{$\Hom^*_{\fp\!\Sp}$}{Hom*(FpSp)}\discretionary{}{}{}\texorpdfstring{$(\pres\fp,$}{(PResFp,)}\discretionary{}{}{}\texorpdfstring{$\pres\fp)$}{PresFp}]
{An \texorpdfstring{$\A_\infty$}{A(oo)}-structure on \texorpdfstring{$\Ext^*_{\fp\!\Sp}(\fp,\fp)$}{Ext*(FpSp)(Fp,Fp)} as a minimal model of \texorpdfstring{$\Hom^*_{\fp\!\Sp}(\pres\fp,\pres\fp)$}{Hom*(FpSp)(PResFp,PresFp)}}
\label{subsec:minmod}
%FIXME: Lifting the identity ?\\
Recall that $p\geq 3$ is a prime. Write $A := \Hom^*_{\fp\!\Sp}(\pres\fp,\pres\fp)$, which becomes an $\A_∞$-algebra $(A, (m_n)_{n\geq 1})$ over $R=\fp$ via \cref{lem:cai}. We implement $\Ext^*_{\fp\!\Sp}(\fp,\fp)$ as $\Ext^*_{\fp\!\Sp}(\fp,\fp):=\Hm^*A$.

Our goal in this section is to construct an $\A_\infty$-structure $(m'_n)_{n\geq 1}$ on $\Hm^*A$ and a morphism of $\A_\infty$-algebras $f=(f_n)_{n\geq 1}:(\Hm^*A,(m'_n)_{n\geq 1})→(A, (m_n)_{n\geq 1})$ which satisfy the statements of \cref{tm:kadeishvili}. I.e.\ we will construct a minimal model of $A$.
%The multiplications on $\Hm^*A$ will be denoted by $m'_n$. For the multiplications on $A$ we will use the symbols $m_n$.
% \begin{bem} 
% General theory suggest choosing a quasi-isomorphism $f_1:\Hm^*A → A$ lifting the identity ($m'_1 := 0$), inducing $m'_2$ from $m_2$ and constructing $f_2$ such that \eqref{finfrel} holds for $n=2$. Then for each $n=3,4,5,…$ we construct $m'_n$ and $f_n$ such that \eqref{finfrel} holds for $n$.
% \end{bem}
%We will construct the $f_n$ and $m'_n$ simultaneously.
% \begin{Def}
% $m'_1 := 0$
% \end{Def}
%
% \begin{lemma}
% \label[lemma]{lem:m2}
% $m'_2: (\Hm^*A)^{\otimes 2} → \Hm^*A$ is given on basis elements by 
% \begin{align*}
% m'_2(\overline{ι^j}\otimes \overline{ι^k}) \eqs& \overline{ι^{j+k}}\\
% m'_2(\overline{χι^j}\otimes \overline{ι^k}) \eqs&
%  m'_2(\overline{ι^k}\otimes \overline{χι^j}) = \overline{χι^{j+k}}\\
% m'_2(\overline{χι^j}\otimes \overline{χι^k}) \eqs& 0
% \end{align*}
% $j,k\geq 0$
% \end{lemma}
% \begin{proof}
% $m'_2$ is induced by $m_2$. By $m_2(ι^j\otimes ι^k) = ι^{j+k}$, $m_2(χι^j\otimes ι^k) = m_2(ι^k\otimes χι^j) = χι^{j+k}$ all relations but the last are implied. As $|m'_2| = 0$ and $|\overline{χι^j}|=l-1 + jl$ we have $|m'_2(\overline{χι^j}\otimes \overline{χι^k})| = l-1 + jl + l-1 + kl = (j+k+1)l + (l-2)\equiv_l -2$. The degree of all homogeneous non-zero elements of $\Hm^*A$ are of the form $jl\equiv_l 0$ or $jl + (l-1)\equiv -1$. As $l=2(p-1)\overset{p\geq 3}{\geq} 4$ we have $-2\not\equiv_l 0$ and $-2\not\equiv_l -1$ ⇒ $m'_2(\overline{χι^j}\otimes \overline{χι^k})=0$.
% \end{proof}
In preparation of the definitions of the $f_n$ and $m'_n$, we name and examine certain elements of $A$:
\begin{lemma}
\label[lemma]{lem:gamma}
Suppose given $k∈[2,p-1]$. We set 
\begin{align*}
γ_k:= \dum_{i\geq 0}\left( \ls e_k \rs_{k(l-1)+li}^{k-1+li}  
+\ls e_{p-k} \rs_{k(l-1) + (p-1) + li}^{k-1+(p-1)+li}\right)∈A^{k(l-2)+1}.
\end{align*}
For $j\geq 0$, we have
\begin{align*}
γ_kι^j := γ_k\circ ι^j = ι^j\circ γ_k = \dum_{i\geq 0}\left( \ls e_k \rs_{k(l-1)+l(i+j)}^{k-1+li}
+ \ls e_{p-k} \rs_{k(l-1) + (p-1) + l(i+j)}^{k-1+(p-1)+li}\right) ∈A^{k(l-2)+1+jl}
\end{align*}
and
%$γ_kι^j$ has degree $|γ_kι^j| = jl + k(l-1) + 1-k$. 
%For $k∈[2,p-1]$, we have
\begin{align*}
m_1(γ_kι^j)\eqs&\dum_{i\geq 0}\left( \ls e_{k-1,k} \rs_{k(l-1)+l(i+j)}^{k-2+li} 
+ %\dum_{i\geq 0}
 \ls e_{p-k+1,p-k}\rs_{k(l-1) + (p-1) + l(i+j)}^{k-2+(p-1)+li}\right.\\
&\left.+%\dum_{i\geq 0}
 \ls e_{k,k-1}\rs_{k(l-1)+1+l(i+j)}^{k-1+li}
+ %\dum_{i\geq 0} 
\ls e_{p-k,p-(k-1)}\rs_{k(l-1) + p + l(i+j)}^{k-1+(p-1)+li}\right).
\end{align*}
\end{lemma}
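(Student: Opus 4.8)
The plan is to handle the three assertions in turn, obtaining the last two from \cref{pp:iota}(b),(c) once $γ_k$ has been recognized as an $l$-periodic element of $A$.

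First I would check that the symbols occurring in $γ_k$ are well-defined. For $\ls e_k\rs_{k(l-1)+li}^{k-1+li}$ this means verifying $ω(k-1+li)=k$ and $ω(k(l-1)+li)=k$, and for $\ls e_{p-k}\rs_{k(l-1)+(p-1)+li}^{k-1+(p-1)+li}$ that $ω(k-1+(p-1)+li)=p-k=ω(k(l-1)+(p-1)+li)$. Each is a direct evaluation of \eqref{omega}: one reduces the index modulo $l$ and uses $k∈[2,p-1]$ to decide whether the residue lies in the increasing branch $[0,p-2]$ or the decreasing branch $[p-1,l-1]$. The same computation shows both symbols have degree $k(l-2)+1$, matching the claim, and I record that $|γ_k|=k(l-2)+1$ is odd since $l$ is even.

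For the identity $γ_kι^j=ι^jγ_k$, I would observe that $γ_k$ is $l$-periodic: writing $y:=k(l-2)+1$, the two families of summands have target local positions $k-1$ and $(k-1)+(p-1)$ inside each period $[il,il+l-1]$, so $γ_k=\dum_{i\geq 0}\dum_{k'=0}^{l-1}\ls h_{k'}\rs_{il+k'+y}^{il+k'}$ with $h_{k-1}=e_k$, $h_{(k-1)+(p-1)}=e_{p-k}$, and $h_{k'}=0$ otherwise. The claimed formula is then immediate from \cref{pp:iota}(b), which simultaneously gives $γ_kι^j=ι^jγ_k$ and shifts the source indices by $jl$.

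Finally, for $m_1(γ_kι^j)$ I would apply \cref{pp:iota}(c) to reduce to $m_1(γ_kι^j)=m_1(γ_k)\circ ι^j$, and since $|γ_k|$ is odd this equals $m_1(γ_k)=d\circ γ_k+γ_k\circ d$. Using $d=\dum_{y\geq 0}\ls e_{ω(y),ω(y+1)}\rs_{y+1}^y$ from \cref{bem:di} together with the composition rule $\ls g'\rs_{x'}^{y'}\circ\ls g\rs_x^y=\ls g'\circ g\rs_x^{y'}$ (nonzero iff $y=x'$), a summand of $γ_k\circ d$ survives exactly when $d$'s target meets a source of $γ_k$, and a summand of $d\circ γ_k$ exactly when $d$'s source meets a target of $γ_k$. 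Evaluating $ω$ at each matched pair, and using $e_k=\id_{P_k}$, $e_{p-k}=\id_{P_{p-k}}$ from \cref{lem:relfp}, produces the four families $\ls e_{k-1,k}\rs$ and $\ls e_{p-k+1,p-k}\rs$ (from $d\circ γ_k$) and $\ls e_{k,k-1}\rs$ and $\ls e_{p-k,p-k+1}\rs$ (from $γ_k\circ d$); re-multiplying by $ι^j$ via \cref{pp:iota}(b) then gives the asserted expression. The point needing care — and the main obstacle — is the $ω$-bookkeeping: I must confirm that the turning-point maps $e_{1,1}$ and $e_{p-1,p-1}$ never arise, which holds because the source and target local positions of $γ_k$ (namely $l-k$, $p-1-k$ and $k-1$, $k+p-2$) never equal the local positions $0$ and $p-1$ at which $d$ carries a diagonal map. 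I would verify this explicitly, checking the boundary cases $k=2$ and $k=p-1$ and the nonnegativity of all degrees at $i=0$, so that exactly these four families occur with no extra or missing terms.
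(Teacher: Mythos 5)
Your proposal is correct and follows essentially the same route as the paper: well-definedness via evaluating $ω$ at the source and target indices, the formula for $γ_kι^j$ from \cref{pp:iota}(b), and $m_1(γ_k)=d\circ γ_k+γ_k\circ d$ computed summand-by-summand against \cref{bem:di} (the paper computes $m_1(γ_kι^j)$ directly rather than first factoring out $ι^j$ via \cref{pp:iota}(c), an immaterial difference). Your index bookkeeping checks out — in particular the diagonal maps $e_{1,1}$, $e_{p-1,p-1}$ indeed never occur, though note that for the composite $γ_k\circ d$ the relevant comparison is of the source local positions $l-k$, $p-1-k$ of $γ_k$ with the \emph{target} local positions $l-1$, $p-2$ of the diagonal summands of $d$, not with $0$ and $p-1$.
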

\begin{proof}
We need to prove that $γ_k$ is well-defined. Let $i\geq 0$.\\*
We consider the first term. The complex $\pres \fp$ (cf.\ \eqref{prfp}, \eqref{omega}) has entry $P_k$ at position $k(l-1)+li$ and at position $k-1+li$: 
We have $k(l-1)+li = (k-1+i)l + l - k$. So $ω(k(l-1)+li) = l - (l - k) = k$ since  $p-1\leq l-k \leq l-1$. 
We have $ω(k-1+li) = (k-1)+1 = k$ since $0\leq k-1 \leq p-2$. As $k(l-1)+li, k-1+li\geq 0$, we have $\pr_{k(l-1)+li}=P_{ω(k(l-1)+li)}=P_k$ and $\pr_{k-1+li} = P_{ω(k-1+li)} = P_k$. So the first term is well-defined.\\*
%$e_k:P_{ω(k(l-1)+li)}→P_{ω(k-1+li)}$.\\
Now consider the second term. The complex $\pres \fp$ has entry $P_{p-k}$ at position $k(l-1) + (p-1) + li$ and at position $k-1+(p-1)+li$: 
We have $k(l-1) + (p-1) + li = (i+k)l + (p-1)-k$, so $ω(k(l-1) + (p-1)+li) = (p-1)-k + 1 = p-k$ since  $0\leq (p-1)-k \leq p-2$. We have $ω(k-1+(p-1)+li) = 2(p-1) - (k-1) -(p-1) = p-k$ since $p-1\leq k-1 + (p-1) \leq 2(p-1)-1$. As $k(l-1) + (p-1) + li, k-1+(p-1)+li\geq 0$, we have  $\pr_{k(l-1)+(p-1)+li} = P_{ω(k(l-1)+(p-1)+li)} = P_{p-k}$ and $\pr_{k-1+(p-1)+li} = P_{ω(k-1+(p-1)+li)} = P_{p-k}$. So the second term is well-defined.
%$e_{p-k}: P_{ω((k(l-1) + (p-1) + li)}→P_{ω(k-1+(p-1)+li)}$.

The degree of the tuple of  maps is computed to be $(k(l-1) + li) - (k-1+li) = k(l-2) + 1 = (k(l-1)+(p-1)+li) - (k-1+(p-1)+li)$.

The explicit formula for $γ_kι^j$ is an application of \cref{pp:iota}(b).

The degree $|γ_kι^j| = k(l-2) + 1$ is odd, so
\begin{align*}
m_1(γ_kι^j)\,\, \ovs{\text{L.}\ref{lem:cai}}& d \circ γ_kι^j + γ_kι^j \circ d\\
\ovs{\text{R.}\ref{bem:di}}& \dum_{i\geq 0} \ls e_{ω(k-2),ω(k-1)}\rs_{k-1+li}^{k-2+li} \circ \dum_{i\geq 0} \ls e_k \rs_{k(l-1)+l(i+j)}^{k-1+li} \\
&+ \dum_{i\geq 0} \ls e_{ω(p-1+k-2),ω(p-1+k-1)}\rs_{k-1+(p-1)+li}^{k-2+(p-1)+li}\circ \dum_{i\geq 0} \ls e_{p-k} \rs_{k(l-1) + (p-1) + l(i+j)}^{k-1+(p-1)+li} \\
&+\dum_{i\geq 0} \ls e_k \rs_{k(l-1)+l(i+j)}^{k-1+li} \circ \dum_{i\geq 0} \ls e_{ω(l-k),ω(l-k+1)}\rs_{k(l-1)+1+l(i+j)}^{k(l-1)+l(i+j)}\\
&+ \dum_{i\geq 0} \ls e_{p-k} \rs_{k(l-1) + (p-1) + l(i+j)}^{k-1+(p-1)+li}\circ
 \dum_{i\geq 0} \ls e_{ω(p-1-k),ω(p-k)}\rs_{k(l-1) + p + l(i+j)}^{k(l-1) + (p-1) + l(i+j)}\\
\eqs&\dum_{i\geq 0} \ls e_{k-1,k} \rs_{k(l-1)+l(i+j)}^{k-2+li} 
+ \dum_{i\geq 0} \ls e_{p-k+1,p-k}\rs_{k(l-1) + (p-1) + l(i+j)}^{k-2+(p-1)+li}\\
&+\dum_{i\geq 0} \ls e_{k,k-1}\rs_{k(l-1)+1+l(i+j)}^{k-1+li}
+ \dum_{i\geq 0} \ls e_{p-k,p-(k-1)}\rs_{k(l-1) + p + l(i+j)}^{k-1+(p-1)+li}\\
\end{align*} 
Note that in the second line  $k-2 +li\geq 0$ as $i\geq 0$ and $k\geq 2$.
\end{proof}
\begin{lemma}
\label[lemma]{lem:chichi}
For $j,j'\geq 0$, we have
$%\begin{align*}
χι^j \circ χι^{j'} = m_1(γ_2ι^{j+j'}).
$%\end{align*}
\end{lemma}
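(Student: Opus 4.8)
The plan is to reduce the assertion to the single multiplicative identity $χ\circ χ=m_1(γ_2)$ and then transport it along $ι^{j+j'}$. First I would strip off the $ι$-powers. By \cref{pp:iota}(e) we have $χι^j=χ\circ ι^j$ and $χι^{j'}=ι^{j'}\circ χ$, so that, using \cref{pp:iota}(a) for $ι^j\circ ι^{j'}=ι^{j+j'}$ and the $l$-periodicity of $χ$ (\cref{pp:iota}(b), which gives $ι^{j+j'}\circ χ=χ\circ ι^{j+j'}$),
\begin{align*}
χι^j\circ χι^{j'}=(χ\circ ι^j)\circ(ι^{j'}\circ χ)=χ\circ ι^{j+j'}\circ χ=χ\circ χ\circ ι^{j+j'}.
\end{align*}
On the other side, $γ_2ι^{j+j'}=γ_2\circ ι^{j+j'}$ by definition (\cref{lem:gamma}), so \cref{pp:iota}(c) yields $m_1(γ_2ι^{j+j'})=m_1(γ_2)\circ ι^{j+j'}$. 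Hence it will suffice to prove $χ\circ χ=m_1(γ_2)$ and then compose both sides on the right with $ι^{j+j'}$.

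To establish the claim I would expand $χ\circ χ$ componentwise. Since $|χ|=l-1$, the composition convention of \cref{lem:cai} gives
\begin{align*}
χ\circ χ=\dum_{i\geq 0}\ls χ_i\circ χ_{i+l-1}\rs_{i+2(l-1)}^{i}.
\end{align*}
By $l$-periodicity the component $χ_i$ depends only on the residue $r\in[0,l-1]$ of $i$ modulo $l$; writing $χ_{[r]}$ for this common value and reading it off from the definition of $χ$, one has $χ_{[0]}=e_1$, $χ_{[r]}=e_{r+1,r}$ for $r\in[1,p-2]$, $χ_{[p-1]}=e_{p-1}$ and $χ_{[(p-1)+k]}=e_{p-k-1,p-k}$ for $k\in[1,p-2]$. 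As $i+l-1\equiv i-1\pmod{l}$, the second factor is $χ_{[r-1]}$, read modulo $l$, so that the residue $0$ pairs with the residue $l-1$ (where $χ_{[l-1]}=e_{1,2}$).

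It then remains to run through the residues. For the four values $r\in\{0,1,p-1,p\}$ exactly one factor is an identity map, and
\begin{align*}
χ_{[0]}\circ χ_{[l-1]}&=e_1\circ e_{1,2}=e_{1,2}, & χ_{[1]}\circ χ_{[0]}&=e_{2,1}\circ e_1=e_{2,1},\\
χ_{[p-1]}\circ χ_{[p-2]}&=e_{p-1}\circ e_{p-1,p-2}=e_{p-1,p-2}, & χ_{[p]}\circ χ_{[p-1]}&=e_{p-2,p-1}\circ e_{p-1}=e_{p-2,p-1}.
\end{align*}
For every other residue the product is $e_{r+1,r}\circ e_{r,r-1}\colon P_{r-1}\to P_{r+1}$ (when $r\in[2,p-2]$) or $e_{p-k-1,p-k}\circ e_{p-k,p-k+1}\colon P_{p-k+1}\to P_{p-k-1}$ (when $r=(p-1)+k$, $k\in[2,p-2]$); in either case source and target are projectives whose indices differ by $2$, so the map vanishes by \cref{lem:relfp}(c). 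No Koszul signs appear, because $m_2$ is plain composition (\cref{lem:cai}). Placing the four surviving components at the indices $i=jl+r$, $j\geq 0$, and comparing with the explicit expression for $m_1(γ_2)$ from \cref{lem:gamma} (case $k=2$), the two $l$-periodic families of boxes coincide term by term, which proves $χ\circ χ=m_1(γ_2)$ and hence the lemma.

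I expect the main obstacle to be the index bookkeeping in this last comparison: correctly reducing $i+l-1$ modulo $l$ (in particular the wrap-around that makes residue $0$ produce the $e_{1,2}$-component), and verifying that the domain/codomain indices $i+2(l-1)\to i$ of the four surviving boxes match exactly the slots occupied by $m_1(γ_2)$ in \cref{lem:gamma}.
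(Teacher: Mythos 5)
Your proposal is correct and follows essentially the same route as the paper: reduce to $χ\circ χ=m_1(γ_2)$ via \cref{pp:iota}(c),(e), compute the composite componentwise, observe that only the four components at residues $0,1,p-1,p$ survive (the rest vanishing by \cref{lem:relfp}(c) since their source and target indices differ by $2$), and match the result against the formula for $m_1(γ_2)$ in \cref{lem:gamma}. The only cosmetic difference is that you organize the bookkeeping by residues modulo $l$ instead of explicitly splitting the sums as the paper does; the index identities check out.
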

\begin{proof}
It suffices to prove that $χ\circ χ = m_1(γ_2)$ since then $χι^j \circ χι^{j'} \overset{\text{P.}\ref{pp:iota}(e)}{=} χ\circ χ\circ ι^{j+j'} = m_1(γ_2)\circ ι^{j+j'} \overset{\text{P.}\ref{pp:iota}(c)}{=} m_1(γ_2ι^{j+j'})$.\\*
To determine when a composite is zero, we will need the following.  For \mbox{$0\leq k,k' < l$}, we examine the condition 
\begin{align}
\label{mulccg}
il +l - 1 + k = i'l + k'.
\end{align}
If $k=0$ then \eqref{mulccg} holds iff $i=i'$ and $k' = l-1$.\\*
If $k\geq 1$ then \eqref{mulccg} holds iff $i+1=i'$ and $k'=k-1$.\\*
So
% $i+j+1 = i'\text{ and }k-1 = k'$ so
\begin{align*}
χ\circ χ \ovs{p\geq 3\vphantom{I_{I_y}}}&%\overset{p\geq 3\vphantom{I_{I_y}}}{=} 
\left(\dum_{i\geq 0}\left(\ls e_1 \rs_{il+l-1}^{il} + \ls e_{2,1}\rs_{il+l}^{il+1}+\left\lgroup\dum_{k=2}^{p-2}\ls e_{k+1,k}\rs_{il+l-1+k}^{il + k}\right\rgroup\right.\right.\\
& + \ls e_{p-1}\rs_{il+l-1 +(p-1)}^{il+(p-1)}
+ \ls e_{p-2,p-1}\rs_{il+l+p-1}^{il+p}
+\left.\left.\left\lgroup\dum_{k=2}^{p-2}\ls e_{p -k-1,p-k}\rs_{il+l-1+(p-1)+k}^{il +(p-1)+ k}\right\rgroup\right)\right)\\
&\circ\left(\dum_{i'\geq 0}\left(\ls e_1 \rs_{i'l+l-1}^{i'l} + \left\lgroup\dum_{k'=1}^{p-3}\ls e_{k'+1,k'}\rs_{i'l+l-1+k'}^{i'l + k'}\right\rgroup\right.\right.
 + \ls e_{p-1,p-2}\rs_{i'l+l+p-3}^{i'l+p-2}\\
& + \ls e_{p-1}\rs_{i'l+l-1 +(p-1)}^{i'l+(p-1)}
\left.\left.+ \left\lgroup\dum_{k'=1}^{p-3}\ls e_{p -k'-1,p-k'}\rs_{i'l+l-1+(p-1)+k'}^{i'l +(p-1)+ k'}\right\rgroup+ \ls e_{1,2}\rs_{i'l+l+2(p-2)}^{i'l+l-1}\right)\right)\\
\eqs& 
\dum_{i\geq 0}\Big(\ls e_1\circ e_{1,2}  \rs_{il+l+2(p-2)}^{il} + \ls e_{2,1}\circ e_1\rs_{il+2l-1}^{il+1}\\
& +\left\lgroup\dum_{k=2}^{p-2}\right.\!\underbrace{\ls e_{k{+}1,k}\circ e_{k,k{-}1}\rs_{il+2l-1+k-1}^{il + k}}_{=0\text{ by L.\ref{lem:relfp}(c)}}\left.\vphantom{\dum_k^p}\!\!\right\rgroup + \ls e_{p-1}\circ e_{p-1,p-2}\rs_{il+2l+p-3}^{il+(p-1)}\\
&+ \ls e_{p-2,p-1}\circ e_{p-1}\rs_{il+2l+p-2}^{il+p}
+\left\lgroup\dum_{k=2}^{p-2}\right.\underbrace{\ls e_{p -k-1,p-k}\circ e_{p-k,p-k+1}\rs_{il+2l-1+p-1+k-1}^{il +(p-1)+ k}}_{=0\text{ by L.\ref{lem:relfp}(c)}}\left.\vphantom{\dum_k^p}\!\right\rgroup\Big)\\
\eqs& \dum_{i\geq 0}\Big(\ls e_{1,2}  \rs_{(i+2)l-2}^{il} + \ls e_{2,1}\rs_{(i+2)l-1}^{il+1}
+\ls e_{p-1,p-2}\rs_{(i+2)l+p-3}^{il+p-1} + \ls e_{p-2,p-1}\rs_{(i+2)l+p-2}^{il+p} 
\Big)\\
\ovs{\text{L.}\ref{lem:gamma}}& m_1(γ_2)
\end{align*}
%FIXED: separate $j,j'$ from the calculation ? 
\end{proof}
%For the higher $f_n$ we will use a sign convention:
% \begin{Def}
% For $n\geq 1$ the sign $s_n$ is defined recursively by 
% \begin{align*}
% s_1 &:= +1\\
% s_n &:= (-1)^{n-1} s_{n-1} & n\geq 2
% \end{align*}
% \end{Def}
Below are the definitions which will give a minimal $\A_∞$-algebra structure on $\Hm^*A$ and a quasi-isomorphism of $\A_∞$-algebras $\Hm^*A → A$. 
\begin{Def}
\label[Def]{defall}
Recall from \cref{pp:iota} that $\mathfrak{B}= \mbox{$\{\overline{ι^j}\mid j\geq 0\}\sqcup \{\overline{χι^j}\mid j\geq 0\}$} = \{\overline{χ^aι^j} \mid j\geq 0, a∈\{0,1\}\}$ is a basis of $\Hm^*A$. For $n∈\Z_{\geq 1}$, we set \begin{align*}
\mathfrak{B}^{\otimes n} :\eqs& \{\overline{χ^{a_1}ι^{j_1}}\otimes … \otimes \overline{χ^{a_n}ι^{j_n}}  ∈ (\Hm^*A)^{\otimes n} \mid  a_i∈\{0,1\} \text{ and } j_i∈\Z_{\geq 0} \text{ for all }i∈[1,n] \},%\\
% \eqs& \{b_1\otimes … \otimes b_n \mid b_1,…,b_n∈\mathfrak{B}\},
\end{align*}
 which is a basis of $(\Hm^* A)^{\otimes n}$ consisting of homogeneous elements.

For $n\geq 1$, we define the $\fp$-linear map $f_n:(\Hm^*A)^{\otimes n}→ A$ as follows:
\begin{description}
\item[Case $n=1$:] $f_1$ is given on $\mathfrak{B}$ by $f_1(\overline{ι^j}):=ι^j$ and $f_1(\overline{χι^j}):= χι^j$.
\item[Case {$n∈[2,p-1]$}:] $f_n$ is given on elements of $\mathfrak{B}^{\otimes n}$ %=\{\overline{χ^{a_1}ι^{j_1}}\otimes … \otimes \overline{χ^{a_n}ι^{j_n}} \mid  a_i∈\{0,1\} \text{ and } j_i∈\Z_{\geq 0} \text{ for all }i∈[1,n] \}$
 by
\begin{align*}
f_n(\overline{χ^{a_1}ι^{j_1}}\otimes … \otimes \overline{χ^{a_n}ι^{j_n}}) := \begin{cases}
0 & \text{if }∃i∈[1,n]: a_i = 0 \\
(-1)^{n-1}γ_nι^{j_1+…+j_n} & \text{if } 1=a_1 = a_2 = … = a_n
\end{cases}
\end{align*}
\item[Case {$n\geq p$}:] We set $f_n:= 0$.
\end{description}
For $n\geq 1$, we define the $\fp$-linear map $m'_n:(\Hm^*A)^{\otimes n}→ \Hm^*A$ by defining it on elements  $\overline{χ^{a_1}ι^{j_1}}\otimes … \otimes \overline{χ^{a_n}ι^{j_n}}∈\mathfrak{B}^{\otimes n}$:%, where $a_i∈\{0,1\}$ and $j_i∈\Z_{\geq 0}$ for all $i∈[1,n]$:
\begin{description}
\item[{Case $∃i∈[1,n]: a_i = 0$:}] $ $\\*
 $m'_n(\overline{χ^{a_1}ι^{j_1}}\otimes … \otimes \overline{χ^{a_n}ι^{j_n}}) := 0$ for $n\neq 2$ and\\*
%\begin{align*}
$
m'_2(\overline{χ^{a_1}ι^{j_1}} \otimes \overline{χ^{a_2}ι^{j_2}}) := \overline{χ^{a_1+a_2}ι^{j_1+j_2}}% \text{ note $a_1+a_2∈[0,1]$}
$ (Note that $a_1+a_2∈\{0,1\}$).
%\end{align*}
\item[Case $a_1 = a_2 = … = a_n=1$:] $ $\\*
 $m'_n(\overline{χι^{j_1}}\otimes … \otimes \overline{χι^{j_n}}):= 0$ for $n\neq p$ and\\*
$m'_p(\overline{χι^{j_1}}\otimes … \otimes \overline{χι^{j_p}}) := (-1)^p\overline{ι^{p-1+j_1+…+j_p}}=-\overline{ι^{p-1+j_1+…+j_p}}$.
\end{description}
% \begin{itemize}
% \item $n=1$, $3\leq n\leq p-1$, $n\geq p+1$: $m'_n=0$
% \item $n=2$: As in \cref{lem:m2}
% %\item] $m'_n=0$
% \item $n=p$: $m'_p$ is given on basis elements by
% \begin{align*}
% m'_p(\overline{χ^{a_1}ι^{j_1}}\otimes … \otimes \overline{χ^{a_p}ι^{j_p}}) := \begin{cases}
% 0 & ∃1\leq j\leq p: a_j = 0 \\
% \overline{ι^{p-1+j_1+…+j_n}} & 1=a_1 = a_2 = … = a_p
% \end{cases}
% \end{align*}
% %\item[$n\geq p+1]$] $m'_n = 0$
% \end{itemize}
\end{Def}
Note that since $p\geq 3$, we have $m'_2(\overline{χι^{j_1}} \otimes \overline{χι^{j_2}}) = 0$ for $j_1,j_2\geq 0$.

\begin{tm}
\label[tm]{tm:statement}
The pair $(\Hm^*A, (m'_n)_{n\geq 1})$ is a minimal $\A_∞$-algebra. 
%The maps $(m'_n)_{n\geq 1}$ give an $\A_∞$-structure on $\Hm^*A$ such that $\Hm^*A$ is minimal.
The tuple  $(f_n)_{n\geq 1}$ is an quasi-isomorphism of $\A_∞$-algebras from $(\Hm^*A, (m'_n)_{n\geq 1})$ to $(A, (m_n)_{n\geq 1})$. More precisely, \mbox{$f_1:(\Hm^* A, m'_1)→(A, m_1)$} induces the identity in homology.
\end{tm}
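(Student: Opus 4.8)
The plan is to produce the $\A_\infty$-algebra and the morphism by a single application of \cref{lem:aaut} with $n=\infty$, taking $(A,(m_n)_{n\geq1})$ as the dg-algebra from \cref{lem:cai}. For this I first record the routine facts: the maps $m'_n$ and $f_n$ of \cref{defall} have the degrees required of a pre-$\A_\infty$-structure and a pre-$\A_\infty$-morphism (one checks $|m'_p|=2-p$ and $|f_n|=1-n$ directly on the generators, using $|\iota^j|=jl$, $|\chi\iota^j|=jl+l-1$ and the degree $|\gamma_k\iota^j|=k(l-2)+1+jl$ from \cref{lem:gamma}); and $f_1$ is injective because it carries the basis $\mathfrak B$ to the linearly independent family of cycles $\{\iota^j\}\sqcup\{\chi\iota^j\}$. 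It then remains only to verify the morphism identities \eqref{finfrel}$[n]$ for all $n\geq1$. Since $f_i=0$ for $i\geq p$ and $m'_i=0$ for $i\geq p+1$ (by \cref{defall}), \cref{lem:finffinite} applied with $k=p$ reduces this to checking \eqref{finfrel}$[n]$ for the finitely many $n\in[1,2p-2]$.

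Because $A$ is a dg-algebra, $m_r=0$ for $r\geq3$, so the right-hand side of \eqref{finfrel}$[n]$ collapses to $m_1\circ f_n$ plus the composition terms $\sum_{a+b=n}(\pm1)\,f_a\circ f_b$, while on the left only $m'_2$ and $m'_p$ survive. I would evaluate both sides on a basis element $\overline{\chi^{a_1}\iota^{j_1}}\otimes\cdots\otimes\overline{\chi^{a_n}\iota^{j_n}}$ and split on the tuple $(a_i)$. If some $a_i=0$, then $f_n$ and $m'_p$ annihilate the input, and the surviving terms involve only $m'_2$, $f_1$, $m_1$ and composition $m_2$; these collapse to the product rules of \cref{pp:iota}(b),(e) together with the definition of $m'_2$. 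Here the instance $n=2$ is exactly \eqref{finfrel2}; an interior zero forces both sides to vanish, since the two $m'_2$-absorptions of the zero into its neighbours carry opposite signs $(-1)^{rs+t}$ and cancel; and a boundary zero matches the single surviving term $f_1\circ f_{n-1}$ or $f_{n-1}\circ f_1$.

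The crux is the all-ones case $a_1=\cdots=a_n=1$. Since $m'_2(\overline{\chi\iota}\otimes\overline{\chi\iota})=0$ (noted after \cref{defall}), the left-hand side is $0$ for $2\leq n\leq p-1$ and equals $(-1)^p\iota^{p-1+j_1+\cdots+j_n}$ for $n=p$. On the right-hand side every product $f_a\circ f_b$ with $a,b\geq2$ vanishes, because $\gamma_a\iota\circ\gamma_b\iota=0$ for support reasons: comparing the source and target degrees listed in \cref{lem:gamma} shows they never match when $a,b\geq2$. Hence only $m_1\circ f_n=(-1)^{n-1}m_1(\gamma_n\iota^{j_1+\cdots+j_n})$ and the two mixed products $\chi\iota\circ\gamma_{n-1}\iota$ and $\gamma_{n-1}\iota\circ\chi\iota$ remain. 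The whole statement therefore reduces to product formulas for $\chi\iota^{j}\circ\gamma_k\iota^{j'}$ and $\gamma_k\iota^{j}\circ\chi\iota^{j'}$, to be computed exactly as in \cref{lem:chichi}: for $2\leq k\leq p-2$ their signed sum equals $\mp\,m_1(\gamma_{k+1}\iota^{j+j'})$, so the right-hand side telescopes to $0$; for $k=p-1$ the composite wraps around the $l$-periodic resolution and yields $(-1)^p\iota^{p-1+j+j'}$, matching the $m'_p$-term. The base case $n=2$ is \cref{lem:chichi} itself, so $\gamma_1$ is never needed.

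The main obstacle is this all-ones computation: establishing the two product formulas for $\chi\circ\gamma_k$ and $\gamma_k\circ\chi$ and, above all, keeping the Koszul signs and the Stasheff signs $(-1)^{rs+t}$ and $(-1)^{(1-i_2)i_1}$ consistent so that the telescoping and the final wraparound emerge with the asserted signs. Once \eqref{finfrel}$[n]$ holds for $n\in[1,2p-2]$, \cref{lem:finffinite} gives it for all $n$, and \cref{lem:aaut} yields that $(\Hm^*A,(m'_n))$ is an $\A_\infty$-algebra and $(f_n)$ an $\A_\infty$-morphism. Finally $m'_1=0$ by \cref{defall} (using $p\geq3$), so the structure is minimal; and since $f_1$ sends each $\overline{\iota^j}$ and $\overline{\chi\iota^j}$ to a representing cycle it induces the identity on $\Hm^*A$, whence $f_1$ is a quasi-isomorphism and $(f_n)$ is a quasi-isomorphism of $\A_\infty$-algebras, as asserted.
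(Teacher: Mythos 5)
Your overall strategy is the same as the paper's: check \eqref{finfrel}$[n]$ on the basis $\mathfrak{B}^{\otimes n}$ for $n\in[1,2p-2]$, extend to all $n$ by \cref{lem:finffinite} with $k=p$, and conclude with \cref{lem:aaut}; your handling of the degrees, of the injectivity of $f_1$, of the cases with some $a_i=0$, and of the all-ones case for $n\in[3,p-1]$ and $n\in[p+1,2p-2]$ agrees with \cref{lem:degree,lem:f1inj,lem:f3,lem:f4,lem:f6}. There is, however, a genuine error in your all-ones case at $n=p$. You assert that every product $f_a\circ f_b$ with $a,b\geq 2$ vanishes for support reasons. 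The support criterion actually reads as follows: the source positions of the components of $\gamma_a$ are $\equiv -a$ and the target positions of the components of $\gamma_b$ are $\equiv b-1$ modulo $p-1$ (since $l\equiv 0$), so $\gamma_a\iota^x\circ\gamma_b\iota^y$ can only be nonzero when $a+b\equiv 1\pmod{p-1}$. For $a+b=n\in[3,p-1]\cup[p+1,2p-2]$ this fails and your vanishing claim is correct, but for $a+b=n=p$ the congruence \emph{holds}, and indeed
\begin{align*}
\gamma_k\circ\gamma_{p-k}\eqs\dum_{i\geq 0} \ls e_k \rs_{(p+i-1)l + k-1}^{k-1+li}
+ \dum_{i\geq 0} \ls e_{p-k} \rs_{(p+i-1)l+ k-1 + (p-1)}^{k-1+(p-1)+li}\,\neq\, 0
\end{align*}
for $k\in[2,p-2]$.

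Consequently your reduction of $\Xi_p$ to $m_1\circ f_p$ plus the two mixed products $\chi\circ\gamma_{p-1}$ and $\gamma_{p-1}\circ\chi$ is wrong for $p\geq 5$: those two composites supply only the components of $\iota^{p-1}$ sitting in degrees congruent to $0$, $p-2$, $p-1$ and $l-1$ modulo $l$, i.e.\ four components per period out of $l=2(p-1)$. The identity the paper proves in \cref{lem:f5} is
\begin{align*}
\chi\circ \gamma_{p-1} + \gamma_{p-1}\circ \chi + \dum_{k=2}^{p-2} \gamma_k\circ \gamma_{p-k} \eqs \iota^{p-1},
\end{align*}
where the terms $\gamma_k\circ\gamma_{p-k}$ contribute exactly the missing $l-4$ components per period. (For $p=3$ the sum over $k\in[2,p-2]$ is empty and your version is accidentally correct, which may be how the gap escaped notice.) The repair is local: replace the blanket vanishing claim by the mod-$(p-1)$ criterion above, and carry the nonzero products $\gamma_k\circ\gamma_{p-k}$ through the $n=p$ computation; the signs $(-1)^{(1-i_2)i_1}$ combine with $|f_k|=1-k$ to make all these terms enter with the same sign $(-1)^p$, after which the sum assembles to $(-1)^p\iota^{p-1+j_1+\cdots+j_p}$ as required.
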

The proof of \cref{tm:statement} will take the remainder of \cref{subsec:minmod}. We will use \cref{lem:aaut}.
\begin{lemma}
\label[lemma]{lem:degree}
The maps $f_n$ and $m'_n$ have degree $|f_n| = 1-n$ and $|m'_n| = 2-n$.
I.e.\ $(f_n)_{n\geq 1}$ is a pre-$A_∞$-morphism from $\Hm^* A$ to $A$, and $(\Hm^*A, (m'_n)_{n\geq 1})$ is a pre-$\A_∞$-algebra.
\end{lemma}
\begin{proof}
We have $|f_1|=0$ as $|\overline{ι^j}| = |ι^j|$ and $|\overline{χι^j}|=|χι^j|$. For $n\geq p$ the map $f_n$ is  of degree $1-n$ as $f_n=0$.  For $n∈[2,p-1]$  the statement $|f_n|=1-n$ is proven by checking the degrees for the elements of the basis $\mathfrak{B}^{\otimes n}$ whose image under $f_n$ is non-zero:
\begin{align*}
|f_n(\overline{χι^{j_1}}\otimes … \otimes \overline{χι^{j_n}})| \eqs& 
|(-1)^{n-1}γ_nι^{j_1+…+j_n}| 
\overset{\text{L.}\ref{lem:gamma}}{=} (j_1+…+j_n)l + n(l-1) + 1-n\\
\eqs& 1-n+\dum_{x=1}^n |\overline{χι^{j_x}}| = 1-n + |\overline{χι^{j_1}}\otimes … \otimes \overline{χι^{j_n}}|
\end{align*}
Thus $|f_n|=1-n$ for all $n$ and we have proven the first statement.

Now we show $|m'_n|=2-n$. As before, we only need check the degrees for basis elements whose image is non-zero: For $\overline{χ^{a_1}ι^{j_1}}\otimes\overline{χ^{a_2}ι^{j_2}}$, $j_1,j_2\geq 0$, $a_1,a_2∈\{0,1\}$, $0∈\{a_1,a_2\}$, we have
\begin{align*}
|m'_2(\overline{χ^{a_1}ι^{j_1}}\otimes\overline{χ^{a_2}ι^{j_2}})| \eqs& |\overline{χ^{a_1+a_2}ι^{j_1+j_2}}| = (a_1+a_2)(l-1)+l(j_1+j_2)\\
\eqs& a_1(l-1) + j_1l + a_2(l-1) + j_2l = |\overline{χ^{a_1}ι^{j_1}}\otimes\overline{χ^{a_2}ι^{j_2}}| + (2-2).
\end{align*}
For $\overline{χι^{j_1}}\otimes \cdots \otimes \overline{χι^{j_p}}$, $j_x\geq 0$ for $x∈[1,p]$, we have
\begin{align*}
|m'_p(\overline{χι^{j_1}}\otimes \cdots \otimes \overline{χι^{j_p}})| \eqs& |\overline{ι^{p-1+j_1+…+j_p}}| = l(p-1+j_1+…+j_p)\\
\eqs& lp - l + l(j_1+…+j_p) = lp - 2p+2 + l(j_1+…+j_p)\\
 \eqs& p(l-1) + l(j_1+…+j_p) + 2-p 
= |\overline{χι^{j_1}}\otimes \cdots \otimes \overline{χι^{j_p}}| + 2-p 
\end{align*}
\end{proof}

\begin{lemma}
\label[lemma]{lem:f1}
We have $m'_1=0$. The equation \eqref{finfrel}$[1]$ holds. The complex morphism  {$f_1:(A',m'_1)→(A,m_1)$} is a quasi-isomorphism inducing the identity in homology.
\end{lemma}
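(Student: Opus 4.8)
The plan is to settle the three assertions of \cref{lem:f1} directly from \cref{defall} and \cref{pp:iota}, with essentially no computation beyond evaluating definitions on the basis $\mathfrak{B}$. First I would prove $m'_1=0$ by specializing the defining formula for $m'_n$ in \cref{defall} to $n=1$. On a basis element $\overline{χ^{a_1}ι^{j_1}}∈\mathfrak{B}$ with $a_1=0$ the first case applies and yields $0$, since $n=1\neq 2$; with $a_1=1$ the second case applies and yields $0$, since $n=1\neq p$ (this is where $p\geq 3$ enters). As $m'_1$ vanishes on the basis, $m'_1=0$.

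Next, for \eqref{finfrel}$[1]$, which reads $f_1\circ m'_1 = m_1\circ f_1$, the left-hand side is zero by the previous step, so it suffices to show $m_1\circ f_1=0$. Evaluating $f_1$ on $\mathfrak{B}$ gives $f_1(\overline{ι^j})=ι^j$ and $f_1(\overline{χι^j})=χι^j$, and both are cycles by \cref{pp:iota}(d) and (f), i.e.\ $m_1(ι^j)=m_1(χι^j)=0$. Hence $m_1\circ f_1$ vanishes on $\mathfrak{B}$, and therefore on all of $\Hm^*A$, so \eqref{finfrel}$[1]$ holds; in particular $f_1$ is a morphism of complexes from $(A',m'_1)$ to $(A,m_1)$ (cf.\ \cref{ex:cc}).

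Finally, for the quasi-isomorphism claim, observe that since $m'_1=0$ the complex $(A',m'_1)=(\Hm^*A,0)$ carries the trivial differential, so $\Hm^*(A',m'_1)=\Hm^*A$ and every element is a cycle. The induced map $\bar f_1$ sends the class of $\overline{χ^aι^j}$ to $\overline{f_1(\overline{χ^aι^j})}=\overline{χ^aι^j}$, because the cycle $χ^aι^j$ represents the class $\overline{χ^aι^j}$ in $\Hm^*(A,m_1)=\Hm^*A$ by definition. Since $\mathfrak{B}$ is a basis of $\Hm^*A$ by \cref{pp:iota}(g), the linear map $\bar f_1$ is the identity, hence an isomorphism, and $f_1$ is therefore a quasi-isomorphism inducing the identity in homology.

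I do not expect a genuine obstacle here: all of the substance has been front-loaded into \cref{pp:iota}, namely the cycle properties (d) and (f) and the basis statement (g). The only point that merits care in the write-up is the double role played by $\Hm^*A$ — once as the underlying graded module $A'$ of the minimal model, equipped with the zero differential, and once as the homology $\Hm^*(A,m_1)$ of the dg-algebra $A$ — and the observation that the phrase ``$f_1$ induces the identity'' is precisely the statement that $f_1$ was constructed to send each basis homology class to a chosen representing cycle, so the compatibility of the two identifications is built in by definition.
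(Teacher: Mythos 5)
Your proposal is correct and follows essentially the same route as the paper's own (much terser) proof: $m'_1=0$ by inspection of \cref{defall}, $m_1\circ f_1=0$ because the images of the basis $\mathfrak{B}$ under $f_1$ are the cycles $ι^j$ and $χι^j$ of \cref{pp:iota}(d),(f), and the identity-in-homology claim from \cref{pp:iota}(g) since each class is sent to a representing cycle. Your write-up merely makes explicit the steps the paper compresses into ``follows immediately from the definition'' and ``by construction''.
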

\begin{proof}
The equality $m'_1=0$ follows immediately from the definition. Thus $m_1\circ f_1 = 0 = f_1\circ m'_1$. Moreover $f_1$ is a quasi-isomorphism inducing the identity in homology by construction, cf.\ \cref{pp:iota}(g).
\end{proof}

\begin{lemma}
\label[lemma]{lem:f1inj}
The map $f_1$ is injective.
\end{lemma}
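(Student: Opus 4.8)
The plan is to use that $f_1$ is a graded map of degree $0$ together with the fact, established in \cref{pp:iota}(g), that $\mathfrak{B} = \{\overline{ι^j} \mid j \geq 0\} \sqcup \{\overline{χι^j} \mid j \geq 0\}$ is an $\fp$-basis of $\Hm^*A$. Since $f_1$ sends these basis elements to the cycles $ι^j$ and $χι^j$ respectively, injectivity of $f_1$ is equivalent to the $\fp$-linear independence of the family $\{ι^j \mid j \geq 0\} \cup \{χι^j \mid j \geq 0\}$ inside $A$.

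The first step is to record the degrees. By \cref{pp:iota} we have $|ι^j| = jl$ and $|χι^j| = jl + l - 1$. Since $p \geq 3$ gives $l = 2(p-1) \geq 4$, these integers are pairwise distinct: the values $jl$ are distinct for distinct $j$, the values $jl + l - 1$ are distinct for distinct $j$, and no value of the first kind equals one of the second kind, because $jl = j'l + l - 1$ would force $l \mid 1$. Hence each of the elements $ι^j$, $χι^j$ lies in its own homogeneous component $A^z$, and distinct elements of the family lie in distinct components.

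Because $A = \bigoplus_z A^z$ is graded and the images occupy pairwise distinct degrees, any $\fp$-linear relation among them splits into one relation per degree, each involving a single element. Thus the independence we want reduces to showing that each $ι^j$ and each $χι^j$ is non-zero in $A$. For this I would exhibit an explicit non-zero component: in the formula of \cref{pp:iota}(a) the summand with $i = 0$, $k = 0$ is $\ls e_{ω(0)} \rs_{jl}^0 = \ls e_1 \rs_{jl}^0$, and in the formula of \cref{pp:iota}(e) the summand with $i = 0$ contains $\ls e_1 \rs_{(j+1)l - 1}^0$. In both cases the relevant component at position $0$ is $e_1$, which by \cref{lem:relfp}(a) is the identity on $P_1$; since $P_1 \neq 0$ this component is non-zero, so $ι^j \neq 0$ and $χι^j \neq 0$.

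There is no substantial obstacle here: once the grading is brought to bear, the statement is essentially immediate, and the only point requiring a little care is verifying that the two families of degrees $\{jl\}$ and $\{jl + l - 1\}$ are disjoint, which is exactly where the hypothesis $p \geq 3$ (equivalently $l \geq 4 > 1$) enters. Assembling these observations, if $x = \sum_j c_j \overline{ι^j} + \sum_j d_j \overline{χι^j}$ satisfies $f_1(x) = 0$, then a degree-by-degree comparison forces every $c_j ι^j = 0$ and $d_j χι^j = 0$, hence every $c_j = d_j = 0$, so $x = 0$ and $f_1$ is injective.
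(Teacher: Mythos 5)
Your proof is correct and follows essentially the same route as the paper: the paper likewise observes that the images $χ^aι^j$ are nonzero elements lying in pairwise distinct summands of $A=\bigoplus_{k∈\Z}\Hom^k(\pres\fp,\pres\fp)$, hence linearly independent, so $f_1$ maps the basis $\mathfrak{B}$ bijectively onto a linearly independent set. Your version merely spells out the degree computation and the nonvanishing of the $e_1$-component in more detail than the paper does.
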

\begin{proof}
The set $X:=\{ χ^aι^j \mid a∈\{0,1\}, j∈\Z_{\geq 1}\}\subseteq A$ is linearly independent, since it consists of nonzero elements of different summands of the direct sum $A=\bigoplus_{k∈\Z}\Hom^k(\pres\fp,\pres\fp)$.
%  projection to homology $\mathfrak{B}$ a basis of $\Hm^*A$, cf.\ \cref{pp:iota}, and therefore linearly independent.
The set $\mathfrak{B}$, which is a basis of $\Hm^* A$, is mapped bijectively to $X$ by $f_1$, so $f_1$ is injective.
\end{proof}

\begin{lemma}
\label[lemma]{lem:f2}
The equation \eqref{finfrel}$[2]$ holds.
% The graded map $f_2:\Hm^*A^{\otimes 2} → A$ of degree $1-2=-1$ is given on  basis elements by 
% \begin{align*}
% f_2(\overline{ι^j}\otimes \overline{ι^{j'}}) = f_2(\overline{ι^j}\otimes \overline{χι^{j'}}) = f_2(\overline{χι^j} \otimes \overline{ι^{j'}}) = 0\\
% f_2(\overline{χι^j}\otimes \overline{χι^{j'}}) = -γ_2ι^{j+j'}
% \end{align*}
\end{lemma}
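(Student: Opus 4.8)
The plan is to exploit the vanishing $m'_1=0$ established in \cref{lem:f1}, which collapses the already-expanded form \eqref{finfrel2} of \eqref{finfrel}$[2]$ to the identity
\[
f_1\circ m'_2 = m_1\circ f_2 + m_2\circ (f_1\otimes f_1).
\]
Both sides are $\fp$-linear maps $(\Hm^*A)^{\otimes 2}→A$, so it suffices to check equality on each basis element $\overline{χ^{a_1}ι^{j_1}}\otimes \overline{χ^{a_2}ι^{j_2}}∈\mathfrak{B}^{\otimes 2}$, with $a_1,a_2∈\{0,1\}$ and $j_1,j_2\geq 0$. Throughout I would use that $m_2$ is composition in $A$ (cf.\ \cref{lem:cai}), that $f_1$ sends $\overline{χ^aι^j}$ to $χ^aι^j$, and the explicit values of $m'_2$ and $f_2$ from \cref{defall}.

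First I would dispose of the three cases in which at least one $a_i$ vanishes. In each such case $f_2(\overline{χ^{a_1}ι^{j_1}}\otimes \overline{χ^{a_2}ι^{j_2}})=0$ by definition, so the term $m_1\circ f_2$ drops out and the right-hand side reduces to the composite $χ^{a_1}ι^{j_1}\circ χ^{a_2}ι^{j_2}$. By \cref{pp:iota}(b),(e) this composite equals $χ^{a_1+a_2}ι^{j_1+j_2}$ (note $a_1+a_2∈\{0,1\}$ here), while the left-hand side is $f_1(m'_2(\overline{χ^{a_1}ι^{j_1}}\otimes \overline{χ^{a_2}ι^{j_2}})) = f_1(\overline{χ^{a_1+a_2}ι^{j_1+j_2}}) = χ^{a_1+a_2}ι^{j_1+j_2}$, so both sides agree.

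The remaining case $a_1=a_2=1$ carries the actual content. Here the left-hand side vanishes, because $m'_2(\overline{χι^{j_1}}\otimes\overline{χι^{j_2}})=0$ for $p\geq 3$ (as remarked after \cref{defall}). On the right-hand side, $f_2(\overline{χι^{j_1}}\otimes\overline{χι^{j_2}}) = -γ_2ι^{j_1+j_2}$ by \cref{defall}, so $m_1\circ f_2$ contributes $-m_1(γ_2ι^{j_1+j_2})$, whereas $m_2(χι^{j_1}\otimes χι^{j_2}) = χι^{j_1}\circ χι^{j_2}$. The crux is precisely \cref{lem:chichi}, which identifies this composite as $m_1(γ_2ι^{j_1+j_2})$; the two right-hand contributions then cancel and the right-hand side is $0$ as well. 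Since the nontrivial algebraic input---the relation $χ\circ χ = m_1(γ_2)$---is already isolated in \cref{lem:chichi}, the only genuine obstacle is bookkeeping: correctly tracking the sign $(-1)^{n-1}=-1$ in the definition of $f_2$. As \eqref{finfrel2} already has all Koszul signs absorbed (cf.\ \cref{ex:cc}), assembling the four cases completes the verification of \eqref{finfrel}$[2]$.
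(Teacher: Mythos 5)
Your proof is correct and follows essentially the same route as the paper: reduce \eqref{finfrel}$[2]$ to $f_1\circ m'_2 = m_1\circ f_2 + m_2\circ(f_1\otimes f_1)$ using $m'_1=0$, verify it on $\mathfrak{B}^{\otimes 2}$ case by case, and invoke \cref{lem:chichi} for the case $a_1=a_2=1$, where the sign $(-1)^{2-1}=-1$ in $f_2$ produces the required cancellation. The only cosmetic difference is that you group the three cases with some $a_i=0$ into one argument, while the paper writes them out separately.
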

\begin{proof}
As $m'_1=0$, equation \eqref{finfrel}[$2$] is equivalent to (cf.\ \eqref{finfrel2})
\begin{align*}
%&&f_1\circ m'_2 -f_2\circ (1\otimes m'_1) -f_2\circ (m'_1\otimes 1) \eqs& m_1\circ f_2 + %m_2\circ (f_1\otimes f_1)\\
%\overset{m'_1=0}{⇔}&& 
f_1\circ m'_2 \eqs& m_1\circ f_2 + m_2\circ (f_1\otimes f_1).
\end{align*}  
We check this equation on $\mathfrak{B}^{\otimes 2}$: Recall \cref{pp:iota,defall}.
%By \cref{defall}, we have (note $|m_2|=|m'_2| = 0$, $|f_1|=0$ so there are no additional signs)
\begin{align*}
f_1m'_2(\overline{ι^j}\otimes \overline{ι^{j'}}) \eqs& ι^{j+j'}
= m_2(f_1\otimes f_1)(\overline{ι^j}\otimes \overline{ι^{j'}})
= (m_1\circ f_2 + m_2\circ (f_1\otimes f_1))(\overline{ι^j}\otimes \overline{ι^{j'}})\\
f_1m'_2(\overline{ι^j}\otimes \overline{χι^{j'}})\eqs& χι^{j+j'}
= m_2(f_1\otimes f_1)(\overline{ι^j}\otimes \overline{χι^{j'}})\\
\eqs& (m_1\circ f_2 + m_2\circ (f_1\otimes f_1)) (\overline{ι^j}\otimes \overline{χι^{j'}})\\
f_1m'_2(\overline{χι^j} \otimes \overline{ι^{j'}}) \eqs& χι^{j+j'}
= m_2(f_1\otimes f_1)(\overline{χι^j} \otimes \overline{ι^{j'}})\\
\eqs& (m_1\circ f_2 + m_2\circ (f_1\otimes f_1))(\overline{χι^j} \otimes \overline{ι^{j'}})\\
f_1m'_2(\overline{χι^j}\otimes \overline{χι^{j'}}) \eqs &0 \overset{\text{L.}\ref{lem:chichi}}{=}
-m_1(γ_2ι^{j+j'}) + m_2(f_1\otimes f_1)(\overline{χι^j}\otimes \overline{χι^{j'}})\\
\eqs& (m_1\circ f_2 + m_2\circ (f_1\otimes f_1))(\overline{χι^j}\otimes \overline{χι^{j'}})
\end{align*}
Note that there are no additional signs due to the Koszul sign rule since $|f_1|=0$.
\end{proof}
The following results directly from \cref{defall}.
\begin{kor}
\label[kor]{kor:zero}
For $n\geq 2$ and  $a_1,…,a_n∈\{0,1\}$, $j_1,…,j_n\geq 0$, we have
%such that there is an $1\leq x\leq n$ with $a_x=0$ then
\begin{align*}
f_n(\overline{χ^{a_1}ι^{j_1}}\otimes … \otimes \overline{χ^{a_n}ι^{j_n}})= f_n(\overline{χ^{a_1}}\otimes … \otimes \overline{χ^{a_n}}) \circ ι^{j_1+…+j_n}.
\end{align*} 
If there is additionally an $x∈[1,n]$ with $a_x=0$ then 
%\vspace{-0.5em}
\[f_n(\overline{χ^{a_1}ι^{j_1}}\otimes … \otimes \overline{χ^{a_n}ι^{j_n}})=0.\]
\end{kor}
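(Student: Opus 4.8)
The plan is to unwind \cref{defall} directly, distinguishing the three regimes $n \geq p$, $n \in [2,p-1]$ with some $a_i = 0$, and $n \in [2,p-1]$ with all $a_i = 1$, and in each case comparing the two sides of the claimed identities. Since $f_n$ is defined purely by cases, the whole statement is definitional, so the proof is a bookkeeping exercise rather than a computation.

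First I would settle the second assertion, as it feeds into the first. Suppose some $a_x = 0$. If $n \geq p$, then $f_n = 0$ by definition, so the value is $0$. If $n \in [2,p-1]$, the defining case distinction in \cref{defall} returns $0$ precisely under the hypothesis $\exists i \in [1,n]:a_i = 0$, which holds with $i = x$. Hence $f_n(\overline{χ^{a_1}ι^{j_1}}\otimes \cdots \otimes \overline{χ^{a_n}ι^{j_n}}) = 0$ in either case.

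For the first assertion I would first record that $\overline{χ^{a_i}} = \overline{χ^{a_i}ι^0}$, that $ι^0$ is the identity of $A$ by \cref{pp:iota}(a), and consequently that $γ_n \circ ι^{j_1+\cdots+j_n} = γ_n ι^{j_1+\cdots+j_n}$ in the sense of \cref{lem:gamma} (with $γ_n \circ ι^0 = γ_n$ as the base instance). Then I split on $n$: for $n \geq p$ both sides vanish since $f_n = 0$; for $n \in [2,p-1]$ with some $a_i = 0$ both sides vanish by the second assertion (the right-hand side being $0 \circ ι^{j_1+\cdots+j_n} = 0$); and for $n \in [2,p-1]$ with all $a_i = 1$ the left-hand side is $(-1)^{n-1}γ_nι^{j_1+\cdots+j_n}$, while the right-hand side is $f_n(\overline{χ}\otimes \cdots \otimes \overline{χ}) \circ ι^{j_1+\cdots+j_n} = (-1)^{n-1}γ_n \circ ι^{j_1+\cdots+j_n}$, and the two agree by the identification above.

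Since every step is a substitution into the definitions, there is no genuine obstacle here; the only point requiring care is the case bookkeeping in $n$ together with the interpretation $\overline{χ^{a_i}} = \overline{χ^{a_i}ι^0}$ and the use of $ι^0$ as the identity, so that the composite $γ_n \circ ι^{j_1+\cdots+j_n}$ is read correctly as $γ_n ι^{j_1+\cdots+j_n}$ via \cref{lem:gamma}.
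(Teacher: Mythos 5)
Your proposal is correct and matches the paper, which gives no separate argument but simply notes that the corollary "results directly from \cref{defall}"; your case split on $n\geq p$ versus $n\in[2,p-1]$ and on whether some $a_i=0$, together with the identification $\overline{χ^{a_i}}=\overline{χ^{a_i}ι^0}$ and $γ_n\circ ι^{j_1+\cdots+j_n}=γ_nι^{j_1+\cdots+j_n}$ from \cref{lem:gamma}, is exactly the intended unwinding of the definitions.
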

Equation \eqref{finfrel}[$n$] can be reformulated as
\begin{align*}
&f_1\circ m'_n+\underbrace{\sum_{\substack{n=r+s+t \\ r,t\geq 0,s\geq 1\\ s\leq n-1}} (-1)^{rs+t} f_{r+1+t}\circ (1^{\otimes r}\otimes m'_s\otimes 1^{\otimes t})}_{=:Φ_n}\\
 &= m_1\circ f_n +\underbrace{\sum_{\substack{2\leq r\leq n \\ i_1+…+i_r=n\\ i_s\geq 1}} (-1)^v m_r\circ (f_{i_1}\otimes f_{i_2}\otimes … \otimes f_{i_r})}_{=:Ξ_n}\,,
\end{align*}
where $v= \sum_{1\leq t < s\leq r}(1-i_s)i_t$.

A term of the form $f_{r+1+t}\circ (1^{\otimes r}\otimes m'_s\otimes 1^{\otimes t})$, $s\geq 3$, $r+t\geq 1$, is zero because of \cref{kor:zero} and the definition of $m'_p$. Also recall $m'_1=0$. Thus% Taking into account that  $m'_s=0$ for $s\notin\{1,p\}$ we have
\begin{align}
\label{eqphi}
Φ_n %\eqs& \sum_{\mathclap{n=r+2+t}} (-1)^{r+2t}f_{n-1}(1^{\otimes r}\otimes m'_2\otimes 1^{\otimes t}) \nonumber \\
% \eqs& \sum_{r=0}^{n-2} (-1)^r f_{n-1}(1^{\otimes r}\otimes m'_2\otimes 1^{\otimes n-r-2})
\eqs&\sum_{\mathclap{\substack{n=r+2+t\\ r,t\geq 0}}} (-1)^{2r+t}f_{n-1}\circ (1^{\otimes r}\otimes m'_2\otimes 1^{\otimes t}) = \sum_{r=0}^{n-2} (-1)^{n-r} f_{n-1}\circ (1^{\otimes r}\otimes m'_2\otimes 1^{\otimes n-r-2}).
\end{align}
Because of $m_k=0$ for $k\geq 3$, we have
\begin{align}
\label{eqxi}
Ξ_n %\eqs& \sum_{i_1+i_2 = n}(-1)^s m_2(f_{i_1}\otimes f_{i_2}) = \sum_{i=1}^{n-1}(-1)^{i-1}m_2(f_i\otimes f_{n-i})
\eqs& \sum_{\substack{i_1+i_2 = n \\ i_1,i_2\geq 1}}(-1)^{(1-i_2)i_1} m_2\circ (f_{i_1}\otimes f_{i_2}) = \sum_{i=1}^{n-1}(-1)^{ni}m_2\circ (f_i\otimes f_{n-i}).
\end{align}
We have proven:
\begin{lemma}
\label[lemma]{lem:finfrelmod}
For $n\geq 1$, condition \eqref{finfrel}$[n]$ is equivalent to $f_1\circ m'_n + Φ_n = m_1\circ f_n + Ξ_n$ where $Φ_n$ and $Ξ_n$ are as in \eqref{eqphi} and \eqref{eqxi}.
\end{lemma}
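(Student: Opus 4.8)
The statement is a pure reformulation of the $\A_∞$-morphism relation \eqref{finfrel}$[n]$, so the plan is to start from that relation and simplify each side by isolating its leading term and discarding the summands that are forced to vanish. On the left-hand side the outer sum ranges over $r+s+t=n$ with $r,t\geq 0$, $s\geq 1$; the unique index with $s=n$ forces $r=t=0$ and carries sign $(-1)^{rs+t}=1$, producing exactly $f_1\circ m'_n$. Peeling this off leaves the subsum over $s\leq n-1$, which is by definition $Φ_n$. Symmetrically, on the right-hand side the summand with $r=1$ forces $i_1=n$ and $v=0$, producing $m_1\circ f_n$, and peeling it off leaves the subsum over $2\leq r\leq n$, which I must identify with $Ξ_n$.

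First I would reduce the left subsum to $Φ_n$ as in \eqref{eqphi}. The terms with $s=1$ drop out because $m'_1=0$. The terms with $s\geq 3$ drop out as well: by \cref{defall} one has $m'_s=0$ unless $s\in\{2,p\}$, so only $s=p$ can contribute, but whenever $s=p$ the inner map $1^{\otimes r}\otimes m'_p\otimes 1^{\otimes t}$ either vanishes or inserts a factor of $ι$-type (i.e.\ with $a=0$, since $m'_p$ lands in $\overline{ι^{\ast}}$), and in the latter case $f_{r+1+t}$ with $r+1+t\geq 2$ annihilates the whole tensor by \cref{kor:zero}; the one index with $r+t=0$ would force $s=n=p$, which is exactly the $f_1\circ m'_n$ term already separated off. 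Hence only $s=2$ survives, producing $f_{n-1}$, and substituting $t=n-2-r$ collapses the sign $(-1)^{rs+t}$ to $(-1)^{n-r}$, giving \eqref{eqphi}.

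Next I would reduce the right subsum to $Ξ_n$ as in \eqref{eqxi}. Since $A$ is a dg-algebra we have $m_r=0$ for $r\geq 3$ (cf.\ \cref{ex:dg,lem:cai}), so only $r=2$ survives, leaving $\sum_{i_1+i_2=n}(-1)^{(1-i_2)i_1}m_2\circ(f_{i_1}\otimes f_{i_2})$. Reindexing by $i=i_1$ and reducing the exponent $(1-i_2)i_1=(1-n+i)i\equiv ni\pmod 2$, using that $i(i+1)$ is even, yields $Ξ_n$. Combining the two reductions shows that \eqref{finfrel}$[n]$ holds if and only if $f_1\circ m'_n+Φ_n=m_1\circ f_n+Ξ_n$, as asserted.

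The only genuinely delicate step is the vanishing of the $s\geq 3$ contributions to $Φ_n$: this is the single place where the definition of the higher product $m'_p$ feeds into \cref{kor:zero}, and it is essential both that a nonzero application of $m'_p$ outputs a factor of $ι$-type (so that the outer $f$ kills the term) and that the boundary index $r+t=0$ be correctly attributed to the already-separated $f_1\circ m'_n$ term. Everything else is routine bookkeeping of the Koszul signs modulo $2$.
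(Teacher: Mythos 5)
Your proposal is correct and follows essentially the same route as the paper: split off the $s=n$ term (giving $f_1\circ m'_n$) and the $r=1$ term (giving $m_1\circ f_n$), kill the $s=1$ terms via $m'_1=0$, kill the $s\geq 3$ terms via the definition of $m'_p$ together with \cref{kor:zero}, kill the $r\geq 3$ terms via $m_k=0$ for $k\geq 3$, and simplify the signs modulo $2$. Your explicit justification of why the $s=p$, $r+t\geq 1$ terms vanish (a nonzero output of $m'_p$ is of $ι$-type and is then annihilated by $f_{r+1+t}$ with $r+1+t\geq 2$) is exactly the argument the paper compresses into one sentence.
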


\begin{lemma}
\label[lemma]{lem:f3}
% For $3\leq n \leq p-1$ we have $m'_n=0$ and for $2\leq n\leq p-1$ we have 
% \begin{align*}
% f_n(\overline{χ^{a_1}ι^{j_1}}\otimes … \otimes \overline{χ^{a_n}ι^{j_n}}) = \begin{cases}
% 0 & ∃1\leq j\leq n: a_j = 0 \\
% s_nγ_nι^{j_1+…+j_n} & 1=a_1 = a_2 = … = a_n
% \end{cases}
% \end{align*}
Condition \eqref{finfrel}$[n]$ holds for $n\geq 3$ and arguments $\overline{χ^{a_1}ι^{j_1}}\otimes…\otimes\overline{χ^{a_n}ι^{j_n}}∈\mathfrak{B}^{\otimes n}=\{\overline{χ^{a_1}ι^{j_1}}\otimes … \otimes \overline{χ^{a_n}ι^{j_n}}  ∈ (\Hm^*A)^{\otimes n} \mid  a_i∈\{0,1\} \text{ and } j_i∈\Z_{\geq 0} \text{ for all }i∈[1,n] \}$ where $0∈\{a_1,…,a_n\}$. %(Recall $a_i∈\{0,1\}$ and $j_i∈\Z_{\geq 0}$ for all $i∈[1,n]$).
\end{lemma}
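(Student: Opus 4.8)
The plan is to invoke \cref{lem:finfrelmod}, which reduces \eqref{finfrel}$[n]$ to the single identity $f_1\circ m'_n + Φ_n = m_1\circ f_n + Ξ_n$ evaluated on a fixed basis element $w := \overline{χ^{a_1}ι^{j_1}}\otimes\cdots\otimes\overline{χ^{a_n}ι^{j_n}}\in\mathfrak{B}^{\otimes n}$ with $0\in\{a_1,\dots,a_n\}$. First I would dispose of the two outer terms: since $n\geq 3$ and some $a_i=0$, the definition of $m'_n$ in \cref{defall} gives $m'_n(w)=0$, so $f_1(m'_n(w))=0$; and \cref{kor:zero} gives $f_n(w)=0$, so $m_1(f_n(w))=0$. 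Hence the claim collapses to $Φ_n(w)=Ξ_n(w)$, with $Φ_n$, $Ξ_n$ as in \eqref{eqphi} and \eqref{eqxi}. Throughout I write $J:=j_1+\cdots+j_n$, and I would record once that $l$ is even, so that every Koszul sign of the form $(-1)^{|f_{n-i}|\,|x|}$ occurring below, with $|x|$ a multiple of $l$, equals $1$.

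I would then split by the number of vanishing exponents. If at least two of the $a_i$ vanish, I claim both sides are zero. Each summand of $Φ_n$ applies $m'_2$ to two adjacent factors and then $f_{n-1}$ to the resulting $(n-1)$-fold tensor; merging two factors absorbs at most one zero exponent (and produces exponent $0$ if both merged exponents are $0$), so the merged tensor still contains a factor $\overline{χ^0ι^{(\cdots)}}$, whence $f_{n-1}$ kills it by \cref{kor:zero}, giving $Φ_n(w)=0$. Each summand of $Ξ_n$ is $m_2\circ(f_i\otimes f_{n-i})$, splitting $w$ into blocks $1,\dots,i$ and $i+1,\dots,n$; as $n\geq 3$, at least one block has length $\geq 2$, and by \cref{kor:zero} such a block survives only if all its exponents equal $1$. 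Two zeros make this impossible for both blocks simultaneously, so $Ξ_n(w)=0$, and the identity holds.

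It remains to treat exactly one zero, say $a_{x_0}=0$ and $a_i=1$ for $i\neq x_0$. The surviving summands of $Φ_n$ are precisely those whose $m'_2$ merges the factor at $x_0$ with an adjacent all-$χ$ factor, i.e.\ $r=x_0-1$ or $r=x_0-2$ (subject to $0\leq r\leq n-2$); each yields $f_{n-1}$ of an all-$χ$ tensor, equal by \cref{lem:gamma} to $(-1)^{n-2}γ_{n-1}ι^{J}$, and combined with the prefactor $(-1)^{n-r}$ contributes $(-1)^{r}γ_{n-1}ι^{J}$. For $1<x_0<n$ both $r=x_0-1,x_0-2$ occur and the two contributions, carrying consecutive signs, cancel, so $Φ_n(w)=0$; for $x_0=1$ only $r=0$ survives, giving $γ_{n-1}ι^{J}$; for $x_0=n$ only $r=n-2$ survives, giving $(-1)^{n}γ_{n-1}ι^{J}$. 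On the $Ξ_n$ side the same \cref{kor:zero} argument shows the only possibly nonzero summand is $i=1$ when $x_0=1$, $i=n-1$ when $x_0=n$, and none when $1<x_0<n$. Evaluating these with $m_2(g\otimes h)=g\circ h$, the commutation $γ_{n-1}\circ ι^{c}=ι^{c}\circ γ_{n-1}$ of \cref{lem:gamma}, the triviality of the Koszul signs (as $l$ is even), and $f_{n-1}$ of an all-$χ$ tensor, reproduces $γ_{n-1}ι^{J}$ for $x_0=1$ and $(-1)^{n}γ_{n-1}ι^{J}$ for $x_0=n$. Thus $Φ_n(w)=Ξ_n(w)$ in every subcase, and \eqref{finfrel}$[n]$ follows.

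The main obstacle is sign bookkeeping: correctly combining the weights $(-1)^{n-r}$ of $Φ_n$ with the $(-1)^{n-2}$ from $f_{n-1}$, and tracking the Koszul signs hidden inside $f_i\otimes f_{n-i}$ in $Ξ_n$. The decisive simplifications are that $l$ is even, which trivializes those Koszul signs, and that for interior $x_0$ the two admissible merges in $Φ_n$ carry opposite signs and cancel exactly when $Ξ_n$ also vanishes. One must finally remember the degenerate range $n\geq p+1$, where $f_{n-1}=0$ forces both $Φ_n(w)$ and $Ξ_n(w)$ to vanish identically, so the identity holds there as well.
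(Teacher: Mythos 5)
Your proposal is correct and follows essentially the same route as the paper: reduce via \cref{lem:finfrelmod} and \cref{kor:zero} to $Φ_n(w)=Ξ_n(w)$, split into the case of at least two zero exponents (both sides vanish) and exactly one zero exponent at position $x_0$ (two cancelling $Φ_n$-terms and vanishing $Ξ_n$ for interior $x_0$; matching single surviving terms for $x_0\in\{1,n\}$), with the same sign bookkeeping and the same treatment of the degenerate range $n\geq p+1$. The paper organizes the one-zero case as subcases 2a/2b/2c rather than by a uniform sign formula $(-1)^r$, but the content is identical.
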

\begin{proof}
% The statement for $f_2$ is given in \cref{lem:f2}.\\
% We proceed by induction: Let $3\leq n\leq p-1$ given and the statement already be proven for all $n'$, $3\leq n' \leq n-1$. Note that the induction hypothesis is trivial in case $n=3$ so we don't need a (separate) base step. \\
% We reformulate \eqref{finfrel} for $n$:
% 
Because of \cref{lem:finfrelmod,defall} it is sufficient to show
that 
\[Φ_n(\overline{χ^{a_1}ι^{j_1}}\otimes…\otimes\overline{χ^{a_n}ι^{j_n}}) = Ξ_n(\overline{χ^{a_1}ι^{j_1}}\otimes…\otimes\overline{χ^{a_n}ι^{j_n}})\]
if at least one $a_x$ equals $0$.
%\{\overline{χ^{a_1}ι^{j_1}}\otimes … \otimes \overline{χ^{a_n}ι^{j_n}} \mid  a_i∈\{0,1\} \text{ and } j_i∈\Z_{\geq 0} \text{ for all }i∈[1,n] \}
%\\ COMPLETED: deal with cases $n>p$
\begin{description}
\item[Case 1] At least two $a_x$ equal $0$:\\*
To show $Φ_n(\overline{χ^{a_1}ι^{j_1}}\otimes…\otimes\overline{χ^{a_n}ι^{j_n}})=0$, we show \\*
$f_{n-1}(1^{\otimes r}\otimes m'_2\otimes 1^{\otimes n-r-2})(\overline{χ^{a_1}ι^{j_1}}\otimes…\otimes\overline{χ^{a_n}ι^{j_n}})=0$ for $r∈[0,n-2]$: In case both components of the argument of $m'_2$ are of the form $\overline{χ^0ι^j}$, the result of $m'_2$ is of the form $\overline{ι^{j'}}$ (see \cref{defall}). Since $2\leq n-1$, \cref{kor:zero} implies the result of $f_{n-1}$ is zero. Otherwise at least one of the components of the argument of $f_{n-1}$ must be of the form $\overline{ι^j}$ and the result of $f_{n-1}$ is zero as well. So $Φ_n(\overline{χ^{a_1}ι^{j_1}}\otimes…\otimes\overline{χ^{a_n}ι^{j_n}})=0$.\\*
To show $Ξ_n(\overline{χ^{a_1}ι^{j_1}}\otimes…\otimes\overline{χ^{a_n}ι^{j_n}})=0$, we show $m_2(f_i\otimes f_{n-i})(\overline{χ^{a_1}ι^{j_1}}\otimes…\otimes\overline{χ^{a_n}ι^{j_n}})=0$ for $i∈[1,n-1]$: 
\begin{itemize}
\item Suppose given $i∈[2,n-2]$: The statements $a_1=…=a_i=1$ and $a_{i+1}=…=a_n=1$ cannot be true at the same time, so $f_i(…)=0$ or $f_{n-i}(…)=0$ and we have $m_2(f_i\otimes f_{n-i})(\overline{χ^{a_1}ι^{j_1}}\otimes…\otimes\overline{χ^{a_n}ι^{j_n}})=0$.
\item Suppose that $i=1$. Because at least two $a_x$ equal $0$ the statement $a_2=…=a_n=1$ cannot be true. Since $n-1\geq 2$, we have $f_{n-1}(…)=0$ and $m_2(f_1\otimes f_{n-1})(\overline{χ^{a_1}ι^{j_1}}\otimes…\otimes\overline{χ^{a_n}ι^{j_n}})=0$.
\item The case $i=n-1$ is analogous to the case $i=1$.
\end{itemize}
So we have $Φ_n(\overline{χ^{a_1}ι^{j_1}}\otimes…\otimes\overline{χ^{a_n}ι^{j_n}}) = 0=Ξ_n(\overline{χ^{a_1}ι^{j_1}}\otimes…\otimes\overline{χ^{a_n}ι^{j_n}})$.
\item[Case 2a] Exactly one $a_x$ equals $0$, where $x∈[2,n-1]$.\\*% (no "border" case):\\*
We have $Φ_n(\overline{χ^{a_1}ι^{j_1}}\otimes…\otimes\overline{χ^{a_n}ι^{j_n}})=0$: In case $n\geq p+1$, it follows from $f_{n-1}=0$. Let us check the case $n∈[3,p]$: Because of \cref{defall}, we have \mbox{$f_{n-1}(1^{\otimes r}\otimes m'_2\otimes 1^{\otimes n-r-2})(\overline{χ^{a_1}ι^{j_1}}\otimes…\otimes\overline{χ^{a_n}ι^{j_n}})=0$} unless $r∈\{x-2,x-1\}$. So
\begin{align*}
Φ_n(\overline{χ^{a_1}ι^{j_1}}&\otimes…\otimes\overline{χ^{a_n}ι^{j_n}})\\
\eqs& (-1)^{n-x+2}f_{n-1}(1^{\otimes x-2}\otimes m'_2\otimes 1^{\otimes n-x} - 1^{\otimes x-1}\otimes m'_2\otimes 1^{n-x-1})\\
  &(\overline{χ^{a_1}ι^{j_1}}\otimes…\otimes\overline{χ^{a_n}ι^{j_n}})\\
\eqs& (-1)^{n-x}f_{n-1}(\overline{χι^{j_1}}\otimes …\otimes\overline{χι^{j_{x-2}}}\otimes m'_2(\overline{χι^{j_{x-1}}}\otimes \overline{ι^{j_x}})\otimes \overline{χι^{j_{x+1}}}\otimes…\otimes \overline{χι^{j_n}}\\
&- \overline{χι^{j_1}}\otimes …\otimes\overline{χι^{j_{x-1}}}\otimes m'_2(\overline{ι^{j_{x}}}\otimes \overline{χι^{j_{x+1}}})\otimes \overline{χι^{j_{x+2}}}\otimes…\otimes \overline{χι^{j_n}})\\
\eqs& (-1)^{n-x}f_{n-1}(\overline{χι^{j_1}}\otimes …\otimes\overline{χι^{j_{x-2}}}\otimes \overline{χι^{j_{x-1}+j_x}}\otimes \overline{χι^{j_{x+1}}}\otimes…\otimes \overline{χι^{j_n}}\\
&- \overline{χι^{j_1}}\otimes …\otimes\overline{χι^{j_{x-1}}}\otimes \overline{χι^{j_{x}+j_{x+1}}}\otimes \overline{χι^{j_{x+2}}}\otimes…\otimes \overline{χι^{j_n}})\\
\eqs& (-1)^{n-x}((-1)^{n-2}γ_{n-1}ι^{j_1+…+j_n} - (-1)^{n-2}γ_{n-1}ι^{j_1+…+j_n}) = 0
\end{align*} 
To show $Ξ_n(\overline{χ^{a_1}ι^{j_1}}\otimes…\otimes\overline{χ^{a_n}ι^{j_n}})=0$, we show $m_2(f_i\otimes f_{n-i})(\overline{χ^{a_1}ι^{j_1}}\otimes…\otimes\overline{χ^{a_n}ι^{j_n}})=0$ for $i∈[1,n-1]$: The element $χ^{a_x}ι^{j_x}$ is a tensor factor of the argument of $f_i$ or of $f_{n-i}$. We write $y=i$ or $y=n-i$ accordingly. Then $y\geq 2$ since $x\notin\{1,n\}$, so $f_y(…) = 0$ and thus $m_2(f_i\otimes f_{n-i})(\overline{χ^{a_1}ι^{j_1}}\otimes…\otimes\overline{χ^{a_n}ι^{j_n}})=0$.\\*
So $Φ_n(\overline{χ^{a_1}ι^{j_1}}\otimes…\otimes\overline{χ^{a_n}ι^{j_n}}) = 0=Ξ_n(\overline{χ^{a_1}ι^{j_1}}\otimes…\otimes\overline{χ^{a_n}ι^{j_n}})$.
\item[Case 2b] Only $a_1=0$, all other $a_x$ equal $1$.\\*% ("left border" case):\\*
We have $f_{n-1}(1^{\otimes r}\otimes m'_2\otimes 1^{\otimes n-r-2})(\overline{χ^{a_1}ι^{j_1}}\otimes…\otimes\overline{χ^{a_n}ι^{j_n}})=0$ unless $r=0$. So
\begin{align*}
Φ_n(\overline{χ^{a_1}ι^{j_1}}\otimes…\otimes\overline{χ^{a_n}ι^{j_n}})
\eqs&(-1)^nf_{n-1}(1^{\otimes 0}\otimes m'_2\otimes 1^{\otimes n-2})(\overline{χ^{a_1}ι^{j_1}}\otimes…\otimes\overline{χ^{a_n}ι^{j_n}})\\
\eqs&(-1)^n f_{n-1}(m'_2(\overline{ι^{j_1}}\otimes \overline{χι^{j_2}})\otimes \overline{χι^{j_3}}\otimes …\otimes \overline{χι^{j_n}})\\
\eqs&(-1)^n f_{n-1}(\overline{χι^{j_1+j_2}}\otimes \overline{χι^{j_3}}\otimes …\otimes \overline{χι^{j_n}})\\
%\eqs& s_{n-1}γ_{n-1}ι^{j_1+…+j_n}
\eqs& \begin{cases} γ_{n-1}ι^{j_1+…+j_n} & 3\leq n\leq p\\
0 & n\geq p+1 \end{cases}
\end{align*}
We have $(f_i\otimes f_{n-1})(\overline{χ^{a_1}ι^{j_1}}\otimes…\otimes\overline{χ^{a_n}ι^{j_n}})=0$ if $i\geq 2$. So
\begin{align*}
Ξ_n(\overline{χ^{a_1}ι^{j_1}}\otimes…\otimes\overline{χ^{a_n}ι^{j_n}})
\eqs& (-1)^{1\cdot n}m_2(f_1\otimes f_{n-1})(\overline{χ^{a_1}ι^{j_1}}\otimes…\otimes\overline{χ^{a_n}ι^{j_n}})\\
\ovs{\eqref{koszulraw}}& (-1)^nm_2\left( (-1)^{n\cdot |ι^{j_1}|}f_1(\overline{ι^{j_1}})\otimes f_{n-1}(\overline{χι^{j_2}}\otimes…\otimes\overline{χι^{j_n}})\right)\\
\eqs& (-1)^n m_2\left(ι^{j_1}\otimes f_{n-1}(\overline{χι^{j_2}}\otimes…\otimes\overline{χι^{j_n}})\right)\\% =  s_{n-1}γ_{n-1}ι^{j_1+…+j_n} 
\eqs& \begin{cases} (-1)^nm_2(ι^{j_1}\otimes (-1)^{n-2}γ_{n-1}ι^{j_2+…+j_n}) & 3\leq n\leq p\\
0 & n\geq p+1 \end{cases}\\
\eqs& \begin{cases} γ_{n-1}ι^{j_1+…+j_n} & 3\leq n\leq p\\
0 & n\geq p+1 \end{cases}
% = ι^{j_1}\circ f_{n-1}(χι^{j_2}\otimes…\otimesχ^{j_n})
\end{align*}
So $Φ_n(\overline{χ^{a_1}ι^{j_1}}\otimes…\otimes\overline{χ^{a_n}ι^{j_n}}) =Ξ_n(\overline{χ^{a_1}ι^{j_1}}\otimes…\otimes\overline{χ^{a_n}ι^{j_n}})$.
% Thus if $3 \leq n\leq p$ we have 
% \begin{align*}
% Φ_n(\overline{χ^{a_1}ι^{j_1}}\otimes…\otimes\overline{χ^{a_n}ι^{j_n}})\eqs&(-1)^n(-1)^{n-2}γ_{n-1}ι^{j_1+…+j_n}\\
% \eqs&Ξ_n(\overline{χ^{a_1}ι^{j_1}}\otimes…\otimes\overline{χ^{a_n}ι^{j_n}}).
% \end{align*}
% If $n\geq p+1$ we have 
% \[Φ_n(\overline{χ^{a_1}ι^{j_1}}\otimes…\otimes\overline{χ^{a_n}ι^{j_n}})= 0 =Ξ_n(\overline{χ^{a_1}ι^{j_1}}\otimes…\otimes\overline{χ^{a_n}ι^{j_n}}).\]
\item[Case 2c] Only $a_n=0$, all other $a_x$ equal $1$.\\*%("right border" case):\\*
Argumentation analogous to case 2b gives
\begin{align*}
Φ_n(\overline{χ^{a_1}ι^{j_1}}\otimes…\otimes\overline{χ^{a_n}ι^{j_n}})
\eqs&(-1)^{2}f_{n-1}(1^{\otimes n-2}\otimes m'_2\otimes 1^{\otimes 0})(\overline{χ^{a_1}ι^{j_1}}\otimes…\otimes\overline{χ^{a_n}ι^{j_n}})\\
\ovs{|m'_2|=0}&f_{n-1}(\overline{χι^{j_1}}\otimes …\otimes \overline{χι^{j_{n-2}}}\otimes m'_2(\overline{χι^{j_{n-1}}}\otimes \overline{ι^{j_n}}))\\
\eqs& \begin{cases} (-1)^{n-2}γ_{n-1}ι^{j_1+…+j_n} & 3\leq n\leq p\\
0 & n\geq p+1 \end{cases}
\end{align*} 
and
\begin{align*}
Ξ_n(\overline{χ^{a_1}ι^{j_1}}\otimes…\otimes\overline{χ^{a_n}ι^{j_n}})
\eqs& (-1)^{n(n-1)}m_2(f_{n-1}\otimes f_{1})(\overline{χ^{a_1}ι^{j_1}}\otimes…\otimes\overline{χ^{a_n}ι^{j_n}})\\
\ovs{|f_1|=0}& m_2\left(f_{n-1}(\overline{χι^{j_1}}\otimes…\otimes\overline{χι^{j_{n-1}}})\otimes f_{1}(\overline{ι^{j_n}})\right)\\
%\eqs& m_2( s_{n-1}γ_{n-1}ι^{j_1+…+j_{n-1}} \otimes ι^{j_n})\\
\eqs& \begin{cases} (-1)^{n-2}γ_{n-1}ι^{j_1+…+j_n} & 3\leq n\leq p \\
0& n\geq p+1 \end{cases}
\end{align*}
So $Φ_n(\overline{χ^{a_1}ι^{j_1}}\otimes…\otimes\overline{χ^{a_n}ι^{j_n}}) =Ξ_n(\overline{χ^{a_1}ι^{j_1}}\otimes…\otimes\overline{χ^{a_n}ι^{j_n}})$.
\end{description}
\end{proof}
% and we can set $m'_n(\overline{χ^{a_1}ι^{j_1}}\otimes…\otimes\overline{χ^{a_n}ι^{j_n}})=0$ and $f_n(\overline{χ^{a_1}ι^{j_1}}\otimes…\otimes\overline{χ^{a_n}ι^{j_n}})=0$.\\
Now we examine the cases where $a_1=…=a_n=1$:
\begin{lemma}
\label[lemma]{lem:phichi}
For $n\geq 3$, we have $Φ_n(\overline{χι^{j_1}}\otimes…\otimes\overline{χι^{j_n}})=0$ for $\overline{χι^{j_1}}\otimes…\otimes\overline{χι^{j_n}}∈\mathfrak{B}^{\otimes n}=\{\overline{χ^{a_1}ι^{j_1}}\otimes … \otimes \overline{χ^{a_n}ι^{j_n}}  ∈ (\Hm^*A)^{\otimes n} \mid  a_i∈\{0,1\} \text{ and } j_i∈\Z_{\geq 0} \text{ for all }i∈[1,n] \}$. 
\end{lemma}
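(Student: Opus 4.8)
The plan is to reduce the statement directly to the already-recorded vanishing of $m'_2$ on two $χ$-classes. First I would invoke the reformulation of $Φ_n$ given in \eqref{eqphi}, namely
\[
Φ_n = \sum_{r=0}^{n-2} (-1)^{n-r}\, f_{n-1}\circ (1^{\otimes r}\otimes m'_2\otimes 1^{\otimes n-r-2}),
\]
in which every summand involving some $m'_s$ with $s\geq 3$ has already been discarded (via \cref{kor:zero}), and the $m'_1=0$ terms have been dropped. So only the insertions of $m'_2$ at the $n-1$ adjacent positions can possibly contribute.

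Next I would evaluate a single summand on the given basis element. For a fixed $r∈[0,n-2]$, the operator $1^{\otimes r}\otimes m'_2\otimes 1^{\otimes n-r-2}$ feeds the adjacent pair $\overline{χι^{j_{r+1}}}\otimes \overline{χι^{j_{r+2}}}$ into $m'_2$. Since here all exponents $a_i$ equal $1$, both arguments of $m'_2$ are of $χ$-type, so this is exactly the case ``$a_1=a_2=1$'' of \cref{defall}. As $p\geq 3$ forces $2\neq p$, that case of the definition yields
\[
m'_2(\overline{χι^{j_{r+1}}}\otimes \overline{χι^{j_{r+2}}}) = 0,
\]
which is precisely the note recorded immediately after \cref{defall}.

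Consequently each summand of $Φ_n$ contains the vanishing factor $m'_2(\overline{χι^{j_{r+1}}}\otimes \overline{χι^{j_{r+2}}})$ inside $f_{n-1}$, so every summand is zero and hence $Φ_n(\overline{χι^{j_1}}\otimes\cdots\otimes\overline{χι^{j_n}})=0$. I would also remark that $|m'_2|=0$ by \cref{lem:degree}, so the Koszul sign rule contributes no extra signs inside $1^{\otimes r}\otimes m'_2\otimes 1^{\otimes n-r-2}$; but this is immaterial, as the whole term vanishes regardless of the sign.

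I do not expect any genuine obstacle here: the claim is an immediate consequence of the vanishing of $m'_2$ on two $χ$-classes, which itself is forced by $p\geq 3$. The only point needing a moment's attention is that, whatever the insertion index $r$, both tensor factors handed to $m'_2$ are always of $χ$-type — and this holds trivially because every $a_i=1$.
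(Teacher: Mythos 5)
Your argument is correct and coincides with the paper's own proof: both use the reformulation of $Φ_n$ from \eqref{eqphi} and observe that every summand feeds an adjacent pair $\overline{χι^{j_{r+1}}}\otimes\overline{χι^{j_{r+2}}}$ into $m'_2$, which vanishes by \cref{defall} since $p\geq 3$. Nothing is missing.
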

\begin{proof}
We have $Φ_n(\overline{χι^{j_1}}\otimes…\otimes\overline{χι^{j_n}})=0$ since $Φ_n=\sum_{r=0}^{n-2} (-1)^{n-r} f_{n-1}(1^{\otimes r}\otimes m'_2\otimes 1^{\otimes n-r-2})$ and the argument of $m'_2$ is always of the form $\overline{χι^x}\otimes \overline{χι^y}$, whence its result is zero.
\end{proof}

\begin{lemma}
\label[lemma]{lem:f4}
Condition \eqref{finfrel}$[n]$ holds for $n∈[3,p-1]$ and arguments $\overline{χι^{j_1}}\otimes…\otimes\overline{χι^{j_n}}∈\mathfrak{B}^{\otimes n}=\{\overline{χ^{a_1}ι^{j_1}}\otimes … \otimes \overline{χ^{a_n}ι^{j_n}}  ∈ (\Hm^*A)^{\otimes n} \mid  a_i∈\{0,1\} \text{ and } j_i∈\Z_{\geq 0} \text{ for all }i∈[1,n] \}$.
\end{lemma}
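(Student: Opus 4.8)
The plan is to reduce \eqref{finfrel}$[n]$ to a single identity on the relevant inputs and then verify it using the explicit formulas of \cref{lem:gamma}. By \cref{lem:finfrelmod}, condition \eqref{finfrel}$[n]$ is equivalent to $f_1\circ m'_n + Φ_n = m_1\circ f_n + Ξ_n$. Evaluated on an argument $\overline{χι^{j_1}}\otimes\cdots\otimes\overline{χι^{j_n}}$, the left-hand side vanishes: since $n\in[3,p-1]$ we have $n\neq p$, so $m'_n(\overline{χι^{j_1}}\otimes\cdots\otimes\overline{χι^{j_n}})=0$ by \cref{defall}, and $Φ_n(\overline{χι^{j_1}}\otimes\cdots\otimes\overline{χι^{j_n}})=0$ by \cref{lem:phichi}. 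Hence it suffices to show $m_1\circ f_n + Ξ_n = 0$ on these inputs. Writing $J:=j_1+\cdots+j_n$, \cref{defall} gives $f_n(\overline{χι^{j_1}}\otimes\cdots\otimes\overline{χι^{j_n}})=(-1)^{n-1}γ_nι^{J}$, so $m_1\circ f_n$ evaluates to $(-1)^{n-1}m_1(γ_nι^{J})$, whose four summands are listed in \cref{lem:gamma}.

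Next I would treat $Ξ_n=\sum_{i=1}^{n-1}(-1)^{ni}m_2\circ(f_i\otimes f_{n-i})$. For the middle indices $i\in[2,n-2]$ both $f_i$ and $f_{n-i}$ are of $γ$-type by \cref{defall}, so the corresponding summand is a scalar multiple of $γ_iι^{a}\circ γ_{n-i}ι^{b}$, and I claim each such composite vanishes. Since $m_2$ is composition (\cref{lem:cai}) and $ι$-powers only shift indices by multiples of $l$ (\cref{pp:iota}(b)), it is enough to check that no source position of $γ_i$ is congruent modulo $l$ to a target position of $γ_{n-i}$. From the index data in \cref{lem:gamma}, the source positions of $γ_i$ are $\equiv -i$ and $\equiv -i+(p-1)\pmod l$, while the target positions of $γ_{n-i}$ are $\equiv n-i-1$ and $\equiv n-i-1+(p-1)\pmod l$; matching the four cases forces $n\equiv 1$ or $n\equiv p\pmod l$, both impossible for $n\in[3,p-1]$ with $l=2(p-1)$. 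Thus only the terms $i=1$ and $i=n-1$ of $Ξ_n$ survive (for $n=3$ the middle range is empty, consistent with this).

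For the two boundary terms I would use $f_1(\overline{χι^{j}})=χι^{j}$ and $f_{n-1}(\overline{χι^{j_2}}\otimes\cdots)=(-1)^{n-2}γ_{n-1}ι^{\cdots}$ from \cref{defall}, together with \cref{pp:iota}(b),(e) to pull all $ι$-powers to the right, reducing the $i=1$ term to a scalar multiple of $(χ\circ γ_{n-1})ι^{J}$ and the $i=n-1$ term to a scalar multiple of $(γ_{n-1}\circ χ)ι^{J}$. A direct computation of these two composites, entirely analogous to the evaluation of $χ\circ χ$ in \cref{lem:chichi} and using the same residue-matching as above, shows that $χ\circ γ_{n-1}$ produces exactly the $e_{n,n-1}$ and $e_{p-n,p-n+1}$ summands of $m_1(γ_n)$, while $γ_{n-1}\circ χ$ produces exactly the $e_{n-1,n}$ and $e_{p-n+1,p-n}$ summands; together these exhaust $m_1(γ_n)$.

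Finally I would collect the signs. The prefactors $(-1)^{ni}$ from $Ξ_n$, the factors $(-1)^{i-1}$ and $(-1)^{n-i-1}$ from the definitions of $f_i$ and $f_{n-i}$, and the Koszul sign $(-1)^{|f_{n-i}|\cdot|\overline{χι^{j_1}}\otimes\cdots\otimes\overline{χι^{j_i}}|}$ incurred by $f_i\otimes f_{n-i}$ must be combined so that the surviving part of $Ξ_n$ equals $(-1)^{n}m_1(γ_nι^{J})=-m_1\circ f_n$, giving $m_1\circ f_n+Ξ_n=0$ as desired. I expect the sign bookkeeping here, and the precise position-matching inside the composites $χ\circ γ_{n-1}$ and $γ_{n-1}\circ χ$, to be the main obstacle; the remainder is organizational, following the template already set for $n=2$ in \cref{lem:chichi,lem:f2}.
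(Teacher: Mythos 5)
Your proposal is correct and follows essentially the same route as the paper: reduce to $m_1\circ f_n+Ξ_n=0$ via \cref{lem:finfrelmod} and \cref{lem:phichi}, kill the middle terms $γ_i\circ γ_{n-i}$ by a degree/position congruence (the paper checks $-(n-1)\not\equiv 0 \pmod{p-1}$, which is equivalent to your mod-$l$ case analysis), and identify $χ\circ γ_{n-1}+γ_{n-1}\circ χ$ with $m_1(γ_n)$, with exactly the summand attribution you state. The sign bookkeeping you defer does work out as you predict ($(-1)^{n(n-1)}=1$, the Koszul sign for $f_1\otimes f_{n-1}$ is $(-1)^n$ since $|\overline{χι^{j_1}}|$ is odd, yielding $Ξ_n=(-1)^n m_1(γ_nι^{J})=-m_1\circ f_n$).
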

\begin{proof}
%Now we examine the case $a_1=…=a_n=1$:\\*
For computing $Ξ_n$, we first show that $m_2(f_k\otimes f_{n-k})(\overline{χι^{j_1}}\otimes…\otimes\overline{χι^{j_n}})=0$ for $k∈[2,n-2]$. We will need the following congruence.
\begin{align}
\label{gradesmismatch}
\underbrace{(k(l-1)+l(i+x))}_{\equiv_{p-1} k(l-1)+(p-1)+l(i+x)} - \underbrace{(n-k-1+li')}_{\equiv_{p-1} n-k-1+(p-1)+li'} &\equiv_{p-1} -k+k-n+1 = -(n-1) \nonumber \\&
\not\equiv_{p-1} 0
\end{align}
The last statement results from $2\leq n\leq p-1$.
We set "$\pm$" as a symbol for the (a posteriori irrelevant) signs in the following calculation. For $k∈[2,n-2]$, we have 
\begin{align*}
\hphantom{XXX}m_2&(f_k\otimes f_{n-k})(\overline{χι^{j_1}}\otimes…\otimes\overline{χι^{j_n}})\\
\eqs& \pm m_2((-1)^{k-1} γ_kι^{j_1+…+j_k}\otimes (-1)^{n-k-1}γ_{n-k}ι^{j_{k+1}+…+j_n})\\
\ovs{\substack {j_1+…+j_k=:x,\\j_{k+1}+…+j_n=:y}}&
\pm γ_kι^x\circ γ_{n-k}ι^{y}\\
\eqs& \pm \left(\dum_{i\geq 0} \ls e_k \rs_{k(l-1)+l(i+x)}^{k-1+li}
+ \dum_{i\geq 0} \ls e_{p-k} \rs_{k(l-1) + (p-1) + l(i+x)}^{k-1+(p-1)+li}\right)\\
&\circ \left(\dum_{i'\geq 0} \ls e_{n-k} \rs_{(n-k)(l-1)+l(i'+y)}^{n-k-1+li'}
+ \dum_{i'\geq 0} \ls e_{p-n+k} \rs_{(n-k)(l-1) + (p-1) + l(i'+y)}^{n-k-1+(p-1)+li'}\right)\\
\ovs{\eqref{gradesmismatch}} 0.
\end{align*}
So %($y=j_2+…+j_n$, $x=j_1+…+j_{n-1}$)
\begin{align*}
Ξ_n(\overline{χι^{j_1}}\otimes…&\otimes\overline{χι^{j_n}})\\
 \eqs& m_2((-1)^nf_1\otimes f_{n-1}+(-1)^{n(n-1)}f_{n-1}\otimes f_1)(\overline{χι^{j_1}}\otimes…\otimes\overline{χι^{j_n}})\\
\eqs& m_2((-1)^{n+n|\overline{χι^{j_1}}|}f_1(\overline{χι^{j_{1}}})\otimes f_{n-1}(\overline{χι^{j_2}}\otimes…\otimes\overline{χι^{j_n}})\\
 &+ f_{n-1}(\overline{χι^{j_1}}\otimes…\otimes\overline{χι^{j_{n-1}}})\otimes f_1(\overline{χι^{j_n}})) \\
\eqs&  m_2(χι^{j_1}\otimes (-1)^{n-2}γ_{n-1}ι^{j_2+…+j_n} + (-1)^{n-2}γ_{n-1}ι^{j_1+…+j_{n-1}}\otimes χι^{j_n})\\
\eqs& (-1)^n (χι^{j_1}\circ γ_{n-1}ι^{j_2+…+j_n} + γ_{n-1}ι^{j_1+…+j_{n-1}}\circ χι^{j_n})\\
\ovs{\text{P.}\ref{pp:iota}(e),\text{L.}\ref{lem:gamma}}& (-1)^n (χ\circ γ_{n-1} + γ_{n-1}\circ χ)\circ ι^{j_1+…+j_n}
\end{align*}
\begin{align*}
χ\circ γ_{n-1}
\eqs& \Big(\dum_{i\geq 0}\left(\ls e_1 \rs_{(i+1)l-1}^{il} + \left\lgroup\dum_{k=1}^{p-2}\ls e_{k+1,k}\rs_{(i+1)l-1+k}^{il + k}\right\rgroup\right.\\
& \left.  + \ls e_{p-1}\rs_{(i+1)l-1 +(p-1)}^{il+(p-1)}
+ \left\lgroup\dum_{k=1}^{p-2}\ls e_{p -k-1,p-k}\rs_{(i+1)l-1+(p-1)+k}^{il +(p-1)+ k}\right\rgroup\right)\Big) \\
&\circ\Big(\dum_{i'\geq 0} \ls e_{n-1} \rs_{(n-1)(l-1)+li'}^{n-2+li'}  
+ \dum_{i'\geq 0} \ls e_{p-n+1} \rs_{(n-1)(l-1) + (p-1) + li'}^{n-2+(p-1)+li'}\Big)\\
\overset{\hphantom{\displaystyle{=}}\mathllap{3\leq n\leq p-1}}{\underset{\hphantom{\displaystyle{=}}\mathllap{\natop{k\leadsto n-1}{i'\leadsto i+1}}}{=}}\eqsp&
\dum_{i\geq 0} \ls e_{n,n-1}\circ e_{n-1} \rs_{(n-1)(l-1)+l(i+1)}^{il+n-1}\\
&+ \dum_{i\geq 0} \ls e_{p-n,p-n+1}\circ e_{p-n+1} \rs_{(n-1)(l-1) + (p-1) + l(i+1)}^{il+p-1+n-1}
\\
\eqs&\dum_{i\geq 0}\left(\ls e_{n,n-1} \rs_{n(l-1) + 1 +li}^{il+n-1}+ \ls e_{p-n,p-n+1}\rs_{n(l-1)+p+li}^{il+p-1+n-1}\right)\\
γ_{n-1}\circ χ \eqs&\Big(\dum_{i'\geq 0} \ls e_{n-1} \rs_{(n-1+i'-1)l+2(p-1)-(n-1)}^{n-2+li'}  
+ \dum_{i'\geq 0} \ls e_{p-n+1} \rs_{(n-1+i')l-n + p}^{n-2+(p-1)+li'}\Big) \\
&\circ\Big(\dum_{i\geq 0}\left(\ls e_1 \rs_{(i+1)l-1}^{il} + \left\lgroup\dum_{k=1}^{p-2}\ls e_{k+1,k}\rs_{(i+1)l-1+k}^{il + k}\right\rgroup\right.\\
& \left.  + \ls e_{p-1}\rs_{(i+1)l-1 +(p-1)}^{il+(p-1)}
 + \left\lgroup\dum_{k=1}^{p-2}\ls e_{p -k-1,p-k}\rs_{(i+1)l-1+(p-1)+k}^{il +(p-1)+ k}\right\rgroup\right)\Big)\\
\ovs{\substack{k\leadsto p-n}}& \dum_{i'\geq 0} \ls e_{n-1} \circ e_{n-1,n} \rs_{(n-1+i')l-1+(p-1)+(p-n)}^{n-2+li'}  \\
&+ \dum_{i'\geq 0} \ls e_{p-n+1}\circ e_{p-n+1,p-n} \rs_{(n+i')l-1+p-n}^{n-2+(p-1)+li'}\\
\eqs&  \dum_{i'\geq 0} \ls e_{n-1,n} \rs_{n(l-1)+ i'l}^{n-2+li'} + \dum_{i'\geq 0} \ls e_{p-n+1,p-n} \rs_{n(l-1) +(p-1)+i'l }^{n-2+(p-1)+li'}
\end{align*}
So $χ\circ γ_{n-1} + γ_{n-1}\circ χ = m_1(γ_n)$ by \cref{lem:gamma}. Therefore
\begin{align*}
Ξ_n(\overline{χι^{j_1}}\otimes…\otimes\overline{χι^{j_n}})
\eqs& (-1)^n  m_1(γ_n)\circ ι^{j_1+…+j_n}
\overset{\text{P.}\ref{pp:iota}(c)}{=} (-1)^n m_1(γ_n ι^{j_1+…+j_n})\\
\eqs& - m_1((-1)^{n-1}γ_n ι^{j_1+…+j_n})\\
\eqs& - m_1\circ f_n(\overline{χι^{j_1}}\otimes…\otimes\overline{χι^{j_n}}).
\end{align*}
We conclude using \cref{lem:finfrelmod} by
\begin{align*}
(f_1\circ m'_n + Φ_n)(\overline{χι^{j_1}}\otimes…\otimes\overline{χι^{j_n}}) \overset{\text{L.}\ref{lem:phichi},\text{D.}\ref{defall}}{=} 0 = (m_1\circ f_n + Ξ_n)(\overline{χι^{j_1}}\otimes…\otimes\overline{χι^{j_n}}).
\end{align*}
%We have $m_n(\overline{χι^{j_1}}\otimes…\otimes\overline{χι^{j_n}})=0$ and $f_n(\overline{χι^{j_1}}\otimes…\otimes\overline{χι^{j_n}}) = s_nγ_n ι^{j_1+…+j_n}$ so $\eqref{finfrel}$ is satisfied for $n$.
%Together with \cref{lem:finfrelmod,lem:phichi,defall} this completes the proof.
\end{proof}
\begin{lemma}
\label[lemma]{lem:f5}
Condition \eqref{finfrel}$[p]$ holds for arguments $\overline{χι^{j_1}}\otimes…\otimes\overline{χι^{j_p}}∈\mathfrak{B}^{\otimes p}=\{\mbox{$\overline{χ^{a_1}ι^{j_1}}\otimes … \otimes \overline{χ^{a_p}ι^{j_p}}$}  ∈ (\Hm^*A)^{\otimes p} \mid  a_i∈\{0,1\} \text{ and } j_i∈\Z_{\geq 0} \text{ for all }i∈[1,p] \}$.
\end{lemma}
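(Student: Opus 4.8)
The plan is to invoke \cref{lem:finfrelmod}, which reduces \eqref{finfrel}$[p]$ on an argument $\overline{χι^{j_1}}\otimes\cdots\otimes\overline{χι^{j_p}}$ to the identity $f_1\circ m'_p + Φ_p = m_1\circ f_p + Ξ_p$. Three of the four terms are immediate. Since $p\geq p$, \cref{defall} gives $f_p=0$, hence $m_1\circ f_p=0$; \cref{lem:phichi} gives $Φ_p=0$ on all-$χ$ arguments; and \cref{defall} together with the definition of $f_1$ yields $(f_1\circ m'_p)(\overline{χι^{j_1}}\otimes\cdots\otimes\overline{χι^{j_p}}) = (-1)^p ι^{p-1+j_1+\cdots+j_p}$. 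So everything comes down to computing $Ξ_p=\sum_{k=1}^{p-1}(-1)^{pk}m_2(f_k\otimes f_{p-k})$ (cf.\ \eqref{eqxi}) on these arguments and showing it equals $(-1)^p ι^{p-1+j_1+\cdots+j_p}$.

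Writing $J:=j_1+\cdots+j_p$ and pulling all $ι$-powers out via \cref{pp:iota}(b),(e) (so that $γ_aι^x\circ γ_bι^y=(γ_a\circ γ_b)ι^{x+y}$, and similarly with $χ$), I would first dispatch the signs. A short computation with the Koszul rule \eqref{koszulraw}, using the parities $|\overline{χι^{j}}|\equiv 1$ and $|f_{p-k}|\equiv k\pmod 2$, the definition of the maps $f_k$ in \cref{defall}, and the fact that $(-1)^{pk}=(-1)^k$ because $p$ is odd, shows that \emph{every} summand carries the same overall sign: the boundary terms contribute $-(χ\circ γ_{p-1})ι^{J}$ (from $k=1$) and $-(γ_{p-1}\circ χ)ι^{J}$ (from $k=p-1$), while each middle term $k\in[2,p-2]$ contributes $-(γ_k\circ γ_{p-k})ι^{J}$. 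Thus $Ξ_p = -\bigl[(χ\circ γ_{p-1})+\textstyle\sum_{k=2}^{p-2}(γ_k\circ γ_{p-k})+(γ_{p-1}\circ χ)\bigr]ι^{J}$.

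The hard part, and the point where this case genuinely departs from \cref{lem:f4}, is that for $n=p$ the congruence \eqref{gradesmismatch} degenerates: the grade difference there becomes $\equiv -(p-1)\equiv 0 \pmod{p-1}$, so the middle composites $γ_k\circ γ_{p-k}$ no longer vanish and must be evaluated. I would carry this out by the same index bookkeeping used in \cref{lem:gamma}: matching the source positions of the two summands of $γ_k$ against the target positions of the two summands of $γ_{p-k}$. Only the two "cross" pairings survive (the composites $e_k\circ e_k=e_k$ and $e_{p-k}\circ e_{p-k}=e_{p-k}$, by \cref{lem:relfp}(a)), and simplifying the resulting indices with $ω$ and $l=2(p-1)$ identifies $γ_k\circ γ_{p-k}$ with the restriction of $ι^{p-1}=\sum_{i\geq 0}\ls e_{ω(i)}\rs_{i+(p-1)l}^{i}$ to those lower indices lying in the residue classes $k-1$ and $k+p-2$ modulo $l$. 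An analogous matching against the diagonal $e_1$- and $e_{p-1}$-terms of $χ$ (the off-diagonal terms $e_{k+1,k}$, $e_{p-k-1,p-k}$ yielding no index match when composed with $γ_{p-1}$) shows that $χ\circ γ_{p-1}$ restricts $ι^{p-1}$ to residues $0$ and $p-1$, and $γ_{p-1}\circ χ$ to residues $p-2$ and $l-1=2p-3$.

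Finally I would observe that, as $k$ runs through $\{1\}\cup[2,p-2]\cup\{p-1\}$, the residue classes $\{0,p-1\}$, $\{k-1,k+p-2\}_{k=2}^{p-2}$, $\{p-2,2p-3\}$ partition $\{0,1,\dots,l-1\}$, each residue occurring exactly once and always with coefficient $+1$. Hence the bracketed sum is precisely $ι^{p-1}$, so $Ξ_p=-ι^{p-1}ι^{J}=-ι^{p-1+J}=(-1)^p ι^{p-1+J}$, which agrees with $(f_1\circ m'_p)(\cdots)$ computed above. This establishes \eqref{finfrel}$[p]$ on all-$χ$ arguments, completing \cref{lem:f5}. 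The only real labour is the two index computations (the middle $γ_k\circ γ_{p-k}$ and the $χ$-boundary composites) together with verifying that the residue classes tile $[0,l-1]$ exactly once; the sign bookkeeping, while fiddly, collapses uniformly to $-1$.
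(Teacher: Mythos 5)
Your proposal is correct and follows essentially the same route as the paper: reduce via \cref{lem:finfrelmod} (with $Φ_p=0$ by \cref{lem:phichi}, $f_p=0$, and $f_1\circ m'_p$ giving $(-1)^pι^{p-1+j_1+\cdots+j_p}$), check that the Koszul and $(-1)^{pk}$ signs collapse to a uniform $(-1)^p$, and evaluate $χ\circ γ_{p-1}+\sum_{k=2}^{p-2}γ_k\circ γ_{p-k}+γ_{p-1}\circ χ=ι^{p-1}$ by the same index matching the paper carries out explicitly. Your surviving pairings, residue classes, and the observation that they tile $[0,l-1]$ exactly once all agree with the paper's computation; the only difference is presentational.
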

\begin{proof}
Recall that $|ι|=l=2(p-1)$ is even, $|χ|=l-1$ is odd and $|f_i| = 1-i$ by \cref{lem:degree}. We have 
\begin{align*}
Ξ_p(\overline{χι^{j_1}}\otimes…&\otimes\overline{χι^{j_p}}) = \dum_{i=1}^{p-1}(-1)^{pi}m_2(f_i\otimes f_{p-i})
(\overline{χι^{j_1}}\otimes…\otimes\overline{χι^{j_p}})\\
\eqs& \dum_{i=1}^{p-1}(-1)^{pi+i(1-(p-i))}m_2(f_i(\overline{χι^{j_1}}\otimes…\otimes\overline{χι^{j_i}})\otimes f_{p-i}(\overline{χι^{j_{i+1}}}\otimes…\otimes\overline{χι^{j_p}}))\\
\eqs& \dum_{i=1}^{p-1}f_i(\overline{χι^{j_1}}\otimes…\otimes\overline{χι^{j_i}})\circ f_{p-i}(\overline{χι^{j_{i+1}}}\otimes…\otimes\overline{χι^{j_p}})\\
%\end{align*}
% The last equality involves $(-1)^{(i-1)+i(1-(p-i))} = (-1)^{-1-i(p-i)}$; as $p$ is odd, $i(p-i)$ is even so there is left only $(-1)^{-1} = -1$.\\
% We will need the fact that 
% \begin{align}
% \label{spme}
% s_is_{p-i} \eqs& s_{p-1}  & 1\leq i\leq p-1
% \end{align}
% The induction base $i=1$ is trivial.\\
% Now let $s_ks_{p-k}=s_{p-1}$ for some $1\leq k<p-1$. As $s_n = (-1)^{n-1}s_{n-1}$, $n\geq 2$ or equivalently $s_{n+1} = (-1)^n s_n$, $n\geq 1$
% \begin{align*}
% s_{p-1}\eqs& s_ks_{p-k} = (-1)^ks_{k+1}(-1)^{p-k-1}s_{p-k-1}\\
% \eqs& (-1)^{p-1} s_{k+1} s_{p-(k+1)} \overset{p\text{ odd}}{=} s_{k+1}s_{p-(k+1)}
% \end{align*}
% and we have proven the above statement. 
% Now
% \begin{align*}
% Ξ_p(\overline{χι^{j_1}}\otimes…&\otimes\overline{χι^{j_p}})= - \dum_{i=1}^{p-1}f_i(\overline{χι^{j_1}}\otimes…\otimes\overline{χι^{j_i}})\circ f_{p-i}(\overline{χι^{j_{i+1}}}\otimes…\otimes\overline{χι^{j_p}})\\
\ovs{p\geq 3\vphantom{I_{I_I}}}& χι^{j_1}\circ (-1)^{p-2}γ_{p-1}ι^{j_2+…+j_p} + (-1)^{p-2}γ_{p-1}ι^{j_1+…+j_{p-1}}\circ χι^{j_p}\\
& + \dum_{i=2}^{p-2} (-1)^{i-1}γ_iι^{j_1+…+j_i}\circ (-1)^{p-i-1}γ_{p-i}ι^{j_{i+1}+…+j_p}\\
\ovs{\text{P.}\ref{pp:iota}(b)}& 
(-1)^p\left(χ\circ γ_{p-1} + γ_{p-1}\circ χ + \dum_{k=2}^{p-2} γ_k\circ γ_{p-k}\right)\circ ι^{j_1+…+j_p}
\end{align*}
\begin{align*}
χ\circγ_{p-1}\eqs& \Big(\dum_{i\geq 0}\left(\ls e_1 \rs_{(i+1)l-1}^{il} + \left\lgroup\dum_{k=1}^{p-2}\ls e_{k+1,k}\rs_{(i+1)l-1+k}^{il + k}\right\rgroup\right.\\
&\left. + \ls e_{p-1}\rs_{(i+1)l-1 +(p-1)}^{il+(p-1)}
+ \left\lgroup\dum_{k=1}^{p-2}\ls e_{p -k-1,p-k}\rs_{(i+1)l-1+(p-1)+k}^{il +(p-1)+ k}\right\rgroup\right)\Big)\\
&\circ\left(\dum_{i'\geq 0} \ls e_{p-1} \rs_{(p-1)(l-1)+li'}^{(p-1)-1+li'}+ \dum_{i'\geq 0} \ls e_{1} \rs_{(p-1)(l-1) + (p-1) + li'}^{-1+2(p-1)+li'}\right)\\
\eqs& \dum_{i\geq 0} \ls e_{p-1} \rs_{(p-1)(l-1)+l(i+1)}^{il+(p-1)}+ \dum_{i\geq 0} \ls e_{1} \rs_{(p-1)(l-1) + (p-1) + li}^{il}\\
\eqs& \dum_{i\geq 0} \ls e_{p-1} \rs_{(p+i-1)l+(p-1)}^{il+(p-1)}+ \dum_{i\geq 0} \ls e_{1} \rs_{(p+i-1)l}^{il}
\end{align*} 
\begin{align*}
γ_{p-1}\circ χ\eqs& \left(\dum_{i'\geq 0} \ls e_{p-1} \rs_{(p+i'-2)l+(p-1)}^{(p-1)-1+li'}+ \dum_{i'\geq 0} \ls e_{1} \rs_{(p+i'-1)l}^{-1+2(p-1)+li'}\right) \\
&\circ\Big(\dum_{i\geq 0}\left(\ls e_1 \rs_{(i+1)l-1}^{il} + \left\lgroup\dum_{k=1}^{p-2}\ls e_{k+1,k}\rs_{(i+1)l-1+k}^{il + k}\right\rgroup\right.\\
&\left. + \ls e_{p-1}\rs_{(i+1)l-1 +(p-1)}^{il+(p-1)}
+ \left\lgroup\dum_{k=1}^{p-2}\ls e_{p -k-1,p-k}\rs_{(i+1)l-1+(p-1)+k}^{il +(p-1)+ k}\right\rgroup\right)\Big)\\
\eqs& \dum_{i'\geq 0} \ls e_{p-1} \rs_{(p+i'-1)l-1+(p-1)}^{(p-1)-1+li'}+ \dum_{i'\geq 0} \ls e_{1} \rs_{(p+i')l-1}^{-1+2(p-1)+li'}\\
\eqs& \dum_{i'\geq 0} \ls e_{p-1} \rs_{(p+i'-1)l+p-2}^{p-2+i'l}+ \dum_{i'\geq 0} \ls e_{1} \rs_{(p+i'-1)l+l-1}^{i'l+l-1}
\end{align*}
\begin{align*}
γ_k\circ γ_{p-k}\eqs& \left(\dum_{i\geq 0} \ls e_k \rs_{(i+k-1)l+l-k}^{k-1+li}  
+ \dum_{i\geq 0} \ls e_{p-k} \rs_{(i+k)l+ (p-1)-k}^{k-1+(p-1)+li}\right)\\
&\circ\left(\dum_{i'\geq 0} \ls e_{p-k} \rs_{(p-k)(l-1)+li'}^{p-k-1+li'}  
+ \dum_{i'\geq 0} \ls e_{k} \rs_{(p-k)(l-1) + (p-1) + li'}^{-k+2(p-1)+li'}\right)\\
\eqs&\dum_{i\geq 0} \ls e_k \rs_{(p-k)(l-1)+(p-1)+l(i+k-1)}^{k-1+li}  
+ \dum_{i\geq 0} \ls e_{p-k} \rs_{(p-k)(l-1)+l(i+k)}^{k-1+(p-1)+li}\\
\eqs&\dum_{i\geq 0} \ls e_k \rs_{(p-k+i+k-1)l -(p-k)+(p-1)}^{k-1+li}  
+ \dum_{i\geq 0} \ls e_{p-k} \rs_{(p-k+i+k)l-(p-k)}^{k-1+(p-1)+li}\\
\eqs&\dum_{i\geq 0} \ls e_k \rs_{(p+i-1)l + k-1}^{k-1+li}  
+ \dum_{i\geq 0} \ls e_{p-k} \rs_{(p+i-1)l+ k-1 + (p-1)}^{k-1+(p-1)+li}\,.
\end{align*}
Thus 
\begin{align*}
χ\circ γ_{p-1} +& γ_{p-1}\circ χ + \dum_{k=2}^{p-2} γ_k\circ γ_{p-k}\\
\eqs&\dum_{i\geq 0} \dum_{k=0}^{p-2}\left(\ls e_{k+1} \rs_{(p+i-1)l + k}^{k+li}  
+  \ls e_{p-k-1} \rs_{(p+i-1)l+ k + (p-1)}^{k+(p-1)+li}\right)\\
\eqs& \dum_{i\geq 0} \dum_{k'=0}^{l-1} \ls e_{ω(k')}\rs_{(p-1+i)l+k'}^{k'+li}
\overset{\text{P.}\ref{pp:iota}(a)}{=} ι^{p-1}
\end{align*}
and %as $p$ is odd 
\begin{align*}
Ξ_p(\overline{χι^{j_1}}\otimes…\otimes\overline{χι^{j_p}})=(-1)^pι^{p-1+j_1+…+j_p}\,.% = -ι^{p-1+j_1+…+j_n}.
\end{align*}
So we conclude using \cref{lem:finfrelmod} by
\begin{align*}
\begin{array}{rcl}
(f_1\circ m'_p + Φ_p)(\overline{χι^{j_1}}\otimes…\otimes\overline{χι^{j_p}}) &\overset{\text{L.}\ref{lem:phichi},\text{D.}\ref{defall}}{=} &(-1)^p ι^{p-1+j_1+…+j_p} \\
&\overset{\text{D.}\ref{defall}}{=}& (m_1\circ f_p + Ξ_p)(\overline{χι^{j_1}}\otimes…\otimes\overline{χι^{j_p}}).
\end{array}
\end{align*}
\end{proof}
\begin{lemma}
\label[lemma]{lem:f6}
Condition \eqref{finfrel}$[n]$ holds for $n∈[p+1,2(p-1)]$ and arguments\\*
$\mbox{$\overline{χι^{j_1}}\otimes…\otimes\overline{χι^{j_n}}$} ∈\mathfrak{B}^{\otimes n}=\{\overline{χ^{a_1}ι^{j_1}}\otimes … \otimes \overline{χ^{a_n}ι^{j_n}}  ∈ (\Hm^*A)^{\otimes n} \mid  a_i∈\{0,1\} \text{ and } j_i∈\Z_{\geq 0} \text{ for all }i∈[1,n] \}$.
\end{lemma}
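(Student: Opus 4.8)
The plan is to invoke \cref{lem:finfrelmod}, by which \eqref{finfrel}$[n]$ amounts, on the argument $w:=\overline{χι^{j_1}}\otimes\cdots\otimes\overline{χι^{j_n}}$, to the single equality $f_1\circ m'_n(w)+Φ_n(w)=m_1\circ f_n(w)+Ξ_n(w)$. For $n∈[p+1,2(p-1)]$ I would first dispose of three of the four terms: since $n\geq p+1>p$, \cref{defall} gives $f_n=0$ and hence $m_1\circ f_n(w)=0$; since $n\neq p$, the same definition gives $m'_n(w)=0$ and hence $f_1\circ m'_n(w)=0$; and $Φ_n(w)=0$ by \cref{lem:phichi} (as $n\geq 3$). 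Everything therefore reduces to proving $Ξ_n(w)=0$.

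Next I would determine which summands of $Ξ_n=\sum_{i=1}^{n-1}(-1)^{ni}m_2\circ(f_i\otimes f_{n-i})$, cf.\ \eqref{eqxi}, can contribute on $w$. By \cref{defall}, $f_i$ is non-zero on an all-$χ$ argument only when $i∈[1,p-1]$, so a summand survives only if $i∈[1,p-1]$ and $n-i∈[1,p-1]$, that is $i∈[n-p+1,p-1]$. As $n\geq p+1$ this forces $i,n-i∈[2,p-1]$; in particular no factor $f_1$ occurs, and each surviving summand equals, up to sign, $(γ_iι^{j_1+\cdots+j_i})\circ(γ_{n-i}ι^{j_{i+1}+\cdots+j_n})$. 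Using \cref{lem:gamma}, which gives $γ_kι^j=γ_k\circ ι^j=ι^j\circ γ_k$, this rewrites as $(γ_i\circ γ_{n-i})\circ ι^{j_1+\cdots+j_n}$, so it suffices to show $γ_i\circ γ_{n-i}=0$ for every $i∈[n-p+1,p-1]$.

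The step I expect to be the main obstacle is precisely this vanishing of $γ_i\circ γ_{n-i}$. Setting $k=i$ and $k'=n-i$, I would read off from \cref{lem:gamma} that $γ_k$ consists of two strands, built from the identity maps $e_k$ and $e_{p-k}$, whose source and target modules are $P_k$ and $P_{p-k}$ respectively; likewise the target modules of $γ_{k'}$ are $P_{k'}$ and $P_{p-k'}$. A non-vanishing composite $γ_k\circ γ_{k'}$ requires a strand of $γ_{k'}$ whose target degree and module match the source degree and module of a strand of $γ_k$. Module agreement forces $\{k,p-k\}\cap\{k',p-k'\}\neq\emptyset$, i.e.\ $k=k'$ or $k+k'=p$; the latter is impossible since $k+k'=n\geq p+1$. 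In the remaining case $k=k'$, matching the degrees of corresponding strands (say the $e_k$-strands, at degrees $k-1+li'$ and $k(l-1)+li$) would require $l\mid\bigl(k(l-1)-(k-1)\bigr)=kl-2k+1$, hence $l\mid(2k-1)$; this cannot hold because $l=2(p-1)$ is even while $2k-1$ is odd. Thus no pair of strands composes and $γ_k\circ γ_{k'}=0$.

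Consequently $Ξ_n(w)=0$ term by term, so the reduced equality $0=0$ holds on every basis vector $w$, which by \cref{lem:finfrelmod} is exactly \eqref{finfrel}$[n]$. The only computational content lies in the degree bookkeeping of the previous paragraph; it is routine given \cref{lem:gamma} and \cref{pp:iota}, and runs parallel to the composite computations carried out in \cref{lem:f5}, the point being that there the matching strands survive precisely because $n=p$, whereas here $n>p$ destroys every possible match.
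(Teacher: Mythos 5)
Your proposal is correct and follows essentially the same route as the paper: reduce via \cref{lem:finfrelmod} to showing $Ξ_n$ vanishes, note that only summands $m_2(f_k\otimes f_{n-k})$ with $k,n-k∈[2,p-1]$ survive, and kill each resulting $γ_k\circ γ_{n-k}$ by a positional mismatch of the component maps. The only (immaterial) difference is in how that mismatch is certified: the paper observes in one line that source and target indices would have to agree modulo $p-1$, forcing $n\equiv 1\pmod{p-1}$, which fails for $n∈[p+1,2(p-1)]$, whereas you first use module agreement to force $k=n-k$ and then a parity argument modulo $l$; both are valid.
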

\begin{proof}
As $f_k=0$ for $k\geq p$, we have
\begin{align*}
Ξ_n&(\overline{χι^{j_1}}\otimes…\otimes\overline{χι^{j_n}}) = \dum_{k=n-p+1}^{p-1} (-1)^{nk}m_2(f_k\otimes f_{n-k})
(\overline{χι^{j_1}}\otimes…\otimes\overline{χι^{j_n}})
\end{align*}
The right side is a linear combination of terms of the form $γ_k\circ γ_{n-k}$ for $k∈[n-p-1,p-1]$. We have
%For $n-p+1\leq k\leq p-1$, we have
\begin{align*}
γ_k\circ γ_{n-k} \eqs& \left(\dum_{i\geq 0} \ls e_k \rs_{k(l-1)+li}^{k-1+li}  
+ \dum_{i\geq 0} \ls e_{p-k} \rs_{k(l-1) + (p-1) + li}^{k-1+(p-1)+li}\right)\\
&\circ\left(\dum_{i'\geq 0} \ls e_{n-k} \rs_{(n-k)(l-1)+li'}^{n-k-1+li'}  
+ \dum_{i'\geq 0} \ls e_{p-n+k} \rs_{(n-k)(l-1) + (p-1) + li'}^{n-k-1+(p-1)+li'}\right)
\end{align*}
A necessary condition for that term  to be non-zero is  $k(l-1)\equiv_{p-1}n-k-1$ as $l=2(p-1)$. We have
\begin{align*}
k(l-1)-(n-k-1) \equiv_{p-1}\eqsp& -k-n+k+1 = 1-n
\not\equiv_{p-1} 0,
\end{align*} 
since $p\leq n-1\leq 2(p-1)-1$. So $γ_k\circ γ_{n-k}= 0$ and $Ξ_n(\overline{χι^{j_1}}\otimes…\otimes\overline{χι^{j_n}})=0$.
We conclude using \cref{lem:finfrelmod} by
\begin{align*}
(f_1\circ m'_n + Φ_n)(\overline{χι^{j_1}}\otimes…\otimes\overline{χι^{j_n}}) \overset{\text{L.}\ref{lem:phichi}, \text{D.}\ref{defall}}{=} 0 \overset{\text{D.}\ref{defall}}{=} (m_1\circ f_n + Ξ_n)(\overline{χι^{j_1}}\otimes…\otimes\overline{χι^{j_n}}).
\end{align*}
\end{proof}
One could formulate a lemma similar to \cref{lem:f6} for the case $n> 2(p-1)$ as then the sum $ \sum_{k=n-p+1}^{p-1} (-1)^{nk} m_2(f_k\otimes f_{n-k})
(\overline{χι^{j_1}}\otimes…\otimes\overline{χι^{j_n}})$ is in fact empty. 
Instead we use  \cref{lem:finffinite} to prove \eqref{finfrel}[$n$] for $n>2p-2$: 
\begin{proof}[Proof of \cref{tm:statement}]
\cref{lem:f1,lem:f2,lem:f3,lem:f4,lem:f5,lem:f6} ensure that \eqref{finfrel}[$n$] holds for \mbox{$n∈[1,2p-2]$}. Then \cref{lem:finffinite} with $k=p$ proves that \eqref{finfrel}[$n$] holds for all $n∈[1,∞]$, cf. also \cref{defall}. By \cref{lem:f1inj}, $f_1$ is injective. By \cref{lem:degree}, the degrees are as required in \cref{lem:aaut}. \cref{lem:aaut} proves that $(\Hm^* A, (m'_n)_{n\geq 1})$ is an $\A_∞$-algebra and $(f_n)_{n\geq 1}$ is an $\A_∞$-morphism from  $(\Hm^* A, (m'_n)_{n\geq 1})$ to $(A, (m_n)_{n\geq 1})$. %Now \cref{lem:f1} implies the remaining statements of \cref{tm:statement}:
By \cref{lem:f1}, we have $m'_1=0$. Thus $(\Hm^* A, (m'_n)_{n\geq 1})$ is a minimal $\A_∞$-algebra. By \cref{lem:f1}, the complex morphism $f_1:(\Hm^* A,m'_1)→(A,m_1)$ is a quasi-isomorphism which induces the identity in homology. So the $\A_∞$-morphism $(f_n)_{n\geq 1}: (\Hm^* A, (m'_n)_{n\geq 1}) → (A, (m_n)_{n\geq 1})$ is a quasi-isomorphism and the proof of \cref{tm:statement} is complete.
\end{proof}

%Instead we use a general statement on the computation of the homology of dg-algebras:

%Using the above for $k=p$ proves that \eqref{finfrel}[$n$] holds for all $n\geq 1$.

\subsection{At the prime \texorpdfstring{$2$}{2}}
\label{prime2}
We examine the case at the prime $2$.  %One could reuse the results of \cref{sec:prfp} if one resolved e.g.\ the notation collision between $\hat e_{1,1}$ and $\hat e_{p-1,p-1}$. % are two distinct morphisms but $p-1=1$. 
%For the sake of simplicity, we use a direct approach instead. 
We use a direct approach.
Note that $\Sy_2$ is a cyclic group so the theory of cyclic groups applies as well.

We have $\fs\Sy_2= \{0, (\text{id}),(1,2),(\text{id})+(1,2)\}$. We have maps given by
\begin{align*}
\begin{array}{rrcl}
ε:&\fs\Sy_2 & \longrightarrow & \fs\\
& a(\text{id})+b(1,2) &\longmapsto & a+b \\
% \end{array}\\
% \begin{array}{rrcl}
D:&\fs\Sy_2 & \longrightarrow & \fs\Sy_2\\
& a(\text{id})+b(1,2) &\longmapsto & (a+b)\left((\text{id})+(1,2)\right).
\end{array}
\end{align*}
We see that $ε$ is surjective and $\ker ε = \ker D = \im D = \{0, (\text{id})+(1,2)\}$. The maps $ε$ and $D$ are $\fs\Sy_2$-linear, where $\fs$ is the $\fs\Sy_2$-module that corresponds to the trivial representation of $\Sy_2$. So we have a projective resolution of $\fs$ with augmentation $ε$ by
\begin{align*}
\pres \fs := (\cdots \xrightarrow{D} \underbrace{\fs\Sy_2}_{1} \xrightarrow{D} \underbrace{\fs\Sy_2}_{0}  \rightarrow \underbrace{0}_{-1} \rightarrow \cdots),
\end{align*}
where the degrees are written below. 
% We have the corresponding extended projective resolution
% \begin{align*}
% \cdots \xrightarrow{D} \fs\Sy_2 \xrightarrow{D} \fs\Sy_2 \xrightarrow{ε} \fs  \rightarrow 0 \rightarrow \cdots
% \end{align*}

We set $e_1$ to be the identity on $\fs \Sy_2$.

Let $A:=\Hom^*_{\fs\Sy_2}(\pres\fs,\pres\fs)$ and let the $\A_∞$-structure on $A$ be  $(m_n)_{n\geq 1}$ (cf. \cref{lem:cai}). Recall the conventions concerning $\Hom_B^k(C,C')$ for complexes $C,C'$ and $k∈\Z$. 
\begin{lemma}
An $\fs$-basis of $\Hm^*A$ is given by $\{\overline{ξ^j} \mid j\geq 0\}$ where 
\begin{align*}
ξ := \dum_{i\geq 0} \ls e_1\rs_{i+1}^i∈A.
\end{align*}
\end{lemma}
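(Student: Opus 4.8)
The plan is to follow the strategy of the proof of \cref{pp:iota}(g): reduce the computation of $\Hm^*A$ to the homology of the much smaller complex $\Hom^*_{\fs\Sy_2}(\pres\fs,\fs)$ via \cref{prpr-prm}, and then identify the classes $\overline{\xi^j}$ explicitly.

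First I would record the local data. Since $(\pres\fs)_i=\fs\Sy_2$ for $i\geq 0$ and $(\pres\fs)_i=0$ otherwise, and since $\fs\Sy_2$ is free of rank one over itself, one has $\Hom_{\fs\Sy_2}(\fs\Sy_2,\fs)=\langle\varepsilon\rangle_{\fs}$: an $\fs\Sy_2$-linear map $\fs\Sy_2\to\fs$ is determined by the image of $(\mathrm{id})$, and $\fs\Sy_2$-linearity forces this image to equal that of $(1,2)$, so every such map is a scalar multiple of $\varepsilon$. The induced differential $(d_k)^*\colon g\mapsto (-1)^k g\circ d_k=(-1)^k g\circ D$ sends $\varepsilon$ to $(-1)^k\varepsilon\circ D$, and $\varepsilon\circ D=0$ because $\im D\subseteq\ker\varepsilon=\{0,(\mathrm{id})+(1,2)\}$. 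Hence every differential of $\Hom^*_{\fs\Sy_2}(\pres\fs,\fs)$ vanishes, so $\Hm^k\Hom^*_{\fs\Sy_2}(\pres\fs,\fs)=\Hom^k_{\fs\Sy_2}(\pres\fs,\fs)$ is one-dimensional, spanned by $\overline\varepsilon$, for $k\geq 0$, and zero for $k<0$. By \cref{prpr-prm} the comparison maps $\overline\Psi_k$ are isomorphisms, so $\Hm^kA$ is one-dimensional for $k\geq 0$ and zero for $k<0$.

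Next I would describe the powers of $\xi$. The element $\xi$ is well-defined (for each target index only one summand contributes) and lies in $A^1$. As in \cref{pp:iota}(a), an induction on $j$ using $e_1\circ e_1=e_1$ gives
\[
\xi^j=\dum_{i\geq 0}\ls e_1\rs_{i+j}^i\in A^j,
\]
the base case being $\xi^0=\mathrm{id}$. To see that $\xi^j$ is a cycle I would compute $d\circ\xi^j$ and $\xi^j\circ d$ from \cref{lem:cai}, where $d=\dum_{i\geq 0}\ls D\rs_{i+1}^i$ is the differential of $\pres\fs$ (cf.\ \cref{bem:di}); both composites equal $\dum_{i\geq 0}\ls D\rs_{i+j+1}^i$, since $D\circ e_1=e_1\circ D=D$. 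Working over $\fs$, where $-1=1$, this yields $m_1(\xi^j)=d\circ\xi^j+\xi^j\circ d=0$. This is exactly where the prime $2$ simplifies the argument: there is no sign obstruction and no auxiliary element $\chi$ is needed.

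Finally I would evaluate $\overline\Psi_j$ on these classes. The degree-$0$ component of $\xi^j$ is $(\xi^j)_0=e_1\colon(\pres\fs)_j\to(\pres\fs)_0=\fs\Sy_2$, the identity, so $\overline\Psi_j(\overline{\xi^j})=\overline{\varepsilon\circ e_1}=\overline\varepsilon$, which spans $\Hm^j\Hom^*_{\fs\Sy_2}(\pres\fs,\fs)$. As $\overline\Psi_j$ is an isomorphism, $\overline{\xi^j}$ spans the one-dimensional space $\Hm^jA$ for each $j\geq 0$, while $\Hm^kA=0$ for $k<0$; collecting over all $j$ shows that $\{\overline{\xi^j}\mid j\geq 0\}$ is an $\fs$-basis of $\Hm^*A$. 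No step here is genuinely hard; the only thing to watch is the index bookkeeping in the $\ls\,\cdot\,\rs_{*}^{*}$ notation and the composition convention of \cref{lem:cai}, exactly as in the case $p\geq 3$.
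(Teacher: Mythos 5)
Your proposal is correct and follows essentially the same route as the paper's own proof: an induction giving $ξ^j = \dum_{i\geq 0}\ls e_1\rs_{i+j}^i$, the sign-free computation $m_1(ξ^j)=d\circ ξ^j+ξ^j\circ d=0$ over $\fs$, and the reduction via \cref{prpr-prm} to $\Hom^*_{\fs\Sy_2}(\pres\fs,\fs)$, whose differentials vanish because $ε\circ D=0$, followed by the evaluation $\bar{Ψ}_j(\overline{ξ^j})=\overline{ε}$. The only difference is that you spell out a few of the small verifications (such as $\Hom_{\fs\Sy_2}(\fs\Sy_2,\fs)=\langle ε\rangle_{\fs}$) that the paper states without comment.
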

\begin{proof}
Straightforward induction yields, for $j\geq 0$,
\begin{align*}
ξ^j = \dum_{i\geq 0} \ls e_1 \rs_{i+j}^i\,.
\end{align*}
We have 
\begin{align*}
m_1(ξ^j) \eqs& d \circ ξ^j - (-1)^j ξ^j \circ d \overset{} = d \circ ξ^j + ξ^j \circ d\\
\eqs& \left(\dum_{i\geq 0} \ls D\rs_{i+1}^i\right)\circ  \left(\dum_{i\geq 0} \ls e_1 \rs_{i+j}^i\right) + \left(\dum_{i\geq 0} \ls e_1 \rs_{i+j}^i\right)\circ \left(\dum_{i\geq 0} \ls D\rs_{i+1}^i\right)\\
\eqs& \dum_{i\geq 0} \ls D\rs_{i+j+1}^i + \dum_{i\geq 0} \ls D\rs_{i+j+1}^i = 0,
\end{align*}
so $ξ^j$ is a cycle.
As $\Hom_{\fs \Sy_2}(\fs \Sy_2, \fs) = \{0,ε\}$ and $ε\circ D=0$, the differentials of $\Hom^*(\pres \fs, \fs)$ (cf.\ \cref{prpr-prm}) are all zero. So $\{ε\}$ is an  $\fs$-basis of $\Hm^k \Hom^*(\pres \fs, \fs)$ for $k\geq 0$. Since in the notion of \cref{prpr-prm}, $\bar{Ψ}_k(\overline{ξ^k}) = ε$, the set $\{\overline{ξ^k}\}$ is an $\fs$-basis of $\Hm^k\Hom^*(\pres\fs,\pres\fs)$ for $k\geq 0$. For $k<0$ we have $\Hm^k\Hom^*(\pres\fs,\pres\fs) \cong \Hm^k \Hom^*(\pres \fs, \fs) = 0$. So $\{\overline{ξ^j} \mid j\geq 0\}$ is an $\fs$-basis of $\Hm^* A$.
\end{proof}
We define families of maps $(f_n:(\Hm^*A)^{\otimes n}→A)_{n\geq 1}$ and $(m'_n:(\Hm^*A)^{\otimes n} → \Hm^* A)_{n\geq 1}$ as follows. $f_1$ and $m'_2$  are given on a basis by 
\begin{align*}
f_1(\overline{ξ^j}) :=\eqsp& ξ^j &&\text{for $j\geq 0$}\\
m'_2(\overline{ξ^j}\otimes \overline{ξ^k}) :=\eqsp& \overline{ξ^{j+k}} &&\text{for $j,k\geq 0$}.
\end{align*}
 All other maps are set to zero.  %By construction, $f_1$ is a quasi-isomorphism.
% For $n\geq 2$ we set $f_n:(\Hm^*A)^{\otimes n}→A$, 
% $f_n=0$. We define a family of maps $m'_n:(\Hm^*A)^{\otimes n) → \Hm^* A$, $n\geq 1$ as follows: 
% . For all other $n$ we set $m'_n=0$.

It is straightforward to check that $(\Hm^* A, (m'_n)_{n\geq 1})$ is a pre-$\A_∞$-algebra and $(f_n)_{n\geq 1}$ is a pre-$A_∞$-morphism from $\Hm^*A$ to $A$. As $m'_2$ is associative, $(\Hm^* A, (m'_n)_{n\geq 1})$ is a dg-algebra, so in particular an $\A_∞$-algebra.
As $f_k=0$ for $k\neq 1$, \eqref{finfrel}[$n$] simplifies to
\begin{align*}
f_1\circ m'_n \eqs& m_n\circ (\underbrace{f_1\otimes \cdots \otimes f_1}_{n\text{ factors}}).
\end{align*}
As $m'_n=0$ and $m_n=0$ for $n\geq 3$, \eqref{finfrel}[$n$] is satisfied for all $n\geq 3$. For $n∈\{1,2\}$, we have
\begin{align*}
f_1\circ m'_1 \eqs& m_1\circ f_1\\
f_1\circ m'_2 \eqs& m_2(f_1\otimes f_1).
\end{align*}
The second equation follows immediately from the definition of $m'_2$ and $f_1$. The first equation holds as $m'_1=0$ and the images of $f_1$ are all cycles. So \eqref{finfrel}[$n$] holds for all $n$ and  $(f_n)_{n\geq 1}$ is an $\A_∞$-morphism from $(\Hm^* A, (m'_n)_{n\geq 1})$ to $(A, (m_n)_{n\geq 1})$. By the construction of $f_1$,  it induces the identity on homology. Thus $(\Hm^* A, (m'_n)_{n\geq 1})$ is a minimal model of $(A, (m_n)_{n\geq 1})$. 

\begin{bem}[Comparison with primes $p\geq 3$]
\label[bem]{bem:comp}
At a prime $p \geq 3$, we have constructed a projective resolution with period length $l=2(p-1)$ in \eqref{presform1}. 
%Note that the period length formula $l=2(p-1)$ (recall $p\geq 3$) for the projective resolution might be used to predict a period length of $2$ at the prime $2$ but the period length of the constructed projective resolution is $1$. 
If one constructs a projective resolution of $\Z_{(2)}$ analogous to the case $p\geq 3$, we have a sequence of the form
\[\cdots \rightarrow \Z_{(2)}\Sy_2 \xrightarrow{\hat e_{2,2}^*} \Z_{(2)}\Sy_2\xrightarrow{\hat e_{2,2}}\Z_{(2)}\Sy_2 \xrightarrow{\hat e_{2,2}^*} \Z_{(2)}\Sy_2\xrightarrow{\hat e_{2,2}}\Z_{(2)}\Sy_2 \rightarrow 0 \rightarrow \cdots\]
with a period length of $2$, where 
\begin{align*}
\hat e_{2,2}\colon&\,(\text{id})\,\longmapsto\, (\text{id})-(1,2)\\
\hat e_{2,2}^*\colon&\,(\text{id})\,\longmapsto\, (\text{id})+(1,2).
\end{align*}
However, modulo $2$ the differentials  $\hat e_{2,2}$ and $\hat e_{2,2}^*$ reduce to the same map $D: \fs \Sy_2\rightarrow \fs \Sy_2$, so we obtain a period length of $1$.

The maps $ι$ resp.\ $χ$ from \cref{pp:iota} may be identified with $ξ^2$ resp.\ $ξ$. This way, the definition of $m'_2$ at the prime $2$ is readily compatible with \cref{defall}. %Though the description in case $p\geq 3$ is more complicated.
\end{bem}

\kommentar{\appendix
\nsection{On the bar construction}

\subsection{Applications. Kadeishvili's algorithm and the minimality theorem.}
In this subsection we will discuss the construction of minimal models of $\A_∞$-algebras. Firstly, \cref{lem:aaut} states that certain pre-$\A_n$-structures and pre-$\A_n$-morphisms that arise in the construction of minimal models are actually $\A_n$-structures and $\A_n$-morphisms. Secondly, we give a proof of \cref{tm:kadeishvili}. We will review Kadeishvili's original proof of \cite{Ka82} as it gives a an algorithm for constructing minimal models which can be used for the direct calculation of examples.
Note that Lefèvre-Hasegawa has given a generalization of the minimality theorem, see \cite[Théorème 1.4.1.1]{Le03}, which we will not cover.
%We  fit in \cite[Lemme B.1.3]{Le03} for parts concerning obstruction theory that were omitted in \cite{Ka82}.
}

%FIXME: Remove dangling references

%\begin{footnotesize}

%\end{footnotesize}

\end{document}